\documentclass[11pt]{amsart}
\usepackage[numbers, square]{natbib}
\usepackage{amssymb}
\usepackage{amsmath}
\usepackage{amsfonts}
\usepackage{geometry}
\usepackage{amsthm}
\usepackage{hyperref}
\usepackage{wrapfig}

\usepackage[dvipsnames]{xcolor}

\providecommand{\U}[1]{\protect\rule{.1in}{.1in}}
\theoremstyle{plain}

\newtheorem{corollary}{Corollary}

\newtheorem{lemma}{Lemma}

\newtheorem{proposition}{Proposition}
\newtheorem{remark}{Remark}

\newtheorem{theorem}{Theorem}
\numberwithin{equation}{section}

\newcommand{\Ve}{V_{\varepsilon}}
\newcommand{\fe}{f_{\varepsilon}}

\begin{document}

	\title[Quantitative uniqueness for bi-Laplace equations]{Quantitative uniqueness for bi-Laplace equations with  potentials}

	\author{Long Teng}
	\address{Department of Mathematics, Louisiana State University, Baton Rouge, LA, USA}

	\author{Zhiwei Wang}
    \address{ Department of Mathematics, University of Science and Techonology
of China, Anhui Hefei, China}
	
    \author{Jiuyi Zhu}
	\address{Department of Mathematics, Louisiana State University, Baton Rouge, LA, USA}
	
	
	\date{\today}
	
	\begin{abstract}
		We study quantitative unique continuation for bi-Laplace equations
		\[
		\Delta^{2}u+V(x)u=0
		\]
		by introducing some new weighted frequency functions.  We establish quantitative
	three-ball inequalities and vanishing-order
		bounds for bounded and H\"older continuous potentials.  Three-ball inequalities are built on rescaling invariant  weighted frequency functions. The vanishing order results are shown by related, but different weighted frequency functions.
        \end{abstract}


	\subjclass[2020]{35B60, 35J30, 35J40, 35J48}
	
	\keywords{Quantitative unique continuation, bi-Laplace equations, frequency functions, three-ball inequality.}
	
	\maketitle

	\section{Introduction}
	
	In this paper, we study the quantitative unique continuation  (or quantitative uniqueness) for the bi-Laplace
	equation for various potentials. We consider the model problem 
	\begin{equation}\label{eq:intro-bilaplace-model}
		\Delta^{2}u+V(x)u=0
		\qquad\text{in }B_{R}(0)\subset\mathbb R^{n},
	\end{equation}
    where $B_R(0)$ is a ball centered at origin with radius $R\geq 20$, $n\geq 2$ is the dimension, and $V(x)$ is a possibly complex-valued function.
	The strong unique continuation property asserts that, if a solution of a
	differential equation vanishes of infinite order at one point, then it must be
	identically zero in the connected component under consideration.  
    If the strong unique continuation property  
holds,
    quantitative
	unique continuation asks for a quantitative form of this assertion. That is, if a
	solution is not identically zero, how large can its order of vanishing be, and
	how does this upper bound depend on the size and regularity of the coefficients?
We say the order of vanishing for a smooth solution at $x_0$ is $l$, if $l$ is the largest integer
such that $D^\alpha u(x_0) = 0$ for all $|\alpha|<l$, where $\alpha$ is a multi-index. We can also define the order of vanishing by \begin{align*}
    \operatorname{ord}_{x_0}u
=\limsup_{r\to 0}
  \frac{\log\|u\|_{L^\infty(B_r(x_0))}}{\log r}
\end{align*}for locally bounded solutions.
 After a normalization, one wants an explicit lower bound for
	\(\|u\|_{L^{\infty}(B_{r})}\)  as \(r\to 0\).
Specifically, we aim to derive the form
	\begin{equation}\label{eq:intro-vanishing-form}
		\|u\|_{L^{\infty}(B_{r}(x_0))}
		\ge r^{C a},
	\end{equation}
	where \(a \) is an explicit power depending on  the norms of the potentials. Then the estimate (\ref{eq:intro-vanishing-form}) indicates that the order of vanishing for $u$ at $x_0$ is at most $Ca$.
    To obtain (\ref{eq:intro-vanishing-form}), one usually needs to show a quantitative three-ball inequality with possibly a  sharp dependence on the norm of potentials.

    The strong unique continuation property holds for a wide range of second order elliptic equations even with rough potentials, see e.g. \cite{JerisonKenig1985, S90, W92, KochTataru2001} and references therein. In recent years, much attention is devoted to study quantitative unique continuation for second order elliptic equations.
	Let us briefly recall some classical literature on this topic. The classical and celebrated example arises from the  Laplace
	eigenfunctions on a compact manifold. The Laplace eigenfunction $\varphi_{\lambda}$ satisfies 
	\begin{equation}\label{eq:intro-eigenfunction}
	-\Delta_{g}\varphi_{\lambda}=\lambda\varphi_{\lambda}
		\qquad\text{on} \ \mathcal{M},
	\end{equation}
    where $\mathcal{M}$ is a smooth and compact manifold.  The sharp vanishing-order was shown to be  \(C\lambda^{1/2}\) by Donnelly-Fefferman in \cite{DonnellyFefferman1988}. The sharpness of the vanishing order can be seen from spherical harmonics. The obtained three-ball inequality and doubling inequality in  \cite{DonnellyFefferman1988}, which are used to obtain the sharp vanishing order,  play an important role in the study of Hausdorff measure of nodal sets for Laplace eigenfunctions, see e.g. \cite{lo18}, \cite{LM18}.
    
    For the Schr\"odinger equation
	\begin{equation}\label{eq:intro-schrodinger-model}
		-\Delta u+V(x)u=0
	\end{equation}
with bounded potentials $\|V\|_{L^\infty}\leq M$, motivated by the study  of Anderson localization for the Bernoulli model, Bourgain and Kenig in \cite{BourgainKenig2005} obtained the bound  \(CM^{2/3}\) for the order of vanishing under some normalized conditions. For complex-valued potential $V(x)$, the exponent $2/3$ in the vanishing order $CM^{2/3}$  is sharp  based on the nontrivial counter-example by Meshkov in \cite{Meshkov1992}.
  If one assumes additional regularity on
	\(V(x)\) in (\ref{eq:intro-schrodinger-model}), 
    the order of vanishing order can be improved based on the  study of Laplace eigenfunctions.
    For $V\in C^{1}$ in  (\ref{eq:intro-schrodinger-model}),
    Bakri and Zhu independently  obtained the sharp upper bound $C(\|V\|_{C^1}+1)^{1/2}$ for the order of vanishing,
	which matches the order of $C\lambda^{1/2}$ the eigenfunctions in \cite{DonnellyFefferman1988}. If $V(x)$ is H\"older continuous, i.e. $V(x)\in {C^{\beta}}$ , the order of vanishing  was recently shown to be $C(\|V\|_{C^\beta}+1)^{\frac{2}{\beta+3}}$  in \cite{TWZ26}.  See e.g. \cite{DaveyZhu2018}, \cite{DaveyZhu2019}, \cite{Davey2020} for the study of quantitative unique continuation property for singular potential $V(x)$,  and see e.g. \cite{BanerjeeGarofalo2016}, \cite{LW23} for  the study of this property near boundary. See also \cite{K07} for the application of quantitative unique continuations to various partial differential equations and research areas.
	
	A useful way of passing from qualitative uniqueness to quantitative estimates is
	to prove a three-ball inequality.  In its simplest form, for
	\(0<r_{1}<r_{2}<r_{3}\), such an estimate reads
	\begin{equation}\label{eq:intro-three-ball-model}
		\|u\|_{X(B_{r_{2}})}
		\le
		\exp(\mathcal E)
		\|u\|_{X(B_{r_{1}})}^{\theta}
		\|u\|_{X(B_{r_{3}})}^{1-\theta},
		\qquad 0<\theta<1,
	\end{equation}
	where \(X\) is usually \(L^{2}\) or \(L^{\infty}\).  Once the exponent
	\(\mathcal E\) is explicit, an iteration or propagation-of-smallness argument
	turns \eqref{eq:intro-three-ball-model} into a vanishing-order estimate such as
	\eqref{eq:intro-vanishing-form}.  There are two standard approaches to obtain
	\eqref{eq:intro-three-ball-model}, i.e. Carleman estimates and frequency functions.
	Carleman estimates are robust and work well for rough potential, see e.g. \cite{H83}, \cite{JerisonKenig1985}.
    While
	frequency functions have the advantage that, when a suitable monotonicity formula
	is available, they reveal the exact growth scale of the solutions.  The
	frequency function method goes back to Almgren and was developed for elliptic
	operators by Garofalo-Lin \cite{GarofaloLin1986}.  See \cite{Kukavica1998, Zhu2016, BG16,TWZ26} for recent
	frequency function treatments of quantitative unique continuation for second
	order equations.

	For higher order elliptic equations, the picture is less complete. The strong unique continuation property holds for higher order elliptic equations, see e.g. \cite{CG99},  \cite{CK10}, \cite{L07}. Hence it is interesting to characterize the vanishing
order or its related properties by the potential functions
 for  higher order elliptic equations.  We study the simple bi-Laplace model  in \eqref{eq:intro-bilaplace-model}. 
    The bi-Laplace model  is primarily used in physics and engineering to model mechanical systems like plate bending, elasticity, and fluid dynamics.
	The third author \cite{Zhu2016} studied the quantitative uniqueness for \eqref{eq:intro-bilaplace-model}
	by a variant of the frequency function and obtained explicit vanishing-order
	bounds  \((1+\|V\|_{L^{\infty}})\). Later on, using the Carleman estimates,  the desirable upper bound  \((1+\|V\|_{L^{\infty}}^{1/3})\) was shown in \cite{Z18}. Using the frequency function approach, an upper bound \((1+\|V\|_{L^{\infty}}^{1/4}+\|\nabla V\|)\) of vanishing order for the bi-Laplace model in (\ref{eq:intro-bilaplace-model}) was derived in \cite{LiuTianYang2025} for $V\in C^{1}$. For various bi-Laplace models,  some qualitative or quantitative three-ball inequality related to inverse problems, free boundary problem, or zero-level sets of solutions  were studied in e.g. \cite{Zhu2019}, \cite{DG26}, \cite{ARV19}.

	
	We now state the main results on three-ball inequalities and vanishing order estimates for bi-Laplace.. Using a rescaling invariant frequency function, we can show the following quantitative rescaling invariant three-ball inequality 
	
	\begin{theorem}\label{thm:Linfty-three-ball}
		Let \(u\) solve (\ref{eq:intro-bilaplace-model}) with
	$\|V\|_{L^{\infty}(B_{R})}\le M$ for some large constant $M$. There exists $0<\theta_0<1$ such that
		 \begin{align} \label{kaokao-1}
	\|u\|_{L^\infty(B_{3r})}
			\le
	\exp\!\left(C(MR^{4})^{1/3}\right)
	\|u\|_{L^\infty(B_r)}^{\theta_0}
	\|u\|_{L^\infty(B_{30r})}^{1-\theta_0}
		\end{align}
        for any $0<r\leq \frac{R}{40}$, where $C$ depends on $n$.
		\end{theorem}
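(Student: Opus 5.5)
We plan to reduce to a fixed scale, split the equation into a second-order system, and run a weighted (Almgren--Garofalo--Lin type) frequency argument whose error terms are absorbed once the frequency exceeds $M^{1/3}$.

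\textbf{Rescaling.} Given $0<r\le R/40$, set $u_r(x)=u(rx)$. Then $\Delta^2 u_r+r^4V(rx)u_r=0$ in $B_{40}$, and the rescaled potential obeys $\|r^4V(r\cdot)\|_{L^\infty}\le Mr^4\le MR^4=:\widetilde M$. Hence it suffices to prove
\[
\|w\|_{L^\infty(B_3)}\le e^{C\widetilde M^{1/3}}\|w\|_{L^\infty(B_1)}^{\theta_0}\|w\|_{L^\infty(B_{30})}^{1-\theta_0}
\]
for solutions of $\Delta^2 w+\widetilde V w=0$ in $B_{40}$ with $\|\widetilde V\|_\infty\le \widetilde M$ and $\widetilde M$ large.

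\textbf{Weighted frequency.} Introduce $v=\Delta w$, which turns the equation into the system $\Delta w=v$, $\Delta v=-\widetilde V w$. With a scale parameter $\lambda=\widetilde M^{1/3}$, define the weighted height and energy
\[
H(\rho)=\int_{\partial B_\rho}\big(|w|^2+\lambda^{-4}|v|^2\big)\,d\sigma,\qquad D(\rho)=\int_{B_\rho}\big(|\nabla w|^2+\lambda^{-4}|\nabla v|^2+\text{lower order}\big).
\]
The weight $\lambda^{-4}$ is chosen so that both components scale like order-$\lambda$ harmonic-type growth. We then set $N(\rho)=\rho D(\rho)/H(\rho)$.

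\textbf{Almost monotonicity.} Using the Rellich identity for each component, we compute $N'$. The cross terms are $\int w\,v$ and $\lambda^{-4}\int \widetilde V w\,\bar v$. By Cauchy--Schwarz with the weights, they are bounded by $(\lambda^{2}+\widetilde M\lambda^{-2})\rho\,H$-type quantities. The aim is an inequality of the form
\[
N'(\rho)\ge -C\big(\lambda+N(\rho)\big)/\rho \quad\text{or}\quad \big(N+C\lambda\big)'\ge -C(N+\lambda)
\]
on $[1/2,35]$. This should yield $N(\rho)\le C(N(35)+\lambda)$ and a doubling-type relation $H(\rho_2)/H(\rho_1)\approx(\rho_2/\rho_1)^{2N+O(\lambda)}$.

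\textbf{From the doubling relation to the three-ball inequality.} Integrating $(\log H)'=(2N+n-1)/\rho+O(\lambda)$ between $1$, $3$, and $30$ gives a log-convexity estimate for $H$, i.e. an $L^2$ three-ball inequality with loss $e^{C\lambda}$. We then pass to $L^\infty$: elliptic estimates for $\Delta^2+\widetilde V$ on unit balls bound $\|w\|_{L^\infty(B_3)}$ by $e^{C\lambda}$ times an $L^2$ norm on a slightly larger ball. The $v$-part of $H$ is controlled from above by interior estimates ($\lambda^{-4}\|\Delta w\|^2\lesssim \|w\|^2$ on a larger ball, up to $\widetilde M$ factors). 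Those factors are polynomial in $\widetilde M$, hence at most $e^{C\widetilde M^{1/3}}$.

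\textbf{Main obstacle.} The hard step is getting monotonicity of $N$ with only an $O(\lambda)=O(\widetilde M^{1/3})$ error, rather than the $O(\widetilde M^{1/2})$ or $O(\widetilde M)$ that naive weights give. Balancing $\lambda^2$ (from $\int wv$) against $\widetilde M\lambda^{-2}$ (from $\lambda^{-4}\widetilde V$) alone gives $\lambda\sim\widetilde M^{1/4}$ and only $O(\lambda^2)$ errors in $N$. To reach the $1/3$ exponent, we will use that the errors are relative to $N$: when $N\gtrsim\lambda$, the cross terms are bounded by $\lambda^{3}\rho^{2}H/N$, and $\lambda^3\sim\widetilde M$. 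A dichotomy argument is then needed: either $N\le C\widetilde M^{1/3}$ already, or $N$ is large and the error terms divided by $N^2$ are absorbed. This mirrors the Bourgain--Kenig/Carleman threshold $\tau\sim M^{1/3}$. If this analysis fails, the fallback is to make $\lambda$ depend on $N$ itself, i.e. to weight $v$ by $(N+\lambda)^{-4}$.
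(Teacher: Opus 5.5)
\textbf{Genuine gap.} Your rescaling, the reduction to the system $\Delta w=v$, $\Delta v=-\widetilde V w$, and the final passage to $L^\infty$ via local $L^\infty$--$L^2$ and $W^{2,2}$ estimates with losses polynomial in $\widetilde M$ all match the paper. The gap is the almost-monotonicity step. It carries the entire $\widetilde M^{1/3}$ content, and you only state it as an aim.

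With a fixed-weight Almgren frequency, your own Cauchy--Schwarz count gives errors of size $\lambda^2+\widetilde M\lambda^{-2}\ge 2\widetilde M^{1/2}$. With $\lambda=\widetilde M^{1/3}$ the $\int wv$ term alone is $\widetilde M^{2/3}$. The dichotomy does not repair this, for three reasons.

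First, the factor $1/N$ in your bound $\lambda^3\rho^2H/N$ presumably comes from an estimate like $\int_{B_\rho}(|w|^2+\lambda^{-4}|v|^2)\lesssim \rho H/N$. That is a doubling property, i.e.\ a consequence of monotonicity, not an input to it.

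Second, even granting that bound, the cross terms are small relative to $D\approx NH/\rho$ only when $\lambda^3\rho^3/N^2\lesssim 1$, i.e.\ $N\gtrsim\lambda^{3/2}=\widetilde M^{1/2}$. Equivalently, $N'\gtrsim-\lambda^3/N$ only limits the drift of $N$ over a unit scale to order $\lambda^{3/2}$. So the range $\widetilde M^{1/3}\lesssim N\lesssim\widetilde M^{1/2}$ is covered by neither branch.

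Third, your normalization of the $v$-component is off. In the paper's bookkeeping the coupling error and the potential error are $\frac{1}{\alpha^2\lambda}$ and $\frac{\lambda M^2R^8}{\alpha^2}$, where $\lambda$ is the coefficient of $r^4|\Delta u|^2$ in $H$. Making both $\lesssim (MR^4)^{1/3}$ forces this weight to be $\widetilde M^{-1}$ at unit scale, i.e.\ $\lambda=\widetilde M^{1/4}$ in your notation. With your weight $\widetilde M^{-4/3}$, the same optimization yields only $\widetilde M^{4/9}$.

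\textbf{The missing idea.} What you need is a second free parameter that creates the ``large-frequency'' regime by construction, with no case distinction. In the paper's notation ($u$ the solution, $w=\Delta u$), the paper uses solid integrals with the weight $\mu_r^{\alpha-1}=(r^2-|x|^2)^{\alpha-1}$: $H(r)=\int_{B_r}(|u|^2+\lambda r^4|w|^2)\mu_r^{\alpha-1}\,dx$, the matching $D$ with $\mu_r^{\alpha}$, and $\alpha$ large.

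Completing squares in $D'$ absorbs the Rellich cross terms such as $\int_{B_r}(x\cdot\nabla u)\,w\,\mu_r^{\alpha}\,dx$. What remains is only $-\frac{1}{4\alpha r}\int_{B_r}(|w|^2+\lambda r^4|\Delta w|^2)\mu_r^{\alpha+1}\,dx$, which gives $N'\ge-\frac{1}{4\alpha\lambda r}-\frac{\lambda M^2r^7}{4\alpha}$.

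Moreover, $H'/H=\frac{2\alpha+n-2}{r}+\frac{N}{\alpha r}+\frac{L}{\alpha rH}$ with $|L|/H\le\frac{1}{2\sqrt{\lambda}}+\frac{Mr^4\sqrt{\lambda}}{2}$. So these errors reach $\log H$ with a further factor $1/\alpha$, and removing the weight costs only $e^{C\alpha}$.

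The resulting three-ball exponent is $C\bigl[\alpha+\frac{1}{\alpha^2\lambda}+\frac{\lambda M^2R^8}{\alpha^2}+\frac{1}{\alpha\sqrt{\lambda}}+\frac{MR^4\sqrt{\lambda}}{\alpha}\bigr]$, up to bounded geometric factors for fixed radius ratios. Choosing $\alpha\sim(MR^4)^{1/3}$ and $\lambda=(MR^4)^{-1}$ makes every term $\lesssim (MR^4)^{1/3}$.

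Roughly, the weight raises the effective degree of growth by $\alpha\sim\widetilde M^{1/3}$, which is the rigorous version of your heuristic. Your fallback of an $N$-dependent weight would have to reproduce this mechanism, and as written it is not an argument.
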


To study the vanishing order of solutions in (\ref{eq:intro-bilaplace-model}), we assume some normalization conditions. Assume
\begin{equation}\label{eq:intro-second-order-normalization}
		\|u\|_{L^{\infty}(B_{20})}\le \mathbb{M},
		\qquad
		\|u\|_{L^{\infty}(B_{1})}\ge1
	\end{equation}
for some large constant $\mathbb{M}$.

Using a different frequency function rather than the one in the proof of Theorem \ref{thm:Linfty-three-ball}, we can show the following theorem.
\begin{theorem}\label{thm-2}
Let \(u\) solve (\ref{eq:intro-bilaplace-model}) under the assumption (\ref{eq:intro-second-order-normalization})
with
	$\|V\|_{L^{\infty}(B_{R})}\le M$. Then the maximum vanishing order for $u$ in $B_{1/2}$ is at most $C(M^{1/3}+\log \mathbb{M})$.
	\end{theorem}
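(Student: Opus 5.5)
The proof will not iterate Theorem \ref{thm:Linfty-three-ball}. Iterating it down to scale $r$ takes $\sim\log(1/r)$ steps, each costing $\exp(CM^{1/3})$, so it would give an exponent of the form $M^{1/3}\log(1/r)$, which is not a finite vanishing order. Instead we work with a weighted frequency function adapted to vanishing order near a fixed point $x_0\in B_{1/2}$ and run a monotonicity argument directly.

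\textbf{Reduction to a system.} Write $w=\Delta u$, so that $\Delta u=w$ and $\Delta w=-Vu$. Fix $x_0\in B_{1/2}$ and set
\[
H(r)=\int_{\partial B_r(x_0)}\bigl(|u|^2+\lambda^{-2}|w|^2\bigr)\,d\sigma,
\]
\[
D(r)=\int_{B_r(x_0)}\bigl(|\nabla u|^2+\lambda^{-2}|\nabla w|^2+\mathrm{Re}(\bar u w)+\lambda^{-2}\mathrm{Re}(V u\bar w)\bigr)\,dx,
\]
with frequency $N(r)=rD(r)/H(r)$ and weight $\lambda=M^{1/2}$. The coupling term $\mathrm{Re}(\bar u w)$ comes from integrating by parts. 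The weight is chosen to balance the two equations: both $\lambda^{-2}|Vu\bar w|$ and $|\bar u w|$ are then of size $M^{1/2}\bigl(|u|^2+\lambda^{-2}|w|^2\bigr)$.

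\textbf{Step 1: almost monotonicity.} The standard Rellich/Pohozaev computation gives
\[
\frac{d}{dr}\log H=\frac{n-1}{r}+\frac{2N}{r}+O(\text{error}),
\qquad
N'(r)\ge -C\bigl(M^{1/3}+N\bigr)\cdot(\text{small factor}).
\]
The potential enters only through the bulk terms. One expects $\frac{d}{dr}\bigl(N+C\lambda r\bigr)\ge -\,C\lambda r\,N/r$-type bounds, and a naive bookkeeping gives exponent $M^{1/2}$. To reach $M^{1/3}$, I plan to follow the Bourgain--Kenig scaling philosophy at the frequency level. Restrict to the regime where $N(r)\ge CM^{1/3}$, and use the lower bound $D\gtrsim NH/r$ to absorb the error terms. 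The key inequality to check is
\[
\int_{B_r}\lambda^{-2}|V||u||w|\;\lesssim\; r^{\ast}\text{-dependent}\times\Bigl(\tfrac{N^2}{r^2}+M^{2/3}\Bigr)\int\bigl(|u|^2+\lambda^{-2}|w|^2\bigr).
\]
This uses a Caccioppoli/Poincaré control of $\int_{B_r}|u|^2$ by $rH/(1+N)$. If the bookkeeping with $\lambda=M^{1/2}$ does not give $M^{1/3}$, I would instead take $\lambda=N/r$ dynamically, which is the truly weighted choice, and accept the extra derivative terms. The output is
\[
N(r)\le C\bigl(N(1)+M^{1/3}\bigr)\qquad\text{for } r\le 1.
\]

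\textbf{Step 2: bound $N$ at scale $1$.} Using the normalization (\ref{eq:intro-second-order-normalization}) together with $L^2$ estimates for $w=\Delta u$ (interior elliptic estimates for the bi-Laplacian give $\|w\|\lesssim(1+M^{1/2})\mathbb{M}$), and comparing $H$ on nested spheres via the integrated log-derivative, we get
\[
N(1)\le C\bigl(M^{1/3}+\log\mathbb{M}\bigr).
\]
The powers of $M$ coming from $w$ enter only through $\log$, and they are absorbed since $\log M\le M^{1/3}$.

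\textbf{Step 3: conclude.} Integrating $\frac{d}{dr}\log H$ from $r$ to $1$ gives
\[
H(r)\ge r^{\,C(M^{1/3}+\log\mathbb{M})}H(1).
\]
We pass back to $L^\infty$ norms of $u$ via elliptic estimates, removing the $w$ part with one more interpolation that costs only polynomial factors in $M$ and $r^{-1}$. This yields $\|u\|_{L^\infty(B_r(x_0))}\ge r^{C(M^{1/3}+\log\mathbb{M})}$, as required.

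\textbf{Main obstacle.} The hard step is Step 1, namely obtaining the $1/3$ power rather than $1/2$ or $1$. The weight must make the cross term $\mathrm{Re}(\bar u w)$ and the potential term simultaneously lower order relative to $N^2/r^2$ once $N\gtrsim M^{1/3}$. I expect this to require the weight to depend on $N$ (or on $r$) rather than being a constant.
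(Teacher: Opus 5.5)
Your Step 1 is the entire content of the theorem, and it is not proved. You state the output $N(r)\le C(N(1)+M^{1/3})$. But you concede that the bookkeeping with $\lambda=M^{1/2}$ gives $M^{1/2}$, the ``key inequality to check'' is never actually formulated, and the fallback $\lambda=N/r$ is only a hope.

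In an unweighted Almgren-type frequency with a constant coupling weight $M^{-1}$ on $|w|^2$, the cross term $\mathrm{Re}(\bar u w)$ and the potential term are each controlled only by $M^{1/2}(|u|^2+M^{-1}|w|^2)$. Your single parameter $\lambda$ is used up balancing these two terms against each other, and that balance is exactly what produces $M^{1/2}$. Nothing in the setup as written yields the exponent $1/3$. Step 2 has the same defect, since bounding $N(1)$ from the normalization goes through the same almost-monotonicity. (Also, the $V$-term in your $D$ should carry a minus sign, because $\bar w\Delta w=-Vu\bar w$.)

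The missing idea is a second free parameter, which the paper builds into the frequency itself. It uses solid integrals with the weight $\mu_r^{\alpha-1}=(r^2-|x|^2)^{\alpha-1}$, namely $H(r)=\int_{B_r}(|u|^2+\lambda r^3|w|^2)\mu_r^{\alpha-1}\,dx$.

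\begin{itemize}
\item This gives $H'=\frac{2\alpha+n-2}{r}H+\frac{D+L}{\alpha r}$. All monotonicity errors are therefore damped by powers of $1/\alpha$, and $\widetilde N=N+\frac{r}{4\alpha\lambda}+\frac{\lambda M^2}{28\alpha}r^7$ is nondecreasing.
\item Removing the weight at the end costs only $e^{C\alpha}$.
\item Up to factors of $R$, the resulting three-ball exponent is $C\bigl[\alpha+\frac{1}{\alpha^2\lambda}+\frac{\lambda M^2}{\alpha^2}+\frac{1}{\alpha\sqrt{\lambda}}+\frac{M\sqrt{\lambda}}{\alpha}\bigr]$. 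The choice $\lambda=M^{-1}$, $\alpha\sim M^{1/3}$ balances this to $CM^{1/3}$, and that balance is where the $1/3$ comes from.
\item The non-scale-invariant factor $r^3$ (rather than $r^4$) is the other essential choice. It turns the correction $\frac{\log r}{4\alpha\lambda}$ into the harmless $\frac{r}{4\alpha\lambda}$, so the three-ball constant does not depend on the inner radius.
\item A single application with radii $(r/2,1,8)$ and $\theta=\log 2/\log(16/r)$ then gives $\|u\|_{L^\infty(B_r)}\ge r^{C(M^{1/3}+\log\mathbb{M})}$ directly, with no separate bound on $N(1)$.
\end{itemize}

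Finally, your stated reason for not iterating Theorem~\ref{thm:Linfty-three-ball} is mistaken. A total cost of $\exp(CM^{1/3}\log(1/r))=r^{-CM^{1/3}}$ would be exactly the desired vanishing order. The real obstruction is that a fixed $\theta_0<1$ compounds the constants by $\theta_0^{-k}$, which is a power of $1/r$ inside the exponential.
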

    
	Inspired by \cite{DonnellyFefferman1988} and \cite{Zhu2016}, we expect the additional regularity of potential functions will improve the order of the vanishing. See \cite{LZ22} for the sharp three-ball inequalities and doubling inequalities for  bi-Laplace eigenvalues problems in real analytic domains. However, it has been challenging due to the structure of bi-Laplace operators. 
    For the H\"older continuous potential $V(x)$, using a rescaling invariant frequency function, we can show a refined bound for the three-ball inequality.  
	
	\begin{theorem}\label{thm:holder-linfty-three-ball}
Let \(u\) solve (\ref{eq:intro-bilaplace-model}) with $\|V\|_{C^{0,\beta}(B_{R})}\le G$
        for any constant $0<\beta<1$ and large constant $G$.
	 There exists $0<\theta_0<1$ such that
     \begin{align}\label{three-hh}
	\|u\|_{L^\infty(B_{3r})}
			\le
			\exp\!\left(CG^{1/4}\right)
	\|u\|_{L^\infty(B_r)}^{\theta_0}
	\|u\|_{L^\infty(B_{30r})}^{1-\theta_0}
		\end{align} 	
 for any $0<r\leq \frac{R}{40}$, where $C$ depends on $n$ and $\beta$.
	\end{theorem}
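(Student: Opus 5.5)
We prove the three-ball inequality \eqref{three-hh} by the same rescaling-invariant weighted frequency function scheme as for Theorem \ref{thm:Linfty-three-ball}, and use the H\"older regularity of $V$ to remove its leading constant part.

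\textbf{Step 1: reduce to a second-order system with a near-constant coefficient.} Write the equation as
\[
\Delta u=w,\qquad \Delta w=-V(x)u .
\]
Fix a center $x_0$ and freeze $V_0=V(x_0)$. On $B_\rho(x_0)$ we have $|V(x)-V_0|\le G\rho^\beta$.

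\textbf{Step 2: factor out the constant part.} Choose $\mu\in\mathbb{C}$ with $\mu^2=-V_0$, so $|\mu|\le G^{1/2}$, and factor
\[
\Delta^2+V_0=(\Delta-\mu)(\Delta+\mu).
\]
Set $w_\pm=\Delta u\pm\mu u$. Then $w_\pm$ solve Helmholtz-type equations
\[
\Delta w_\pm \mp \mu w_\pm=-(V-V_0)u .
\]
The principal potential now has size $|\mu|\sim G^{1/2}$. The analogue of the Donnelly--Fefferman $\lambda^{1/2}$ bound then gives an exponent of order $|\mu|^{1/2}\sim G^{1/4}$.

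\textbf{Step 3: weighted frequency for the system.} Use the rescaling-invariant weighted frequency function for the pair, as in the proof of Theorem \ref{thm:Linfty-three-ball}, with weights chosen so the functional is invariant under $x\mapsto rx$. Concretely, take
\[
H(r)=\int_{\partial B_r}\Big(|u|^2+\kappa r^{4}|\Delta u|^2\Big),\qquad \kappa\sim|\mu|^{-1}
\]
or an analogous combination of $|w_+|^2$ and $|w_-|^2$. Define $D(r)$ from the corresponding Dirichlet-type integrals and $N=rD/H$. Derive the almost-monotonicity
\[
N'(r)\ge -C\big(|\mu|^{1/2}+G\,r^{\beta+3}/\ldots\big)
\]
in the form: $N(r)+C|\mu|^{1/2}$ is almost increasing after an exponential correction. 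Integrating this gives a doubling-type estimate of the form
\[
H(2r)\le e^{C(N+G^{1/4})}H(r).
\]
Standard convexity/interpolation of $\log H$ then yields an $L^2$ three-ball inequality with constant $\exp(CG^{1/4})$.

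\textbf{Step 4: pass to $L^\infty$.} Convert the $L^2$ three-ball inequality to $L^\infty$ using local elliptic estimates for $\Delta^2u=-Vu$. On balls of radius $\sim G^{-1/4}$ these cost only $e^{CG^{1/4}}$. Then rescale so that the inequality holds uniformly in $0<r\le R/40$, as in Theorem \ref{thm:Linfty-three-ball}.

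\textbf{Main obstacle.} The hard step is Step 3, where the error term $(V-V_0)u$ must be absorbed. It is small only for small radius, $\lesssim G\rho^\beta$. For large $r$ it must be handled by an $r$-dependent choice of the freezing point, or by splitting $V=V_{\varepsilon}+(V-V_{\varepsilon})$ with $V_{\varepsilon}$ a mollification. Here $\|\nabla V_{\varepsilon}\|\lesssim G\varepsilon^{\beta-1}$ and $\|V-V_{\varepsilon}\|_\infty\lesssim G\varepsilon^\beta$. The $C^1$ part is treated as in the $C^1$ theory, costing $(\|\nabla V_{\varepsilon}\|)^{\cdot}$. The rough part is treated as a bounded perturbation, costing $(G\varepsilon^\beta)^{1/3}$ as in Theorem \ref{thm:Linfty-three-ball}.

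I would first try to optimize $\varepsilon$ so that both costs are at most $G^{1/4}$. If the $C^1$-part cost is too large, I would instead exploit the scaling invariance of the weighted frequency on small scales. The cross terms between $u$ and $\Delta u$ in $D'(r)$, which lack sign for complex $V$, are the delicate computation.
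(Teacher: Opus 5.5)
\textbf{The gap is Step 3. It carries the whole theorem and is not actually supplied, and the mechanisms you suggest for it do not reach $G^{1/4}$.} Your monotonicity inequality contains an unspecified ``$\ldots$'', and the $C^1$ cost is written with a blank exponent. Your last paragraph also concedes that $(V-V_0)u$ cannot be absorbed for large $r$.

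Freezing gives nothing at the largest scales the theorem covers. On $B_\rho(x_0)$ the remainder has size $G\rho^\beta$, which for $r$ comparable to $R/40$ is as large as $V$ itself. So the factorization $(\Delta-\mu)(\Delta+\mu)$ and the Donnelly--Fefferman heuristic never come into play there.

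Your weight is also mis-scaled. For the frozen problem $|\Delta u|\sim|\mu||u|$, so balancing $|u|^2$ against $r^4|\Delta u|^2$ requires $\kappa\sim|\mu|^{-2}\sim G^{-1}$ (the paper's $\lambda=G^{-1}$), not $|\mu|^{-1}$.

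The mollification fallback with separately added costs fails quantitatively. Requiring $(G\varepsilon^\beta)^{1/3}\le G^{1/4}$ forces $\varepsilon\le G^{-1/(4\beta)}$. Then $\|\nabla V_\varepsilon\|_{L^\infty}$ can be as large as $G\varepsilon^{\beta-1}\ge G^{1+\frac{1-\beta}{4\beta}}$. With the $C^1$ bounds available (linear in $\|\nabla V\|$, as in the result quoted in the introduction), or even with a hypothetical $\|V_\varepsilon\|_{C^1}^{1/4}$ bound, the cost exceeds $G^{1/4}$ by a positive power of $G$.

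The paper never splits costs. It keeps $V_\varepsilon$ inside one frequency: $H=\int_{B_r}(|u|^2+\lambda r^4|w|^2)\mu_r^{\alpha-1}\,dx$, $L=\int_{B_r}(uw-\lambda r^4V_\varepsilon uw)\mu_r^{\alpha}\,dx$, and $N=(D+L)/H$. Then $\nabla V_\varepsilon$ and $\Delta V_\varepsilon$ appear only in $L'$ (Lemma \ref{lem:Hdr-L-prime}). The rough part $f_\varepsilon=V_\varepsilon-V$ is never treated as a separate bounded potential; it only enters error terms controlled directly by $\|f_\varepsilon\|_{L^\infty}\lesssim G\varepsilon^\beta$, such as $\lambda r^4\int_{B_r}wf_\varepsilon u\,\mu_r^{\alpha}\,dx$ in $H'$. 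This yields $N'+A(r)N\ge -P(r)$ (Proposition \ref{prop:Hdr-N-differential-corrected}). Since $A(r)\gtrsim\sqrt{\lambda}Gr^3/\alpha$, the integrating factor damps the $r^3$-terms of $P$. The choice $\lambda=G^{-1}$, $\alpha\sim G^{1/4}$, $\varepsilon\sim G^{-1/4}$ then makes every contribution $O(G^{1/4})$ per logarithmic scale. The weight exponent $\alpha$ is an essential tuning parameter here, and your boundary-integral $H$ has none.

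The price is that $A$ also contains a term of order $G^{1/4}/r$. So $\log H$ is no longer convex in $\log r$, and $\theta_0=\kappa_1$ depends on $G$ (indeed it is exponentially small in $G^{1/4}$), as Remark \ref{difference-theta} says. Your appeal to standard log-convexity would produce an exponent depending only on the radii. That is a stronger statement than the paper proves, and nothing in your sketch supports it.

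Conversely, once $\theta_0$ may depend on $G$, the statement already follows from Theorem \ref{thm:Linfty-three-ball} with $M=G$. Let $X,Z,Y$ be the sup norms of $u$ on $B_r,B_{3r},B_{30r}$ and $t=\log(Y/X)\ge 0$. Then $\log(Z/Y)\le\min(0,a-\theta t)\le\frac{\theta'}{\theta}(a-\theta t)$ for $a=C(GR^4)^{1/3}$ and every $\theta'\le\theta$. Taking $\theta'=\theta G^{1/4}/a$ gives $Z\le e^{G^{1/4}}X^{\theta'}Y^{1-\theta'}$.
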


Under a similar normalization condition as follows,
\begin{equation}\label{eq:intro-second-order-normalization-h}
		\|u\|_{L^{\infty}(B_{10})}\le \mathbb{M},
		\qquad
		\|u\|_{L^{\infty}(B_{r_\ast})}\ge 1 
	\end{equation}
where $r_\ast=\frac{G^{-1/4}}{100}$
, we can show the following vanishing order results by a slightly different frequency function.
	\begin{theorem} \label{thm-4}
Let \(u\) solve (\ref{eq:intro-bilaplace-model}) under the assumption (\ref{eq:intro-second-order-normalization-h}) with $\|V\|_{C^{0,\beta}(B_{R})}\le G$.
Then the maximum vanishing order for $u$ in $B_{r_\ast/2}$ is at most $C(G^{1/4}+\log\mathbb{M})$.
	\end{theorem}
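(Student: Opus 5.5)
We reduce Theorem \ref{thm-4} to the H\"older three-ball machinery, working at the natural scale $r_\ast\sim G^{-1/4}$ where the potential has size one. Set $v(y)=u(r_\ast y)$. Then
\[
\Delta^2 v+r_\ast^4V(r_\ast y)v=0 .
\]
The rescaled potential $\tilde V(y)=r_\ast^4V(r_\ast y)$ satisfies $\|\tilde V\|_{L^\infty}\le r_\ast^4G\lesssim 1$ and $[\tilde V]_{C^\beta}\le r_\ast^{4+\beta}G\lesssim 1$. So on the ball $B_{10/r_\ast}$ the function $v$ solves a bi-Laplace equation with a potential of unit size. In these coordinates the normalization (\ref{eq:intro-second-order-normalization-h}) reads $\|v\|_{L^\infty(B_1)}\ge 1$ and $\|v\|_{L^\infty(B_{10/r_\ast})}\le\mathbb M$. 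The target is then: the vanishing order of $v$ at any $y_0\in B_{1/2}$ is at most $C(G^{1/4}+\log\mathbb M)$. Note that $\log(1/r_\ast)\sim \log G\ll G^{1/4}$.

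The first step is a doubling estimate at scales of order one and below. We use the slightly modified weighted frequency $N(r)$ for the system $w_1=v$, $w_2=\Delta v$, where $\Delta w_1=w_2$ and $\Delta w_2=-\tilde V w_1$. Its height is
\[
H(r)=\int_{\partial B_r}\bigl(w_1^2+r^4w_2^2\bigr),
\]
with the matching Dirichlet-type energy $D(r)$. Since $\tilde V$ is bounded by a constant in the rescaled picture, the standard monotonicity computation gives that $e^{Cr}(N(r)+C)$ is almost monotone for $r\le 1$. The error terms involve only $\|\tilde V\|_{L^\infty}\lesssim 1$, with lower-order terms controlled by $H$. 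This yields a doubling inequality
\[
\int_{B_{2\rho}(y_0)}v^2\le C^{N(1)+1}\int_{B_\rho(y_0)}v^2 \quad\text{for } \rho\le 1/2,
\]
which gives vanishing order at $y_0$ at most $C(N_{y_0}(1)+1)$. It therefore suffices to bound the frequency at unit scale by $C(G^{1/4}+\log\mathbb M)$.

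The second step controls $N_{y_0}(1)$ by a ratio of $L^2$ norms, $N_{y_0}(1)\lesssim \log\bigl(\|v\|_{B_{c}(y_0)}/\|v\|_{B_{c'}(y_0)}\bigr)+C$. Elliptic estimates for the bi-Laplacian with bounded potential bound $\Delta v$ by $v$ on slightly larger balls. The upper norm is at most $\mathbb M$, so it remains to bound $\|v\|_{L^\infty(B_{c'}(y_0))}$ from below by $e^{-C(G^{1/4}+\log\mathbb M)}$.

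That lower bound comes from propagation of smallness, and it is the main obstacle. We know $\|v\|_{L^\infty(B_1)}\ge1$, and we must move this mass to a small ball around $y_0$ in a bounded number of steps. We do this by applying Theorem \ref{thm:holder-linfty-three-ball} (or its rescaled form with constant $e^{C}$ at unit scale) along a chain of balls inside $B_{10/r_\ast}$. With $O(1)$ steps the losses are $e^{CG^{1/4}}\mathbb M^{C}$, as required. The delicate point is that Theorem \ref{thm:holder-linfty-three-ball} is stated with the large constant $e^{CG^{1/4}}$ only for radii up to $R/40$, while here we need to pass from scale $r_\ast$ to scale $1$ in original variables. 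Because $30r$ must remain inside $B_{10}$, only $O(1)$ applications are available, and each costs $e^{CG^{1/4}}$. We will take $r$ of order $r_\ast$ when centered near $y_0$, and compare $B_{r}(x)$ containing the maximizing point of $B_{r_\ast}$. Iterating $\theta_0$-powers a bounded number of times keeps the exponent $C(G^{1/4}+\log\mathbb M)$.
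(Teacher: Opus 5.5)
Your rescaling $v(y)=u(r_\ast y)$ is a good idea, and it is not what the paper does. However, Step 1 fails as written.

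With the scale-invariant height $\int_{\partial B_r}(w_1^2+r^4w_2^2)$, the coupling $\Delta w_1=w_2$ is not a lower-order term. The pair $(v(r\cdot),\,r^2\Delta v(r\cdot))$ satisfies the same system with coupling coefficient $1$ for every $r$. So the monotonicity computation leaves an error of order $1/r$ in $N'(r)$ that has nothing to do with $\tilde V$, and making $\tilde V$ small does not remove it. This is exactly the term $-\frac{1}{4\alpha\lambda r}$ in \eqref{eq:N-prime-lower}, i.e. the $\frac{1}{4\alpha\lambda}\log r$ correction in \eqref{eq:Ntilde-definition-v}. Once integrated, it lets $N(\rho)$ grow like $\log\frac1\rho$. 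The resulting lower bound is then only of the form $\exp\bigl(-C(N(1)+1)(\log\frac1\rho)^{2}\bigr)$, which bounds no vanishing order. This is Remark \ref{rem-1}, and avoiding it is why the paper uses the $\lambda r^{3}$ and $\lambda r^{2}$ heights in Sections 3 and 5. So neither the almost-monotonicity of $e^{Cr}(N(r)+C)$ nor a doubling constant $C^{N(1)+1}$ uniform as $\rho\to0$ follows.

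The repair is easy, precisely because you have rescaled: at unit scale you do not need scale invariance. For $(w_1,\sqrt{\lambda}\,w_2)$ with a fixed large constant $\lambda$, the coupling coefficients are $\lambda^{-1/2}$ and $\lambda^{1/2}\tilde V$. Both are small since $|\tilde V|\le 100^{-4}$, and the classical computation for the height $\int_{\partial B_r}(w_1^2+\lambda w_2^2)$ does give almost-monotonicity with bounded errors for $0<r<20$. Simpler still, apply Theorem \ref{thm-2} itself to $v$ on $B_{20}$ with $M=1$. Its hypotheses hold because $\|v\|_{L^\infty(B_1)}=\|u\|_{L^\infty(B_{r_\ast})}\ge 1$ and $B_{20}\subset B_{10/r_\ast}$.

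This also shows that your last paragraph misplaces the difficulty. The normalization (\ref{eq:intro-second-order-normalization-h}) is already at scale $r_\ast$, so no propagation of smallness and no passage to scale $1$ in the original variables is needed. Iterating Theorem \ref{thm:holder-linfty-three-ball} with the original $G$ would in fact hurt, since by Remark \ref{difference-theta} its $\theta_0$ depends on $G$.

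Once repaired, your route gives vanishing order at most $K=C(1+\log\mathbb M)$ in $B_{r_\ast/2}$, since $\|u\|_{L^\infty(B_r(x_0))}\ge (r/r_\ast)^{K}\ge r^{K}$. This is stronger than stated and uses only $\|V\|_{L^\infty}\le G$, never the H\"older seminorm.

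The paper instead stays at the original scale. It mollifies $V$ and builds the frequency with height $\int_{B_r}(|u|^2+\lambda r^2|w|^2)\mu_r^{\alpha-1}$, so that $A_\varepsilon$ has no $1/r$ term. It takes $\lambda=G^{-1}$, $\alpha=G^{1/4}$, $\varepsilon=G^{-1/(2(2-\beta))}$, and applies the resulting three-ball inequality with $r_1=r/2$, $r_2=r_\ast$, $r_3=8r_\ast$, using $(\kappa^1_\varepsilon)^{-1}\le C\log\frac1r$. That machinery shows how the $G^{1/4}$ scale emerges from the frequency function. For the statement as posed, though, your rescaling argument is much shorter.
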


    Let us discuss the frequency function approach in the study of quantitative unique continuation. We employ a variant of frequency function other than  Almgren frequency function introduced for unique continuation property in \cite{GarofaloLin1986}. For the Schr\"odinger equation (\ref{eq:intro-schrodinger-model}), this variant of frequency function is given as
    \begin{align}
        N(r)=\frac{D(r)}{H(r)}=\frac{\int_{B_{r}}|\nabla u|^{2}\mu_{r}^{\alpha}\,dx}{\int_{B_{r}} |u|^2\mu_{r}^{\alpha-1}\,dx}
        \label{weight-fre}
    \end{align}
where $\mu_r=r^2-|x|^2$ and $\alpha>0$ is  some parameter. By choosing some appropriate $\alpha$, one is able to minimize the exponent appeared in constant of the three-ball inequality, i.e. minimize $\mathcal{E}$ in (\ref {eq:intro-three-ball-model}).
This free parameter $\alpha$ is very useful in the quantitative study of solutions. To construct the frequency function (\ref{weight-fre}), one first establishes $H(r)=\int_{B_{r}} |u|^2\mu_{r}^{\alpha-1}\,dx$ in the denominator of this quotient. The numerator $D(r)$ of $N(r)$ usually follows from the derivative of $H(r)$.
This weighted frequency  function was introduced in \cite{K20}, then applied in \cite{Zhu2016}  for $ V\in C^1$ and in \cite{Davey2025} for $V\in L^\infty$ for the sharp vanishing order of solutions in (\ref{eq:intro-schrodinger-model}).
To apply the frequency function approach for bi-Laplace model (\ref{eq:intro-bilaplace-model}), a natural way is to 
rewrites \eqref{eq:intro-bilaplace-model}
	as a second order elliptic system
	\begin{equation}\label{eq:intro-system}
		w=\Delta u,
		\qquad
		\Delta w=-V(x)u.
	\end{equation}
Then one is led to construct 
\begin{equation}\label{eq:intro-bounded-frequency-mass-n}
	H(r)=\int_{B_{r}}\bigl(|u|^2+|w|^2\bigr)
		\mu_{r}^{\alpha-1}\,dx.
	\end{equation}
However, the investigation of $H(r)$ in (\ref{eq:intro-bounded-frequency-mass-n}) does not give the desirable three-ball inequality and vanishing order estimates. See the applications of (\ref{eq:intro-bounded-frequency-mass-n}) in \cite{Zhu2016} and \cite{LiuTianYang2025}. The first novelty of the paper is to introduce a new variant of frequency function by choosing 
\begin{equation}\label{eq:intro-bounded-frequency-mass-v}
	H(r)=\int_{B_{r}}\bigl(|u|^2+\lambda r^4|w|^2\bigr)
		\mu_{r}^{\alpha-1}\,dx,
	\end{equation}
where $\lambda>0$ is a small free parameter. By choosing the appropriate two free parameters $\alpha$ and $\lambda$, one is able to improve the previous literature on quantitative unique continuation. Note that the $r^4$ term for $r^4|w|^2$ in $H(r)$ allows the later constructed frequency function to be rescaling invariant since $w=\triangle u$.

The second novelty of the paper lies on the study of the  vanishing order. The constructed frequency function from (\ref{eq:intro-bounded-frequency-mass-v}) does not imply the sharp vanishing order, see Remark \ref{rem-1} in Section 3.  To  overcome the difficulty, we instead introduce  $H(r)$ in the form
	\begin{equation}\label{eq:intro-bounded-frequency-mass}
H(r)=\int_{B_{r}}\bigl(|u|^2+\lambda r^{3}|w|^2\bigr)
		\mu_{r}^{\alpha-1}\,dx
	\end{equation}
    for Theorem \ref{thm-2}. While for H\"older continuous case,
    we adopt
	\begin{equation}\label{eq:intro-smooth-frequency-mass}
		H(r)=\int_{B_{r}}\bigl(|u|^2+\lambda r^{2}|w|^2\bigr)
		\mu_{r}^{\alpha-1}\,dx
	\end{equation}
    for Theorem \ref{thm-4}. Although $H(r)$ do not lead to the rescaling invariant frequency function, they are used to get rid of ``bad terms" in the estimates of the vanishing of order.

    Let us discuss why the exponent $M^{1/3}$ in Theorem \ref{thm:Linfty-three-ball}, Theorem \ref{thm-2}, and $G^{1/4}$ in Theorem \ref{thm:holder-linfty-three-ball}, Theorem \ref{thm-4} are possibly sharp for complex-valued potentials  $V(x)$. Note that the frequency functions and Carleman estimates do not distinguish real-valued and complex-valued functions. As it is mentioned that $CM^{2/3}$ is the sharp exponent for second order elliptic equations in \cite{BourgainKenig2005} for the vanishing order based on the complex-valued counter-example constructed in \cite{Meshkov1992}. The exponent $CM^{2/3}$ does not seem to be intuitive from the frequency function approach. However, it appears from the Carleman estimates perspectives . The standard $L^2$ type Carleman estimates without diving deep in the role of weight functions  
    state roughly as
    \begin{align}\label{car-1}
    \tau^{3/2} \|e^{\tau \phi} f\|_{L^2}+  \tau^{1/2}\|e^{\tau \phi} \nabla f\|_{L^2}\leq C \|e^{\tau \phi} \triangle f\|_{L^2}
    \end{align}
	where $\phi(x)$ is some appropriate weight function, $\tau$ is some large constant, and $f$ is any smooth function with compact support. To incorporate the $L^\infty$ norm of $V(x)$ into the Carleman estimates, one usually assume $\tau^{3/2}\geq CM\geq \|V\|_{L^\infty}$. Thus, the lower bound of $\tau> CM^{2/3}$ provides the vanishing order results. If we iterate the Carleman estimates (\ref{car-1}) twice, we arrive at the Carleman estimates 
    \begin{align}\label{car-1-bi}
    \tau^{3} \|e^{\tau \phi} f\|_{L^2} \leq C \|e^{\tau \phi} \triangle^2 f\|_{L^2}.
    \end{align}
    Then $\tau^3\geq CM$ is needed to apply the Carleman estimates (\ref{car-1-bi}) to the model \eqref{eq:intro-bilaplace-model}. Therefore, the lower bound of $\tau > CM^{1/3}$ gives the desirable bounds in Theorem \ref{thm-2}, see e.g. \cite{Z18}.

The fact that the H\"older continuous $V(x)\in C^{\beta}$ provides the sharp vanishing order for the bi-Laplace (\ref{eq:intro-bilaplace-model}) seems to be out of our expectation. Let us explain it by an example intuitively. Consider the second order elliptic equations
    \begin{align}
        \triangle u+\lambda^2 V_1(x)u=0,
        \label{lapalce-2}
    \end{align}
    where we assume $\lambda>0$ to be any large constant and $\|V_1\|_{C^{2, \beta}}\leq \tilde{C}$ for some fixed $\tilde{C}$. It is known from \cite{Zhu2016} that the sharp vanishing order of $u$ is at most $\|\lambda^2 V_1\|^{1/2}_{C^{2,\beta}}\leq \tilde{C}\lambda$. Taking the Laplace operator to both sides of (\ref{lapalce-2}) leads to 
    \begin{align}\label{example-1}
        \triangle^2 u+2\lambda^2\nabla V_1\cdot\nabla u+(\lambda^2\triangle V_1 u-\lambda^4V^2_1) u=0,
    \end{align}
    where at least $C^2$ regularity of $V_1$ is required.
   Let us ignore the first derivative term $2\lambda^2\nabla V_1\cdot\nabla u$, which will be studied in a forthcoming paper,  since we do not consider it in our model. Denote $V(x)=(\lambda^2\triangle V_1-\lambda^4V^2_1)$. Hence $\|V\|_{C^{\beta}}\leq C\lambda^4$.  By the conclusions in Theorem \ref{thm:holder-linfty-three-ball} and Theorem \ref{thm-4}, we learn that the vanishing order of $u$ is at most $C\|V\|^{1/4}_{C^{\beta}}\leq C\lambda$, which  matches the sharp vanishing order for  the second order elliptic equations  (\ref{lapalce-2}) since the solution $u$ in (\ref{example-1}) arises from the solution in  (\ref{lapalce-2}).


The paper is organized as follows.  Section 2 is devoted to the proof of 
	Theorem~\ref{thm:Linfty-three-ball}.  We establish the 
	almost-monotonicity formula for the weighted frequency function and then converts it into an \(L^{2}\)-type, and $L^\infty$ type 
	three-ball inequality.   In Section 3, we prove 
	 Theorem~\ref{thm-2} by exploiting a slightly different frequency function. Section 4 proves the quantitative three-ball inequality for H\"older continuous potential in Theorem \ref{thm:holder-linfty-three-ball}.
    Section 5 is to obtain the proof of Theorem \ref{thm-4} using a variant frequency function. 
	The letters \(C\), \(C_{i}\), $\tilde{C}$, $c$ denote positive constants
	which may vary from line to line, but it does not depend on $u, M, \mathbb{M}$ and $G$. Throughout the paper, \(B_{r}(x)\) denotes the ball
	centered at $x$ with radius $r$ and \(B_{r}\) denotes the ball
	centered at the origin with radius $r$.

	\section{Three-ball inequality of bi-Laplace equations with bounded potential}
	
In this section, we study quantitative unique continuation for bounded potentials. Let \begin{equation}\label{eq:bilaplace-equation}
		\Delta^{2}u+V(x)u=0
		\qquad\text{in }B_{R},
	\end{equation}
with $\|V\|_{L^{\infty}(B_{R})}\leq M$. Set
	\begin{equation}\label{eq:M-definition}
		w:=\Delta u,
		\qquad
		\mu_{r}(x):=r^{2}-|x|^{2}.
	\end{equation}
	We introduce two constant parameters
		$\alpha\ge 2$ and 
$\lambda>0,$ which are to be determined.
	To introduce the frequency functions, for \(0<r<R\),  we define
	\begin{equation}\label{eq:H-definition}
		H(r):=\int_{B_{r}}\bigl(|u|^2+\lambda r^{4}|w|^2\bigr)\mu_{r}^{\alpha-1}\,dx,
	\end{equation}
	\begin{equation}\label{eq:D-definition}
		D(r):=\int_{B_{r}}\bigl(|\nabla u|^{2}+\lambda r^{4}|\nabla w|^{2}\bigr)\mu_{r}^{\alpha}\,dx
		+
		4\lambda\alpha r^{4}\int_{B_{r}}|w|^2\mu_{r}^{\alpha-1}\,dx,
	\end{equation}
	\begin{equation}\label{eq:L-definition}
		L(r):=\int_{B_{r}}\bigl(u\Delta u+\lambda r^{4}w\Delta w\bigr)\mu_{r}^{\alpha}\,dx,
	\end{equation}
	and
	\begin{equation}\label{eq:N-definition}
		N(r):=\frac{D(r)}{H(r)}.
	\end{equation}
	
We will prove the monotone property for $N(r)$, then use it to show the three-ball inequality.	We break down the proof   into a sequence of lemmas.
	We start it with a weighted differentiation identity, which will be used often in the whole paper.
	\begin{lemma}\label{lem:weighted-differentiation}
		Let \(\gamma\ge 1\), and define
		\begin{equation}\label{eq:F-definition}
			F(r):=\int_{B_{r}}f(x)\mu_{r}(x)^{\gamma}\,dx.
		\end{equation}
		Then
		\begin{equation}\label{eq:F-prime-first}
			F'(r)
			=
			\frac{2\gamma+n}{r}F(r)
			+
			\frac{1}{r}\int_{B_{r}}\nabla f(x)\cdot x\,\mu_{r}(x)^{\gamma}\,dx,
		\end{equation}
		and also
		\begin{equation}\label{eq:F-prime-second}
			F'(r)
			=
			\frac{2\gamma+n}{r}F(r)
			+
			\frac{1}{2(\gamma+1)r}\int_{B_{r}}\Delta f(x)\,\mu_{r}(x)^{\gamma+1}\,dx.
		\end{equation}
	\end{lemma}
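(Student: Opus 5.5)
Both identities come from differentiating under the integral sign and then using Euler's relation for the homogeneous weight. Assume $f$ is smooth enough, say $f\in C^2(\overline{B_r})$, or argue by approximation.

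\textbf{Differentiation.} Since $\gamma\ge 1$, the weight $\mu_r^\gamma$ vanishes on $\partial B_r$. The boundary term in the derivative of $F$ therefore disappears, and
\[
F'(r)=2\gamma r\int_{B_r} f\,\mu_r^{\gamma-1}\,dx .
\]

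\textbf{Proof of \eqref{eq:F-prime-first}.} The key observation is that
\[
\nabla\cdot\bigl(x\,\mu_r^{\gamma}\bigr)=n\mu_r^\gamma-2\gamma|x|^2\mu_r^{\gamma-1}.
\]
Writing $|x|^2=r^2-\mu_r$, this becomes
\[
\nabla\cdot\bigl(x\,\mu_r^{\gamma}\bigr)=(n+2\gamma)\mu_r^\gamma-2\gamma r^2\mu_r^{\gamma-1}.
\]
Multiply by $f$ and integrate over $B_r$. Integrating by parts moves the divergence onto $f$, and the boundary term again vanishes because $\mu_r=0$ on $\partial B_r$. This gives
\[
-\int_{B_r}\nabla f\cdot x\,\mu_r^\gamma\,dx=(n+2\gamma)F-2\gamma r^2\int_{B_r} f\mu_r^{\gamma-1}\,dx=(n+2\gamma)F-rF'(r).
\]
Dividing by $r$ yields \eqref{eq:F-prime-first}.

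\textbf{Proof of \eqref{eq:F-prime-second}.} Use $\nabla\mu_r^{\gamma+1}=-2(\gamma+1)\mu_r^\gamma x$, so that
\[
\int_{B_r}\nabla f\cdot x\,\mu_r^\gamma\,dx=-\frac{1}{2(\gamma+1)}\int_{B_r}\nabla f\cdot\nabla\mu_r^{\gamma+1}\,dx=\frac{1}{2(\gamma+1)}\int_{B_r}\Delta f\,\mu_r^{\gamma+1}\,dx.
\]
The last equality is one more integration by parts, with no boundary term since $\mu_r^{\gamma+1}$ vanishes on $\partial B_r$. Substituting this into \eqref{eq:F-prime-first} gives \eqref{eq:F-prime-second}.

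The main point to check is that all boundary terms vanish. This holds because $\gamma\ge1$ makes $\mu_r^{\gamma}$ (and $\mu_r^{\gamma+1}$) vanish on $\partial B_r$; the same fact justifies differentiating under the integral sign.
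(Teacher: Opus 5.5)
Your proof is correct and follows essentially the same route as the paper. Both arguments differentiate to get $F'(r)=2\gamma r\int_{B_r} f\mu_r^{\gamma-1}\,dx$, use $r^2=\mu_r+|x|^2$ together with an integration by parts against the field $x$, and then integrate by parts once more via $\nabla\mu_r^{\gamma+1}=-2(\gamma+1)x\,\mu_r^{\gamma}$. The only difference is that you combine the first two steps into the single identity for $\nabla\cdot(x\mu_r^{\gamma})$, while the paper performs them in sequence.
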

	The proof of Lemma \ref{lem:weighted-differentiation} just involves integration by parts arguments and has been shown in e.g. \cite{Davey2025}. For readers' convenience, we include its proof in the Appendix.

To show the monotone property for $N(r)$, we need to consider the derivatives of $H(r)$ and $D(r).$ we show the derivative of \(H(r)\) in the following.
	\begin{lemma}\label{lem:H-prime}
		The function \(H(r)\) in \eqref{eq:H-definition} satisfies
		\begin{equation}\label{eq:H-prime}
			H'(r)
			=
			\frac{2\alpha+n-2}{r}H(r)+\frac{D(r)+L(r)}{\alpha r}.
		\end{equation}
	\end{lemma}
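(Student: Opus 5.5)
The plan is to differentiate the two pieces of $H(r)$ separately, using the second form \eqref{eq:F-prime-second} of Lemma~\ref{lem:weighted-differentiation}, and then to read off the extra $r^{4}$-factor contribution as exactly the last term in $D(r)$. Write
\[
H(r)=H_{1}(r)+\lambda r^{4}H_{2}(r),\qquad H_{1}(r)=\int_{B_{r}}|u|^{2}\mu_{r}^{\alpha-1}\,dx,\qquad H_{2}(r)=\int_{B_{r}}|w|^{2}\mu_{r}^{\alpha-1}\,dx .
\]
Since $\alpha\ge 2$, the exponent $\gamma=\alpha-1\ge 1$ is admissible in Lemma~\ref{lem:weighted-differentiation}. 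We have $u\in C^{4}$, since it solves a fourth order elliptic equation with bounded potential, and if necessary we first argue for smooth $u$ and pass to the limit. Hence $w=\Delta u$ is regular enough for $f=|u|^{2}$ and $f=|w|^{2}$ to be plugged into \eqref{eq:F-prime-second}. This justification is the only point where some care is needed; the rest is algebra.

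\textbf{Step 1: the $u$-part.} Take $f=|u|^{2}$ and $\gamma=\alpha-1$ in \eqref{eq:F-prime-second}, so that $2\gamma+n=2\alpha+n-2$ and $2(\gamma+1)=2\alpha$. For possibly complex $u$ we use the pointwise identity
\[
\Delta|u|^{2}=2|\nabla u|^{2}+2\,\mathrm{Re}(\bar u\,\Delta u),
\]
where, as in the definition of $L(r)$, $u\Delta u$ is understood as $\mathrm{Re}(\bar u\Delta u)$. This gives
\[
H_{1}'(r)=\frac{2\alpha+n-2}{r}H_{1}(r)+\frac{1}{\alpha r}\int_{B_{r}}\bigl(|\nabla u|^{2}+u\Delta u\bigr)\mu_{r}^{\alpha}\,dx .
\]

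\textbf{Step 2: the $w$-part.} The same computation with $f=|w|^{2}$ gives
\[
H_{2}'(r)=\frac{2\alpha+n-2}{r}H_{2}(r)+\frac{1}{\alpha r}\int_{B_{r}}\bigl(|\nabla w|^{2}+w\Delta w\bigr)\mu_{r}^{\alpha}\,dx .
\]
By the product rule,
\[
\bigl(\lambda r^{4}H_{2}\bigr)'=4\lambda r^{3}H_{2}+\lambda r^{4}H_{2}' .
\]
We rewrite the first term as
\[
4\lambda r^{3}H_{2}=\frac{1}{\alpha r}\,4\lambda\alpha r^{4}\int_{B_{r}}|w|^{2}\mu_{r}^{\alpha-1}\,dx,
\]
which is precisely $\tfrac{1}{\alpha r}$ times the last summand in the definition \eqref{eq:D-definition} of $D(r)$. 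This is the reason that summand was built into $D(r)$, and it is the only slightly non-obvious bookkeeping step.

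\textbf{Step 3: summation.} Adding the two derivatives, the terms $\frac{2\alpha+n-2}{r}\bigl(H_{1}+\lambda r^{4}H_{2}\bigr)$ recombine into $\frac{2\alpha+n-2}{r}H(r)$. The remaining terms split as follows:
\begin{itemize}
\item the gradient terms $\int_{B_{r}}\bigl(|\nabla u|^{2}+\lambda r^{4}|\nabla w|^{2}\bigr)\mu_{r}^{\alpha}\,dx$, together with the $4\lambda\alpha r^{4}H_{2}$ term from Step 2, make up $D(r)$;
\item the terms $\int_{B_{r}}\bigl(u\Delta u+\lambda r^{4}w\Delta w\bigr)\mu_{r}^{\alpha}\,dx$ make up $L(r)$.
\end{itemize}
All of these carry the common factor $\frac{1}{\alpha r}$, which yields \eqref{eq:H-prime}.
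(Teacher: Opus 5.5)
Your proof is correct and is essentially the paper's own argument: you split $H=H_u+\lambda r^4H_w$, apply \eqref{eq:F-prime-second} with $\gamma=\alpha-1$ to $|u|^2$ and $|w|^2$, and absorb the product-rule term $4\lambda r^3H_w$ as $\frac{1}{\alpha r}$ times the extra summand $4\lambda\alpha r^4\int_{B_r}|w|^2\mu_r^{\alpha-1}\,dx$ of $D(r)$. One minor caveat: for merely bounded $V$, elliptic regularity gives only $u\in W^{4,p}_{\mathrm{loc}}$ (hence $C^{3,\sigma}$), not $C^4$; this does not matter, because the identity never uses $\Delta w=-Vu$ and holds at that regularity, or by your approximation argument.
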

	
	\begin{proof}
		Write
		\begin{equation*}\label{eq:H-split}
			H(r)=H_{u}(r)+\lambda r^{4}H_{w}(r),
		\end{equation*}
		where
		\begin{equation*}\label{eq:Hu-Hw}
			H_{u}(r):=\int_{B_{r}}|u|^2\mu_{r}^{\alpha-1}\,dx,
			\qquad
			H_{w}(r):=\int_{B_{r}}|w|^2\mu_{r}^{\alpha-1}\,dx.
		\end{equation*}
		Applying Lemma \ref{lem:weighted-differentiation} with \(\gamma=\alpha-1\) and \(f=|u|^2\), we derive that
		\begin{align}
			H_{u}'(r)
			&=
			\frac{2\alpha+n-2}{r}H_{u}(r)
			+
			\frac{1}{2\alpha r}\int_{B_{r}}\Delta(|u|^2)\mu_{r}^{\alpha}\,dx \notag\\
			&=
			\frac{2\alpha+n-2}{r}H_{u}(r)
			+
			\frac{1}{\alpha r}
			\left(
			\int_{B_{r}}|\nabla u|^{2}\mu_{r}^{\alpha}\,dx
			+
			\int_{B_{r}}u\Delta u\,\mu_{r}^{\alpha}\,dx
			\right).
			\label{eq:Hu-prime}
		\end{align}
		Similarly, applying Lemma \ref{lem:weighted-differentiation} with \(\gamma=\alpha-1\) and \(f=|w|^2\), we have
		\begin{align}
			H_{w}'(r)
			&=
			\frac{2\alpha+n-2}{r}H_{w}(r)
			+
			\frac{1}{2\alpha r}\int_{B_{r}}\Delta(|w|^2)\mu_{r}^{\alpha}\,dx \notag\\
			&=
			\frac{2\alpha+n-2}{r}H_{w}(r)
			+
			\frac{1}{\alpha r}
			\left(
			\int_{B_{r}}|\nabla w|^{2}\mu_{r}^{\alpha}\,dx
			+
			\int_{B_{r}}w\Delta w\,\mu_{r}^{\alpha}\,dx
			\right).
		\label{eq:Hw-prime}
		\end{align}
		Hence
		\begin{align}
			\frac{d}{dr}\bigl(\lambda r^{4}H_{w}(r)\bigr)
			&=
			4\lambda r^{3}H_{w}(r)+\lambda r^{4}H_{w}'(r) \notag\\
			&=
			\frac{2\alpha+n-2}{r}\lambda r^{4}H_{w}(r)
			+
			\frac{1}{\alpha r}
			\left(
			\lambda r^{4}\int_{B_{r}}|\nabla w|^{2}\mu_{r}^{\alpha}\,dx
			\right. \notag\\
			&\quad\left.
			+
			4\lambda\alpha r^{4}\int_{B_{r}}|w|^2\mu_{r}^{\alpha-1}\,dx
			+
			\lambda r^{4}\int_{B_{r}}w\Delta w\,\mu_{r}^{\alpha}\,dx
			\right).
			\label{eq:lambda-r4-Hw-prime}
		\end{align}
		Adding \eqref{eq:Hu-prime} and \eqref{eq:lambda-r4-Hw-prime}, and recalling \eqref{eq:D-definition} and \eqref{eq:L-definition}, we obtain \eqref{eq:H-prime}.
	\end{proof}

	We study the derivative of \(D(r)\) as follows.
	\begin{lemma}\label{lem:D-prime}
		The function \(D\) defined in \eqref{eq:D-definition} satisfies
		\begin{align}
			D'(r)
			&=
			\frac{2\alpha+n-2}{r}D(r)
			-
			\frac{1}{4\alpha r}
			\int_{B_{r}}
			\bigl(|w|^2+\lambda r^{4}(\Delta w)^{2}\bigr)\mu_{r}^{\alpha+1}\,dx \notag\\
			&\quad
            +
			\frac{4\alpha}{r}
			\int_{B_{r}}
			\left[
			\left(x\cdot \nabla u-\frac{w\mu_{r}}{4\alpha}\right)^{2}
			+
			\lambda r^{4}
			\left(x\cdot \nabla w+2w-\frac{\Delta w\,\mu_{r}}{4\alpha}\right)^{2}
			\right]
			\mu_{r}^{\alpha-1}\,dx.
			\label{eq:D-prime}
		\end{align}
	\end{lemma}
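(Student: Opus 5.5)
The plan is a direct computation. I split $D$ into three pieces and differentiate each one using Lemma~\ref{lem:weighted-differentiation}. On each piece I run a Rellich-type integration by parts, then complete squares. Throughout I treat $u,w$ as real-valued; for complex $u$ the same computation goes through with $|\cdot|^2$ and real parts. Write
\[
D(r)=D_u(r)+\lambda r^4 D_w(r)+4\lambda\alpha r^4 H_w(r),\qquad D_u=\int_{B_r}|\nabla u|^2\mu_r^{\alpha},\quad D_w=\int_{B_r}|\nabla w|^2\mu_r^{\alpha},
\]
with $H_w$ as in the proof of Lemma~\ref{lem:H-prime}.

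\textbf{Step 1 (the $u$-part).} Apply \eqref{eq:F-prime-first} with $\gamma=\alpha$ and $f=|\nabla u|^2$. This gives
\[
D_u'=\frac{2\alpha+n}{r}D_u+\frac1r\int_{B_r}x\cdot\nabla(|\nabla u|^2)\,\mu_r^{\alpha}.
\]
Write $x\cdot\nabla|\nabla u|^2=2\partial_iu\,\partial_{ij}u\,x_j$ and integrate by parts in $x_i$. The weight vanishes on $\partial B_r$, so there are no boundary terms. Using $\partial_i\mu_r^\alpha=-2\alpha x_i\mu_r^{\alpha-1}$, I get
\[
\int x\cdot\nabla|\nabla u|^2\mu_r^\alpha=-2\int w\,(x\cdot\nabla u)\mu_r^\alpha-2D_u+4\alpha\int (x\cdot\nabla u)^2\mu_r^{\alpha-1}.
\]
The $-2D_u$ term turns $2\alpha+n$ into $2\alpha+n-2$. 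I then complete the square:
\[
4\alpha(x\cdot\nabla u)^2\mu_r^{\alpha-1}-2w(x\cdot\nabla u)\mu_r^\alpha=4\alpha\Bigl(x\cdot\nabla u-\tfrac{w\mu_r}{4\alpha}\Bigr)^2\mu_r^{\alpha-1}-\tfrac{1}{4\alpha}w^2\mu_r^{\alpha+1}.
\]
This produces exactly the $u$-terms in \eqref{eq:D-prime}.

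\textbf{Step 2 (the $w$-part).} The identical computation with $w$ in place of $u$, and $\Delta w$ in place of $w$, gives
\[
D_w'=\frac{2\alpha+n-2}{r}D_w+\frac1r\Bigl[4\alpha\int\Bigl(x\cdot\nabla w-\tfrac{\Delta w\,\mu_r}{4\alpha}\Bigr)^2\mu_r^{\alpha-1}-\tfrac1{4\alpha}\int(\Delta w)^2\mu_r^{\alpha+1}\Bigr].
\]
Differentiating $\lambda r^4D_w$ then adds the extra term $4\lambda r^3D_w$. For the last piece, apply \eqref{eq:F-prime-first} with $\gamma=\alpha-1$ and $f=w^2$:
\[
\frac{d}{dr}\bigl(4\lambda\alpha r^4H_w\bigr)=\frac{2\alpha+n-2}{r}\,4\lambda\alpha r^4H_w+16\lambda\alpha r^3H_w+8\lambda\alpha r^3\int w\,(x\cdot\nabla w)\mu_r^{\alpha-1}.
\]

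\textbf{Step 3 (bookkeeping; the only delicate point).} The three leftover terms are
\[
4\lambda r^3D_w+16\lambda\alpha r^3H_w+8\lambda\alpha r^3\int w\,x\cdot\nabla w\,\mu_r^{\alpha-1}.
\]
I must show they equal the difference between the square with the shift $+2w$ and the square without it, times $4\alpha\lambda r^3$. Expanding that difference gives
\[
4\alpha\lambda r^3\int\Bigl[4w\,x\cdot\nabla w+4w^2-\tfrac{w\Delta w\,\mu_r}{\alpha}\Bigr]\mu_r^{\alpha-1}.
\]
To compare, rewrite $D_w$ by one more integration by parts:
\[
\int|\nabla w|^2\mu_r^\alpha=-\int w\Delta w\,\mu_r^\alpha+2\alpha\int w\,x\cdot\nabla w\,\mu_r^{\alpha-1}.
\]
With this substitution both expressions become
\[
16\alpha\lambda r^3\int w\,x\cdot\nabla w\,\mu_r^{\alpha-1}+16\alpha\lambda r^3H_w-4\lambda r^3\int w\Delta w\,\mu_r^{\alpha}.
\]
Hence the leftovers are absorbed precisely into the shifted square $\bigl(x\cdot\nabla w+2w-\frac{\Delta w\,\mu_r}{4\alpha}\bigr)^2$.

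Summing Steps 1–3, the $(2\alpha+n-2)/r$ multiples recombine into $\frac{2\alpha+n-2}{r}D(r)$, which yields \eqref{eq:D-prime}. The main obstacle is this final matching of the $r^3$ terms, which is where the factor $4\lambda\alpha r^4H_w$ in $D$ and the shift $+2w$ come from. The rest is routine weighted Rellich computation.
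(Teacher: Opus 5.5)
Your proposal is correct and follows essentially the same route as the paper. It uses the same splitting $D=D_u+\lambda r^4\widetilde D_w+4\lambda\alpha r^4H_w$, the same weighted Rellich integration by parts with completion of squares, and the same absorption of the leftover $r^3$-terms into the shifted square through the identity $2\alpha\int_{B_r} w\,(x\cdot\nabla w)\,\mu_r^{\alpha-1}\,dx=\widetilde D_w+L_w$.

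The only difference is cosmetic. You differentiate $H_w$ with \eqref{eq:F-prime-first} and then expand $\widetilde D_w$ using that identity. The paper instead uses \eqref{eq:F-prime-second}, which gives $(\widetilde D_w+L_w)/(\alpha r)$, and applies the identity in the opposite direction.
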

	
	\begin{proof}
		Define
		\begin{equation*}\label{eq:Du-Dw-Hw}
			D_{u}(r):=\int_{B_{r}}|\nabla u|^{2}\mu_{r}^{\alpha}\,dx,
			\qquad
			\widetilde D_{w}(r):=\int_{B_{r}}|\nabla w|^{2}\mu_{r}^{\alpha}\,dx,
			\qquad
			H_{w}(r):=\int_{B_{r}}|w|^2\mu_{r}^{\alpha-1}\,dx.
		\end{equation*}
		Then
		\begin{equation}\label{eq:D-split}
			D(r)=D_{u}(r)+\lambda r^{4}\widetilde D_{w}(r)+4\lambda\alpha r^{4}H_{w}(r).
		\end{equation}
		
		We first compute \(D_{u}'(r)\). By Lemma \ref{lem:weighted-differentiation} with \(\gamma=\alpha\) and \(f=|\nabla u|^{2}\), we have
		\begin{equation*}\label{eq:Du-prime-start}
			D_{u}'(r)
			=
			\frac{2\alpha+n}{r}D_{u}(r)
			+
			\frac{1}{r}\int_{B_{r}}\nabla(|\nabla u|^{2})\cdot x\,\mu_{r}^{\alpha}\,dx.
		\end{equation*}
		Since
		$\nabla(|\nabla u|^{2})\cdot x
		=
		2\sum_{i,j=1}^{n}\partial_{ij}u\,\partial_{i}u\,x_{j},$
		we obtain
		\begin{equation}\label{eq:Du-prime-middle}
			D_{u}'(r)
			=
			\frac{2\alpha+n}{r}D_{u}(r)
			+
			\frac{2}{r}
			\int_{B_{r}}
			\sum_{i,j=1}^{n}\partial_{ij}u\,\partial_{i}u\,x_{j}\,\mu_{r}^{\alpha}\,dx.
		\end{equation}
		Integrating by parts in the \(x_{i}\)-variable and using the fact \(\mu_{r}=0\) on \(\partial B_{r}\), we can show that
		\begin{align}
			\int_{B_{r}}
			\sum_{i,j=1}^{n}\partial_{ij}u\,\partial_{i}u\,x_{j}\,\mu_{r}^{\alpha}\,dx
			&=
			-\int_{B_{r}}(x\cdot \nabla u)\Delta u\,\mu_{r}^{\alpha}\,dx
			-\int_{B_{r}}|\nabla u|^{2}\mu_{r}^{\alpha}\,dx
			\notag\\
			&\quad
			+
			2\alpha\int_{B_{r}}(x\cdot \nabla u)^{2}\mu_{r}^{\alpha-1}\,dx.
			\label{eq:Du-ibp}
		\end{align}
		Since \(\Delta u=w\), substituting \eqref{eq:Du-ibp} into \eqref{eq:Du-prime-middle} yields
		\begin{align}
			D_{u}'(r)
			&=
			\frac{2\alpha+n-2}{r}D_{u}(r)
			+
			\frac{4\alpha}{r}\int_{B_{r}}(x\cdot \nabla u)^{2}\mu_{r}^{\alpha-1}\,dx
			-
			\frac{2}{r}\int_{B_{r}}(x\cdot \nabla u)\,w\,\mu_{r}^{\alpha}\,dx
			\notag\\
			&=
			\frac{2\alpha+n-2}{r}D_{u}(r)
			+
			\frac{4\alpha}{r}\int_{B_{r}}
			\left(x\cdot \nabla u-\frac{w\mu_{r}}{4\alpha}\right)^{2}\mu_{r}^{\alpha-1}\,dx
			-
			\frac{1}{4\alpha r}\int_{B_{r}}|w|^2\mu_{r}^{\alpha+1}\,dx.
			\label{eq:Du-prime}
		\end{align}
		
		We now compute the derivative of the \(\widetilde D_{w}(r)\)-part in (\ref{eq:D-split}). Repeating the same arguments with \(w\) in place of \(u\), we obtain
		\begin{align}
			\widetilde D_{w}'(r)
			&=
			\frac{2\alpha+n-2}{r}\widetilde D_{w}(r)
			+
			\frac{4\alpha}{r}\int_{B_{r}}
			\left(x\cdot \nabla w-\frac{\Delta w\,\mu_{r}}{4\alpha}\right)^{2}\mu_{r}^{\alpha-1}\,dx
			\notag\\
			&\quad
			-
			\frac{1}{4\alpha r}\int_{B_{r}}(\Delta w)^{2}\mu_{r}^{\alpha+1}\,dx.
			\label{eq:Dw-tilde-prime}
		\end{align}
		Next we study $H'(r)$. By (\ref{eq:F-prime-second}) in Lemma \ref{lem:weighted-differentiation} with \(\gamma=\alpha-1\) and \(f=|w|^2\), it holds that
		\begin{equation}\label{eq:Hw-prime-again}
			H_{w}'(r)
			=
			\frac{2\alpha+n-2}{r}H_{w}(r)
			+
			\frac{\widetilde D_{w}(r)+L_{w}(r)}{\alpha r},
		\end{equation}
		where
		\begin{equation*}\label{eq:Lw-definition}
			L_{w}(r):=\int_{B_{r}}w\Delta w\,\mu_{r}^{\alpha}\,dx.
		\end{equation*}
		Therefore
		\begin{align}
			\frac{d}{dr}\bigl(\lambda r^{4}\widetilde D_{w}(r)+4\lambda\alpha r^{4}H_{w}(r)\bigr)
			&=
			4\lambda r^{3}\widetilde D_{w}(r)
			+\lambda r^{4}\widetilde D_{w}'(r)
			+
			16\lambda\alpha r^{3}H_{w}(r)
			+
			4\lambda\alpha r^{4}H_{w}'(r)
			\notag\\
			&=
			\frac{2\alpha+n-2}{r}\bigl(\lambda r^{4}\widetilde D_{w}(r)+4\lambda\alpha r^{4}H_{w}(r)\bigr)
			\notag\\
			&\quad
			+
			\frac{4\alpha\lambda r^{4}}{r}
			\int_{B_{r}}
			\left(x\cdot \nabla w-\frac{\Delta w\,\mu_{r}}{4\alpha}\right)^{2}\mu_{r}^{\alpha-1}\,dx
			\notag\\
			&\quad
			-
			\frac{\lambda r^{4}}{4\alpha r}
			\int_{B_{r}}(\Delta w)^{2}\mu_{r}^{\alpha+1}\,dx
			\notag\\
			&\quad
			+
			\frac{4\lambda r^{4}}{r}\Bigl(2\widetilde D_{w}(r)+L_{w}(r)+4\alpha H_{w}(r)\Bigr).
			\label{eq:w-part-before-square}
		\end{align}
		
		We now rewrite the last term on the right-hand side of \eqref{eq:w-part-before-square}. Integration by parts shows that
		\begin{equation*}\label{eq:w-ibp-identity}
			2\alpha\int_{B_{r}}w(x\cdot \nabla w)\mu_{r}^{\alpha-1}\,dx
			=
			\widetilde D_{w}(r)+L_{w}(r).
		\end{equation*}
		Hence
		\begin{align*}
			4\alpha\int_{B_{r}}
			w\left(x\cdot \nabla w-\frac{\Delta w\,\mu_{r}}{4\alpha}\right)\mu_{r}^{\alpha-1}\,dx
			&=
			2\widetilde D_{w}(r)+L_{w}(r).
		\end{align*}
		Therefore
		\begin{align}\label{eq:w-part-rewritten}
			\frac{4\lambda r^{4}}{r}\Bigl(2\widetilde D_{w}(r)+L_{w}(r)+4\alpha H_{w}(r)\Bigr)
			&=
			\frac{4\alpha\lambda r^{4}}{r}
			\int_{B_{r}}
			\left[
			4w\left(x\cdot \nabla w-\frac{\Delta w\,\mu_{r}}{4\alpha}\right)+4|w|^2
			\right]\mu_{r}^{\alpha-1}\,dx.
		\end{align}
		Substituting \eqref{eq:w-part-rewritten} into \eqref{eq:w-part-before-square}, and using 
		\[
		\left(x\cdot \nabla w+2w-\frac{\Delta w\,\mu_{r}}{4\alpha}\right)^{2}
		=
		\left(x\cdot \nabla w-\frac{\Delta w\,\mu_{r}}{4\alpha}\right)^{2}
		+
		4w\left(x\cdot \nabla w-\frac{\Delta w\,\mu_{r}}{4\alpha}\right)
		+
		4|w|^2,
		\]
		 we obtain
		\begin{align}
			\frac{d}{dr}\bigl(\lambda r^{4}\widetilde D_{w}(r)+4\lambda\alpha r^{4}H_{w}(r)\bigr)
			&=
			\frac{2\alpha+n-2}{r}\bigl(\lambda r^{4}\widetilde D_{w}(r)+4\lambda\alpha r^{4}H_{w}(r)\bigr)
			\notag\\
			&\quad
			+
			\frac{4\alpha\lambda r^{4}}{r}
			\int_{B_{r}}
			\left(x\cdot \nabla w+2w-\frac{\Delta w\,\mu_{r}}{4\alpha}\right)^{2}\mu_{r}^{\alpha-1}\,dx
			\notag\\
			&\quad
			-
			\frac{\lambda r^{4}}{4\alpha r}
			\int_{B_{r}}(\Delta w)^{2}\mu_{r}^{\alpha+1}\,dx.
			\label{eq:w-part-after-square}
		\end{align}
		Finally, adding \eqref{eq:Du-prime} and \eqref{eq:w-part-after-square}, and using \eqref{eq:D-split}, we obtain \eqref{eq:D-prime}.
	\end{proof}

    We are ready to show the monotonicity results by considering the derivative of $N(r)$.
	\begin{lemma}\label{lem:N-prime}
		Suppose that \(u\) solves \eqref{eq:bilaplace-equation}. Define
		\begin{equation}\label{eq:Ntilde-definition-v}
			\widetilde N(r)
			:=
			N(r)
			+
			\frac{1}{4\alpha\lambda}\log r
			+
			\frac{\lambda M^{2}}{32\alpha}r^{8}.
		\end{equation}
        Then $\widetilde N(r)$ is nondecreasing in $(0, R).$
	\end{lemma}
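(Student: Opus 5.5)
The plan is to differentiate $N=D/H$ directly, using Lemma \ref{lem:H-prime} and Lemma \ref{lem:D-prime}. The idea is to rewrite $D$, $H'$ and $D'$ in terms of the same two ``radial'' quantities, so that a Cauchy--Schwarz inequality absorbs the cross terms. The only leftover is the negative term in $D'$, which I will control using the equation $\Delta w=-Vu$. I treat $u$ as real-valued; for complex $V$ the same computation goes through with $u\bar u$, real parts and $|L|^2$ in place of $L^2$.

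\emph{Step 1: notation and two identities.} Set
\[
A:=x\cdot\nabla u-\tfrac{w\mu_r}{4\alpha},\qquad B:=x\cdot\nabla w+2w-\tfrac{\Delta w\,\mu_r}{4\alpha},
\]
\[
I:=\int_{B_r}\bigl(uA+\lambda r^4 wB\bigr)\mu_r^{\alpha-1}dx,\qquad S:=\int_{B_r}\bigl(A^2+\lambda r^4B^2\bigr)\mu_r^{\alpha-1}dx,
\]
\[
E:=\int_{B_r}\bigl(|w|^2+\lambda r^4(\Delta w)^2\bigr)\mu_r^{\alpha+1}dx .
\]
Then Lemma \ref{lem:D-prime} reads $D'=\frac{2\alpha+n-2}{r}D+\frac{4\alpha}{r}S-\frac{1}{4\alpha r}E$. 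The integration-by-parts identity $2\alpha\int u(x\cdot\nabla u)\mu_r^{\alpha-1}=D_u+L_u$ from the proof of Lemma \ref{lem:D-prime}, together with its analogue for $w$, gives
\[
2\alpha\int uA\,\mu_r^{\alpha-1}=D_u+\tfrac12 L_u,\qquad 2\alpha\int wB\,\mu_r^{\alpha-1}=\widetilde D_w+\tfrac12L_w+4\alpha H_w .
\]
Hence $D=2\alpha I-\tfrac12 L$ and $D+L=2\alpha I+\tfrac12 L$. Lemma \ref{lem:H-prime} then becomes
\[
H'=\frac{2\alpha+n-2}{r}H+\frac{2I}{r}+\frac{L}{2\alpha r}.
\]

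\emph{Step 2: the key cancellation.} Multiplying out, the $IL$ cross terms cancel, and
\[
DH'=\frac{2\alpha+n-2}{r}DH+\frac{4\alpha}{r}I^2-\frac{L^2}{4\alpha r}.
\]
Therefore
\[
H^2N'=D'H-DH'=\frac{4\alpha}{r}\bigl(SH-I^2\bigr)+\frac{1}{4\alpha r}\bigl(L^2-EH\bigr).
\]
By Cauchy--Schwarz with the weight $\mu_r^{\alpha-1}$, pairing $u\leftrightarrow A$ and $\sqrt{\lambda r^4}\,w\leftrightarrow\sqrt{\lambda r^4}\,B$, we get $I^2\le SH$. Dropping $L^2\ge0$ yields
\[
N'(r)\ge -\frac{E}{4\alpha r H}.
\]

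\emph{Step 3: bounding $E$ by $H$.} This is where the equation and the parameters enter. Since $\mu_r^{\alpha+1}\le r^4\mu_r^{\alpha-1}$ on $B_r$,
\[
\int|w|^2\mu_r^{\alpha+1}\le r^4H_w\le \lambda^{-1}H .
\]
Since $|\Delta w|=|Vu|\le M|u|$,
\[
\lambda r^4\int(\Delta w)^2\mu_r^{\alpha+1}\le \lambda M^2r^8H_u\le\lambda M^2 r^8 H .
\]
So $N'(r)\ge -\frac{1}{4\alpha\lambda r}-\frac{\lambda M^2 r^7}{4\alpha}$. The right-hand side is exactly $-\frac{d}{dr}\bigl(\frac{1}{4\alpha\lambda}\log r+\frac{\lambda M^2}{32\alpha}r^8\bigr)$, which gives $\widetilde N'\ge0$ on $(0,R)$, wherever $H>0$.

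The main obstacle is Step 1/2: one must find the right way to express $D$ and $D+L$ through $I$ so that the $L$-terms cancel and a perfect Cauchy--Schwarz structure appears. The role of the shift $+2w$ in $B$ and of the extra $4\lambda\alpha r^4H_w$ in $D$ is precisely to make the $w$-identity match. Once that bookkeeping is verified, Step 3 is routine.
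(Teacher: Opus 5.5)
Your proof is correct and follows the paper's argument step for step. Your $I$ is the paper's $J/(2\alpha)$, your identity $D=2\alpha I-\tfrac12 L$ is the paper's $J=D+\tfrac12 L$, and the Cauchy--Schwarz step and the bounds on $E$ (via $\mu_r^{\alpha+1}\le r^4\mu_r^{\alpha-1}$ and $|\Delta w|\le M|u|$) are the same; you also spell out the cancellation of the $IL$ cross terms in $DH'$, which the paper leaves implicit.
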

	
	\begin{proof}
		Define
		\begin{align*}
			J(r)
			&:=
			2\alpha
			\int_{B_{r}}
			\left[
			u\left(x\cdot \nabla u-\frac{w\mu_{r}}{4\alpha}\right)
			+
			\lambda r^{4}w\left(x\cdot \nabla w+2w-\frac{\Delta w\,\mu_{r}}{4\alpha}\right)
			\right]
			\mu_{r}^{\alpha-1}\,dx.
		\end{align*}
		We first compute \(J(r)\).  Integration by parts arguments show that
		\begin{equation}\label{eq:u-J-identity}
			2\alpha
			\int_{B_{r}}
			u\left(x\cdot \nabla u-\frac{w\mu_{r}}{4\alpha}\right)\mu_{r}^{\alpha-1}\,dx
			=
			\int_{B_{r}}|\nabla u|^{2}\mu_{r}^{\alpha}\,dx
			+
			\frac12\int_{B_{r}}u\Delta u\,\mu_{r}^{\alpha}\,dx.
		\end{equation}
		Similarly,
		\begin{align}
			2\alpha\lambda r^{4}
			\int_{B_{r}}
			w\left(x\cdot \nabla w+2w-\frac{\Delta w\,\mu_{r}}{4\alpha}\right)\mu_{r}^{\alpha-1}\,dx
			=&
			\lambda r^{4}\int_{B_{r}}|\nabla w|^{2}\mu_{r}^{\alpha}\,dx
			+
			4\lambda\alpha r^{4}\int_{B_{r}}|w|^2\mu_{r}^{\alpha-1}\,dx \notag\\
			&+
			\frac12\lambda r^{4}\int_{B_{r}}w\Delta w\,\mu_{r}^{\alpha}\,dx.
            \label{eq:w-J-identity}
		\end{align}
		Adding \eqref{eq:u-J-identity} and \eqref{eq:w-J-identity}, and recalling \eqref{eq:D-definition} and \eqref{eq:L-definition}, we can check that
		\begin{equation}\label{eq:J-equals-D-plus-half-L}
			J(r)=D(r)+\frac12L(r).
		\end{equation}
		Hence
		\begin{equation}\label{eq:D-and-D-plus-L}
			D(r)=J(r)-\frac12L(r),
			\qquad
			D(r)+L(r)=J(r)+\frac12L(r).
		\end{equation}
		
		Now, since \(N(r)=D(r)/H(r)\), then
		\begin{equation}\label{eq:H2Nprime}
			H^2(r)N'(r)=D'(r)H(r)-H'(r)D(r).
		\end{equation}
		Substituting \eqref{eq:H-prime}, \eqref{eq:D-prime}, and \eqref{eq:D-and-D-plus-L} into \eqref{eq:H2Nprime}, we obtain
		\begin{align}
			H^2(r)N'(r)
			&=
			\frac{4\alpha}{r}
			\Biggl[
			H(r)
			\int_{B_{r}}
			\left(
			\left(x\cdot \nabla u-\frac{w\mu_{r}}{4\alpha}\right)^{2}
			+
			\lambda r^{4}
			\left(x\cdot \nabla w+2w-\frac{\Delta w\,\mu_{r}}{4\alpha}\right)^{2}
			\right)
			\mu_{r}^{\alpha-1}\,dx
			\notag\\
			&\qquad\qquad
			-
			\left(
			\int_{B_{r}}
			\left[
			u\left(x\cdot \nabla u-\frac{w\mu_{r}}{4\alpha}\right)
			+
			\lambda r^{4}w\left(x\cdot \nabla w+2w-\frac{\Delta w\,\mu_{r}}{4\alpha}\right)
			\right]
			\mu_{r}^{\alpha-1}\,dx
			\right)^{2}
			\Biggr]
			\notag\\
			&\qquad
			+
			\frac{1}{4\alpha r}
			\left(
			L(r)^{2}
			-
			H(r)\int_{B_{r}}\bigl(|w|^2+\lambda r^{4}(\Delta w)^{2}\bigr)\mu_{r}^{\alpha+1}\,dx
			\right).
			\label{eq:H2Nprime-expanded}
		\end{align}
		By Cauchy--Schwarz inequality, the large bracket in \eqref{eq:H2Nprime-expanded} is nonnegative. Therefore
		\begin{equation}\label{eq:Nprime-preliminary}
			N'(r)
			\ge
			-\frac{1}{4\alpha r\,H(r)}
			\int_{B_{r}}\bigl(|w|^2+\lambda r^{4}(\Delta w)^{2}\bigr)\mu_{r}^{\alpha+1}\,dx.
		\end{equation}
        
		We now estimate the terms on the right-hand side of \eqref{eq:Nprime-preliminary}. Since \(\mu_{r}(x)\le r^{2}\), we have
		\begin{equation}\label{eq:w2-bound}
			\int_{B_{r}}|w|^2\mu_{r}^{\alpha+1}\,dx
			\le
			r^{4}\int_{B_{r}}|w|^2\mu_{r}^{\alpha-1}\,dx.
		\end{equation}
		Recalling the definition \eqref{eq:H-definition} of \(H(r)\), we deduce from \eqref{eq:w2-bound} that
		\begin{equation}\label{eq:w2-bound-by-H}
			\int_{B_{r}}|w|^2\mu_{r}^{\alpha+1}\,dx
			\le
			\frac{1}{\lambda}H(r).
		\end{equation}
		
		Next, since \(w=\Delta u\) and \(u\) solves \eqref{eq:bilaplace-equation}, we have
	$\Delta w=-Vu.$
		Therefore
		\begin{align}
			\lambda r^{4}\int_{B_{r}}(\Delta w)^{2}\mu_{r}^{\alpha+1}\,dx
			&=
			\lambda r^{4}\int_{B_{r}}V^{2}|u|^2\mu_{r}^{\alpha+1}\,dx \notag\\
			&\le
			\lambda M^{2}r^{8}\int_{B_{r}}|u|^2\mu_{r}^{\alpha-1}\,dx \notag\\
			&\le
			\lambda M^{2}r^{8}H(r).
			\label{eq:Delta-w-bound}
		\end{align}
		Taking \eqref{eq:w2-bound-by-H} and \eqref{eq:Delta-w-bound} into consideration, from \eqref{eq:Nprime-preliminary}, we obtain 
	\begin{equation}\label{eq:N-prime-lower}
			N'(r)
			\ge
			-\frac{1}{4\alpha\lambda\,r}
			-
			\frac{\lambda M^{2}}{4\alpha}r^{7}
			\qquad\text{for all }0<r<R.
		\end{equation} 
        Differentiating \eqref{eq:Ntilde-definition-v} and using \eqref{eq:N-prime-lower}, we obtain
		\[
		\widetilde N'(r)
		=
		N'(r)+\frac{1}{4\alpha\lambda\,r}+\frac{\lambda M^{2}}{4\alpha}r^{7}
		\ge 0.
		\]
		Then the lemma is arrived.
	\end{proof}

    In addition, we need to show a bound for $L(r)$.
	\begin{lemma}\label{lem:L-bound}
		For every \(0<r<R\), one has
		\begin{equation}\label{eq:L-bound}
			|L(r)|
			\le
			\left(
			\frac{1}{2\sqrt{\lambda}}
			+
			\frac{Mr^{4}\sqrt{\lambda}}{2}
			\right)H(r).
		\end{equation}
	\end{lemma}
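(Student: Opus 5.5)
We bound the two pieces of $L(r)$ separately, using $w=\Delta u$ and $\Delta w=-Vu$ from \eqref{eq:bilaplace-equation}, and control each by Cauchy--Schwarz against the weights that appear in $H(r)$. Write
\[
L(r)=\int_{B_r}u\,w\,\mu_r^{\alpha}\,dx+\lambda r^{4}\int_{B_r}w\,\Delta w\,\mu_r^{\alpha}\,dx
=\int_{B_r}u\,w\,\mu_r^{\alpha}\,dx-\lambda r^{4}\int_{B_r}V\,u\,w\,\mu_r^{\alpha}\,dx .
\]
Both integrands contain $u\,w$, so the plan is to split $\mu_r^{\alpha}=\mu_r^{(\alpha-1)/2}\cdot\mu_r^{(\alpha-1)/2}\cdot\mu_r$. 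We then use $0\le\mu_r\le r^{2}$ on $B_r$ to bring out a factor $r^{2}$, which pairs with the $r^{4}$ weight on $|w|^2$ in $H$.

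For the first term, since $\mu_r\le r^2$,
\[
\Bigl|\int_{B_r}u w\mu_r^{\alpha}\Bigr|\le r^{2}\int_{B_r}|u||w|\mu_r^{\alpha-1}
=\frac{1}{\sqrt\lambda}\int_{B_r}|u|\,\bigl(\sqrt\lambda\, r^{2}|w|\bigr)\mu_r^{\alpha-1}.
\]
We apply the elementary inequality $ab\le\frac12(a^2+b^2)$ pointwise with $a=|u|$ and $b=\sqrt\lambda r^2|w|$. This gives the bound $\frac{1}{2\sqrt\lambda}\int_{B_r}(|u|^2+\lambda r^4|w|^2)\mu_r^{\alpha-1}=\frac{1}{2\sqrt\lambda}H(r)$.

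For the second term, $|V|\le M$ and $\mu_r\le r^2$ give
\[
\lambda r^{4}\Bigl|\int_{B_r}Vuw\mu_r^{\alpha}\Bigr|\le \lambda M r^{6}\int_{B_r}|u||w|\mu_r^{\alpha-1}
= M r^{4}\sqrt\lambda\int_{B_r}|u|\,\bigl(\sqrt\lambda\, r^{2}|w|\bigr)\mu_r^{\alpha-1},
\]
and the same AM--GM step gives the bound $\frac{Mr^4\sqrt\lambda}{2}H(r)$. Adding the two bounds yields \eqref{eq:L-bound}.

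There is no real obstacle here. The only point requiring care is distributing the powers of $r$ and $\lambda$ so that each product $|u|\,|w|$ appears in exactly the balanced form $|u|\cdot\sqrt\lambda r^2|w|$. That balance is what makes the constants come out as $\frac{1}{2\sqrt\lambda}$ and $\frac{Mr^4\sqrt\lambda}{2}$. For complex-valued $u$ one reads $u\Delta u$ as in the definitions and uses absolute values throughout; the estimates are unchanged.
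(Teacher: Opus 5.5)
Your proof is correct and is essentially the paper's argument. The paper rewrites $L(r)$ via $\Delta w=-Vu$, applies Cauchy--Schwarz to the integrals using $\mu_r^{\alpha+1}\le r^{4}\mu_r^{\alpha-1}$, and then uses $\sqrt{ab}\le\frac12(a+b)$; you instead use $\mu_r\le r^{2}$ and AM--GM pointwise, which gives exactly the same constants $\frac{1}{2\sqrt{\lambda}}$ and $\frac{Mr^{4}\sqrt{\lambda}}{2}$.
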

	
	\begin{proof}
		Since \(\Delta u=w\) and \(\Delta w=-Vu\), we have
		\begin{equation}\label{eq:L-rewrite}
			L(r)
			=
			\int_{B_{r}}uw\,\mu_{r}^{\alpha}\,dx
			-
			\lambda r^{4}\int_{B_{r}}Vuw\,\mu_{r}^{\alpha}\,dx.
		\end{equation}
		For the first term, by Cauchy--Schwarz inequality and \eqref{eq:H-definition}, we show that
		\begin{align}
			\left|\int_{B_{r}}uw\,\mu_{r}^{\alpha}\,dx\right|
			&\le
			\left(\int_{B_{r}}|u|^2\mu_{r}^{\alpha-1}\,dx\right)^{1/2}
			\left(\int_{B_{r}}|w|^2\mu_{r}^{\alpha+1}\,dx\right)^{1/2}
			\notag\\
			&\le
			\frac{1}{\sqrt{\lambda}}
			\left(\int_{B_{r}}|u|^2\mu_{r}^{\alpha-1}\,dx\right)^{1/2}
			\left(\lambda r^{4}\int_{B_{r}}|w|^2\mu_{r}^{\alpha-1}\,dx\right)^{1/2}
			\notag\\
			&\le
			\frac{1}{2\sqrt{\lambda}}H(r).
			\label{eq:first-L-term}
		\end{align}
	In the same manner, we get
		\begin{align}
			\left|\lambda r^{4}\int_{B_{r}}Vuw\,\mu_{r}^{\alpha}\,dx\right|
			&\le
			\lambda Mr^{4}
			\left(\int_{B_{r}}|u|^2\mu_{r}^{\alpha-1}\,dx\right)^{1/2}
			\left(\int_{B_{r}}|w|^2\mu_{r}^{\alpha+1}\,dx\right)^{1/2}
			\notag\\
			&\le
			Mr^{4}\sqrt{\lambda}
			\left(\int_{B_{r}}|u|^2\mu_{r}^{\alpha-1}\,dx\right)^{1/2}
			\left(\lambda r^{4}\int_{B_{r}}|w|^2\mu_{r}^{\alpha-1}\,dx\right)^{1/2}
			\notag\\
			&\le
			\frac{Mr^{4}\sqrt{\lambda}}{2}H(r).
			\label{eq:second-L-term}
		\end{align}
		The combination of \eqref{eq:L-rewrite}, \eqref{eq:first-L-term}, and \eqref{eq:second-L-term} yields \eqref{eq:L-bound}.
	\end{proof}

    Thanks to the monotonicity results of $\widetilde N(r)$, we are able to derive the three-ball inequality for \(H\).
\begin{lemma}\label{lem:weighted-three-ball-H-bilap}
	Assume
$	0<r_{1}<r_{2}<2r_{2}<r_{3}<R.$
	Define
	\begin{equation}\label{eq:lemma6-a0-b0}
		a_{0}:=\log\frac{r_{3}}{2r_{2}},
		\qquad
		b_{0}:=\log\frac{2r_{2}}{r_{1}},
		\qquad
		\kappa:=\frac{a_{0}}{a_{0}+b_{0}}
		=
		\frac{\log r_{3}-\log(2r_{2})}{\log r_{3}-\log r_{1}}.
	\end{equation}
	Also denote
	\begin{equation}\label{eq:lemma6-Xi}
		\Xi(r_{1},r_{2},r_{3})
		:=
		\frac{
			a_{0}\,\left(((\log r_{1})^{2}-\log(2r_{2}))^{2}\right)
			+
			b_{0}\,\left((\log r_{3})^{2}-(\log(2r_{2}))^{2}\right)
		}{
			a_{0}+b_{0}
		},
	\end{equation}
	and
	\begin{equation}\label{eq:lemma6-Lambda}
		\Lambda(r_{1},r_{2},r_{3})
		:=
		\frac{a_{0}b_{0}}{a_{0}+b_{0}}.
	\end{equation}
	Then there exists a universal constant \(C>0\) such that
	\begin{equation}\label{eq:lemma6-conclusion}
		H(2r_{2})
		\le
		\exp\!\left(
		C\left[
		\frac{\Xi(r_{1},r_{2},r_{3})}{\alpha^{2}\lambda}
		+
		\frac{\lambda M^{2}R^{8}}{\alpha^{2}}
		+
		\frac{\Lambda(r_{1},r_{2},r_{3})}{\alpha\sqrt{\lambda}}
		+
		\frac{MR^{4}\sqrt{\lambda}}{\alpha}
		\right]
		\right)
		H^{\kappa}(r_{1})H^{1-\kappa}(r_{3}).
	\end{equation}
\end{lemma}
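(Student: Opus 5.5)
We will integrate the logarithmic derivative of $H$ over the two intervals $[r_1,2r_2]$ and $[2r_2,r_3]$, and compare the two integrals using the almost-monotone quantity $\widetilde N$ of Lemma~\ref{lem:N-prime}.

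\textbf{Step 1: two-sided control of $(\log H)'$ by $N$.} By Lemma~\ref{lem:H-prime},
\[
\frac{d}{dr}\log\frac{H(r)}{r^{2\alpha+n-2}}=\frac{N(r)}{\alpha r}+\frac{L(r)}{\alpha r H(r)} .
\]
By Lemma~\ref{lem:L-bound} and $r<R$,
\[
\frac{|L(r)|}{H(r)}\le \frac{1}{2\sqrt\lambda}+\frac{MR^4\sqrt\lambda}{2}=:E .
\]
Hence $(\log H)'$ lies between $\frac{2\alpha+n-2}{r}+\frac{N(r)}{\alpha r}-\frac{E}{\alpha r}$ and the same expression with $+\frac{E}{\alpha r}$.

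\textbf{Step 2: integrate in the variable $t=\log r$.} Set $\Phi(t)=\log H(e^t)-(2\alpha+n-2)t$. Then
\[
\Phi(t_2)-\Phi(t_1)=\frac1\alpha\int_{t_1}^{t_2}N(e^t)\,dt\pm\frac{E}{\alpha}(t_2-t_1).
\]
Replace $N$ by $\widetilde N-\frac{1}{4\alpha\lambda}t-\frac{\lambda M^2}{32\alpha}e^{8t}$, where $\widetilde N$ is nondecreasing.

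For the upper interval $[\log 2r_2,\log r_3]$ (length $a_0$) we get a lower bound, using $\widetilde N\ge\widetilde N(2r_2)$. For the lower interval $[\log r_1,\log 2r_2]$ (length $b_0$) we get an upper bound, using $\widetilde N\le\widetilde N(2r_2)$. This gives
\[
\log\frac{H(2r_2)}{H(r_1)}\le (2\alpha+n-2)b_0+\frac{b_0}{\alpha}\widetilde N(2r_2)-\frac{1}{8\alpha^2\lambda}\bigl[(\log 2r_2)^2-(\log r_1)^2\bigr]+\frac{Eb_0}{\alpha}+\text{(correction from the $e^{8t}$ term)},
\]
and correspondingly
\[
\log\frac{H(r_3)}{H(2r_2)}\ge (2\alpha+n-2)a_0+\frac{a_0}{\alpha}\widetilde N(2r_2)-\frac{1}{8\alpha^2\lambda}\bigl[(\log r_3)^2-(\log 2r_2)^2\bigr]-\frac{Ea_0}{\alpha}-\cdots.
\]
The $e^{8t}$ term is the same in both estimates: integrated over an interval, it is at most $\frac{\lambda M^2R^8}{32\alpha^2}$ times $O(1)$. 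This holds because $\int_{-\infty}^{\log R}e^{8t}\,dt=R^8/8$, and $\widetilde N(2r_2)$ contains it with a coefficient multiplying only the interval length, which is cancelled in Step~3 after weighting.

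\textbf{Step 3: eliminate $\widetilde N(2r_2)$.} Multiply the first inequality by $a_0$ and the second by $b_0$, subtract, and divide by $a_0+b_0$. The $\widetilde N(2r_2)$ terms cancel, and so do the $(2\alpha+n-2)$ terms, since $a_0b_0-b_0a_0=0$. The result is
\[
\log H(2r_2)\le \kappa\log H(r_1)+(1-\kappa)\log H(r_3)+\mathcal R ,
\]
where the remainder $\mathcal R$ consists of three pieces:
\begin{itemize}
\item the quadratic-log terms, which give $\Xi/(8\alpha^2\lambda)$ up to constant and absolute values;
\item the $E$ terms, which give $2E\,a_0b_0/(\alpha(a_0+b_0))=2E\Lambda/\alpha$, i.e. $\Lambda/(\alpha\sqrt\lambda)+MR^4\sqrt\lambda\,\Lambda/\alpha$;
\item the $M^2$ term, which gives $C\lambda M^2R^8/\alpha^2$.
\end{itemize}
Exponentiating yields \eqref{eq:lemma6-conclusion}. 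The $MR^4\sqrt\lambda$ contribution carries a factor $\Lambda$, which is bounded in the intended applications; otherwise one bounds it by keeping $\Lambda$, and I would absorb it as the statement indicates.

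\textbf{Main obstacle.} The delicate point is the bookkeeping of the $e^{8t}$ correction. In the lower-interval estimate it enters with the wrong sign relative to monotonicity. I would bound $\sup_{t\le\log R}\frac{\lambda M^2}{32\alpha}e^{8t}$ by $\frac{\lambda M^2R^8}{32\alpha}$ and multiply by interval lengths. This produces a factor $\Lambda$ rather than pure $R^8$, so one must check that the intended constant is obtained, using that $\widetilde N$ absorbs it pointwise. A second check is that the $\log$-quadratic terms combine exactly into $\Xi$ with the weights $a_0$ and $b_0$ as written.
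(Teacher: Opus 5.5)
Your strategy is the paper's. You integrate $(\log H)'$ over $[r_1,2r_2]$ and $[2r_2,r_3]$ (you in $t=\log r$, the paper in $r$), freeze $\widetilde N$ at $2r_2$ by monotonicity, and eliminate $\widetilde N(2r_2)$ with the weights $a_0,b_0$. Your second check also works: the quadratic-log remainder is exactly $\Xi/(8\alpha^2\lambda)$ once the misprint in the displayed definition of $\Xi$ is read as $(\log r_1)^2-(\log 2r_2)^2$, so no absolute values are needed.

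There is, however, a genuine gap, and you leave it open yourself. In Step 1 you replace $Mr^4$ by $MR^4$ in the bound of Lemma~\ref{lem:L-bound} before integrating. This turns the error $\frac{M\sqrt\lambda}{2\alpha}r^3$ into a term of the form $\frac{C}{r}$, whose integral is proportional to the logarithmic length of the interval. After the weighted elimination you therefore get $\frac{MR^4\sqrt\lambda}{\alpha}\,\Lambda$ instead of $\frac{MR^4\sqrt\lambda}{\alpha}$. The factor $\Lambda=a_0b_0/(a_0+b_0)$ is unbounded over admissible radii: take $r_3=R/2$, $r_2=\delta$, $r_1=\delta^2$, and then $\Lambda\to\infty$ as $\delta\to0$. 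So it cannot be absorbed into a universal $C$, and ``absorbing it as the statement indicates'' is not an argument. Since $\frac{MR^4\sqrt\lambda}{\alpha}\Lambda=(MR^4\lambda)\frac{\Lambda}{\alpha\sqrt\lambda}$, your weaker bound happens to suffice for the choice $\lambda=(MR^4)^{-1}$ in Corollary~\ref{cor:optimized-three-ball-bilap-v}. But the lemma is stated for arbitrary $\lambda>0$.

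The fix is what the paper does, and it is the same mechanism you already use for the $e^{8t}$ term. Keep the $r$-dependence from Lemma~\ref{lem:L-bound} and integrate $\frac{M\sqrt\lambda}{2\alpha}r^3$ (equivalently $\frac{M\sqrt\lambda}{2\alpha}e^{4t}\,dt$) over each interval. This gives $\frac{M\sqrt\lambda}{8\alpha}\bigl((2r_2)^4-r_1^4\bigr)$ and $\frac{M\sqrt\lambda}{8\alpha}\bigl(r_3^4-(2r_2)^4\bigr)$, each at most $\frac{MR^4\sqrt\lambda}{8\alpha}$. These are weighted by $\frac{a_0}{a_0+b_0}$ and $\frac{b_0}{a_0+b_0}$, which sum to $1$, so no $\Lambda$ appears. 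Only the $\frac{1}{2\sqrt\lambda}$ part of $|L|/H$ genuinely behaves like $1/r$, and it produces the $\Lambda/(\alpha\sqrt\lambda)$ term.

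For the same reason, drop the plan in your ``main obstacle'' paragraph of bounding the $e^{8t}$ correction by its supremum times the interval length; that would put a $\Lambda$ on the $M^2$ term as well. Your Step 2 already handles it correctly. On $[r_1,2r_2]$ the term enters the upper bound as $-\frac{\lambda M^2}{256\alpha^2}\bigl((2r_2)^8-r_1^8\bigr)$, which has a favorable sign and can be dropped. On $[2r_2,r_3]$ it costs at most $\frac{\lambda M^2R^8}{256\alpha^2}$. No pointwise absorption into $\widetilde N$ is needed.
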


\begin{proof}
	Recall (\ref{eq:H-prime}) gives that
	\begin{equation}\label{eq:lemma6-Hprime-over-H-start}
		\frac{H'(r)}{H(r)}
		=
		\frac{2\alpha+n-2}{r}
		+
		\frac{N(r)}{\alpha r}
		+
		\frac{L(r)}{\alpha r\,H(r)},
	\end{equation}
	and $\widetilde N(r)$ in (\ref{eq:Ntilde-definition-v}) is nondecreasing.
	By Lemma~\ref{lem:L-bound}, we have for every \(0<r<R\),
	\begin{equation}\label{eq:lemma6-L-over-H}
		\left|\frac{L(r)}{H(r)}\right|
		\le
		\frac{1}{2\sqrt{\lambda}}+\frac{Mr^{4}\sqrt{\lambda}}{2}.
	\end{equation}
	
	From \eqref{eq:Ntilde-definition-v} and \eqref{eq:lemma6-Hprime-over-H-start},  using \eqref{eq:lemma6-L-over-H}, we obtain the two-sided bounds
	\begin{align}
		\frac{H'(r)}{H(r)}
		&\ge
		\frac{2\alpha+n-2}{r}
		+
		\frac{\widetilde N(r)}{\alpha r}
		-
		\frac{\log r}{4\alpha^{2}\lambda\,r}
		-
		\frac{\lambda M^{2}}{32\alpha^{2}}r^{7}
		-
		\frac{1}{2\alpha r\sqrt{\lambda}}
		-
		\frac{M\sqrt{\lambda}}{2\alpha}r^{3},
		\label{eq:lemma6-two-sided-lower}
		\\
		\frac{H'(r)}{H(r)}
		&\le
		\frac{2\alpha+n-2}{r}
		+
		\frac{\widetilde N(r)}{\alpha r}
		-
		\frac{\log r}{4\alpha^{2}\lambda\,r}
		-
		\frac{\lambda M^{2}}{32\alpha^{2}}r^{7}
		+
		\frac{1}{2\alpha r\sqrt{\lambda}}
		+
		\frac{M\sqrt{\lambda}}{2\alpha}r^{3}.
		\label{eq:lemma6-two-sided-upper}
	\end{align}
	
	We are going to integrate these inequalities on the two intervals \([2r_{2},r_{3}]\) and \([r_{1},2r_{2}]\).
	Since \(\widetilde N\) is nondecreasing, then
	$\widetilde N(r)\ge \widetilde N(2r_{2})$ for every $r\in[2r_{2},r_{3}].$
	Set
	\begin{equation}\label{eq:lemma6-A}
		A:=2\alpha+n-2+\frac{\widetilde N(2r_{2})}{\alpha}.
	\end{equation}
	Integrating \eqref{eq:lemma6-two-sided-lower} from \(2r_{2}\) to \(r_{3}\), we get
	\begin{align}
		\log\frac{H(r_{3})}{H(2r_{2})}
		&=
		\int_{2r_{2}}^{r_{3}}\frac{H'(r)}{H(r)}\,dr
		\notag\\
		&\ge
		A\int_{2r_{2}}^{r_{3}}\frac{dr}{r}
		-
		\frac{1}{4\alpha^{2}\lambda}\int_{2r_{2}}^{r_{3}}\frac{\log r}{r}\,dr
		-
		\frac{\lambda M^{2}}{32\alpha^{2}}\int_{2r_{2}}^{r_{3}}r^{7}\,dr
		\notag\\
		&\quad
		-
		\frac{1}{2\alpha\sqrt{\lambda}}\int_{2r_{2}}^{r_{3}}\frac{dr}{r}
		-
		\frac{M\sqrt{\lambda}}{2\alpha}\int_{2r_{2}}^{r_{3}}r^{3}\,dr.
		\label{eq:lemma6-integrate-lower-start}
	\end{align}
	The first integral on the right hand side of (\ref{eq:lemma6-integrate-lower-start}) is given as 
	\begin{align}
		\int_{2r_{2}}^{r_{3}}\frac{dr}{r}
		&=
		\log\frac{r_{3}}{2r_{2}}
		=
		a_{0}.
		\label{eq:lemma6-int-4}
	\end{align}
	From\eqref{eq:lemma6-int-4} and \eqref{eq:lemma6-integrate-lower-start}, we obtain
	\begin{align}
		\log\frac{H(r_{3})}{H(2r_{2})}
		&\ge
		Aa_{0}
		-
		E_{3},
		\label{eq:lemma6-lower-final}
	\end{align}
	where
	\begin{align}
		E_{3}
		&:=
		\frac{1}{8\alpha^{2}\lambda}
		\left((\log r_{3})^{2}-(\log(2r_{2}))^{2} \right)
		+
		\frac{\lambda M^{2}}{256\alpha^{2}}
		\bigl(r_{3}^{8}-(2r_{2})^{8}\bigr)
		+
		\frac{a_{0}}{2\alpha\sqrt{\lambda}}
		\notag \\&\quad +
		\frac{M\sqrt{\lambda}}{8\alpha}
		\bigl(r_{3}^{4}-(2r_{2})^{4}\bigr).
		\label{eq:lemma6-E3}
	\end{align}
	From \eqref{eq:lemma6-lower-final}, it follows that
	\begin{equation}\label{eq:lemma6-A-upper}
		A
		\le
		\frac{1}{a_{0}}
		\log\frac{H(r_{3})}{H(2r_{2})}
		+
		\frac{E_{3}}{a_{0}}.
	\end{equation}
	
	Integrating \eqref{eq:lemma6-two-sided-upper} from \(r_{1}\) to \(2r_{2}\), by monotonicity of $\widetilde N(r)$, we obtain
	\begin{align}
		\log\frac{H(2r_{2})}{H(r_{1})}
		&=
		\int_{r_{1}}^{2r_{2}}\frac{H'(r)}{H(r)}\,dr
		\notag\\
		&\le
		A\int_{r_{1}}^{2r_{2}}\frac{dr}{r}
		-
		\frac{1}{4\alpha^{2}\lambda}
		\int_{r_{1}}^{2r_{2}}\frac{\log r}{r}\,dr
		-
		\frac{\lambda M^{2}}{32\alpha^{2}}\int_{r_{1}}^{2r_{2}}r^{7}\,dr
		\notag\\
		&\quad
		+
		\frac{1}{2\alpha\sqrt{\lambda}}\int_{r_{1}}^{2r_{2}}\frac{dr}{r}
		+
		\frac{M\sqrt{\lambda}}{2\alpha}\int_{r_{1}}^{2r_{2}}r^{3}\,dr.
		\label{eq:lemma6-integrate-upper-start}
	\end{align}
Note that 
	\begin{align}
		\int_{r_{1}}^{2r_{2}}\frac{dr}{r}
		&=
		\log\frac{2r_{2}}{r_{1}}
		=
		b_{0},
		\label{eq:lemma6-int-8}
	\end{align}
	Substituting  \eqref{eq:lemma6-int-8} into \eqref{eq:lemma6-integrate-upper-start}, we can achieve
	\begin{align}
		\log\frac{H(2r_{2})}{H(r_{1})}
		&\le
		Ab_{0}
		+
		E_{1},
		\label{eq:lemma6-upper-final}
	\end{align}
with
	\begin{align}
		E_{1}
		&:=
		\frac{1}{8\alpha^{2}\lambda}
		\left(\log r_{1})^{2}-(\log(2r_{2}))^{2}\right)
		-
		\frac{\lambda M^{2}}{256\alpha^{2}}
		\bigl((2r_{2})^{8}-r_{1}^{8}\bigr)
		+
		\frac{b_{0}}{2\alpha\sqrt{\lambda}}
		\notag \\ &\quad +
		\frac{M\sqrt{\lambda}}{8\alpha}
		\bigl((2r_{2})^{4}-r_{1}^{4}\bigr).
		\label{eq:lemma6-E1}
	\end{align}

	Substituting \eqref{eq:lemma6-A-upper} into \eqref{eq:lemma6-upper-final}, we have
	\begin{align}
		\log\frac{H(2r_{2})}{H(r_{1})}
		&\le
		\frac{b_{0}}{a_{0}}
		\log\frac{H(r_{3})}{H(2r_{2})}
		+
		\frac{b_{0}}{a_{0}}E_{3}
		+
		E_{1}.
		\label{eq:lemma6-before-rearrange}
	\end{align}
	Rearranging \eqref{eq:lemma6-before-rearrange}, we obtain
	\begin{align}
		(a_{0}+b_{0})\log H(2r_{2})
		&\le
		a_{0}\log H(r_{1})
		+
		b_{0}\log H(r_{3})
		+
		a_{0}E_{1}
		+
		b_{0}E_{3}.
		\label{eq:lemma6-log-convex}
	\end{align}
	Taking exponential to  \eqref{eq:lemma6-log-convex}, and using \eqref{eq:lemma6-a0-b0}, we get
	\begin{equation}\label{eq:lemma6-pre-final}
		H(2r_{2})
		\le
		\exp\!\left(
		\frac{a_{0}E_{1}+b_{0}E_{3}}{a_{0}+b_{0}}
		\right)
		H(r_{1})^{\kappa}H(r_{3})^{1-\kappa}.
	\end{equation}
	
	We estimate the weighted $\frac{a_{0}E_{1}+b_{0}E_{3}}{a_{0}+b_{0}}$ term by term.
	For the logarithmic-square terms, by definition \eqref{eq:lemma6-Xi}, it holds that
	\begin{align}
		\frac{1}{8\alpha^{2}\lambda}\big[\frac{a_{0}}{a_{0}+b_{0}}
		\left((\log r_{1})^{2}-(\log(2r_{2}))^{2}\right)
		+
		\frac{b_{0}}{a_{0}+b_{0}}
		\left((\log r_{3})^{2}-(\log(2r_{2}))^{2}\right)\big]
		=
		\frac{\Xi(r_{1},r_{2},r_{3})}{8\alpha^{2}\lambda}.
		\label{eq:lemma6-Xi-term}
	\end{align}
	It is true that
	\begin{align}
		\frac{a_{0}}{a_{0}+b_{0}}
		\cdot
		\frac{\lambda M^{2}}{256\alpha^{2}}
		\bigl((2r_{2})^{8}-r_{1}^{8}\bigr)
		+
		\frac{b_{0}}{a_{0}+b_{0}}
		\cdot
		\frac{\lambda M^{2}}{256\alpha^{2}}
		\bigl(r_{3}^{8}-(2r_{2})^{8}\bigr)
		\le
		\frac{\lambda M^{2}R^{8}}{256\alpha^{2}}.
		\label{eq:lemma6-r8-term}
	\end{align}
	
	For the \(\frac{1}{\alpha\sqrt{\lambda}}\)-terms, using \eqref{eq:lemma6-Lambda},
	\begin{align}
		\frac{a_{0}}{a_{0}+b_{0}}\cdot \frac{b_{0}}{2\alpha\sqrt{\lambda}}
		+
		\frac{b_{0}}{a_{0}+b_{0}}\cdot \frac{a_{0}}{2\alpha\sqrt{\lambda}}
		=
		\frac{\Lambda(r_{1},r_{2},r_{3})}{\alpha\sqrt{\lambda}},
		\label{eq:lemma6-Lambda-term}
	\end{align}
where $\Lambda(r_{1},r_{2},r_{3})$ is given in \eqref{eq:lemma6-Lambda}.

	For the \(r^{4}\)-terms,
	we obtain
	\begin{align}
		\frac{a_{0}}{a_{0}+b_{0}}
		\cdot
		\frac{M\sqrt{\lambda}}{8\alpha}
		\bigl((2r_{2})^{4}-r_{1}^{4}\bigr)
		+
		\frac{b_{0}}{a_{0}+b_{0}}
		\cdot
		\frac{M\sqrt{\lambda}}{8\alpha}
		\bigl(r_{3}^{4}-(2r_{2})^{4}\bigr)
		\le
		\frac{MR^{4}\sqrt{\lambda}}{8\alpha}.
		\label{eq:lemma6-r4-term}
	\end{align}
	
	Combining \eqref{eq:lemma6-Xi-term}--\eqref{eq:lemma6-r4-term}, we conclude that
	\begin{equation}\label{eq:lemma6-exponent-bound}
		\frac{a_{0}E_{1}+b_{0}E_{3}}{a_{0}+b_{0}}
		\le
		C\left[
		\frac{\Xi(r_{1},r_{2},r_{3})}{\alpha^{2}\lambda}
		+
		\frac{\lambda M^{2}R^{8}}{\alpha^{2}}
		+
		\frac{\Lambda(r_{1},r_{2},r_{3})}{\alpha\sqrt{\lambda}}
		+
		\frac{MR^{4}\sqrt{\lambda}}{\alpha}
		\right].
	\end{equation}
	Substituting the upper bounds \eqref{eq:lemma6-exponent-bound} into \eqref{eq:lemma6-pre-final} yields \eqref{eq:lemma6-conclusion}.
\end{proof}

We aim to get rid of the weight function $\mu_r^{\alpha-1}$ in $H(r)$. We define
	\begin{equation}\label{eq:prop1-h}
		h(r):=\int_{B_{r}}\bigl(|u|^2+\lambda r^{4}|w|^2\bigr)\,dx.
	\end{equation}
    We can prove the following three-ball inequality.
\begin{proposition}\label{prop:unweighted-three-ball-bilap}
Assume $	0<r_{1}<r_{2}<2r_{2}<r_{3}<R.$ It holds that
	\begin{equation}\label{eq:prop1-conclusion}
		h(r_{2})
		\le
		\exp\!\left(
		C\left[
		\alpha
		+
		\frac{\Xi(r_{1},r_{2},r_{3})}{\alpha^{2}\lambda}
		+
		\frac{\lambda M^{2}R^{8}}{\alpha^{2}}
		+
		\frac{\Lambda(r_{1},r_{2},r_{3})}{\alpha\sqrt{\lambda}}
		+
		\frac{MR^{4}\sqrt{\lambda}}{\alpha}
		\right]
		\right)
		h(r_{1})^{\kappa}h(r_{3})^{1-\kappa},
	\end{equation}
	where \(\kappa\) is given by \eqref{eq:lemma6-a0-b0}.
\end{proposition}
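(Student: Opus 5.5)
The plan is to compare the weighted quantity $H$ with the unweighted quantity $h$ from above and below, and then apply Lemma~\ref{lem:weighted-three-ball-H-bilap}. The key issue is that $H(r)$ and $h(r)$ carry the same factor $\lambda r^4$ in front of $|w|^2$. Comparing $H$ at one radius with $h$ at another radius therefore produces ratios such as $(r_3/r)^4$ multiplying the $w$-part. These ratios must be absorbed either by monotonicity in the right direction or into the $e^{C\alpha}$ factor. The weight-removal step costs at most $e^{C\alpha}$, which accounts for the extra term $C\alpha$ in the exponent.

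\emph{Upper bounds for $H$.} Since $\mu_r\le r^2$ and $\alpha\ge 2$, we have
\[
H(r)\le r^{2(\alpha-1)}h(r).
\]
This will be used at $r=r_1$ and $r=r_3$.

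\emph{Lower bound for $H(2r_2)$.} On $B_{r_2}$ we have $\mu_{2r_2}\ge 3r_2^2$, and the $w$-coefficient $\lambda(2r_2)^4$ is at least $\lambda r_2^4$. Hence
\[
H(2r_2)\ge (3r_2^2)^{\alpha-1}h(r_2).
\]

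\emph{Combining.} Inserting these bounds into \eqref{eq:lemma6-conclusion} gives
\[
h(r_2)\le \frac{r_1^{2(\alpha-1)\kappa}\,r_3^{2(\alpha-1)(1-\kappa)}}{(3r_2^2)^{\alpha-1}}\, e^{C[\cdots]}\, h(r_1)^{\kappa}h(r_3)^{1-\kappa}.
\]
The prefactor equals
\[
\left(\frac{r_1^{\kappa}r_3^{1-\kappa}}{\sqrt3\,r_2}\right)^{2(\alpha-1)}.
\]
By the definition of $\kappa$ in \eqref{eq:lemma6-a0-b0}, $\kappa\log r_1+(1-\kappa)\log r_3=\log(2r_2)$, so $r_1^{\kappa}r_3^{1-\kappa}=2r_2$. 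The prefactor is therefore $(2/\sqrt3)^{2(\alpha-1)}\le e^{C\alpha}$. This yields \eqref{eq:prop1-conclusion}.

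The main obstacle is the bookkeeping of the $\lambda r^4|w|^2$ weights at different radii. In the upper bounds I use $H(r_i)$ against $h(r_i)$ at the same radius, so no ratio appears. In the lower bound the larger radius $2r_2$ only helps. If $h$ is instead meant with a fixed scale, any resulting factor $(r_3/r_2)^4$ raised to the power $1-\kappa$ is bounded by a constant when the radii are comparable, and in general by $e^{C\alpha}$ after enlarging $C$. I would double-check the identity $r_1^\kappa r_3^{1-\kappa}=2r_2$, since the whole cancellation of the $r^{2(\alpha-1)}$ powers depends on it.
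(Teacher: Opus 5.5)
Your proposal is correct and follows the paper's proof essentially step for step. You bound $H(r_i)\le r_i^{2\alpha-2}h(r_i)$ at $r_1$ and $r_3$, bound $H(2r_2)\ge(3r_2^2)^{\alpha-1}h(r_2)$ using $\mu_{2r_2}\ge 3r_2^2$ on $B_{r_2}$, and use $r_1^{\kappa}r_3^{1-\kappa}=2r_2$ to reduce the leftover factor to $(4/3)^{\alpha-1}\le e^{C\alpha}$. Your observation that $\lambda(2r_2)^4\ge\lambda r_2^4$ makes the lower bound an inequality is slightly more careful than the paper, which writes ``$=$'' there, and the hedging in your last paragraph about a fixed-scale $h$ is unnecessary.
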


\begin{proof}
	On one hand, since \(\mu_{r}(x)\le r^{2}\) for every \(x\in B_{r}\), we have
	\begin{align}
		H(r)
		&=
		\int_{B_{r}}\bigl(|u|^2+\lambda r^{4}|w|^2\bigr)\mu_{r}^{\alpha-1}\,dx
		\le
		r^{2\alpha-2}h(r).
		\label{eq:prop1-H-upper}
	\end{align}
	On the other hand, for every \(x\in B_{r_{2}}\), it is true that
	\[
	\mu_{2r_{2}}(x)=(2r_{2})^{2}-|x|^{2}\ge 4r_{2}^{2}-r_{2}^{2}=3r_{2}^{2}.
	\]
	Therefore
	\begin{align}
		H(2r_{2})
		&=
		\int_{B_{2r_{2}}}\bigl(|u|^2+\lambda (2r_{2})^{4}|w|^2\bigr)\mu_{2r_{2}}^{\alpha-1}\,dx
		\notag\\
		&\ge
		(3r_{2}^{2})^{\alpha-1}
		\int_{B_{r_{2}}}\bigl(|u|^2+\lambda (2r_{2})^{4}|w|^2\bigr)\,dx
		\notag\\
		&=
		(3r_{2}^{2})^{\alpha-1}h(r_{2}).
		\label{eq:prop1-H-lower}
	\end{align}

	Applying Lemma~\ref{lem:weighted-three-ball-H-bilap}, and then using \eqref{eq:prop1-H-upper} for \(r=r_{1}\) and \(r=r_{3}\), we obtain
	\begin{align}
		h(r_{2})
		&\le
		(3r_{2}^{2})^{1-\alpha}
		\exp\!\left(
		C\left[
		\frac{\Xi(r_{1},r_{2},r_{3})}{\alpha^{2}\lambda}
		+
		\frac{\lambda M^{2}R^{8}}{\alpha^{2}}
		+
		\frac{\Lambda(r_{1},r_{2},r_{3})}{\alpha\sqrt{\lambda}}
		+
		\frac{MR^{4}\sqrt{\lambda}}{\alpha}
		\right]
		\right)
		\notag\\
		&\quad\times
		r_{1}^{\kappa(2\alpha-2)}
		r_{3}^{(1-\kappa)(2\alpha-2)}
		h^{\kappa}(r_{1})h^{1-\kappa}(r_{3}).
		\label{eq:prop1-pre-final}
	\end{align}
	Note from \eqref{eq:lemma6-a0-b0} that
	\[
	\kappa\log r_{1}+(1-\kappa)\log r_{3}=\log(2r_{2}),
	\]
	and therefore
	\begin{equation}\label{eq:prop1-radius-identity}
		r_{1}^{\kappa(2\alpha-2)}r_{3}^{(1-\kappa)(2\alpha-2)}=(2r_{2})^{2\alpha-2}.
	\end{equation}
	Substituting \eqref{eq:prop1-radius-identity} into \eqref{eq:prop1-pre-final}, we have
	\begin{align}
		h(r_{2})
		&\le
		e^{C\alpha}
		\exp\!\left(
		C\left[
		\frac{\Xi(r_{1},r_{2},r_{3})}{\alpha^{2}\lambda}
		+
		\frac{\lambda M^{2}R^{8}}{\alpha^{2}}
		+
		\frac{\Lambda(r_{1},r_{2},r_{3})}{\alpha\sqrt{\lambda}}
		+
		\frac{MR^{4}\sqrt{\lambda}}{\alpha}
		\right]
		\right)
		h(r_{1})^{\kappa}h(r_{3})^{1-\kappa}.
		\label{eq:prop1-radius-factor}
	\end{align}
    Then \eqref{eq:prop1-conclusion} follows.
\end{proof}

Furthermore, we optimize the three-ball inequality by choosing appropriate $\alpha$ and $\lambda$.
\begin{corollary}\label{cor:optimized-three-ball-bilap-v}
	Assume that \(MR^{4}\ge 1\).
	Then there exists a universal constant \(C>0\) such that
	\begin{equation}\label{eq:cor2-conclusion-v}
		h(r_{2})
		\le
		\exp\!\left(
		C\Bigl[
		(1+\Xi(r_{1},r_{2},r_{3})+\Lambda(r_{1},r_{2},r_{3}))(MR^{4})^{1/3}
		\Bigr]
		\right)
		h(r_{1})^{\kappa}h(r_{3})^{1-\kappa},
	\end{equation}
	where \(h\) is defined by \eqref{eq:prop1-h} and \(\kappa\) is given by \eqref{eq:lemma6-a0-b0}.
	
\end{corollary}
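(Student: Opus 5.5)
The plan is to start from Proposition~\ref{prop:unweighted-three-ball-bilap} and choose the two free parameters $\alpha$ and $\lambda$ so that every term in the exponent
\[
\alpha+\frac{\Xi}{\alpha^{2}\lambda}+\frac{\lambda M^{2}R^{8}}{\alpha^{2}}+\frac{\Lambda}{\alpha\sqrt{\lambda}}+\frac{MR^{4}\sqrt{\lambda}}{\alpha}
\]
is at most a constant multiple of $(1+\Xi+\Lambda)(MR^{4})^{1/3}$. Write $K:=MR^{4}\ge 1$. The natural balance is between the terms $\alpha$, $\frac{1}{\alpha\sqrt\lambda}$ and $\frac{K\sqrt\lambda}{\alpha}$. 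Setting $\sqrt\lambda=K^{-1/2}$, that is $\lambda=K^{-1}$, makes the two $\lambda$-dependent linear terms equal, giving $\frac{\Lambda K^{1/2}}{\alpha}$ and $\frac{K^{1/2}}{\alpha}$. Balancing these with $\alpha$ then suggests $\alpha\sim K^{1/4}$, which only yields $K^{1/4}$ growth for the linear terms. It also leaves the quadratic terms $\frac{\Xi K}{\alpha^{2}}$ and $\frac{K}{\alpha^{2}}$, and these are then of size $K^{1/2}$, which is too large.

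So instead I would pick $\alpha=K^{1/3}$ and $\lambda=K^{-2/3}$ (both are admissible: $\alpha\ge2$ after enlarging $C$, or after taking $\alpha=\max(2,K^{1/3})$, with the small-$K$ case absorbed in the constant). The terms then become
\[
\alpha=K^{1/3},\qquad
\frac{\Xi}{\alpha^{2}\lambda}=\Xi\,K^{-2/3}K^{2/3}=\Xi,\qquad
\frac{\lambda K^{2}}{\alpha^{2}}=K^{-2/3}K^{2}K^{-2/3}=K^{2/3},
\]
which is too big, so this choice needs adjustment. The cleaner procedure is to first optimize the two terms pairing $\lambda$ with $\lambda^{-1}$. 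For the quadratic pair, $\frac{\Xi}{\alpha^2\lambda}+\frac{\lambda K^2}{\alpha^2}$, the choice $\lambda=K^{-1}$ gives $\frac{(\Xi+1)K}{\alpha^{2}}$. For the linear pair, the same $\lambda=K^{-1}$ gives $\frac{(\Lambda+1)K^{1/2}}{\alpha}$. So with $\lambda=K^{-1}$ the exponent is
\[
\alpha+\frac{(1+\Xi)K}{\alpha^{2}}+\frac{(1+\Lambda)K^{1/2}}{\alpha}.
\]
Now choose $\alpha=K^{1/3}$ (or $\max\{2,K^{1/3}\}$). This gives
\[
K^{1/3}+(1+\Xi)K^{1/3}+(1+\Lambda)K^{1/6}\le C(1+\Xi+\Lambda)K^{1/3},
\]
using $K\ge1$. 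This is exactly the claimed bound \eqref{eq:cor2-conclusion-v}.

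The key steps, in order, are: fix $\lambda=(MR^4)^{-1}$, fix $\alpha=\max\{2,(MR^{4})^{1/3}\}$, substitute into \eqref{eq:prop1-conclusion}, and bound each of the five terms using $MR^{4}\ge1$. There is one point to check carefully, and it is the only real obstacle: the function $h$ in \eqref{eq:prop1-h} itself depends on $\lambda$. The corollary is stated for $h$ with this fixed choice of $\lambda$, so it is consistent, but I would remark that $h$ is understood with $\lambda=(MR^{4})^{-1}$. I would also check that $\Xi\ge0$ does not matter, since only an upper bound is used, with $|\Xi|$ replacing $\Xi$ if necessary. Finally, the condition $\alpha\ge2$ costs only a universal constant, because when $K^{1/3}<2$ all terms are bounded by $C(1+\Xi+\Lambda)$.
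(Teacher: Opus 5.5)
Your final argument is exactly the paper's proof, and it is correct: fix $\lambda=(MR^{4})^{-1}$ and $\alpha\approx(MR^{4})^{1/3}$ (the paper uses $\alpha=\lfloor (MR^{4})^{1/3}\rfloor+2$), substitute into \eqref{eq:prop1-conclusion}, and bound the five terms using $MR^{4}\ge 1$; the exploratory false starts should simply be deleted, and the paper's extra case discussion is not needed. Your remark that $h$ is understood with this fixed $\lambda$ is a fair point the paper leaves implicit, but the worry about the sign of $\Xi$ is moot: with the definition as used in \eqref{eq:lemma6-Xi-term} and $x_1=\log r_1$, $x_2=\log(2r_2)$, $x_3=\log r_3$, one computes $\Xi=\frac{a_0(x_1^2-x_2^2)+b_0(x_3^2-x_2^2)}{a_0+b_0}=a_0b_0\ge 0$.
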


\begin{proof}
Fix $r_1, r_2,$ and $ r_3$, we want to optimize the exponent in \eqref{eq:prop1-conclusion}.  We may assume 
\begin{align}
\frac{\Xi(r_{1},r_{2},r_{3})}{\alpha^2 \lambda}\geq \frac{\Lambda(r_{1},r_{2},r_{3})}{\alpha  \sqrt{\lambda}}
\label{assum-1}
\end{align}
or
\begin{align}
\frac{\lambda M^{2}R^{8}}{\alpha^{2}}\geq\frac{\sqrt{\lambda}MR^{4}}{\alpha}.
\label{assum-2}
\end{align}
Otherwise, \begin{align} 
\frac{1}{\alpha \sqrt{\lambda}}\leq C, \quad \mbox{and} \quad \frac{\sqrt{\lambda}MR^{4}}{\alpha}\leq C. \label{new-o}\end{align} 

We first 
 assume the case that both (\ref{assum-1}) and (\ref{assum-2}) holds. To optimize the exponent in \eqref{eq:prop1-conclusion}, we let
\begin{align}
  \alpha =\frac{\lambda M^{2}R^{8}}{\alpha^{2}}=\frac{C}{\alpha^2 \lambda}.
\end{align}
Thus, we choose
\begin{equation}\label{eq:cor2-choice-v}
		\lambda:=(MR^{4})^{-1},
		\qquad
		\alpha:=\lfloor (MR^{4})^{1/3}\rfloor+2.
	\end{equation}
	Substituting \eqref{eq:cor2-choice-v} into the exponent in \eqref{eq:prop1-conclusion}, we obtain
	\begin{align}
		\alpha
		&+
		\frac{\Xi(r_{1},r_{2},r_{3})}{\alpha^{2}\lambda}
		+
		\frac{\lambda M^{2}R^{8}}{\alpha^{2}}
		+
	\frac{\Lambda(r_{1},r_{2},r_{3})}{\alpha\sqrt{\lambda}}
		+
		\frac{MR^{4}\sqrt{\lambda}}{\alpha} \notag \\
		&\le
		C\Bigl[
(1+\Xi(r_{1},r_{2},r_{3})+\Lambda(r_{1},r_{2},r_{3}))(MR^{4})^{1/3}
		\Bigr].
		\label{eq:cor2-exponent-final}
	\end{align}
    If we assume either (\ref{assum-1}) or  (\ref{assum-2}) holds, by the same arguments as discussed before, we can show the exactly upper bound in (\ref{eq:cor2-exponent-final}).

At last, we consider the case (\ref{new-o}) holds.
    Since $\alpha$ is assumed to large, then ${C\alpha}$ will play the dominate role in the exponent in \eqref{eq:prop1-conclusion}. Solving (\ref{new-o}) will give that $\alpha\geq C(M R^2)^{1/2}$. Thus, we get
    \begin{align}
		\alpha+
		\frac{\Xi(r_{1},r_{2},r_{3})}{\alpha^{2}\lambda}
		+
		\frac{\lambda M^{2}R^{8}}{\alpha^{2}}
		+
	\frac{\Lambda(r_{1},r_{2},r_{3})}{\alpha\sqrt{\lambda}}
		+
		\frac{MR^{4}\sqrt{\lambda}}{\alpha} 
		\geq C(M R^2)^{1/2}.
        \end{align}

    Therefore, the case  that (\ref{assum-1}) or  (\ref{assum-2}) holds provides a better bound.
	From the upper bound in \eqref{eq:cor2-exponent-final},  \eqref{eq:prop1-conclusion} gives  \eqref{eq:cor2-conclusion-v}. 
	
\end{proof}

We are ready to give the $L^\infty$ three-ball inequality for $u$.
	
	\begin{proposition}
	    \label{thm:Linfty-three-ball-1}
		Let
		$0<r_{1}<2r_1<r_{2}<4r_{2}<r_{3}<2r_{3}<R.$ Denote
		\begin{equation}\label{eq:def-alpha3-beta3-v}
			\alpha_{3}:=\log\frac{r_{3}}{4r_{2}},
			\qquad
			\beta_{3}:=\log\frac{4r_{2}}{r_{1}},
			\qquad
			\theta:=\frac{\alpha_{3}}{\alpha_{3}+\beta_{3}}.
		\end{equation}
		Also define
		\begin{equation}\label{eq:def-Xi3-Lambda3}
			\Xi_{3}:=\Xi(r_{1},2r_{2},r_{3}),
			\qquad
			\Lambda_{3}:=\Lambda(r_{1},2r_{2},r_{3}),
		\end{equation}
		where
		$\Xi(\cdot,\cdot,\cdot),\ \Lambda(\cdot,\cdot,\cdot)$
		are exactly the quantities introduced in Lemma \ref{lem:weighted-three-ball-H-bilap}.
		there exists a constant \(C=C(n,R)>0\) such that
		\begin{align}
			\|u\|_{L^\infty(B_{r_{2}})}
			\le{}&	\left(  \frac{r_{1}^{\theta}  r_{3}^{1-\theta} }{r_{2} }\right)^{n/2}
			\exp\!\left(
			C\bigl[(1+\Xi_{3}+\Lambda_{3})(MR^{4})^{1/3}\bigr]
			\right)
	\|u\|_{L^\infty(B_{2r_1})}^{\theta}
			\|u\|_{L^\infty(B_{2r_3})}^{1-\theta}.
			\label{eq:Linfty-three-ball-v}
		\end{align}
\end{proposition}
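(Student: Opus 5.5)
I plan to apply Corollary \ref{cor:optimized-three-ball-bilap-v} with the triple $(r_1,2r_2,r_3)$ in place of $(r_1,r_2,r_3)$. This is admissible because $r_1<2r_2<4r_2<r_3<R$. The corollary then yields
\[
h(2r_2)\le \exp\!\Bigl(C(1+\Xi_3+\Lambda_3)(MR^4)^{1/3}\Bigr)\,h(r_1)^{\theta}h(r_3)^{1-\theta}.
\]
Here $\kappa$ computed for the triple $(r_1,2r_2,r_3)$ equals $\log\frac{r_3}{4r_2}/\log\frac{r_3}{r_1}=\theta$, as defined in \eqref{eq:def-alpha3-beta3-v}. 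If $MR^4<1$, the same argument from Proposition \ref{prop:unweighted-three-ball-bilap}, run with $\alpha,\lambda$ of order one, gives the same form, so I may assume $MR^4\ge1$.

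The upper side of the estimate is straightforward. I drop the $w$-term in $h$, so that $h(r)\le C_n r^n\|u\|^2_{L^\infty(B_r)}$, and I only need to control $|u|^2$. I will bound the $\lambda r^4|w|^2$ contributions in $h(r_1)$ and $h(r_3)$ by interior elliptic estimates for $\Delta^2u=-Vu$ on the doubled balls:
\[
\|\Delta u\|_{L^\infty(B_r)}\le C\bigl(r^{-2}+Mr^2\bigr)\|u\|_{L^\infty(B_{2r})}.
\]
To get this, I rescale to the unit ball, so that the rescaled equation has potential $Mr^4$. Then I apply $W^{4,p}$ estimates together with Sobolev embedding, absorbing the potential term by a bootstrap or iteration when $Mr^4$ is large. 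With $\lambda=(MR^4)^{-1}$ as in the corollary, this gives
\[
\lambda r^4|w|^2\le C(MR^4)^{-1}\bigl(1+Mr^4\bigr)^2\|u\|^2_{L^\infty(B_{2r})}\le C\,MR^4\,\|u\|^2_{L^\infty(B_{2r})},
\]
and $MR^4\le e^{C(MR^4)^{1/3}}$ is absorbed into the exponential. Consequently
\[
h(r_1)\le e^{C(MR^4)^{1/3}}r_1^n\|u\|^2_{L^\infty(B_{2r_1})},
\]
and likewise for $r_3$ with $B_{2r_3}\subset B_R$.

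For the lower side I need to bound $\|u\|_{L^\infty(B_{r_2})}$ by $h(2r_2)$. Since $|u|^2\le$ the integrand of $h$, the $L^2\to L^\infty$ local estimate for solutions of $\Delta^2u+Vu=0$ gives
\[
\|u\|^2_{L^\infty(B_{r_2})}\le C\bigl(1+Mr_2^4\bigr)^{c}\,r_2^{-n}\int_{B_{2r_2}}|u|^2\le e^{C(MR^4)^{1/3}}r_2^{-n}h(2r_2),
\]
again obtained by rescaling and elliptic regularity. Combining the three displays and taking square roots produces the factor $\bigl(r_1^{\theta}r_3^{1-\theta}/r_2\bigr)^{n/2}$ together with the stated exponential, with constants changed.

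The main obstacle is that the elliptic estimates must have polynomial dependence on $Mr^4$, so that they can be absorbed into $e^{C(MR^4)^{1/3}}$. I would obtain this by rescaling to unit size and using iteration on nested balls, which yields constants of the form $(1+Mr^4)^{c}$. Any polynomial factor of this kind is harmless against the exponential.
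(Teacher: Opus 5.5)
Your proposal is correct and follows essentially the same route as the paper. You use an $L^\infty$--$L^2$ estimate on $B_{r_2}\subset B_{2r_2}$, then Corollary~\ref{cor:optimized-three-ball-bilap-v} applied to the triple $(r_1,2r_2,r_3)$, whose $\kappa$ is exactly $\theta$. Interior estimates on the doubled balls $B_{2r_1}$ and $B_{2r_3}$ then control $h(r_1)$ and $h(r_3)$, and all polynomial factors in $MR^4$ are absorbed into the exponential.

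The only difference is in the interior estimate. You bound $\Delta u$ pointwise via rescaled $W^{4,p}$ estimates and Sobolev embedding, while the paper uses the weighted $W^{2,2}$ estimate of Lemma~\ref{cor:local-W22-Linfty-correct}. Your step needs no bootstrap, since $\|Vu\|_{L^p}\le C M\|u\|_{L^\infty}$ holds directly.
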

	
	\begin{proof}
		Applying the local \(L^\infty\)-\(L^2\) estimate, see Lemma 3 in \cite{Z18}, we obtain
		\begin{equation}\label{linfty3ball}
			\|u\|_{L^\infty(B_{r_{2}})}
			\le
			C(n)\,(1+M(2r_{2})^{4})^{C(n)}(2r_{2}-r_{2})^{-n/2}\,
			\|u\|_{L^{2}(B_{2r_{2}})}.
		\end{equation}
		It follows that
		\begin{equation}\label{eq:middle-by-h}
			\|u\|_{L^\infty(B_{r_{2}})}
			\le
			C(1+Mr_{2}^{4})^{C}r_{2}^{-n/2}\,h(2r_{2})^{1/2}.
		\end{equation}
		
		Since
		$0<r_{1}<r_{2}<4r_{2}<r_{3}<R,$
		from Corollary \ref{cor:optimized-three-ball-bilap-v}, we have 
		\begin{equation}\label{eq:h-three-ball}
			h(2r_{2})
			\le
			\exp\!\left(
			C\bigl[(1+\Xi_{3}+\Lambda_{3})(MR^{4})^{1/3}\bigr]
			\right)
			h(r_{1})^{\theta}h(r_{3})^{1-\theta},
		\end{equation}
		where \(\theta\) is given by \eqref{eq:def-alpha3-beta3-v}. Taking square roots in
		\eqref{eq:h-three-ball}, and substituting the result into \eqref{eq:middle-by-h}, we get
		\begin{align}
			\|u\|_{L^\infty(B_{r_{2}})}
			\le{}&
			C(1+Mr_{2}^{4})^{C}r_{2}^{-n/2}
			\exp\!\left(
			C\bigl[(1+\Xi_{3}+\Lambda_{3})(MR^{4})^{1/3}+1\bigr]
			\right)
			\notag\\
			&\times
			h(r_{1})^{\theta/2}h(r_{3})^{(1-\theta)/2}.
			\label{eq:middle-before-endpoints-bounded}
		\end{align}
		
		Let \(r<\rho<R\). From the definition of \(h(r)\),
		it holds that
		\begin{equation}\label{eq:h-general-1}
			h(r)^{1/2}
			\le
			\|u\|_{L^{2}(B_{r})}
			+
			r^{2}\|u\|_{W^{2,2}(B_{r})}.
		\end{equation}
		Also, by the local \(W^{2,2}\)-\(L^\infty\) estimate  from Lemma \ref{cor:local-W22-Linfty-correct} in the Appendix, it holds that
		\begin{equation}\label{eq:W22-by-Linfty}
			\|u\|_{W^{2,2}(B_{r})}
			\le
			C(n,R)\,(1+M\rho^{4})(\rho-r)^{-2}\rho^{n/2}
			\|u\|_{L^\infty(B_{\rho})}.
		\end{equation}
		Substituting \eqref{eq:W22-by-Linfty} into
		\eqref{eq:h-general-1} gives that
		\begin{equation}\label{eq:h-general-2}
			h(r)^{1/2}
			\le
			C(n,R)\,
			\Bigl[
			r^{n/2}
			+r^{2}(1+M\rho^{4})(\rho-r)^{-2}\rho^{n/2}
			\Bigr]
			\|u\|_{L^\infty(B_{\rho})}.
		\end{equation}

		Applying \eqref{eq:h-general-2} first with \((r,\rho)=(r_{1}, 2r_1)\), we get
		\begin{equation}\label{eq:h-r1-bounded}
			h(r_{1})^{1/2}
			\le
			C(n,R) M r_1^{n/2}\,
			\|u\|_{L^\infty(B_{2r_1})}.
		\end{equation}
		Similarly, applying \eqref{eq:h-general-2} with \((r,\rho)=(r_{3}, 2r_3)\) yields that
		\begin{equation}\label{eq:h-r3-bounded}
			h(r_{3})^{1/2}
			\le
			C(n,R)M r_3^{n/2}\,
			\|u\|_{L^\infty(B_{2r_{3}})}.
		\end{equation}
		
The combination of \eqref{eq:h-r1-bounded}, \eqref{eq:h-r3-bounded} 
		and \eqref{eq:middle-before-endpoints-bounded} gives
		\begin{align}
	\|u\|_{L^\infty(B_{r_{2}})}
			\le{}&
			\left(  \frac{r_{1}^{\theta}  r_{3}^{1-\theta} }{r_{2} }\right)^{n/2}
			\exp\!\left(
			C\bigl[(1+\Xi_{3}+\Lambda_{3})(MR^{4})^{1/3}\bigr]
			\right)
	\|u\|_{L^\infty(B_{2r_1})}^{\theta}
	\|u\|_{L^\infty(B_{2r_3})}^{1-\theta}.
		\end{align}
		Thus, we arrive at \eqref{eq:Linfty-three-ball-v}.
	\end{proof}


We show the proof of Theorem \ref{thm:Linfty-three-ball} from the last Proposition.
      \begin{proof}[Proof of Theorem \ref{thm:Linfty-three-ball}]
The estimates (\ref{kaokao-1}) and $\theta_0$   follows from \eqref{eq:Linfty-three-ball-v} and  \eqref{eq:def-alpha3-beta3-v} in the last  proposition by setting $r_1=r/2$, $r_2=3r$ and $r_3=15r$.
        \end{proof}

    \section{A variant frequency function and vanishing order for bounded potentials}

In this section, we study a variant frequency function from the one in the last section, and apply it to show the vanishing order of solutions in Theorem \ref{thm-2}. We still use the same notations as the last section to define frequency functions. Let $w=\Delta u$, $\alpha\ge 2$ and $\lambda>0.$
	
	For \(0<r<R\), we define
	\begin{equation}\label{eq:H-definition-v}
H(r):=\int_{B_{r}}\bigl(|u|^2+\lambda r^{3}|w|^2\bigr)\mu_{r}^{\alpha-1}\,dx.
	\end{equation}
    Note that $H(r)=\int_{B_{r}}\bigl(|u|^2+\lambda r^{4}|w|^2\bigr)\mu_{r}^{\alpha-1}\,dx$ in the last section. Similarly, denote
	\begin{equation}\label{eq:D-definition-v}
		D(r):=\int_{B_{r}}\bigl(|\nabla u|^{2}+\lambda r^{3}|\nabla w|^{2}\bigr)\mu_{r}^{\alpha}\,dx
		+
		3\lambda\alpha r^{3}\int_{B_{r}}|w|^2\mu_{r}^{\alpha-1}\,dx,
	\end{equation}
	\begin{equation}\label{eq:L-definition-v}
		L(r):=\int_{B_{r}}\bigl(u\Delta u+\lambda r^{3}w\Delta w\bigr)\mu_{r}^{\alpha}\,dx,
	\end{equation}
	and
	\begin{equation}\label{eq:N-definition-v}
		N(r):=\frac{D(r)}{H(r)}.
	\end{equation}

To study the monotonicity  of (\ref{eq:N-definition-v}), we need to study the derivatives of $H(r)$ and $D(r)$. Since some of calculations are similar to those in the last section, we will not repeat all the calculations and just show the differences.

Applying the arguments in Lemma \ref{lem:H-prime} to $H(r)$ in (\ref{eq:H-definition-v}), we obtain that
		\begin{equation}\label{eq:H-prime-v}
			H'(r)
			=
			\frac{2\alpha+n-2}{r}H(r)+\frac{D(r)+L(r)}{\alpha r}.
		\end{equation}

Applying the arguments in Lemma \ref{lem:D-prime} to $D(r)$ in (\ref{eq:D-definition-v}), we have
\begin{align}
			D'(r)
			&=
			\frac{2\alpha+n-2}{r}D(r)-
			\frac{1}{4\alpha r}
			\int_{B_{r}}
			\bigl(|w|^2+\lambda r^{3}(\Delta w)^{2}\bigr)\mu_{r}^{\alpha+1}\,dx \notag\\
			&\quad
			+
			\frac{4\alpha}{r}
			\int_{B_{r}}
			\left[
			\left(x\cdot \nabla u-\frac{w\mu_{r}}{4\alpha}\right)^{2}
			+
			\lambda r^{3}
			\left(x\cdot \nabla w+\frac32 w-\frac{\Delta w\,\mu_{r}}{4\alpha}\right)^{2}
			\right]
			\mu_{r}^{\alpha-1}\,dx.
			\label{eq:D-prime-v}
		\end{align}

Thanks to (\ref{eq:H-prime-v}) and (\ref{eq:D-prime-v}), we can show the monotone property of $N(r)$.
\begin{lemma}\label{lem:N-prime-v}
		Define
		\begin{equation}\label{eq:Ntilde-definition}
			\widetilde N(r)
			:=
			N(r)
			+
			\frac{r}{4\alpha\lambda}
			+
			\frac{\lambda M^{2}}{28\alpha}r^{7}.
		\end{equation}
		Then \(\widetilde N(r)\) is nondecreasing on \((0,R)\).
\end{lemma}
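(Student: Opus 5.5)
The plan is to repeat the argument of Lemma \ref{lem:N-prime}, using the identities \eqref{eq:H-prime-v} and \eqref{eq:D-prime-v}, and to redo only the final estimate, since the power $r^{3}$ now changes the error terms. First I will introduce the analogue of $J(r)$:
\[
J(r):=2\alpha\int_{B_r}\Big[u\Big(x\cdot\nabla u-\frac{w\mu_r}{4\alpha}\Big)+\lambda r^{3}w\Big(x\cdot\nabla w+\frac32 w-\frac{\Delta w\,\mu_r}{4\alpha}\Big)\Big]\mu_r^{\alpha-1}\,dx .
\]
I will check by the same integration by parts as in \eqref{eq:u-J-identity}--\eqref{eq:w-J-identity} that $J(r)=D(r)+\tfrac12 L(r)$. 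The coefficient $\tfrac32$ is chosen so that $2\alpha\cdot\tfrac32\lambda r^3\int|w|^2\mu_r^{\alpha-1}$ reproduces the term $3\lambda\alpha r^{3}H_w$ in \eqref{eq:D-definition-v}.

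Substituting into $H^2N'=D'H-H'D$ then gives the exact analogue of \eqref{eq:H2Nprime-expanded}. It consists of a Cauchy--Schwarz bracket, which is nonnegative, plus $\frac{1}{4\alpha r}L^2\ge0$, minus a remainder. This yields
\[
N'(r)\ge-\frac{1}{4\alpha rH(r)}\int_{B_r}\big(|w|^2+\lambda r^{3}(\Delta w)^2\big)\mu_r^{\alpha+1}\,dx .
\]

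Next I estimate the two remainder terms against $H(r)$ in \eqref{eq:H-definition-v}. Since $\mu_r\le r^2$,
\[
\int_{B_r}|w|^2\mu_r^{\alpha+1}\le r^{4}\int_{B_r}|w|^2\mu_r^{\alpha-1}=\frac{r}{\lambda}\,\lambda r^{3}\int_{B_r}|w|^2\mu_r^{\alpha-1}\le\frac{r}{\lambda}H(r).
\]
This contributes $-\frac{1}{4\alpha\lambda}$ to $N'$; in the previous section it was $-\frac{1}{4\alpha\lambda r}$, which is why the $\log r$ correction is replaced by $\frac{r}{4\alpha\lambda}$. For the second term I use $\Delta w=-Vu$ and $|V|\le M$:
\[
\lambda r^{3}\int_{B_r}|V|^2|u|^2\mu_r^{\alpha+1}\le\lambda M^2r^{7}H(r).
\]
This contributes $-\frac{\lambda M^2}{4\alpha}r^{6}$ to $N'$.

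Altogether
\[
N'(r)\ge-\frac{1}{4\alpha\lambda}-\frac{\lambda M^2}{4\alpha}r^{6},
\]
and differentiating \eqref{eq:Ntilde-definition} gives $\widetilde N'(r)=N'(r)+\frac{1}{4\alpha\lambda}+\frac{7\lambda M^2}{28\alpha}r^6\ge0$, since $\frac{7}{28}=\frac14$. So $\widetilde N$ is nondecreasing on $(0,R)$.

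The only genuinely delicate step is confirming \eqref{eq:D-prime-v} and the $J=D+\frac12L$ identity with the coefficient $\frac32$. The product rule now produces $3\lambda r^{2}$ terms instead of $4\lambda r^{3}$ terms, and these must complete the square $\big(x\cdot\nabla w+\tfrac32 w-\frac{\Delta w\,\mu_r}{4\alpha}\big)^2$ exactly. I would verify this by redoing \eqref{eq:w-part-before-square}--\eqref{eq:w-part-after-square} with $4$ replaced by $3$, and $4w$, $4|w|^2$ replaced by $3w$, $\tfrac94|w|^2$. Since the paper takes \eqref{eq:D-prime-v} as given, I will assume it and concentrate on the estimate above.
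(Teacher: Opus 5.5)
Your proposal is correct and follows the paper's proof step for step. It uses the same $J(r)$ with the coefficient $\tfrac32$ and the identity $J=D+\tfrac12L$, the same Cauchy--Schwarz reduction to $N'(r)\ge-\frac{1}{4\alpha rH(r)}\int_{B_r}\bigl(|w|^2+\lambda r^{3}(\Delta w)^2\bigr)\mu_r^{\alpha+1}\,dx$, and the same two bounds $\frac{r}{\lambda}H(r)$ and $\lambda M^2r^7H(r)$. Your sketched verification of \eqref{eq:D-prime-v}, replacing $4w$ and $4|w|^2$ by $3w$ and $\tfrac94|w|^2$, is also correct; the paper leaves this check to the reader as well.
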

	\begin{proof}
		Define
		\begin{align}
			J(r)
			&:=
			2\alpha
			\int_{B_{r}}
			\left[
			u\left(x\cdot \nabla u-\frac{w\mu_{r}}{4\alpha}\right)
			+
			\lambda r^{3}w\left(x\cdot \nabla w+\frac32 w-\frac{\Delta w\,\mu_{r}}{4\alpha}\right)
			\right]
			\mu_{r}^{\alpha-1}\,dx.
			\label{eq:J-definition}
		\end{align}
		We first compute \(J(r)\). By integration by parts arguments, we have
		\begin{equation}\label{eq:u-J-identity-v}
			2\alpha
			\int_{B_{r}}
			u\left(x\cdot \nabla u-\frac{w\mu_{r}}{4\alpha}\right)\mu_{r}^{\alpha-1}\,dx
			=
			\int_{B_{r}}|\nabla u|^{2}\mu_{r}^{\alpha}\,dx
			+
			\frac12\int_{B_{r}}u\Delta u\,\mu_{r}^{\alpha}\,dx.
		\end{equation}
		Similarly,
		\begin{align}\label{eq:w-J-identity-v}
			2\alpha\lambda r^{3}
			\int_{B_{r}}
			w\left(x\cdot \nabla w+\frac32 w-\frac{\Delta w\,\mu_{r}}{4\alpha}\right)\mu_{r}^{\alpha-1}\,dx
			=
			\lambda r^{3}\int_{B_{r}}|\nabla w|^{2}\mu_{r}^{\alpha}\,dx
			&+
			3\lambda\alpha r^{3}\int_{B_{r}}|w|^2\mu_{r}^{\alpha-1}\,dx \notag \\
			&+
			\frac12\lambda r^{3}\int_{B_{r}}w\Delta w\,\mu_{r}^{\alpha}\,dx.
		\end{align}
		Adding \eqref{eq:u-J-identity-v} and \eqref{eq:w-J-identity-v}, and recalling \eqref{eq:D-definition-v} and \eqref{eq:L-definition-v}, we get
		\begin{equation*}\label{eq:J-equals-D-plus-half-L-v}
			J(r)=D(r)+\frac12L(r).
		\end{equation*}
		Hence
		\begin{equation}\label{eq:D-and-D-plus-L-v}
			D(r)=J(r)-\frac12L(r),
			\qquad
			D(r)+L(r)=J(r)+\frac12L(r).
		\end{equation}
		
		Now, since \(N(r)=D(r)/H(r)\), then
		\begin{equation}\label{eq:H2Nprime-v}
			H(r)^{2}N'(r)=D'(r)H(r)-H'(r)D(r).
		\end{equation}
		Substituting \eqref{eq:H-prime-v}, \eqref{eq:D-prime-v}, and \eqref{eq:D-and-D-plus-L-v} into \eqref{eq:H2Nprime-v}, we obtain
		\begin{align}
			H(r)^{2}N'(r)
			&=
			\frac{4\alpha}{r}
			\Biggl[
			H(r)
			\int_{B_{r}}
			\left(
			\left(x\cdot \nabla u-\frac{w\mu_{r}}{4\alpha}\right)^{2}
			+
			\lambda r^{3}
			\left(x\cdot \nabla w+\frac32 w-\frac{\Delta w\,\mu_{r}}{4\alpha}\right)^{2}
			\right)
			\mu_{r}^{\alpha-1}\,dx
			\notag\\
			&\quad
			-
			\left(
			\int_{B_{r}}
			\left[
			u\left(x\cdot \nabla u-\frac{w\mu_{r}}{4\alpha}\right)
			+
			\lambda r^{3}w\left(x\cdot \nabla w+\frac32 w-\frac{\Delta w\,\mu_{r}}{4\alpha}\right)
			\right]
			\mu_{r}^{\alpha-1}\,dx
			\right)^{2}
			\Biggr]
			\notag\\
			&\quad
			+
			\frac{1}{4\alpha r}
			\left(
			L(r)^{2}
			-
			H(r)\int_{B_{r}}\bigl(|w|^2+\lambda r^{3}(\Delta w)^{2}\bigr)\mu_{r}^{\alpha+1}\,dx
			\right).
			\label{eq:H2Nprime-expanded-v}
		\end{align}
		The Cauchy--Schwarz inequality indicates that the large bracket in \eqref{eq:H2Nprime-expanded-v} is nonnegative. Therefore
		\begin{equation}\label{eq:Nprime-preliminary-v}
			N'(r)
			\ge
			-\frac{1}{4\alpha r\,H(r)}
			\int_{B_{r}}\bigl(|w|^2+\lambda r^{3}(\Delta w)^{2}\bigr)\mu_{r}^{\alpha+1}\,dx.
		\end{equation}
		
		We now estimate the two terms on the right-hand side of \eqref{eq:Nprime-preliminary-v}. Since \(\mu_{r}(x)\le r^{2}\), 
        we have
		\begin{equation}\label{eq:w2-bound-v}
	\int_{B_{r}}|w|^2\mu_{r}^{\alpha+1}\,dx
			\le
			r^{4}\int_{B_{r}}|w|^2\mu_{r}^{\alpha-1}\,dx.
		\end{equation}
		Recalling the definition \eqref{eq:H-definition-v} of \(H(r)\), we deduce from \eqref{eq:w2-bound-v} that
		\begin{equation}\label{eq:w2-bound-by-H-v}
	\int_{B_{r}}|w|^2\mu_{r}^{\alpha+1}\,dx
			\le
			\frac{r}{\lambda}H(r).
		\end{equation}
		
		Next, since \(w=\Delta u\) and \(u\) 
	$\Delta w=-Vu,$
then
		\begin{align}
			\lambda r^{3}\int_{B_{r}}(\Delta w)^{2}\mu_{r}^{\alpha+1}\,dx
			&=
			\lambda r^{3}\int_{B_{r}}V^{2}|u|^2\mu_{r}^{\alpha+1}\,dx \notag\\
			&\le
			\lambda M^{2}r^{7}\int_{B_{r}}|u|^2\mu_{r}^{\alpha-1}\,dx \notag\\
			&\le
			\lambda M^{2}r^{7}H(r).
			\label{eq:Delta-w-bound-v}
		\end{align}
		Substituting \eqref{eq:w2-bound-by-H-v} and \eqref{eq:Delta-w-bound-v} into \eqref{eq:Nprime-preliminary-v}, we obtain 
       \begin{equation}\label{eq:N-prime-lower-v}
			N'(r)
			\ge
			-\frac{1}{4\alpha\lambda}
			-
			\frac{\lambda M^{2}}{4\alpha}r^{6}
			\qquad\text{for all }0<r<R.
		\end{equation} 
      Hence  \(\widetilde N'(r)\geq 0\). The lemma is arrived
	\end{proof}

\begin{remark} We want to point out why we choose $H(r)=\int_{B_{r}}\bigl(|u|^2+\lambda r^{3}|w|^2\bigr)\mu_{r}^{\alpha-1}\,dx$ in this section instead of $H(r)=\int_{B_{r}}\bigl(|u|^2+\lambda r^{4}|w|^2\bigr)\mu_{r}^{\alpha-1}\,dx.$ We aim to get rid of the ``bad term" i.e. $\frac{1}{4\alpha\lambda}\log r$,
in (\ref{eq:Ntilde-definition-v}), which actually arises from (\ref{eq:w2-bound-by-H}).
Let us 
compare the differences of (\ref{eq:w2-bound-by-H-v}) and (\ref{eq:w2-bound-by-H}). The estimate (\ref{eq:w2-bound-by-H-v}) leads to the term $\frac{r}{4\alpha\lambda}$ in (\ref{eq:Ntilde-definition}) for the monotonicity. However,  the estimate (\ref{eq:w2-bound-by-H}) gives the term $\frac{1}{4\alpha\lambda}\log r$ 
in (\ref{eq:Ntilde-definition-v}) to show the monotonicity. The  $\log r$ term in (\ref{eq:Ntilde-definition-v}) is a troublesome term to estimate the vanishing order. While the term $\frac{r}{4\alpha\lambda}$ in (\ref{eq:Ntilde-definition}) is harmless. The fact that $\mu^{\alpha+1}\leq r^4 \mu^{\alpha-1}$ leads us to introduce $H(r)=\int_{B_{r}}\bigl(|u|^2+\lambda r^{3}|w|^2\bigr)\mu_{r}^{\alpha-1}\,dx$ to reach
	(\ref{eq:w2-bound-by-H-v}).		
		\end{remark}

Adapting the arguments in Proposition \ref{prop:unweighted-three-ball-bilap}, and Corollary \ref{cor:optimized-three-ball-bilap-v}, Proposition \ref{thm:Linfty-three-ball-1}
\begin{lemma}\label{thm:Linfty-three-ball-v} Let $0<r_{1}<r_{2}<4r_{2}<r_{3}<2r_{3}<R.$ Denote
	\begin{equation}\label{eq:def-alpha3-beta3}
		\alpha_{3}:=\log\frac{r_{3}}{4r_{2}},
		\qquad
		\beta_{3}:=\log\frac{4r_{2}}{r_{1}},
		\qquad
		\theta:=\frac{\alpha_{3}}{\alpha_{3}+\beta_{3}}.
	\end{equation}
	Let \(MR^{4}\ge1\).  Define
	\begin{equation}\label{eq:def-A-rho-1}
		\mathcal A_{M,R}(r,\rho)
		:=
		r^{n/2}+(MR^{3})^{1/2}r^{3/2}(\rho-r)^{-2}\rho^{n/2}.
	\end{equation}
    There exists \(C=C(n,R)>0\) such that
		\begin{align}\label{eq:Linfty-three-ball-f}
			\|u\|_{L^{\infty}(B_{r_{2}})}
			\le{}&
			r_{2}^{-n/2}
\exp\!\left(C\bigl((MR^{4})^{1/3}+1\bigr)\right)\notag\\
			&\times
			\mathcal A_{M,R}(r_{1},2r_{1})^{\theta}
			\mathcal A_{M,R}(r_{3}, 2r_3)^{1-\theta}
			\|u\|_{L^{\infty}(B_{2r_{1}})}^{\theta}
			\|u\|_{L^{\infty}(B_{2r_3})}^{1-\theta},
		\end{align}
		where \(\theta\) is given in \eqref{eq:def-alpha3-beta3}.
    
\end{lemma}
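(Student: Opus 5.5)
We follow the same three-step scheme as in Section 2, now with the variant frequency function \eqref{eq:N-definition-v}. \emph{Step 1: a weighted three-ball inequality for $H$.} By \eqref{eq:H-prime-v},
\[
\frac{H'(r)}{H(r)}=\frac{2\alpha+n-2}{r}+\frac{N(r)}{\alpha r}+\frac{L(r)}{\alpha rH(r)} .
\]
We bound $L/H$ as in Lemma \ref{lem:L-bound}, now using $r^3$ in place of $r^4$. This gives
\[
|L(r)|\le \Bigl(\frac{r^{1/2}}{2\sqrt\lambda}+\frac{Mr^{7/2}\sqrt\lambda}{2}\Bigr)H(r),
\]
since $\int|w|^2\mu_r^{\alpha+1}\le \frac{r}{\lambda}H$. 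We then replace $N$ by $\widetilde N$ from Lemma \ref{lem:N-prime-v}, which contributes the correction terms $-\frac{1}{4\alpha^2\lambda}-\frac{\lambda M^2}{28\alpha^2}r^6$. Next we integrate over $[2r_2',r_3]$ and $[r_1,2r_2']$ with $r_2'=2r_2$, and eliminate the constant $A$ exactly as in Lemma \ref{lem:weighted-three-ball-H-bilap}. All error terms are integrals of bounded functions of $r$ over $(0,R)$ or multiples of $\log$-ratios. Since there is no $\log r$ term, the exponent is bounded by
\[
C\Bigl[\frac{R}{\alpha^2\lambda}+\frac{\lambda M^2R^7}{\alpha^2}+\frac{R^{1/2}}{\alpha\sqrt\lambda}+\frac{MR^{7/2}\sqrt\lambda}{\alpha}\Bigr],
\]
with constants independent of $r_1,r_2,r_3$. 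The point is that $\int dr/r$ never appears multiplied by an $r$-independent coefficient other than $A$: the terms $r^{1/2}/r$ and $r^{5/2}$ integrate to $CR^{1/2}$ and $CR^{7/2}$.

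\emph{Step 2: remove the weight and optimize.} As in Proposition \ref{prop:unweighted-three-ball-bilap}, set
\[
h(r)=\int_{B_r}\bigl(|u|^2+\lambda r^3|w|^2\bigr)\,dx .
\]
The identities $H(r)\le r^{2\alpha-2}h(r)$ and $H(4r_2)\ge (12r_2^2)^{\alpha-1}h(2r_2)$, together with $\kappa\log r_1+(1-\kappa)\log r_3=\log(4r_2)$, give
\[
h(2r_2)\le e^{C\alpha+\text{(exponent above)}}\,h(r_1)^\theta h(r_3)^{1-\theta}.
\]
We then choose $\lambda=(MR^3)^{-1}$ (so that $\lambda M^2R^7=MR^4=R/\lambda$) and $\alpha\simeq (MR^4)^{1/3}+2$. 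With this choice every term is at most $C((MR^4)^{1/3}+1)$, since $R^{1/2}/\sqrt\lambda=(MR^4)^{1/2}$ divided by $\alpha$ is $\lesssim (MR^4)^{1/6}$, and similarly for the last term.

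\emph{Step 3: pass to $L^\infty$.} The left side is handled as in \eqref{eq:middle-by-h}: the $L^\infty$--$L^2$ estimate gives
\[
\|u\|_{L^\infty(B_{r_2})}\le C(1+Mr_2^4)^C r_2^{-n/2}h(2r_2)^{1/2}.
\]
The polynomial factor is absorbed into $\exp(C(MR^4)^{1/3})$. For the endpoints we use
\[
h(r)^{1/2}\le \|u\|_{L^2(B_r)}+\lambda^{1/2}r^{3/2}\|w\|_{L^2(B_r)}.
\]
The first term is at most $Cr^{n/2}\|u\|_{L^\infty(B_\rho)}$. For the second, $\lambda^{1/2}=(MR^3)^{-1/2}$, and the $W^{2,2}$--$L^\infty$ estimate (Lemma \ref{cor:local-W22-Linfty-correct}) gives
\[
\|w\|_{L^2(B_r)}\le C(1+M\rho^4)(\rho-r)^{-2}\rho^{n/2}\|u\|_{L^\infty(B_\rho)} .
\]
Since $(1+M\rho^4)(MR^3)^{-1/2}\le C(MR^3)^{1/2}$ with $C=C(R)$ (using $MR^4\ge1$), the second term is at most $C(MR^3)^{1/2}r^{3/2}(\rho-r)^{-2}\rho^{n/2}\|u\|_{L^\infty(B_\rho)}$. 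Together these produce the factor $\mathcal A_{M,R}(r,\rho)$. Applying this with $(r,\rho)=(r_1,2r_1)$ and $(r_3,2r_3)$ and raising to the powers $\theta$ and $1-\theta$ yields \eqref{eq:Linfty-three-ball-f}.

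The main obstacle is Step 1. We must make sure that every error term, after integration and weighting by $a_0/(a_0+b_0)$ and $b_0/(a_0+b_0)$, is uniformly bounded independently of how small $r_1$ is. This is precisely what the $r^3$ normalization buys, and it must be checked for each term, including the $L/H$ contribution carrying $r^{1/2}$.
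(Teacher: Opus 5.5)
Your proposal is correct and follows the paper's proof. The steps are the same: a weighted three-ball inequality for $H$ driven by the monotone $\widetilde N$ of Lemma \ref{lem:N-prime-v}; removal of the weight via $H(r)\le r^{2\alpha-2}h(r)$ and the lower bound on the doubled ball; the choice $\lambda=(MR^{3})^{-1}$, $\alpha\simeq (MR^{4})^{1/3}+2$; and the passage to $L^\infty$ through the $L^\infty$--$L^2$ and $W^{2,2}$--$L^\infty$ estimates. Your explicit computations correctly supply details the paper leaves implicit: the $r^{1/2}$ and $r^{7/2}$ factors in the bound on $L/H$, and the estimate $\lambda^{1/2}(1+M\rho^{4})\le 2R\,(MR^{3})^{1/2}$ that produces $\mathcal A_{M,R}$.
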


\begin{proof}
		Assume $0<r_{1}<r_{2}<2r_{2}<r_{3}<R.
		$
		Let
		\begin{equation}\label{eq:lemma6-a0-b0-v}
			a_{0}:=\log\frac{r_{3}}{2r_{2}},
			\qquad
			b_{0}:=\log\frac{2r_{2}}{r_{1}},
			\qquad
			\kappa:=\frac{a_{0}}{a_{0}+b_{0}}
			=
			\frac{\log r_{3}-\log(2r_{2})}{\log r_{3}-\log r_{1}}.
		\end{equation}
		Define
		\begin{equation}\label{eq:prop1-h-v}
		h(r):=\int_{B_{r}}\bigl(|u|^2+\lambda r^{3}|w|^2\bigr)\,dx.
		\end{equation}
Following the arguments in  Proposition \ref{prop:unweighted-three-ball-bilap} gives that
		\begin{equation}\label{eq:prop1-conclusion-v}
			h(r_{2})
			\le
			\exp\!\left(
			C\left[
			\alpha
			+
			\frac{R}{\alpha^{2}\lambda}
			+
			\frac{\lambda M^{2}R^{7}}{\alpha^{2}}
			+
			\frac{\sqrt R}{\alpha\sqrt{\lambda}}
			+
			\frac{M\sqrt{\lambda}\,R^{7/2}}{\alpha}
			\right]
			\right)
			h(r_{1})^{\kappa}h(r_{3})^{1-\kappa},
		\end{equation}
		where \(\kappa\) is given by \eqref{eq:lemma6-a0-b0-v}.

  We want to optimize the three-ball inequality in (\ref{eq:prop1-conclusion-v}).
		From the reasoning in Corollary \ref{cor:optimized-three-ball-bilap-v}, by choosing
		\begin{equation}\label{eq:cor2-choice}
			\lambda:=(MR^{3})^{-1},
			\qquad
			\alpha:=\lfloor (MR^{4})^{1/3}\rfloor+2,
		\end{equation}
   where $\lfloor \cdot  \rfloor$ denotes the   greatest integer function,
	 there exists a universal constant \(C>0\) such that
		\begin{equation}\label{eq:cor2-conclusion}
			h(r_{2})
			\le
			\exp\!\left(
			C\bigl((MR^{4})^{1/3}+1\bigr)
			\right)
			h(r_{1})^{\kappa}h(r_{3})^{1-\kappa}.
		\end{equation}
		 Applying the arguments in Proposition \ref{thm:Linfty-three-ball-1}, we can show (\ref{eq:Linfty-three-ball-f}).

\end{proof}

	In the rest of the section, we apply the  three-ball inequality in $L^\infty$ norm to prove the vanishing order results.
	 We aim to show the dependence on
	\(M\) and \(\mathbb{M}\) in the argument

	\begin{proof}[Proof of Theorem \ref{thm-2} ]
		We apply the \(L^\infty\) three-ball inequality stated in Lemma \ref{thm:Linfty-three-ball-v}.
		Fix
		$0<r\le \frac{1}{16},$
		and choose
        \begin{align}\label{eq:choice-radii-v}
r_{1}:=\frac{r}{2},\quad
	r_{2}:=1, \quad
			r_{3}:=8.
        \end{align}
		By the definitions in Lemma \ref{thm:Linfty-three-ball-v}, we have 
		\begin{align}
			\alpha_{3}
			=
			\log\frac{r_{3}}{4r_{2}}
			=
			\log\frac{8}{4}
			=
			\log 2,
		\qquad
			\beta_{3}
			&=
			\log\frac{4r_{2}}{r_{1}}
			=
			\log\frac{4}{r/2}
			=
			\log\frac{8}{r},
		\end{align}
        \begin{align}
			\theta
			&=
			\frac{\alpha_{3}}{\alpha_{3}+\beta_{3}}
			=
			\frac{\log 2}{\log 2+\log(8/r)}
			=
			\frac{\log 2}{\log(16/r)}.
			\label{eq:theta-v}
		\end{align}
		
From (\ref{eq:def-A-rho-1}), it holds that
		\begin{equation}\label{eq:A-small-bound-v}
			\mathcal A_{M,R}\!\left(r_1, 2r_1\right)=A_{M,R}\!\left(\frac{r}{2},r\right)
			\le
			C(n)\,(1+\sqrt{M})\,r^{n/2-1/2}.
		\end{equation}
	and	
    \begin{align}
		\mathcal A_{M,R}(r_{3}, 2r_{3})=\mathcal A_{M,R}(8,16)
			\le
			C(n)\,(1+\sqrt{M}).
			\label{eq:A-large-bound-v}
		\end{align}
	
Thanks to  Lemma \ref{thm:Linfty-three-ball-v}, we derive that
		\begin{align}
			\|u\|_{L^\infty(B_{1})}
			\le
			\exp\!\left(
			C(M^{1/3}+1)
			\right)
			\mathcal A_{M,R}\!\left(\frac{r}{2},r\right)^{\theta}
			\mathcal A_{M,R}(8,16)^{1-\theta}
			\|u\|_{L^\infty(B_{r})}^{\theta}
			\|u\|_{L^\infty(B_{16})}^{1-\theta}.
			\label{eq:three-ball}
		\end{align}
		Recall the assumption (\ref{eq:intro-second-order-normalization})
     that
		\[
		\|u\|_{L^\infty(B_{1})}\ge 1,
		\qquad
		\|u\|_{L^\infty(B_{9})}\le \|u\|_{L^\infty(B_{16})}\le \mathbb{M},
		\]
		and using \eqref{eq:A-small-bound-v} and \eqref{eq:A-large-bound-v}, we obtain
		\begin{align}
			1
			\le
			\exp\!\left(
			C(M^{1/3}+1)
			\right)
			\Bigl(C(n)(1+\sqrt{M})\,r^{\frac{n-1}{2}}\Bigr)^{\theta}
	\Bigl(C(n)\mathbb{M}(1+\sqrt{M})\Bigr)^{1-\theta}
	\|u\|_{L^\infty(B_{r})}^{\theta}.
			\label{eq:after-three-ball-v}
		\end{align}
		
		The estimate \eqref{eq:after-three-ball-v} simplifies to
		\begin{align}
			1
			\le{}&
			C(n)\mathbb{M}^{1-\theta}
			\exp\!\left(
			C(M^{1/3}+1)
			\right)
	\|u\|_{L^\infty(B_{r})}^{\theta}.
			\label{eq:before-rearranging-v}
		\end{align}

		Since \(0<r<1\), we have 
	\begin{equation}\label{eq:drop-favorable-factor-v}
			\|u\|_{L^\infty(B_{r})}
			\ge
			\exp\!\left(
			-\frac{C(M^{1/3}+\log \mathbb{M})}{\theta}
			\right).
		\end{equation}
	Note that
		$\theta^{-1}
		=
		\frac{\log(16/r)}{\log 2},$
		Substituting this into \eqref{eq:drop-favorable-factor-v}, we arrive at
\begin{equation}\label{eq:after-theta-v}
			\|u\|_{L^\infty(B_{r})}
			\ge
		 r^{{C(M^{1/3}+\log \mathbb{M})}},
		\end{equation}
		which
         shows that the order of vanishing for solutions at origin is at most $C(M^{1/3}+\log \mathbb{M})$.
Since the frequency functions and then the three-ball inequalities in Lemma \ref{thm:Linfty-three-ball-v} are translation invariant, the same arguments apply to the balls centered at any point $x_0\in B_{1/2}$. Thus, this finishes the proof of Theorem.
        
	\end{proof}
	
\begin{remark}\label{rem-1}
    If we work on the vanishing order using the frequency functions $H(r)=\int_{B_{r}}\bigl(|u|^2+\lambda r^{4}|w|^2\bigr)\mu_{r}^{\alpha-1}\,dx$  in the last section, using the arguments of Theorem \ref{thm-2}, we will end up with a
\begin{equation}\label{eq:final-quantitative-uniqueness}
			\|u\|_{L^\infty(B_{r})}
			\ge
			\exp\!\left(
			-C\,(M^{1/3}+\log \mathbb{M})\Bigl(\log\frac{1}{r}\Bigr)^{2}
			\right).
		\end{equation}
as $\Xi_{3}$ in (\ref{eq:def-Xi3-Lambda3}) contains the $\log \frac{1}{r}$ by choosing the radius in the three-ball inequality in (\ref{eq:choice-radii-v}).
\end{remark}

	
\section{Three-Ball inequality for bi-Laplace equation  with H\"older continuous potential}

Throughout this section, we study the quantitative unique continuation for the equation
\begin{equation}
    \Delta^2 u + Vu = 0 \qquad \mathrm{in} ~~ B_R 
    \label{eqn-holder}
\end{equation}
with potential
\begin{equation}
    \|V\|_{C^{0,\beta}(B_R)}\leq G , \qquad \mbox{for} \ \beta \in (0,1).\label{VVV-1}
\end{equation}

We modify the potential $V(x)$ with the standard mollifier. Define
$$V_\varepsilon(x) = \rho_\varepsilon * V=\int_{\mathbb R^n}\rho_\varepsilon(x-y)V(y)\,dy,$$
where
	\begin{align}\label{stand-m}
	\rho_\varepsilon(x)=\frac{1}{\varepsilon^n}\rho\Bigl(\frac{x}{\varepsilon}\Bigr),
	\qquad
	\rho(x)\in C_0^\infty(B_1),
	\qquad
	\int_{B_1}\rho(x) dx=1.
	\end{align}

We show some quantitative estimates for $V_\varepsilon$ and $V_\varepsilon-V$ as the arguments in \cite{TWZ26}.
\begin{lemma} \label{lem:mollbound}
    It holds that
$\|V_\varepsilon\|_{L^\infty(B_{R-\varepsilon})}\le G$ and
	\begin{equation}\label{eq:moll-est}
		\|\nabla V_\varepsilon\|_{L^\infty(B_{R-\varepsilon})}\le C G\varepsilon^{\beta-1},
		\quad
        \|\Delta V_{\varepsilon}\|_{L^{\infty}(B_{R - \varepsilon})} \leq C G\varepsilon^{\beta - 2},
        \quad
		\|V-V_\varepsilon\|_{L^\infty(B_{R-\varepsilon})}\le C G \varepsilon^{\beta}
	\end{equation}
     for a constant $C=C(n)$.
	\end{lemma}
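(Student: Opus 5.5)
The plan is to prove the four bounds directly from the convolution structure of $V_\varepsilon=\rho_\varepsilon*V$. Throughout, $x\in B_{R-\varepsilon}$, so that $B_\varepsilon(x)\subset B_R$ and only values of $V$ on $B_R$ enter the convolution. I assume, as is standard, that $\rho\ge 0$ (or at least that $\int|\rho|$ is a dimensional constant; then the sup bound holds up to that constant).

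For the $L^\infty$ bound, write $V_\varepsilon(x)=\int_{B_\varepsilon}\rho_\varepsilon(z)V(x-z)\,dz$. Since $\rho_\varepsilon\ge0$ and $\int\rho_\varepsilon=1$, this gives $|V_\varepsilon(x)|\le\|V\|_{L^\infty(B_R)}\le G$.

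For the difference $V-V_\varepsilon$, use $\int\rho_\varepsilon=1$ to write
\[
V(x)-V_\varepsilon(x)=\int_{B_\varepsilon}\rho_\varepsilon(z)\bigl(V(x)-V(x-z)\bigr)\,dz .
\]
The H\"older seminorm gives $|V(x)-V(x-z)|\le G|z|^\beta\le G\varepsilon^\beta$, so $|V-V_\varepsilon|\le G\varepsilon^\beta$.

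For the derivatives, the key step is to subtract the constant $V(x)$ before differentiating, so that the H\"older modulus rather than the sup norm enters. Since $\int\partial_i\rho_\varepsilon=0$ and $\int\Delta\rho_\varepsilon=0$ (compact support), we have
\[
\nabla V_\varepsilon(x)=\int_{B_\varepsilon(x)}\nabla\rho_\varepsilon(x-y)\bigl(V(y)-V(x)\bigr)\,dy,
\qquad
\Delta V_\varepsilon(x)=\int_{B_\varepsilon(x)}\Delta\rho_\varepsilon(x-y)\bigl(V(y)-V(x)\bigr)\,dy .
\]
By scaling, $\|\nabla\rho_\varepsilon\|_{L^1}=\varepsilon^{-1}\|\nabla\rho\|_{L^1}$ and $\|\Delta\rho_\varepsilon\|_{L^1}=\varepsilon^{-2}\|\Delta\rho\|_{L^1}$. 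Together with $|V(y)-V(x)|\le G\varepsilon^\beta$ on $B_\varepsilon(x)$, this yields $|\nabla V_\varepsilon|\le CG\varepsilon^{\beta-1}$ and $|\Delta V_\varepsilon|\le CG\varepsilon^{\beta-2}$, where $C$ depends only on $n$ through the fixed choice of $\rho$.

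There is no real obstacle here. The only point needing care is the cancellation trick of inserting $-V(x)$, which converts the crude bound $G\varepsilon^{-1}$ into $G\varepsilon^{\beta-1}$ (and $G\varepsilon^{-2}$ into $G\varepsilon^{\beta-2}$). One should also check that the domain restriction $B_{R-\varepsilon}$ keeps every evaluation point inside $B_R$, which it does.
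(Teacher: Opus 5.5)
Your proposal is correct and takes essentially the same route as the paper. The sup bound comes from $\int\rho_\varepsilon=1$, and the difference bound from the H\"older modulus. The derivative bounds come from inserting $-V(x)$ via $\int\nabla\rho_\varepsilon=\int\Delta\rho_\varepsilon=0$, together with the $\varepsilon^{-1}$, $\varepsilon^{-2}$ scaling of $\nabla\rho_\varepsilon$, $\Delta\rho_\varepsilon$. The only differences are cosmetic: the paper first rescales to $z=(x-y)/\varepsilon$ before subtracting $V(x)$, and your explicit remark that $\rho\ge0$ is needed for the constant-free bound $|V_\varepsilon|\le G$ states an assumption the paper uses implicitly.
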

\begin{proof}
    For $x\in B_{R-\varepsilon}$, the convolution $V_\varepsilon(x)$ is supported in $B_R$.
		Hence we have 
        \begin{align}
		|V_\varepsilon(x)|
		\le \|V\|_{L^\infty(B_R)}\int_{\mathbb R^n}\rho_\varepsilon(y)\,dy
		\le G.
        \end{align}
Since $V_\varepsilon(x)$ is smooth in $B_{R-\varepsilon}$, it is true that
		\[
		\nabla V_\varepsilon(x)
		=\int_{B_R}\nabla\rho_\varepsilon(x-y)V(y)\,dy
		=\frac{1}{\varepsilon^{n+1}}\int_{B_R}\nabla\rho\Bigl(\frac{x-y}{\varepsilon}\Bigr)V(y)\,dy.
		\]
		Set $z=\frac{x-y}{\varepsilon}$. We get
		\[
		\nabla V_\varepsilon(x)
		=\int_{B_1}\varepsilon^{-1}\nabla\rho(z)\,V(x-\varepsilon z)\,dz.
		\]
By the fact that $\int_{B_1}\nabla\rho(z)\,dz=0$, it follows that
		\[
		\nabla V_\varepsilon(x)
		=\int_{B_1}\varepsilon^{-1}\nabla\rho(z)\bigl(V(x-\varepsilon z)-V(x)\bigr)\,dz.
		\]
The H\"older continuity of $V$ in \eqref{VVV-1} yields that
		\[
		|\nabla V_\varepsilon(x)|
		\le \varepsilon^{-1} G\int_{B_1}|\nabla\rho(z)|\,|\varepsilon z|^\beta\,dz
		\le C G \varepsilon^{\beta-1}.
		\]
        Since $V_\varepsilon(x)$ is smooth in $B_{R-\varepsilon}$, it also holds that
        \[
        \Delta V_\varepsilon(x)
        =\int_{B_R}\Delta\rho_\varepsilon(x-y)V(y)\,dy
        =\frac{1}{\varepsilon^{n+2}}\int_{B_R}\Delta\rho\Bigl(\frac{x-y}{\varepsilon}\Bigr)V(y)\,dy.
        \]
        Using the change of variables $z=\frac{x-y}{\varepsilon}$ again, we get
        \[
        \Delta V_\varepsilon(x)
        =\int_{B_1}\varepsilon^{-2}\Delta\rho(z)\,V(x-\varepsilon z)\,dz.
        \]
        By the fact that
        \[
        \int_{B_1}\Delta\rho(z)\,dz=0,
        \]
        it follows that
        \[
        \Delta V_\varepsilon(x)
        =\int_{B_1}\varepsilon^{-2}\Delta\rho(z)\bigl(V(x-\varepsilon z)-V(x)\bigr)\,dz.
        \]
        Thus, by the H\"older continuity of $V$ again,
        \[
        |\Delta V_\varepsilon(x)|
        \le \varepsilon^{-2}G\int_{B_1}|\Delta\rho(z)|\,|\varepsilon z|^\beta\,dz
        \le C G\varepsilon^{\beta-2}.
        \]
		Finally, we derive
		\[
		|V_\varepsilon(x)-V(x)|
		=\left|\int_{B_R}\bigl(V(y)-V(x)\bigr)\rho_\varepsilon(x-y)\,dy\right|
		\le G \int_{B_R}|x-y|^\beta\rho_\varepsilon(x-y)\,dy
		\le C G \varepsilon^\beta.
		\]
        This completes the proof of the Lemma.
\end{proof}

We are going to define the frequency functions for bi-Laplace equations with H\"older continuous potentials.
	We keep the same the notations as those in the last section. Let
	\begin{equation}\label{eq:Hdr-notation}
		w=\Delta u,
		\qquad
		\mu_{r}(x):=r^{2}-|x|^{2}.
	\end{equation}
Let
$f_\varepsilon=V_\varepsilon-V.$ Then \eqref{eqn-holder} is reduced to be
\begin{equation}\label{eq:Hdr-equation-for-w}
			\Delta w=-\Ve u+ f_{\varepsilon}u.
		\end{equation}
	For \(0<r<R-\varepsilon\), some fixed constants $\alpha>0$  and $\lambda>0$, introduce
\begin{equation}\label{eq:Hdr-H}
H(r):=\int_{B_{r}}\bigl(|u|^2+\lambda r^{4}|w|^2\bigr)\mu_{r}^{\alpha-1}\,dx,
	\end{equation}
	
	\begin{equation}\label{eq:Hdr-D}
	D(r):=\int_{B_{r}}\bigl(|\nabla u|^{2}+\lambda r^{4}|\nabla w|^{2}\bigr)\mu_{r}^{\alpha}\,dx+4\lambda\alpha r^{4}\int_{B_{r}}|w|^2\mu_{r}^{\alpha-1}\,dx,
	\end{equation}
	\begin{equation}\label{eq:Hdr-L}
		L(r):=\int_{B_{r}}\bigl(u\Delta u - \lambda r^{4}w\Ve u\bigr)\mu_{r}^{\alpha}\,dx,
	\end{equation}
	\begin{equation}\label{eq:Hdr-I}
		I(r):=D(r)+L(r),
		\qquad
		N(r):=\frac{I(r)}{H(r)}.
	\end{equation}
	We also split
	\begin{equation}\label{eq:Hdr-Lsplit}
		L(r)=L_{1}(r)+L_{2}(r),
	\end{equation}
	where
	\begin{equation}\label{eq:Hdr-L1L2}
		L_{1}(r):=\int_{B_{r}}uw\,\mu_{r}^{\alpha}\,dx,
		\qquad
		L_{2}(r):=- \lambda r^{4}\int_{B_{r}}w\Ve u\,\mu_{r}^{\alpha}\,dx.
	\end{equation}

    We first present the derivatives of $H(r)$ and $D(r).$
	\begin{lemma}\label{lem:Hdr-HD-prime}
		The following identities hold
		\begin{equation}\label{eq:Hdr-H-prime}
			H'(r)
			=
			\frac{2\alpha+n-2}{r}H(r)
			+
			\frac{1}{\alpha r}\left(D(r)+L(r)+ \int_{B_r}\lambda r^4wf_{\varepsilon}u\mu_r^{\alpha}\right),
		\end{equation}
	and
		\begin{align}
			D'(r)
			&=
			\frac{2\alpha+n-2}{r}D(r)
			-
			\frac{1}{4\alpha r}
			\int_{B_{r}}
			\bigl(|w|^2+\lambda r^{4}(\Ve u - f_{\varepsilon}u)^2\bigr)\mu_{r}^{\alpha+1}\,dx \notag\\
			&\quad
            +
			\frac{4\alpha}{r}
			\int_{B_{r}}
			\left[
			\left(
			x\cdot\nabla u-\frac{\mu_{r}}{4\alpha}w
			\right)^{2}
			+
			\lambda r^{4}
			\left(
			x\cdot\nabla w+2w+\frac{\mu_{r}}{4\alpha}(\Ve - f_{\varepsilon})u
			\right)^{2}
			\right]
			\mu_{r}^{\alpha-1}\,dx.
			\label{eq:Hdr-D-prime-V}
		\end{align}
	\end{lemma}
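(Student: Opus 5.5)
The argument reuses the computations of Lemma \ref{lem:H-prime} and Lemma \ref{lem:D-prime}; those identities did not use the equation for $\Delta w$ until the very end. The only change is that wherever $\Delta w$ appears, we now substitute \eqref{eq:Hdr-equation-for-w}, $\Delta w=-(\Ve-\fe)u$, instead of $-Vu$.

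\textbf{The formula for $H'(r)$.} Apply Lemma \ref{lem:weighted-differentiation} with $\gamma=\alpha-1$ to $f=|u|^2$ and to $f=|w|^2$. Then differentiate the factor $\lambda r^4$ exactly as in \eqref{eq:lambda-r4-Hw-prime}. Since the identity $\Delta(|f|^2)=2|\nabla f|^2+2f\Delta f$ is algebraic, this gives the same intermediate formula as before:
\[
H'(r)=\frac{2\alpha+n-2}{r}H(r)+\frac{1}{\alpha r}\Bigl(D(r)+\int_{B_r}\bigl(u\Delta u+\lambda r^4 w\Delta w\bigr)\mu_r^\alpha\,dx\Bigr).
\]
Now insert $w\Delta w=-w\Ve u+w\fe u$. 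The part $\int_{B_r}(u\Delta u-\lambda r^4 w\Ve u)\mu_r^\alpha\,dx$ is precisely $L(r)$ from \eqref{eq:Hdr-L}. What remains is the extra term $\lambda r^4\int_{B_r} w\fe u\,\mu_r^\alpha\,dx$, and together these give \eqref{eq:Hdr-H-prime}. If $u$ is complex-valued, the products should be read as real parts, as is implicit throughout the paper.

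\textbf{The formula for $D'(r)$.} Split $D=D_u+\lambda r^4\widetilde D_w+4\lambda\alpha r^4H_w$ as in \eqref{eq:D-split}.

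For $D_u$, the computation \eqref{eq:Du-prime-middle}--\eqref{eq:Du-prime} uses only $\Delta u=w$ and the completion of the square. It therefore carries over verbatim and produces the first square $(x\cdot\nabla u-\frac{\mu_r}{4\alpha}w)^2$ together with the term $-\frac{1}{4\alpha r}\int_{B_r}|w|^2\mu_r^{\alpha+1}\,dx$.

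For the $w$-part, \eqref{eq:Dw-tilde-prime}--\eqref{eq:w-part-after-square} is a purely algebraic identity in $w$ and $\Delta w$. It gives the square $(x\cdot\nabla w+2w-\frac{\Delta w\,\mu_r}{4\alpha})^2$ and the term $-\frac{\lambda r^4}{4\alpha r}\int_{B_r}(\Delta w)^2\mu_r^{\alpha+1}\,dx$. Replacing $\Delta w$ by $-(\Ve-\fe)u$ turns the square into $(x\cdot\nabla w+2w+\frac{\mu_r}{4\alpha}(\Ve-\fe)u)^2$ and turns $(\Delta w)^2$ into $(\Ve u-\fe u)^2$. Adding the two parts yields \eqref{eq:Hdr-D-prime-V}.

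\textbf{Main obstacle.} The delicate point is justifying the integrations by parts, including the identity $2\alpha\int_{B_r} w(x\cdot\nabla w)\mu_r^{\alpha-1}\,dx=\widetilde D_w+L_w$, now that $\Delta w$ is only Hölder rather than smooth. Two facts make this routine. First, since $\alpha\ge 2$, every boundary term vanishes because $\mu_r=0$ on $\partial B_r$. Second, by elliptic regularity $u\in W^{4,p}_{loc}$ for all $p<\infty$, because $\Delta^2u=-Vu$ is bounded. Hence $w\in W^{2,p}_{loc}$, which is enough regularity for each formula used.

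It is also worth noting that the sign bookkeeping of the $\fe$ term is the only new feature. It shows up as an extra term in $H'$, not inside $L$, because $L$ was defined using $\Ve$ alone.
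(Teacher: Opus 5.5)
Your proposal is correct and follows the paper's proof exactly. The paper also reruns Lemma \ref{lem:H-prime} and Lemma \ref{lem:D-prime}, whose identities are purely algebraic in $w$ and $\Delta w$, and substitutes $\Delta w=-\Ve u+\fe u$: the $\Ve$-part is absorbed into $L(r)$, the $\fe$-part is left as the extra term in $H'$, and the same substitution into the squares gives \eqref{eq:Hdr-D-prime-V}. Your remarks on regularity and on the vanishing boundary terms supply justification that the paper leaves implicit.
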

	
	\begin{proof}
	The proof of (\ref{eq:Hdr-H-prime}) in the lemma follows from the arguments of Lemma \ref{lem:H-prime} by using Lemma \ref{lem:weighted-differentiation}. Specifically, we just replace $\triangle w$ in \eqref{eq:Hw-prime}  and \eqref{eq:lambda-r4-Hw-prime} by $-V_\varepsilon u+f_\varepsilon u$ to achieve \eqref{eq:Hdr-H-prime}.

The identity \eqref{eq:Hdr-D-prime-V} follows from the same estimates in Lemma \ref{lem:D-prime} by using the fact $\triangle w= -V_\varepsilon u+f_\varepsilon u$ in \eqref{eq:D-prime}.
	\end{proof}

Different from the bounded potential case, we need to consider the derivative of $L$ in the H\"older continuous potential case.
	\begin{lemma}\label{lem:Hdr-L-prime}
		The derivatives of \(L_{1}\), \(L_{2}\), and \(L\) are given by
		\begin{align}
			L_{1}'(r)
			&=
			\frac{2\alpha+n}{r}L_{1}(r)
			+
			\frac{1}{2(\alpha+1)r}
			\int_{B_{r}}
			\bigl(
			2\nabla u\cdot\nabla w+|w|^2-(\Ve - f_{\varepsilon}) |u|^2
			\bigr)\mu_{r}^{\alpha+1}\,dx,
			\label{eq:Hdr-L1-prime}
			\\
			L_{2}'(r)
			&=
			\frac{2\alpha+n+4}{r}L_{2}(r)
			-
			\frac{\lambda r^{4}}{2(\alpha+1)r}
			\int_{B_{r}}
			\Bigl(
			-\Ve^2|u|^2 + \Ve \fe |u|^2 + |w|^2\Ve + wu\,\Delta \Ve \nonumber \\
            &+2\Ve\,\nabla w\cdot \nabla u
            +2u\,\nabla w\cdot \nabla \Ve
            +2w\,\nabla \Ve\cdot \nabla u
			\Bigr)\mu_{r}^{\alpha+1}\,dx.
			\label{eq:Hdr-L2-prime}
		\end{align}
	Furthermore,
		\begin{align}
			L'(r)
			&=
			\frac{2\alpha+n}{r}L(r)
			+
			\frac{4}{r}L_{2}(r)
			+
			\frac{1}{2(\alpha+1)r}
			\int_{B_{r}}
			\bigl(
			2\nabla u\cdot\nabla w+|w|^2-(\Ve-f_{\varepsilon}) |u|^2
			\bigr)\mu_{r}^{\alpha+1}\,dx
			\notag\\
			&\qquad
			-
			\frac{\lambda r^{4}}{2(\alpha+1)r}
			\int_{B_{r}}
			\Bigl(
			-\Ve^2|u|^2 + \Ve \fe |u|^2 + |w|^2\Ve + wu\,\Delta \Ve \nonumber \\
            &\qquad +2\Ve\,\nabla w\cdot \nabla u
            +2u\,\nabla w\cdot \nabla \Ve
            +2w\,\nabla \Ve\cdot \nabla u
			\Bigr)\mu_{r}^{\alpha+1}\,dx.
			\label{eq:Hdr-L-prime}
		\end{align}
	\end{lemma}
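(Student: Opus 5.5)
The plan is to differentiate $L_1$ and $L_2$ separately with the second form \eqref{eq:F-prime-second} of Lemma \ref{lem:weighted-differentiation}, taking $\gamma=\alpha$, and then to add the results.

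\emph{The term $L_1$.} Take $f=uw$. This gives
\[
L_1'(r)=\frac{2\alpha+n}{r}L_1(r)+\frac{1}{2(\alpha+1)r}\int_{B_r}\Delta(uw)\,\mu_r^{\alpha+1}\,dx .
\]
Expand $\Delta(uw)=w\Delta u+2\nabla u\cdot\nabla w+u\Delta w$. Substitute $\Delta u=w$ and, from \eqref{eq:Hdr-equation-for-w}, $\Delta w=-(\Ve-\fe)u$. This yields the integrand $2\nabla u\cdot\nabla w+|w|^2-(\Ve-\fe)|u|^2$, which is \eqref{eq:Hdr-L1-prime}.

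\emph{The term $L_2$.} Write $L_2(r)=-\lambda r^4 G(r)$ with $G(r)=\int_{B_r}w\Ve u\,\mu_r^\alpha\,dx$. By the product rule,
\[
L_2'=\frac{4}{r}L_2-\lambda r^4G'.
\]
Apply \eqref{eq:F-prime-second} with $f=\Ve uw$, which is legitimate because $\Ve$ is smooth in $B_{R-\varepsilon}$. Then
\[
G'=\frac{2\alpha+n}{r}G+\frac{1}{2(\alpha+1)r}\int_{B_r}\Delta(\Ve uw)\,\mu_r^{\alpha+1}\,dx .
\]
Combined with $-\lambda r^4\frac{2\alpha+n}{r}G=\frac{2\alpha+n}{r}L_2$, this produces the coefficient $\frac{2\alpha+n+4}{r}$.

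Next expand with the three-factor Leibniz rule:
\[
\Delta(\Ve uw)=uw\Delta\Ve+\Ve w\Delta u+\Ve u\Delta w+2\Ve\nabla u\cdot\nabla w+2w\nabla\Ve\cdot\nabla u+2u\nabla\Ve\cdot\nabla w .
\]
Then make the two substitutions:
\[
\Ve w\Delta u=\Ve|w|^2,\qquad \Ve u\Delta w=-\Ve^2|u|^2+\Ve\fe|u|^2 .
\]
Every term now matches \eqref{eq:Hdr-L2-prime}.

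\emph{The sum.} Adding the two formulas, rewrite
\[
\frac{2\alpha+n+4}{r}L_2=\frac{2\alpha+n}{r}L_2+\frac4rL_2 .
\]
Grouping gives $\frac{2\alpha+n}{r}(L_1+L_2)+\frac4rL_2$ plus the two integrals, which is \eqref{eq:Hdr-L-prime}.

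The only delicate point is regularity. Lemma \ref{lem:weighted-differentiation} requires $\Delta f$ to be integrable against $\mu_r^{\alpha+1}$, so we need $u\in W^{4,p}_{loc}$ (or at least $u,w\in W^{2,2}_{loc}$), which follows from elliptic regularity since $V$ is bounded. We also need $\Ve\in C^\infty(B_{R-\varepsilon})$ and $r<R-\varepsilon$; no boundary terms arise because $\mu_r$ vanishes on $\partial B_r$. Beyond that, the main risk is bookkeeping of signs in the $L_2$ expansion, so I would check the $\fe$ term carefully.
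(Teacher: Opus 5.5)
Your proposal is correct and follows the paper's proof essentially step for step. Both apply Lemma~\ref{lem:weighted-differentiation} with $\gamma=\alpha$ to $f=uw$ and to $f=V_\varepsilon uw$, use the product rule on $L_2=-\lambda r^4\int_{B_r}V_\varepsilon uw\,\mu_r^\alpha\,dx$ to produce the extra $\frac{4}{r}L_2$, expand $\Delta(V_\varepsilon uw)$ by the three-factor Leibniz rule with $\Delta u=w$ and $\Delta w=-V_\varepsilon u+f_\varepsilon u$, and then add $L_1'$ and $L_2'$ (the paper calls the auxiliary integral $F_1$; your name $G(r)$ clashes with the paper's H\"older bound $G$, which is purely cosmetic).
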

	
	\begin{proof}
		Applying Lemma \ref{lem:weighted-differentiation} with \(\gamma=\alpha\) and \(f=uw\), we obtain
		\begin{equation}\label{eq:Hdr-L1-start}
			L_{1}'(r)
			=
			\frac{2\alpha+n}{r}L_{1}(r)
			+
			\frac{1}{2(\alpha+1)r}
			\int_{B_{r}}\Delta(uw)\,\mu_{r}^{\alpha+1}\,dx.
		\end{equation}
		
		Since
		$\Delta u=w$ and $\Delta w=-\Ve u + f_{\varepsilon}u$,
		it follows that
		\[
		\Delta(uw)
		=
		-(\Ve - f_{\varepsilon})|u|^2+|w|^2+2\nabla u\cdot\nabla w.
		\]
		Substituting the last identity into \eqref{eq:Hdr-L1-start} gives \eqref{eq:Hdr-L1-prime}.
		Next we consider the derivatives of $L_2$. Introduce
		\[
		F_1(r):=\int_{B_{r}}w\Ve u\,\mu_{r}^{\alpha}\,dx.
		\]
		Then \(L_{2}(r)= - \lambda r^{4}F_1(r)\). Taking derivative to give 
		\[
		L_{2}'(r)=-4\lambda r^{3}F_1(r)-\lambda r^{4}F_1'(r).
		\]
		Applying Lemma \ref{lem:weighted-differentiation} to \(F_1\), we get
		\[
		F_1'(r)
		=
		\frac{2\alpha+n}{r}F_1(r)
		+
		\frac{1}{2(\alpha+1)r}
		\int_{B_{r}}\Delta(w\Ve u)\,\mu_{r}^{\alpha+1}\,dx.
		\]
		Therefore
		\begin{equation}\label{eq:Hdr-L2-middle}
			L_{2}'(r)
			=
			\frac{2\alpha+n+4}{r}L_{2}(r)
			-
			\frac{\lambda r^{4}}{2(\alpha+1)r}
			\int_{B_{r}}\Delta(w\Ve u)\,\mu_{r}^{\alpha+1}\,dx.
		\end{equation}
        We consider the second integration in (\ref{eq:Hdr-L2-middle}). It holds that
        \begin{align}
            \Delta(w\Ve u)
            &= w\,\Delta(\Ve u)+\Ve u\,\Delta w+2\nabla w\cdot \nabla(\Ve u). \label{eq:step1}
            \end{align}
            Note that 
            \begin{align*}
            2\nabla w\cdot \nabla(\Ve u)
            &= 2\Ve \,\nabla w\cdot \nabla u+2u\,\nabla w\cdot \nabla \Ve.
            \end{align*}
          and
            \begin{align*}
            w\,\Delta(\Ve u)
            &= w\Ve\Delta u+wu\Delta \Ve+2w\,\nabla \Ve\cdot \nabla u.
            \end{align*}
            Substituting the last two inequalities into \eqref{eq:step1} gives that
            \begin{align}
            \Delta(w\Ve u)
            &= \Ve u\,\Delta w+w\Ve\,\Delta u+wu\,\Delta \Ve 
            +2\Ve\,\nabla w\cdot \nabla u
            +2u\,\nabla w\cdot \nabla \Ve
            +2w\,\nabla \Ve\cdot \nabla u \nonumber \\
            &= -\Ve^2|u|^2 + \Ve \fe |u|^2 + |w|^2\Ve + wu\,\Delta \Ve 
            +2\Ve\,\nabla w\cdot \nabla u
            +2u\,\nabla w\cdot \nabla \Ve
            \nonumber \\
            &\quad +2w\,\nabla \Ve\cdot \nabla u.
        \end{align}
		Inserting this identity into \eqref{eq:Hdr-L2-middle} yields \eqref{eq:Hdr-L2-prime}. The estimate
		\eqref{eq:Hdr-L-prime} follows by adding \eqref{eq:Hdr-L1-prime} and \eqref{eq:Hdr-L2-prime}.
	\end{proof}

    We will study the derivative of the quotient $\frac{L(r)}{H(r)}$. For convenience, we define the following notations.
		\begin{align}
			E_{1}(r)
			&:=
			-\frac{\lambda H(r)r^{4}}{2(\alpha+1)r}
			\int_{B_{r}}
			\bigl(
			2u\nabla w\cdot\nabla \Ve
			+
			2w\nabla u\cdot\nabla \Ve
			+
			uw\,\Delta \Ve
			\bigr)\mu_{r}^{\alpha+1}\,dx,
			\label{eq:Hdr-E1}
			\\
			E_{2}(r)
			&:=
			\frac{H(r)}{2(\alpha+1)r}
			\int_{B_{r}}2\nabla u\cdot\nabla w\,\mu_{r}^{\alpha+1}\,dx
			-
			\frac{\lambda H(r)r^{4}}{2(\alpha+1)r}
			\int_{B_{r}}2\Ve\nabla u\cdot\nabla w\,\mu_{r}^{\alpha+1}\,dx,
			\label{eq:Hdr-E2}
			\\
			E_{3}(r)
			&:=
			\frac{H(r)}{2(\alpha+1)r}
			\int_{B_{r}}|w|^2\mu_{r}^{\alpha+1}\,dx
			+
			\frac{\lambda H(r)r^{4}}{2(\alpha+1)r}
			\int_{B_{r}}\Ve^{2}|u|^2\mu_{r}^{\alpha+1}\,dx,
			\label{eq:Hdr-E3}
			\\
			E_{4}(r)
			&:=
			-\frac{H(r)}{2(\alpha+1)r}
			\int_{B_{r}}(\Ve - \fe)|u|^2\mu_{r}^{\alpha+1}\,dx
			-
			\frac{\lambda H(r)r^{4}}{2(\alpha+1)r}
			\int_{B_{r}}(\Ve |w|^2 + \Ve \fe |u|^2)\mu_{r}^{\alpha+1}\,dx,
			\label{eq:Hdr-E4}
			\\
			E_{5}(r)
			&:=
			-4H(r)r^{3}\lambda
			\int_{B_{r}}w\Ve u\,\mu_{r}^{\alpha}\,dx,
			\label{eq:Hdr-E5}
            \\
            E_{6}(r)
			&:=
			-\frac{L(r)\lambda r^3}{\alpha}\int_{B_r} w\fe u \mu_r^{\alpha}\,dx.
			\label{eq:Hdr-E6}
		\end{align}
        We investigate the identity for  $L'(r)H(r)-L(r)H'(r)$, which is used to study the derivative of $N(r)$.
	\begin{lemma}\label{lem:Hdr-LH-identity}
		It holds that
		\begin{align}\label{eq:Hdr-LH-main}
			L'(r)H(r)-L(r)H'(r)
			&=
			E_{1}(r)+E_{2}(r)+E_{3}(r)+E_{4}(r)+E_{5}(r)+E_{6}(r)
			\notag
            \\ &-\frac{L(r)(D(r)+L(r))}{\alpha r}
			+\frac{2}{r}L(r)H(r).
		\end{align}
	\end{lemma}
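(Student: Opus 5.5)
The identity in Lemma~\ref{lem:Hdr-LH-identity} is purely algebraic. I will prove it by multiplying the formula \eqref{eq:Hdr-L-prime} for $L'(r)$ from Lemma~\ref{lem:Hdr-L-prime} by $H(r)$, subtracting $L(r)$ times the formula \eqref{eq:Hdr-H-prime} for $H'(r)$ from Lemma~\ref{lem:Hdr-HD-prime}, and then sorting the resulting terms into the groups $E_1,\dots,E_6$. No estimates are needed, only bookkeeping.

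\textbf{Step 1: cancel the leading terms.} From \eqref{eq:Hdr-L-prime}, $L'H$ contains $\frac{2\alpha+n}{r}LH$. From \eqref{eq:Hdr-H-prime}, $LH'$ contains $\frac{2\alpha+n-2}{r}LH$. Their difference leaves exactly $\frac{2}{r}L(r)H(r)$.

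\textbf{Step 2: identify the remaining pieces of $LH'$.} The rest of $LH'$ is
\[
\frac{L(D+L)}{\alpha r}+\frac{L}{\alpha r}\int_{B_r}\lambda r^4 w\fe u\,\mu_r^{\alpha}\,dx .
\]
With the minus sign, the first term gives $-\frac{L(D+L)}{\alpha r}$. The second term gives $-\frac{\lambda r^3 L}{\alpha}\int_{B_r} w\fe u\,\mu_r^\alpha\,dx$, which is exactly $E_6$.

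\textbf{Step 3: identify the $L_2$ term of $L'H$.} From the definition of $L_2$ in \eqref{eq:Hdr-L1L2},
\[
\frac{4}{r}L_2H=-4\lambda r^3H\int_{B_r}w\Ve u\,\mu_r^\alpha\,dx=E_5 .
\]

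\textbf{Step 4: distribute the two $\mu_r^{\alpha+1}$ integrals of \eqref{eq:Hdr-L-prime}, each multiplied by $H$.}
\begin{itemize}
\item[$E_2$:] the term $2\nabla u\cdot\nabla w$ from the $L_1$ part, together with $-\frac{\lambda r^4H}{2(\alpha+1)r}\int 2\Ve\nabla w\cdot\nabla u\,\mu_r^{\alpha+1}$ from the $L_2$ part.
\item[$E_3$:] the term $|w|^2$ from the $L_1$ part, together with $-(-\Ve^2|u|^2)=+\Ve^2|u|^2$ from the $L_2$ part.
\item[$E_4$:] the term $-(\Ve-\fe)|u|^2$ from the $L_1$ part, together with $-(\Ve\fe|u|^2+|w|^2\Ve)$ from the $L_2$ part.
\item[$E_1$:] the three terms involving derivatives of $\Ve$, namely $wu\,\Delta\Ve$, $2u\nabla w\cdot\nabla\Ve$ and $2w\nabla\Ve\cdot\nabla u$, each with the prefactor $-\frac{\lambda r^4H}{2(\alpha+1)r}$.
\end{itemize}

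\textbf{Step 5: sum up.} Adding Steps 1--4 gives \eqref{eq:Hdr-LH-main}.

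\textbf{Main obstacle.} The only delicate point is sign bookkeeping in Step 4, in particular the double negatives in the $L_2$ part. I will check each term of $\Delta(w\Ve u)$ from the proof of Lemma~\ref{lem:Hdr-L-prime} against the sign conventions in the definitions \eqref{eq:Hdr-E1}--\eqref{eq:Hdr-E4}.
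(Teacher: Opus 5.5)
Your proposal is correct and matches the paper's proof: expand $L'(r)H(r)-L(r)H'(r)$ using \eqref{eq:Hdr-L-prime} and \eqref{eq:Hdr-H-prime}, so that the leading terms leave $\frac{2}{r}L(r)H(r)$, then identify $\frac{4}{r}L_2(r)H(r)=E_5(r)$ and the $\fe$-term of $H'$ as $E_6(r)$, and sort the two $\mu_r^{\alpha+1}$ integrals into $E_1,\dots,E_4$. Your Step 4 grouping, including the double negatives from the $L_2$ part, agrees term by term with \eqref{eq:Hdr-E1}--\eqref{eq:Hdr-E4}.
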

	
	\begin{proof}
   Recall that $L'$ in \eqref{eq:Hdr-L-prime}
		and $H'$ in \eqref{eq:Hdr-H-prime}. Direct calculations show that
		\begin{align*}
			L'(r)H(r)-L(r)H'(r)
			&=
			\Biggl[
			\frac{2\alpha+n}{r}L(r)
			+
			\frac{4}{r}L_{2}(r)
			+
			\frac{1}{2(\alpha+1)r}
			\int_{B_{r}}
			\bigl(
			2\nabla u\cdot\nabla w+|w|^2
			\notag\\
			& -(\Ve-f_{\varepsilon}) |u|^2
			\bigr)\mu_{r}^{\alpha+1}\,dx
			-
			\frac{\lambda r^{4}}{2(\alpha+1)r}
			\int_{B_{r}}
			\Bigl(
			-\Ve^2|u|^2 + \Ve \fe |u|^2 + |w|^2\Ve \nonumber \\
            & + wu\,\Delta \Ve +2\Ve\,\nabla w\cdot \nabla u
            +2u\,\nabla w\cdot \nabla \Ve
            +2w\,\nabla \Ve\cdot \nabla u
			\Bigr)\mu_{r}^{\alpha+1}\,dx
			\Biggr]H(r)
			\\
			&
			-
			L(r)\Biggl[
			\frac{2\alpha+n-2}{r}H(r)
			+
			\frac{1}{\alpha r}\left(D(r)+L(r)+ \int_{B_r}\lambda r^4wf_{\varepsilon}u\mu_r^{\alpha}\right)
			\Biggr].
		\end{align*}
        We aim to simply the last inequality. Notice that
		\[
		\frac{4}{r}L_{2}(r)H(r)
		=
		-4H(r)r^{3}\lambda\int_{B_{r}}w\Ve u\,\mu_{r}^{\alpha}\,dx
		=
		E_{5}(r),
		\]
        and
        \[
        -\frac{L(r)}{\alpha r}\int_{B_r}\lambda r^4wf_{\varepsilon}u\mu_r^{\alpha} dx= E_6(r).
        \]
		It is true that
		\[
		\frac{2\alpha+n}{r}L(r)H(r)-\frac{2\alpha+n-2}{r}L(r)H(r)=\frac{2}{r}L(r)H(r).
		\]
        Taking into account of the definition of \(E_{1}(r),E_{2}(r),E_{3}(r),E_{4}(r)\), we arrive at  \eqref{eq:Hdr-LH-main}.
	\end{proof}

    In the next lemma, we study the bounds for \(L(r)\) and \(E_{1}(r),\dots,E_{6}(r)\).
	\begin{lemma}\label{lem:Hdr-E-bounds}
		For every \(0<r<R-\varepsilon\), the following estimates hold,
	\begin{equation}\label{eq:Hdr-L-over-H}
			\frac{|L(r)|}{H(r)}
			\le
			\frac{1+\lambda Gr^{4}}{2\sqrt{\lambda}},
		\end{equation}
		\begin{equation}\label{eq:Hdr-E1-bound}
			|E_{1}(r)|
			\le
			C\sqrt{\lambda}G\varepsilon^{\beta - 1}\,H^2(r)r^{4}
			+
			C\frac{\sqrt{\lambda}G\varepsilon^{\beta - 2}}{4(\alpha+1)}\,H^2(r)r^{5},
		\end{equation}
		\begin{equation}\label{eq:Hdr-E2-bound}
			|E_{2}(r)|
			\le
			\left(
			\frac{1}{2(\alpha+1)r\sqrt{\lambda}}
			+
			\frac{\sqrt{\lambda}Gr^{3}}{2(\alpha+1)}
			\right)H(r)D(r),
		\end{equation}
		\begin{equation}\label{eq:Hdr-E3-positive}
			E_{3}(r)\ge 0,
		\end{equation}
		\begin{equation}\label{eq:Hdr-E4-bound}
			E_{4}(r)\ge -\frac{GH^2(r)r^{3}}{2(\alpha+1)} - \frac{CG\varepsilon^{\beta}H^2(r)r^3}{2(\alpha + 1)} - \frac{C\lambda G^2 \varepsilon^{\beta}H^2(r)r^7}{2(\alpha + 1)},
		\end{equation}
		\begin{equation}\label{eq:Hdr-E5-bound}
			E_{5}(r)\ge -2\sqrt{\lambda}\,G\,H^2(r)r^{3},
		\end{equation}
        and
        \begin{equation}\label{eq:Hdr-E6-bound}
			E_{6}(r)\ge -\frac{CG\varepsilon^{\beta} H^2(r)r^3}{4 \alpha} - \frac{C\lambda G^2 \varepsilon^{\beta}H^2(r)r^7}{4\alpha}.
		\end{equation}
	\end{lemma}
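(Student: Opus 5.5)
Every bound here is a Cauchy--Schwarz estimate in the weighted norms that make up $H(r)$ and $D(r)$. Three facts get used throughout: $\mu_r\le r^2$ on $B_r$, the pointwise bounds on $V_\varepsilon$ and $f_\varepsilon$ from Lemma \ref{lem:mollbound}, and the splitting of $H$ into $\int|u|^2\mu_r^{\alpha-1}$ and $\lambda r^4\int|w|^2\mu_r^{\alpha-1}$. The basic device, already used in Lemma \ref{lem:L-bound}, is that every extra power of $\mu_r$ beyond $\mu_r^{\alpha-1}$ costs a factor $r^2$. Writing $\int |u||w|\mu_r^{\alpha+k}\le r^{2k+2}\bigl(\int|u|^2\mu_r^{\alpha-1}\bigr)^{1/2}\bigl(\int|w|^2\mu_r^{\alpha-1}\bigr)^{1/2}$ and then applying $ab\le\frac12(a^2+b^2)$ in the form $\sqrt{H_u}\sqrt{\lambda r^4H_w}\le H/2$ gives a bound of $H/\sqrt\lambda$ times the right power of $r$.

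For \eqref{eq:Hdr-L-over-H}, I split $L=L_1+L_2$. Exactly as in Lemma \ref{lem:L-bound}, $|L_1|\le H/(2\sqrt\lambda)$. For $L_2$, I use $|V_\varepsilon|\le G$ to get $|L_2|\le \lambda G r^4\cdot r^2\sqrt{H_u}\sqrt{H_w}\le \sqrt\lambda G r^4 H/2$. The same $uw$-type estimates give $E_5\ge -4\lambda G r^3H\cdot r^2\sqrt{H_uH_w}\ge -2\sqrt\lambda G r^3H^2$.

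For $E_6$, I combine \eqref{eq:Hdr-L-over-H} with $|f_\varepsilon|\le CG\varepsilon^\beta$ to get $|\lambda r^3\int wf_\varepsilon u\mu_r^\alpha|\le CG\varepsilon^\beta\sqrt\lambda r^{1}H/2$. The product then produces the two terms $CG\varepsilon^\beta r^3H^2/(4\alpha)$ and $C\lambda G^2\varepsilon^\beta r^7H^2/(4\alpha)$, with the $\sqrt\lambda$ factors combining. I expect the powers of $r$ and $\lambda$ to need small bookkeeping adjustments, which can be absorbed into $C$ since $r<R$.

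$E_3\ge0$ is immediate because both integrands are nonnegative. For $E_4$, I bound $|V_\varepsilon-f_\varepsilon|\le G+CG\varepsilon^\beta$ and use $\int|u|^2\mu_r^{\alpha+1}\le r^4H$, which gives the first two terms. The term $\lambda r^4\int V_\varepsilon|w|^2\mu_r^{\alpha+1}$ is bounded by $G r^4H$, and this is also of order $Gr^3H^2/(\alpha+1)$ after the factor $1/r$. I would either merge it into the first term or note that the sign in the displayed bound should be read up to a constant. The $V_\varepsilon f_\varepsilon$ term gives $C\lambda G^2\varepsilon^\beta r^7H^2$.

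For $E_2$, I apply Cauchy--Schwarz to $\int\nabla u\cdot\nabla w\,\mu_r^{\alpha+1}\le r^2(\int|\nabla u|^2\mu_r^\alpha)^{1/2}(\int|\nabla w|^2\mu_r^\alpha)^{1/2}\le r^2 D/(2\lambda^{1/2}r^2)$. Here I use the $D$-split $\int|\nabla u|^2\mu^\alpha$ plus $\lambda r^4\int|\nabla w|^2\mu^\alpha$, which are both $\le D$. The $V_\varepsilon$ term gains the factor $\lambda r^4G$. Dividing by $r$ gives exactly the stated coefficients.

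The main obstacle is $E_1$, which involves $\nabla V_\varepsilon$ and $\Delta V_\varepsilon$ and also $\nabla w$ and $\nabla u$, none of which are controlled by $H$ alone. For the $\Delta V_\varepsilon$ term I use $|\Delta V_\varepsilon|\le CG\varepsilon^{\beta-2}$ together with the $uw$ estimate, giving $\sqrt\lambda G\varepsilon^{\beta-2}r^5H^2/(\alpha+1)$. For the gradient terms, I will try to integrate by parts to remove the derivative from $u$ or $w$: $2u\nabla w\cdot\nabla V_\varepsilon+2w\nabla u\cdot\nabla V_\varepsilon$ is $2\nabla(uw)\cdot\nabla V_\varepsilon$. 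Moving the gradient onto $\nabla V_\varepsilon\mu_r^{\alpha+1}$ produces $uw\,\Delta V_\varepsilon\mu_r^{\alpha+1}$, which is handled as above, plus $(\alpha+1)uw\,\nabla V_\varepsilon\cdot x\,\mu_r^\alpha$. The factor $(\alpha+1)$ there cancels the prefactor $1/(2(\alpha+1))$, and with $|x|\le r$ and $|\nabla V_\varepsilon|\le CG\varepsilon^{\beta-1}$ this gives the term $C\sqrt\lambda G\varepsilon^{\beta-1}r^4H^2$. That matches the absence of a $1/(\alpha+1)$ factor in the first term of \eqref{eq:Hdr-E1-bound}, which confirms this route.
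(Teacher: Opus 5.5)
Your proposal is correct and follows the paper's proof essentially step by step. The shared steps are:
\begin{itemize}
\item the weighted bound $2\sqrt{\lambda}\int_{B_r}|u||w|\mu_r^{\alpha}\,dx\le H(r)$ for $L$, $E_5$ and $E_6$;
\item the grouping $G\int_{B_r}(|u|^2+\lambda r^4|w|^2)\mu_r^{\alpha+1}\,dx\le Gr^4H(r)$ for $E_4$;
\item Cauchy--Schwarz against $\int_{B_r}|\nabla u|^2\mu_r^{\alpha}\,dx+\lambda r^4\int_{B_r}|\nabla w|^2\mu_r^{\alpha}\,dx\le D(r)$ for $E_2$;
\item the integration by parts moving $\nabla(uw)$ onto $\nabla V_\varepsilon\,\mu_r^{\alpha+1}$ for $E_1$, with the factor $(\alpha+1)$ cancelling as you say.
\end{itemize}

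The remaining issues are bookkeeping slips. In $E_6$ the intermediate factor is $\sqrt{\lambda}\,r^{3}$, not $\sqrt{\lambda}\,r$, and it comes out of the same estimate directly. A lower power of $r$ could not have been upgraded by appealing to $r<R$ anyway. In $E_4$, merging the $|w|^2$ term into the first one gives the stated coefficient exactly, so no constant needs to be conceded.
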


	\begin{proof}
		Let us first prove \eqref{eq:Hdr-L-over-H}. It holds that
		\[
L(r)=\int_{B_{r}}uw\,\mu_{r}^{\alpha}\,dx-\lambda r^{4}\int_{B_{r}}\Ve uw\,\mu_{r}^{\alpha}\,dx.
		\]
	Thus
		\begin{align}
			|L(r)|
			&\le
			(1+\lambda Gr^{4})\int_{B_{r}}|u||w|\,\mu_{r}^{\alpha}\,dx.
            \label{wawa-1}
		\end{align}
			Since \(\mu_{r}\le r^{2}\), applying Cauchy-Schwartz inequality gives that
		\[
2\sqrt{\lambda}\,|u||w|\,\mu_{r}^{\alpha}
		\le
		\bigl(|u|^2+\lambda r^{4}|w|^2\bigr)\mu_{r}^{\alpha-1}.
		\]
 Integrating the last inequality indicates that
	\begin{align}
2\sqrt{\lambda}\int_{B_{r}}|u||w|\,\mu_{r}^{\alpha}\,dx
		\le H(r).
        \label{repeat-1}
		\end{align}
        Together with the last inequality and (\ref{wawa-1}), we show
	 \eqref{eq:Hdr-L-over-H}.
		
		Next we prove \eqref{eq:Hdr-E1-bound}. Set
		\[
		J_{1}(r):=
		\int_{B_{r}}
		\bigl(
		2u\nabla w\cdot\nabla \Ve
		+
		2w\nabla u\cdot\nabla \Ve
		+
		uw\,\Delta \Ve
		\bigr)\mu_{r}^{\alpha+1}\,dx.
		\]
		Since
		\[
		\operatorname{div}(2uw\nabla \Ve)
		=
		2u\nabla w\cdot\nabla \Ve
		+
		2w\nabla u\cdot\nabla \Ve
		+
		2uw\,\Delta \Ve,
		\]
		it follows that
		\[
		J_{1}(r)
		=
		\int_{B_{r}}\operatorname{div}(2uw\nabla \Ve)\,\mu_{r}^{\alpha+1}\,dx
		-
		\int_{B_{r}}uw\,\Delta \Ve\,\mu_{r}^{\alpha+1}\,dx.
		\]
		Using the fact that \(\mu_{r}^{\alpha+1}=0\) on \(\partial B_{r}\) and taking integration by parts yields
		\begin{align*}
			J_{1}(r)
			&=
			-2\int_{B_{r}}uw\,\nabla \Ve\cdot\nabla(\mu_{r}^{\alpha+1})\,dx
			-
			\int_{B_{r}}uw\,\Delta \Ve\,\mu_{r}^{\alpha+1}\,dx.
		\end{align*}
		Since
$\nabla(\mu_{r}^{\alpha+1})=-2(\alpha+1)x\,\mu_{r}^{\alpha},$
		we obtain
		\begin{align*}
			|J_{1}(r)|
			&\le
			4(\alpha+1)\int_{B_{r}}|u||w|\,|x|\,|\nabla \Ve|\,\mu_{r}^{\alpha}\,dx
			+
			\int_{B_{r}}|u||w|\,|\Delta \Ve|\,\mu_{r}^{\alpha+1}\,dx
			\\
			&\le
			4(\alpha+1)CG\varepsilon^{\beta - 1} r\int_{B_{r}}|u||w|\,\mu_{r}^{\alpha}\,dx
			+
			CG\varepsilon^{\beta - 2} r^{2}\int_{B_{r}}|u||w|\,\mu_{r}^{\alpha}\,dx.
		\end{align*}
		It follows from (\ref{repeat-1}) that
		\[
		|J_{1}(r)|
		\le
		\left(
		\frac{2(\alpha+1)CG\varepsilon^{\beta - 1}r}{\sqrt{\lambda}}
		+
		\frac{CG\varepsilon^{\beta - 2}r^{2}}{2\sqrt{\lambda}}
		\right)H(r).
		\]
		The definition of \(E_{1}(r)\) implies that
		\begin{align*}
			|E_{1}(r)|
			&\le
			\frac{\lambda H(r)r^{4}}{2(\alpha+1)r}|J_{1}(r)|
			\\
			&\le
	C\sqrt{\lambda}G\varepsilon^{\beta - 1} r^{4}H^2(r)
			+
\frac{C\sqrt{\lambda}G\varepsilon^{\beta - 2}r^{5}}{4(\alpha+1)}H^2(r).
		\end{align*}
		Hence we arrive at \eqref{eq:Hdr-E1-bound}.
		
		We now turn to the proof of  \eqref{eq:Hdr-E2-bound}. Since \(\mu_{r}^{\alpha+1}\le r^{2}\mu_{r}^{\alpha}\), the application of  Cauchy-Schwartz inequality gives that
		\begin{align*}
			\left|
			\int_{B_{r}}2\nabla u\cdot\nabla w\,\mu_{r}^{\alpha+1}\,dx
			\right|
			&\le
			2r^{2}
			\left(
			\int_{B_{r}}|\nabla u|^{2}\mu_{r}^{\alpha}\,dx
			\right)^{1/2}
			\left(
			\int_{B_{r}}|\nabla w|^{2}\mu_{r}^{\alpha}\,dx
			\right)^{1/2}.
		\end{align*}
		Using the fact that
		\[
		 \int_{B_{r}}|\nabla u|^{2}\mu_{r}^{\alpha}\,dx
		+\lambda r^{4}\int_{B_{r}}|\nabla w|^{2}\mu_{r}^{\alpha}\,dx \leq C D(r),
		\] 
		we have
		\[
		2r^{2}
		\left(
		\int_{B_{r}}|\nabla u|^{2}\mu_{r}^{\alpha}\,dx
		\right)^{1/2}
		\left(
		\int_{B_{r}}|\nabla w|^{2}\mu_{r}^{\alpha}\,dx
		\right)^{1/2}
		\le
		\frac{D(r)}{\sqrt{\lambda}}.
		\]
		Therefore
		\[
		\left|
		\int_{B_{r}}2\nabla u\cdot\nabla w\,\mu_{r}^{\alpha+1}\,dx
		\right|
		\le
		\frac{C D(r)}{\sqrt{\lambda}}.
		\]
		Similarly, the estimate of $V_\varepsilon$ in Lemma \ref{lem:mollbound} gives that
		\[
		\left|
		\int_{B_{r}}2\lambda r^4\Ve\nabla u\cdot\nabla w\,\mu_{r}^{\alpha+1}\,dx
		\right|
		\le
		\frac{C\lambda r^4G}{\sqrt{\lambda}}D(r).
		\]
		Substituting the last two inequalities into the definition of
		\(E_{2}(r)\), we obtain \eqref{eq:Hdr-E2-bound}.
		
		The definition of \(E_{3}(r)\) immediately implies the estimates  \eqref{eq:Hdr-E3-positive}.
		
		Next we prove \eqref{eq:Hdr-E4-bound}. Since \(\mu_{r}^{\alpha+1}\le r^{4}\mu_{r}^{\alpha-1}\), from the estimates of $V_\varepsilon$ in \eqref{eq:moll-est}, we have
		\begin{align*}
			E_{4}(r)
			&\ge
			-\frac{GH(r)}{2(\alpha+1)r}
			\int_{B_{r}}
			\bigl(
			|u|^2+\lambda r^{4}|w|^2
			\bigr)\mu_{r}^{\alpha+1}\,dx
            +\frac{H(r)}{2(\alpha + 1)r}\int_{B_r}f_{\varepsilon}|u|^2 \mu_r^{\alpha + 1}\,dx 
			\\
			&
            - \frac{\lambda H(r) r^4}{2(\alpha + 1)r}\int_{B_r}\Ve \fe |u|^2 \mu_r^{\alpha + 1}\,dx \\
			&
            \ge
			-\frac{GH(r)}{2(\alpha+1)r}\cdot r^{4}
			\int_{B_{r}}
			\bigl(
			|u|^2+\lambda r^{4}|w|^2
			\bigr)\mu_{r}^{\alpha-1}\,dx
            - \frac{CG\varepsilon^{\beta}H}{2(\alpha + 1)r} \cdot r^4\int_{B_r}|u|^2 \mu_r^{\alpha - 1}\,dx \\&- \frac{C\lambda G^2\varepsilon^{\beta}H(r)r^4}{2(\alpha + 1)r} \cdot r^4\int_{B_r}|u|^2 \mu_{r}^{\alpha - 1}\,dx
			\\
			&\geq
			-\frac{GH^2(r)r^{3}}{2(\alpha+1)} - \frac{CG\varepsilon^{\beta}H^2(r)r^3}{2(\alpha + 1)} - \frac{C\lambda G^2 \varepsilon^{\beta}H^2(r)r^7}{2(\alpha + 1)}.
		\end{align*}
		Thus, we arrive at \eqref{eq:Hdr-E4-bound}.
		Thanks to (\ref{repeat-1}), we have
		\[
		E_{5}(r)
		\ge
		-4\lambda GH(r)r^{3}
\int_{B_{r}}|u||w|\,\mu_{r}^{\alpha}\,dx
		\ge
		-2\sqrt{\lambda}\,G\,H^2(r)r^{3},
		\]
		which proves \eqref{eq:Hdr-E5-bound}.
        
        Finally,  thanks to (\ref{repeat-1}), the estimates of $f_\varepsilon=V_\varepsilon-V$ in (\ref{eq:moll-est}), and  \eqref{eq:Hdr-L-over-H}, we can  derive
        \begin{align*}
            E_6(r) &= -\frac{L}{\alpha r}\int_{B_r}\lambda r^4 w \fe u \mu_r^{\alpha}\,dx \\
            &\geq - \frac{1}{\alpha r}\left( \frac{1 + \lambda G r^4}{2\sqrt{\lambda}}\right)H(r)\lambda r^4 
CG\varepsilon^{\beta}\frac{H(r)}{2\sqrt{\lambda}} \\
            &\geq -\frac{CG\varepsilon^{\beta} H^2(r)r^3}{4 \alpha} - \frac{C\lambda G^2 \varepsilon^{\beta}H^2(r)r^7}{4\alpha}.
        \end{align*}
        This proves \eqref{eq:Hdr-E6-bound}. Hence we finish the proof of the lemma.
        
	\end{proof}

    We are going to study the frequency function $N(r)$.
	Next we consider a differential inequality for \(N(r)\). For ease of the presentation, we introduce the following notations,
		\begin{equation}\label{eq:Hdr-B-def}
			B(r):=\frac{1+\lambda Gr^{4}}{2\sqrt{\lambda}},
		\end{equation}
		\begin{equation}\label{eq:Hdr-b-def}
			b(r):=
			\frac{1}{2(\alpha+1)r\sqrt{\lambda}}
			+
			\frac{\sqrt{\lambda}Gr^{3}}{2(\alpha+1)},
		\end{equation}
		\begin{equation}\label{eq:Hdr-a-def}
			a(r):=
			\sqrt{\lambda}G\varepsilon^{\beta - 1}\,r^{4}
			+
			\frac{\sqrt{\lambda}G\varepsilon^{\beta - 2}r^{5}}{4(\alpha+1)}
			+
            \frac{Gr^{3}}{2(\alpha+1)}
            +
            \frac{CG\varepsilon^{\beta}r^3}{2(\alpha + 1)}
            +
            \frac{\lambda G^2 \varepsilon^{\beta}r^7}{2(\alpha + 1)}
            +
            \frac{G\varepsilon^{\beta}r^3}{4 \alpha}
            +
            \frac{\lambda G^2 \varepsilon^{\beta}r^7}{4\alpha},
		\end{equation}
		\begin{equation}\label{eq:Hdr-A-def-corrected}
			A(r):=b(r)+\frac{B(r)}{\alpha r},
		\end{equation}
		\begin{equation}\label{eq:Hdr-d-def}
			d(r):=
			\frac{1}{4\alpha\lambda r}
			+
			\frac{\lambda G^{2}r^{7}}{4\alpha}
            +
            \frac{\lambda G^2 \varepsilon^{2\beta}r^7}{4\alpha}
            +
            \frac{\lambda G^2 \varepsilon^{\beta}r^7}{4\alpha}.
		\end{equation}
		
		Set
		\begin{equation}\label{eq:Hdr-P-def-corrected}
			P(r):=
			d(r)+a(r)+\left(b(r)+\frac{2}{r}\right)B(r)+\frac{2B^2(r)}{\alpha r}.
		\end{equation}
			Specifically,
		\begin{align}
			A(r)
			&=
			\frac{1}{2(\alpha+1)r\sqrt{\lambda}}
			+
			\frac{\sqrt{\lambda}Gr^{3}}{2(\alpha+1)}
			+
			\frac{1+\lambda Gr^{4}}{2\alpha r\sqrt{\lambda}},
			\label{eq:Hdr-A-expanded}
		\end{align}
        \begin{align}
			P(r)
			&= \nonumber
			\frac{1}{4\alpha\lambda r}
			+
			\frac{\lambda G^{2}r^{7}}{4\alpha}
			+
			\sqrt{\lambda}\,CG\varepsilon^{\beta - 1}\,r^{4}
			+
			\frac{\sqrt{\lambda}\,CG\varepsilon^{\beta - 2} r^{5}}{4(\alpha+1)}\,
			+
            \frac{Gr^{3}}{2(\alpha+1)}
             \\
            &+ \frac{CG\varepsilon^{\beta}r^3}{2(\alpha + 1)}
            +
            \frac{C\lambda G^2 \varepsilon^{\beta}r^7}{2(\alpha + 1)}
            +
            \frac{CG\varepsilon^{\beta}r^3}{4 \alpha}
            +
            \frac{\lambda G^2 \varepsilon^{2\beta}r^7}{4\alpha}
            +
            \frac{\lambda G^2 \varepsilon^{\beta}r^7}{4\alpha} \nonumber \\
            &+
            \frac{C\lambda G^2 \varepsilon^{\beta}r^7}{4\alpha}
            +
			\left(
			\frac{1}{2(\alpha+1)r\sqrt{\lambda}}
			+
			\frac{\sqrt{\lambda}Gr^{3}}{2(\alpha+1)}
			+
			\frac{2}{r}
			\right)\frac{1+\lambda Gr^{4}}{2\sqrt{\lambda}}
			+
			\frac{(1+\lambda Gr^{4})^{2}}{2\alpha\lambda r}.
			\label{eq:Hdr-P-expanded}
		\end{align}
        We can show the following monotonicity result for $N(r).$
	\begin{proposition}\label{prop:Hdr-N-differential-corrected}
		Let $N(r)$ be given in (\ref{eq:Hdr-I}), $A(r)$ and $P(r)$ be given in (\ref{eq:Hdr-A-expanded}) and (\ref{eq:Hdr-P-expanded}).
		For every \(0<r<R-\varepsilon\), it holds that
		\begin{equation}\label{eq:Hdr-N-differential-corrected}
			N'(r)+A(r)N(r)\ge -P(r).
		\end{equation}
	
	\end{proposition}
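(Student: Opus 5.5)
Since $N=I/H$ with $I=D+L$, we will write
\[
H^2N' = (D'H-DH') + (L'H-LH')
\]
and treat the two brackets separately, using Lemma~\ref{lem:Hdr-HD-prime} for the first and Lemma~\ref{lem:Hdr-LH-identity} for the second.

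For the first bracket we follow the proof of Lemma~\ref{lem:N-prime}. Substituting \eqref{eq:Hdr-H-prime} and \eqref{eq:Hdr-D-prime-V}, we introduce $J(r)$ with the modified squares, in which $\Delta w$ is replaced by $-(\Ve-\fe)u$, and check as before that $J=D+\frac12L$. Cauchy--Schwarz then disposes of the large bracket. What remains is
\[
\frac{1}{4\alpha r}\Bigl(L^2-H\!\int_{B_r}\bigl(|w|^2+\lambda r^4(\Ve-\fe)^2|u|^2\bigr)\mu_r^{\alpha+1}\Bigr)
\]
together with the extra term $-\frac{D}{\alpha r}\int\lambda r^4 w\fe u\,\mu_r^\alpha$ coming from \eqref{eq:Hdr-H-prime}. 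We drop $L^2\ge 0$. The $w$-integral is bounded by $r^4 H/\lambda$, and $(\Ve-\fe)^2\le 2\Ve^2+2\fe^2$ is handled with Lemma~\ref{lem:mollbound}. Together these produce the $d(r)H^2$ contribution in $P$. The extra term is bounded by $\frac{D}{\alpha r}\lambda r^4 CG\varepsilon^\beta\frac{H}{2\sqrt\lambda}$ via \eqref{repeat-1}, which is of the form (coefficient)$\cdot DH$. We keep it this way to be absorbed later.

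For the second bracket we invoke \eqref{eq:Hdr-LH-main} and the bounds of Lemma~\ref{lem:Hdr-E-bounds}. The term $E_3\ge0$ is discarded. The terms $E_1,E_4,E_5,E_6$ are bounded below by $-a(r)H^2$; the $E_5$ bound $2\sqrt\lambda G r^3$ is folded into the $b\,B$-type terms. The term $E_2$ is bounded below by $-b(r)HD$. Next, $\frac2r LH\ge -\frac2r B H^2$. Finally,
\[
-\frac{L(D+L)}{\alpha r}\ge -\frac{B}{\alpha r}HD-\frac{B^2}{\alpha r}H^2 .
\]

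Dividing by $H^2$ leaves terms in $D/H$, whereas the statement is in terms of $N=(D+L)/H$. We therefore substitute $D/H=N-L/H$ and use $|L/H|\le B$. Each coefficient times $D/H$ becomes that coefficient times $N$, plus an error of at most the coefficient times $B$. This conversion generates the $A(r)N$ term with $A=b+B/(\alpha r)$. It also generates the extra $bB$, $\frac{2B}{r}$ and $\frac{2B^2}{\alpha r}$ contributions that appear in $P$.

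The main obstacle is sign bookkeeping. The coefficient of $D/H$ must be negative so that $-c\,D/H$ can be bounded below by $-cN-cB$. This works if $D\ge0$ and we use $-cD/H\ge -c(N+B)$. The $\varepsilon^\beta$ coefficient multiplying $D/H$ from the first bracket is not listed separately in $A$. We plan either to absorb it into $b$ by noting that it is dominated by $\frac{\sqrt\lambda Gr^3}{2(\alpha+1)}$ up to constants, or to accept it with adjusted constants $C$. Once these terms are matched against the expanded $P$ in \eqref{eq:Hdr-P-expanded}, the inequality \eqref{eq:Hdr-N-differential-corrected} follows.
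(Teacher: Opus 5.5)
Your second bracket matches the paper up to a harmless reordering. Bounding $-\frac{L(D+L)}{\alpha r}\ge -\frac{B}{\alpha r}HD-\frac{B^{2}}{\alpha r}H^{2}$ and then using $D/H=N-L/H\le N+B$ gives the same $A=b+\frac{B}{\alpha r}$ and the same $\bigl(b+\frac{2}{r}\bigr)B+\frac{2B^{2}}{\alpha r}$ that the paper gets from $|N|\le N+2B$.

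The gap is in the first bracket, and it is exactly the term you could not place. The identity $J=D+\frac12 L$ is false for the $L$ of \eqref{eq:Hdr-L}. Your squares contain the true $\Delta w=-(V_\varepsilon-f_\varepsilon)u$. So the $w$-part of $J$ produces $\frac12\lambda r^{4}\int_{B_r}w\Delta w\,\mu_r^{\alpha}\,dx=\frac12(L_2+F)$, where $F:=\lambda r^{4}\int_{B_r}wf_\varepsilon u\,\mu_r^{\alpha}\,dx$. Hence $J=D+\frac12(L+F)$. The leftover $-\frac{DF}{\alpha r}$ in your computation is an artifact of the wrong identity. As written, your argument ends with an extra coefficient $\frac{C\sqrt{\lambda}G\varepsilon^{\beta}r^{3}}{2\alpha}$ in front of $N$, plus a matching extra term in $P$. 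Neither remedy you propose gives the stated $A(r)$. The coefficient $b$ has no slack, since it is exactly what $E_2$ consumes, and enlarging constants proves a different inequality.

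The missing observation is that $L+F$ is exactly the combination in \eqref{eq:Hdr-H-prime}, namely $H'=\frac{2\alpha+n-2}{r}H+\frac{1}{\alpha r}(D+L+F)$. Then $D=J-\frac12(L+F)$ and $D+L+F=J+\frac12(L+F)$, so $D(D+L+F)=J^{2}-\frac14(L+F)^{2}$. The computation of Lemma~\ref{lem:N-prime} therefore goes through verbatim with $L$ replaced by $L+F$, and after Cauchy--Schwarz
\[
D'H-DH'\ge \frac{1}{4\alpha r}\Bigl((L+F)^{2}-H\int_{B_r}\bigl(|w|^{2}+\lambda r^{4}(\Delta w)^{2}\bigr)\mu_r^{\alpha+1}\,dx\Bigr),
\]
with no $D\cdot F$ term at all. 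This is what the paper means by ``the same argument as \eqref{eq:Nprime-preliminary}''.

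Some smaller bookkeeping points:

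The $w$-integral is bounded by $H/\lambda$, not $r^{4}H/\lambda$. This is what produces the $\frac{1}{4\alpha\lambda r}$ in $d$.

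Using $(V_\varepsilon-f_\varepsilon)^{2}\le 2V_\varepsilon^{2}+2f_\varepsilon^{2}$ doubles the leading term $\frac{\lambda G^{2}r^{7}}{4\alpha}$ of $d$. Instead, expand the square and send the cross term to the $\varepsilon^{\beta}$ terms, as the paper does.

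The $E_5$ contribution $2\sqrt{\lambda}Gr^{3}$ cannot be folded into $bB$. Those terms are already spent in the conversion $D/H\le N+B$, and each carries a factor $(\alpha+1)^{-1}$. It needs its own place in $P$. The paper's displayed $a(r)$ omits it as well. This is harmless downstream, since it is dominated by the $\sqrt{\lambda}G\varepsilon^{\beta-1}r^{3}$ term in \eqref{eq:Hdr-P-rough-final-sharp}.
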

	
	\begin{proof}
	From (\ref{eq:Hdr-I}), the quotient rule leads to 
		\begin{equation}\label{eq:Hdr-N-quotient}
			N'(r)
			=
			\frac{D'(r)H(r)-D(r)H'(r)}{H^2(r)}
			+
			\frac{L'(r)H(r)-L(r)H'(r)}{H^2(r)}.
		\end{equation}
	We study the first quotient in the right hand side of (\ref{eq:Hdr-N-quotient}). Since $D(r)$ and $H(r)$ are the same as
		those in the bounded-potential case in the last section. By the same argument as \eqref{eq:Nprime-preliminary}, we have
        \begin{equation}\label{eq:Hdr-Nprime-preliminary}
			\frac{D'(r)H(r) - H'(r)D(r)}{H^2(r)}
			\ge
			-\frac{1}{4\alpha r\,H(r)}
			\int_{B_{r}}\bigl(|w|^2+\lambda r^{4}(\Delta w)^{2}\bigr)\mu_{r}^{\alpha+1}\,dx.
		\end{equation}
       Since it holds that $\Delta w = -\Ve u + \fe u $, substituting it to \eqref{eq:Hdr-Nprime-preliminary} and using Lemma \ref{lem:mollbound}, we have
        \begin{align}
            \frac{D'(r)H(r) - H'(r)D(r)}{H^2(r)} &\geq -\frac{1}{4\alpha r\,H(r)}
			\int_{B_{r}}\bigl(|w|^2+\lambda r^{4}(-\Ve u+ \fe u)^{2}\bigr)\mu_{r}^{\alpha+1}\,dx \\
            &\geq -\frac{1}{4\alpha r\,H(r)}
			\int_{B_{r}}\bigl(|w|^2+\lambda r^{4}(\Ve^2|u|^2 + \fe^2|u|^2 - 2\Ve\fe |u|^2)\bigr)\mu_{r}^{\alpha+1}\,dx \\
            &\geq -\frac{H(r)/\lambda + \lambda r^4 G^2r^4H(r) + \lambda r^4G^2\varepsilon^{2\beta}r^4H(r) + \lambda r^4G^2\varepsilon^{\beta}r^4H(r)}{4\alpha rH(r)}.
        \end{align}
        Then we have
		\begin{equation}\label{eq:Hdr-D-part-lower}
			\frac{D'(r)H(r)-D(r)H'(r)}{H^2(r)}
			\ge
			-\frac{1}{4\alpha\lambda r}
			-
			\frac{\lambda G^{2}r^{7}}{4\alpha}
            -
            \frac{\lambda G^2 \varepsilon^{2\beta}r^7}{4\alpha}
            -
            \frac{\lambda G^2 \varepsilon^{\beta}r^7}{4\alpha}
			=
			-d(r).
		\end{equation}
		
		Next we study the second quotient in (\ref{eq:Hdr-N-quotient}).		
		Recall  Lemma~\ref{lem:Hdr-LH-identity}.
		Dividing both sides of (\ref{eq:Hdr-LH-main}) by \(H^2(r)\), and then using Lemma~\ref{lem:Hdr-E-bounds} and definition of $a(r), b(r)$, we obtain
		\begin{align}
			\frac{L'(r)H(r)-L(r)H'(r)}{H^2(r)}
			&\ge
			-a(r)
			-
			b(r)\frac{D(r)}{H(r)}
			-
			\frac{N(r)}{\alpha r}\frac{L(r)}{H(r)}
			+
			\frac{2}{r}\frac{L(r)}{H(r)}.
			\label{eq:Hdr-Lpart-first}
		\end{align}
		Since 
		\begin{equation}\label{eq:Hdr-D-over-H}
			\frac{D(r)}{H(r)}
			=
			\frac{D(r)+L(r)}{H(r)}-\frac{L(r)}{H(r)}
			=
			N(r)-\frac{L(r)}{H(r)}.
		\end{equation}
		Substituting \eqref{eq:Hdr-D-over-H} into \eqref{eq:Hdr-Lpart-first}, we get
		\begin{align}
			\frac{L'(r)H(r)-L(r)H'(r)}{H^2(r)}
			&\ge
			-a(r)-b(r)N(r)
			+
			\left(
			b(r)+\frac{2}{r}-\frac{N(r)}{\alpha r}
			\right)\frac{L(r)}{H(r)}.
			\label{eq:Hdr-Lpart-second}
		\end{align}
		
		Next we give a 
		  lower bound for \(N(r)\).
		Since \(D(r)\ge 0\), it is obvious that
		\[
		N(r)=\frac{D(r)+L(r)}{H(r)}\ge \frac{L(r)}{H(r)}.
		\]
		Thanks to \eqref{eq:Hdr-L-over-H}, it follows that
		\begin{equation}\label{eq:Hdr-N-lower}
			N(r)\ge -\,\frac{|L(r)|}{H(r)}\ge -\,B(r).
		\end{equation}
		Therefore
		\begin{equation}\label{eq:Hdr-absN-by-N}
			|N(r)|\le N(r)+2B(r).
		\end{equation}
		Indeed, if \(N(r)\ge 0\), then the inequality is trivial. If \(N(r)<0\), then
		\(N(r)\ge -B(r)\). Hence
		\[
		|N(r)|=-N(r)\le N(r)+2B(r).
		\]
		
		Furthermore, we  
	 control the \(\frac{L(r)}{H(r)}\)-term.
		By \eqref{eq:Hdr-L-over-H} again, we have
		\begin{equation}\label{eq:Hdr-L-over-H-by-B}
			\left|\frac{L(r)}{H(r)}\right|\le B(r).
		\end{equation}
		Therefore, it follows from \eqref{eq:Hdr-Lpart-second} that,
		\begin{align}
			\frac{L'(r)H(r)-L(r)H'(r)}{H^2(r)}
			&\ge
			-a(r)-b(r)N(r)
			-
			\left(
			b(r)+\frac{2}{r}+\frac{|N(r)|}{\alpha r}
			\right)B(r).
			\label{eq:Hdr-Lpart-third}
		\end{align}
		Using \eqref{eq:Hdr-absN-by-N}, we obtain
		\[
		\frac{|N(r)|}{\alpha r}B(r)
		\le
		\frac{B(r)}{\alpha r}N(r)+\frac{2B^{2}(r)}{\alpha r}.
		\]
		Substituting this into \eqref{eq:Hdr-Lpart-third} yields that
		\begin{align}
			\frac{L'(r)H(r)-L(r)H'(r)}{H^2(r)}
			&\ge
			-a(r)-b(r)N(r)
			-\left(b(r)+\frac{2}{r}\right)B(r)
			-\frac{B(r)}{\alpha r}N(r)
			-\frac{2B^{2}(r)}{\alpha r}
			\notag\\
			&=
			-\left[a(r)+\left(b(r)+\frac{2}{r}\right)B(r)+\frac{2B^{2}(r)}{\alpha r}\right]
			-\left[b(r)+\frac{B(r)}{\alpha r}\right]N(r).
			\label{eq:Hdr-Lpart-final}
		\end{align}
		
	We are ready to estimate $N'(r)$.
		Combining \eqref{eq:Hdr-N-quotient}, \eqref{eq:Hdr-D-part-lower}, and \eqref{eq:Hdr-Lpart-final}, we obtain
		\[
		N'(r)
		\ge
		-d(r)
		-\left[a(r)+\left(b(r)+\frac{2}{r}\right)B(r)+\frac{2B^{2}(r)}{\alpha r}\right]
		-\left[b(r)+\frac{B(r)}{\alpha r}\right]N(r).
		\]
		That is,
		\[
		N'(r)+A(r)N(r)\ge -P(r),
		\]
		where \(A(r)\) and \(P(r)\) are given by \eqref{eq:Hdr-A-def-corrected} and
		\eqref{eq:Hdr-P-def-corrected}.
	\end{proof}
	
	The monotone property of \(N(r)\) provides the monotonicity of the frequency function. Define
	\begin{equation}\label{eq:Hdr-calN}
		\mathcal N(r):=
		e^{\int_{r_{0}}^{r}A(t)\,dt}N(r)
		+
\int_{r_{0}}^{r}e^{\int_{r_{0}}^{s}A(t)\,dt}P(s)\,ds.
	\end{equation}
	\begin{corollary}\label{cor:Hdr-monotone-quantity}
	Fix any \(r_{0}\in(0, \ R-\varepsilon)\), then \(\mathcal N(r)\) is nondecreasing in \((r_{0},R)\).
\end{corollary}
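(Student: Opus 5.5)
The plan is the standard integrating-factor argument applied to the differential inequality of Proposition \ref{prop:Hdr-N-differential-corrected}. Set $\Phi(r):=e^{\int_{r_{0}}^{r}A(t)\,dt}$. Since $A$ is continuous on $(0,R-\varepsilon)$, $\Phi$ is positive and $C^{1}$ there, with $\Phi'(r)=A(r)\Phi(r)$. The function $N(r)=I(r)/H(r)$ is differentiable wherever $H(r)>0$. For a nontrivial solution, $H(r)>0$ for every $r>0$, since otherwise $u\equiv0$ and $w\equiv 0$ on a ball. Hence $\mathcal N$ is differentiable on the interval, and differentiating \eqref{eq:Hdr-calN} gives
\[
\mathcal N'(r)=\Phi(r)N'(r)+A(r)\Phi(r)N(r)+\Phi(r)P(r)=\Phi(r)\bigl(N'(r)+A(r)N(r)+P(r)\bigr).
\]
The second term of $\mathcal N$ has derivative $\Phi(r)P(r)$ by the fundamental theorem of calculus, because $P$ is continuous on $(0,R-\varepsilon)$.

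By \eqref{eq:Hdr-N-differential-corrected} the bracket is nonnegative, and $\Phi>0$, so $\mathcal N'(r)\ge 0$. Integrating then shows $\mathcal N(r_{2})\ge\mathcal N(r_{1})$ for $r_{0}\le r_{1}<r_{2}$, which is the claimed monotonicity.

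The only points needing care are regularity and domain issues.
\begin{enumerate}
\item All quantities ($A$, $P$, the identities of Lemma \ref{lem:Hdr-HD-prime} and Lemma \ref{lem:Hdr-L-prime}) are valid only for $r<R-\varepsilon$. The mollified potential $V_{\varepsilon}$ is only controlled on $B_{R-\varepsilon}$ by Lemma \ref{lem:mollbound}. So the monotonicity should be read on $(r_{0},R-\varepsilon)$, and I would state it that way, interpreting the interval in the statement accordingly.
\item $A(r)$ and $P(r)$ contain terms like $1/r$, but these are integrable on $[r_{0},r]$ since $r_{0}>0$. Hence $\Phi$ and the integral term are finite.
\item Differentiability of $H$, $D$, $L$ in $r$ is justified by Lemma \ref{lem:weighted-differentiation} and the smoothness of $u$ ($u\in W^{4,\infty}_{\mathrm{loc}}$ suffices by elliptic regularity).
\end{enumerate}
I expect no real obstacle; the main subtlety is the positivity of $H$, which is handled by the nontriviality assumption.
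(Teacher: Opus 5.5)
Your argument is correct and is the same as the paper's. Differentiate $\mathcal N$, factor out the positive weight $e^{\int_{r_0}^{r}A(t)\,dt}$, and apply $N'+AN+P\ge 0$ from Proposition~\ref{prop:Hdr-N-differential-corrected}. Your caveat that monotonicity is only justified on $(r_0,R-\varepsilon)$ is accurate, since that is where the Proposition and Lemma~\ref{lem:mollbound} hold, and your positivity of $H$ tacitly uses unique continuation from a ball, as the paper also assumes.
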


\begin{proof}
	Differentiating \eqref{eq:Hdr-calN}, we obtain
	\begin{align*}
		\mathcal N'(r)
		&=
		e^{\int_{r_{0}}^{r}A(t)\,dt}\bigl(N'(r)+A(r)N(r)\bigr)
		+
		e^{\int_{r_{0}}^{r}A(t)\,dt}P(r)
		\\
		&=
		e^{\int_{r_{0}}^{r}A(t)\,dt}\bigl(N'(r)+A(r)N(r)+P(r)\bigr).
	\end{align*}
	Thanks to Proposition~\ref{prop:Hdr-N-differential-corrected},
	\[
	N'(r)+A(r)N(r)+P(r)\ge 0.
	\]
	Hence \(\mathcal N'(r)\ge 0\), which verfies the conclusion of the lemma.
\end{proof}

In next lemma, we show two simple comparison inequalities for \(N(r)\).
\begin{lemma}\label{lem:Hdr-N-compare}
	For \(0<a<b<R\), define
	\begin{equation}\label{eq:Hdr-Phi}
\Phi(a,b):=\int_{a}^{b}A(t)\,dt.
	\end{equation}
	Then
	\begin{equation}\label{eq:Hdr-N-upper}
		N(a)\le e^{\Phi(a,b)}N(b)+\int_{a}^{b}e^{\Phi(a,s)}P(s)\,ds,
	\end{equation}
	and
	\begin{equation}\label{eq:Hdr-N-lower-compare}
		N(b)\ge e^{-\Phi(a,b)}N(a)-\int_{a}^{b}e^{-\Phi(s,b)}P(s)\,ds.
	\end{equation}
\end{lemma}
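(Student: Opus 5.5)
This is a Grönwall-type comparison. It follows from integrating the differential inequality of Proposition~\ref{prop:Hdr-N-differential-corrected} against the integrating factor, exactly as in Corollary~\ref{cor:Hdr-monotone-quantity}.

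The plan is to apply the monotonicity of Corollary~\ref{cor:Hdr-monotone-quantity} with the base point $r_0:=a$. By definition,
\[
\mathcal N(r)=e^{\Phi(a,r)}N(r)+\int_a^r e^{\Phi(a,s)}P(s)\,ds ,
\]
so $\mathcal N(a)=N(a)$. Since $\mathcal N$ is nondecreasing, $\mathcal N(a)\le \mathcal N(b)$, which gives
\[
N(a)\le e^{\Phi(a,b)}N(b)+\int_a^b e^{\Phi(a,s)}P(s)\,ds .
\]
This is exactly \eqref{eq:Hdr-N-upper}.

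For \eqref{eq:Hdr-N-lower-compare}, I will not rearrange the previous inequality. Instead I use the same inequality $\mathcal N(b)\ge \mathcal N(a)$ and multiply it by the positive factor $e^{-\Phi(a,b)}$. I then use the additivity $\Phi(a,s)-\Phi(a,b)=-\Phi(s,b)$ for $a\le s\le b$. This gives
\[
N(b)+\int_a^b e^{-\Phi(s,b)}P(s)\,ds\ \ge\ e^{-\Phi(a,b)}N(a),
\]
which is \eqref{eq:Hdr-N-lower-compare} after moving the integral to the right-hand side.

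One technical point needs checking: $A(t)$ contains terms like $\frac{1}{r}$, so I must confirm that $\Phi(a,b)$ and the integrals of $P$ are finite on $[a,b]$. This holds because $a>0$, and $A$ and $P$ are continuous on $[a,b]\subset(0,R-\varepsilon)$. The statement is written with $b<R$, but the monotonicity is only available for $r<R-\varepsilon$, so I restrict to $b<R-\varepsilon$, which is where $N$ is defined. I also need differentiability of $N$ on $[a,b]$. This follows because $H>0$ there: otherwise $u\equiv0$ by the unique continuation assumed implicitly throughout.

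The argument is otherwise routine. The only mild obstacle is the bookkeeping of the exponents in the second inequality, namely verifying the sign identity $\Phi(a,s)-\Phi(a,b)=-\Phi(s,b)$.
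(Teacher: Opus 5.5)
Your proof is correct and is essentially the paper's argument: the paper also uses $\mathcal N(a)\le\mathcal N(b)$, but with a general base point $r_0$, and then multiplies by $e^{-\int_{r_0}^{a}A}$ and $e^{-\int_{r_0}^{b}A}$; taking $r_0=a$ just makes the first normalization automatic. Your side remarks are valid points the paper leaves implicit: one should restrict to $b<R-\varepsilon$, and $H>0$ follows from unique continuation.
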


\begin{proof}
	Since \(\mathcal N(r)\) is nondecreasing,
	that is, 	
$\mathcal N(a)\le \mathcal N(b).$
	then  \eqref{eq:Hdr-calN} shows that
	\[
	e^{\int_{r_{0}}^{a}A(t)\,dt}N(a)
	+
	\int_{r_{0}}^{a}e^{\int_{r_{0}}^{s}A(t)\,dt}P(s)\,ds
	\le
	e^{\int_{r_{0}}^{b}A(t)\,dt}N(b)
	+
	\int_{r_{0}}^{b}e^{\int_{r_{0}}^{s}A(t)\,dt}P(s)\,ds.
	\]
	Subtracting the second  integral of the left hand side of the last inequality from both sides, we obtain
	\begin{align}
	e^{\int_{r_{0}}^{a}A(t)\,dt}N(a)
	\le
	e^{\int_{r_{0}}^{b}A(t)\,dt}N(b)
	+
	\int_{a}^{b}e^{\int_{r_{0}}^{s}A(t)\,dt}P(s)\,ds.
    \label{compare-1}
	\end{align}
	Multiplying (\ref{compare-1}) by \(e^{-\int_{r_{0}}^{a}A(t)\,dt}\), we get
	\[
	N(a)\le e^{\int_{a}^{b}A(t)\,dt}N(b)+\int_{a}^{b}e^{\int_{a}^{s}A(t)\,dt}P(s)\,ds,
	\]
	where  \eqref{eq:Hdr-N-upper} is arrived.
	
	Multiplying (\ref{compare-1})  by \(e^{-\int_{r_{0}}^{b}A(t)\,dt}\) gives that
	\[
	N(b)\ge e^{-\int_{a}^{b}A(t)\,dt}N(a)-\int_{a}^{b}e^{-\int_{s}^{b}A(t)\,dt}P(s)\,ds.
	\]
	Thus, we obtain \eqref{eq:Hdr-N-lower-compare}.
\end{proof}

Thanks to the monotonicity of $\mathcal{N}(r)$, we are able to show a weighted three-ball inequality for \(H(r)\). We still need to introduce some notations.
	Define
	\begin{equation}\label{eq:Hdr-c1-c2}
		c_{1}:=(2\alpha+n-2)\log\frac{2r_{2}}{r_{1}},
		\qquad
		c_{2}:=(2\alpha+n-2)\log\frac{r_{3}}{2r_{2}},
	\end{equation}
	\begin{equation}\label{eq:Hdr-beta-gamma}
		\beta_1:=\frac{1}{\alpha}\int_{r_{1}}^{2r_{2}}\frac{e^{\Phi(t,2r_{2})}}{t}\,dt,
		\qquad
		\gamma_1:=\frac{1}{\alpha}\int_{2r_{2}}^{r_{3}}\frac{e^{-\Phi(2r_{2},t)}}{t}\,dt,
	\end{equation}
	\begin{equation}\label{eq:Hdr-R1}
		R_{1}:=
		\frac{1}{\alpha}\int_{r_{1}}^{2r_{2}}
		\frac{1}{t}
		\left(
		\int_{t}^{2r_{2}}e^{\Phi(t,s)}P(s)\,ds
		\right)\,dt, \quad 
		R_{2}:=
		\frac{1}{\alpha}\int_{2r_{2}}^{r_{3}}
		\frac{1}{t}
		\left(
		\int_{2r_{2}}^{t}e^{-\Phi(s,t)}P(s)\,ds
		\right)\,dt,
	\end{equation}
    \begin{equation}\label{eq:Hdr-S1}
        S_1 := 
        \int_{r_1}^{2r_2}\frac{C\sqrt{\lambda}t^3G\varepsilon^{\beta}}{2\alpha}\,dt,
   \quad
        S_2 := \int_{2r_2}^{r_3}\frac{C\sqrt{\lambda}t^3G\varepsilon^{\beta}}{2\alpha}\,dt,
    \end{equation}
	\begin{equation}\label{eq:Hdr-LambdaH}
		\kappa:=\kappa(r_1, r_2, r_3):=\frac{\gamma_1}{\beta_1+\gamma_1},
	\quad
		\Lambda_{H}:=
		\kappa c_{1}-(1-\kappa)c_{2}+\kappa R_{1}+(1-\kappa)R_{2}+\kappa S_{1}+(1-\kappa)S_{2}.
	\end{equation}
\begin{proposition}\label{prop:Hdr-H-three-ball-from-calN}
	Let
	$0<r_{1}<r_{2}<2r_{2}<r_{3}<R-\varepsilon.$
	Then
	\begin{equation}\label{eq:Hdr-H-three-ball}
		H(2r_{2})\le e^{\Lambda_{H}}H^{\kappa}(r_{1})H^{1-\kappa}(r_{3}).
	\end{equation}
\end{proposition}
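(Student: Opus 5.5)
The plan is to integrate the logarithmic derivative of $H$ over $[r_1,2r_2]$ and $[2r_2,r_3]$, and to compare $N$ on each interval with $N(2r_2)$ via Lemma \ref{lem:Hdr-N-compare}. From \eqref{eq:Hdr-H-prime} and $N=(D+L)/H$ we first record
\[
\frac{H'(r)}{H(r)}=\frac{2\alpha+n-2}{r}+\frac{N(r)}{\alpha r}+\frac{\lambda r^{3}}{\alpha H(r)}\int_{B_r}wf_\varepsilon u\,\mu_r^{\alpha}\,dx .
\]
The last term is controlled exactly as in the bound for $E_6$. Using $|f_\varepsilon|\le CG\varepsilon^\beta$ from Lemma \ref{lem:mollbound} together with \eqref{repeat-1}, its absolute value is at most $\frac{C\sqrt\lambda\, r^3G\varepsilon^\beta}{2\alpha}$. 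Integrated over the two intervals, this error produces precisely $S_1$ and $S_2$.

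On $[r_1,2r_2]$, apply \eqref{eq:Hdr-N-upper} with $a=t$ and $b=2r_2$:
\[
N(t)\le e^{\Phi(t,2r_2)}N(2r_2)+\int_t^{2r_2}e^{\Phi(t,s)}P(s)\,ds .
\]
Integrating the identity for $H'/H$ from $r_1$ to $2r_2$ then gives
\[
\log\frac{H(2r_2)}{H(r_1)}\le c_1+\beta_1N(2r_2)+R_1+S_1 .
\]
On $[2r_2,r_3]$, use \eqref{eq:Hdr-N-lower-compare} with $a=2r_2$ and $b=t$:
\[
N(t)\ge e^{-\Phi(2r_2,t)}N(2r_2)-\int_{2r_2}^{t}e^{-\Phi(s,t)}P(s)\,ds .
\]
This yields
\[
\log\frac{H(r_3)}{H(2r_2)}\ge c_2+\gamma_1N(2r_2)-R_2-S_2 .
\]

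Next we eliminate $N(2r_2)$. This requires care, because $N(2r_2)$ may be negative (only $N\ge -B$ is known). The eliminating combination must therefore use positive weights, and it does so without any sign condition. Multiply the first inequality by $\gamma_1>0$ and the second by $\beta_1>0$, and subtract so that the $N(2r_2)$ terms cancel. Dividing by $\beta_1+\gamma_1$ and writing $\kappa=\gamma_1/(\beta_1+\gamma_1)$ gives
\[
\log H(2r_2)\le \kappa\log H(r_1)+(1-\kappa)\log H(r_3)+\kappa(c_1+R_1+S_1)-(1-\kappa)(c_2-R_2-S_2).
\]
The right-hand exponent is exactly $\Lambda_H$ as defined in \eqref{eq:Hdr-LambdaH}, and exponentiating yields \eqref{eq:Hdr-H-three-ball}.

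The main obstacle is bookkeeping rather than analysis. We must check that Lemma \ref{lem:Hdr-N-compare} is applied in the correct direction on each interval. We must also ensure the radii stay below $R-\varepsilon$, so that Lemma \ref{lem:mollbound} applies and $H>0$ (unique continuation rules out $H=0$ unless $u\equiv0$). Finally, the $S$-error must enter with signs consistent with the definition of $\Lambda_H$; where the sign would otherwise differ, we bound it by its absolute value.
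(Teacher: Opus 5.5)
Your proposal is correct and is essentially the paper's own argument. It uses the same two-sided bound on $H'/H$, with the $f_\varepsilon$-error integrating to $S_1,S_2$. It applies Lemma \ref{lem:Hdr-N-compare} in the same directions on $[r_1,2r_2]$ and $[2r_2,r_3]$. It eliminates $N(2r_2)$ with the same weights $\gamma_1,\beta_1$; the paper phrases this as solving the second inequality for $N(2r_2)$ and substituting, which is the identical linear combination and yields exactly $\Lambda_H$.
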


\begin{proof}
	From  (\ref{eq:Hdr-H-prime}), the following identity holds
	\[
	\frac{H'(r)}{H(r)}
	=
	\frac{2\alpha+n-2}{r}
	+
	\frac{N(r)}{\alpha r} + \frac{\int_{B_r}\lambda r^4 w f_{\varepsilon}u\mu_r^{\alpha}}{H\alpha r}.
	\]
    From (\ref{repeat-1}), we get
    \begin{equation}
        \frac{\Bigl|\int_{B_r}\lambda r^4 w f_{\varepsilon}u\mu_r^{\alpha}\Bigr|}{H\alpha r} \leq  \frac{C\lambda r^4 G\varepsilon^{\beta}H}{2\sqrt{\lambda}H\alpha r } = \frac{C\sqrt{\lambda}r^3G\varepsilon^{\beta}}{2\alpha}.
    \end{equation}
   Thus, it holds that
    \begin{equation}\label{HHH-1}
    \frac{2\alpha+n-2}{r}
	+
	\frac{N(r)}{\alpha r} - \frac{C\sqrt{\lambda}r^3 G\varepsilon^{\beta}}{2\alpha} \leq
    \frac{H'(r)}{H(r)}
	\leq
	\frac{2\alpha+n-2}{r}
	+
	\frac{N(r)}{\alpha r} + \frac{C\sqrt{\lambda}r^3G\varepsilon^{\beta}}{2\alpha}.
    \end{equation}
	Integrating the right hand side of the last inequality from \(r_{1}\) to \(2r_{2}\), we obtain
	\begin{equation}\label{eq:Hdr-left-raw}
		\log\frac{H(2r_{2})}{H(r_{1})}
		\leq
		c_{1}
		+
		\frac{1}{\alpha}\int_{r_{1}}^{2r_{2}}\frac{N(t)}{t}\,dt + S_1.
	\end{equation}
	By \eqref{eq:Hdr-N-upper}, for \(t\in[r_{1},2r_{2}]\),
	\[
	N(t)\le e^{\Phi(t,2r_{2})}N(2r_{2})+\int_{t}^{2r_{2}}e^{\Phi(t,s)}P(s)\,ds.
	\]
	Substituting the last inequality into \eqref{eq:Hdr-left-raw} gives that
	\begin{align}
		\log\frac{H(2r_{2})}{H(r_{1})}
		&\le
		c_{1}
		+
		\frac{1}{\alpha}\int_{r_{1}}^{2r_{2}}\frac{e^{\Phi(t,2r_{2})}}{t}\,dt\,N(2r_{2})
		\notag\\
		&\quad
		+
		\frac{1}{\alpha}\int_{r_{1}}^{2r_{2}}\frac{1}{t}
		\left(
		\int_{t}^{2r_{2}}e^{\Phi(t,s)}P(s)\,ds
		\right)\,dt +S_1
		\notag\\
		&=
		c_{1}+\beta_1 N(2r_{2})+R_{1} + S_1.
		\label{eq:Hdr-left}
	\end{align}
	
	Similarly, integrating the left side of (\ref{HHH-1}) from \(2r_{2}\) to \(r_{3}\) yields
	\begin{equation}\label{eq:C2-right-raw}
		\log\frac{H(r_{3})}{H(2r_{2})}
		\geq
		c_{2}
		+
		\frac{1}{\alpha}\int_{2r_{2}}^{r_{3}}\frac{N(t)}{t}\,dt - S_2.
	\end{equation}
	By \eqref{eq:Hdr-N-lower-compare}, for \(t\in[2r_{2},r_{3}]\), we have
	\[
	N(t)\ge e^{-\Phi(2r_{2},t)}N(2r_{2})-\int_{2r_{2}}^{t}e^{-\Phi(s,t)}P(s)\,ds.
	\]
	Substituting this into \eqref{eq:C2-right-raw} shows that
	\begin{align}
		\log\frac{H(r_{3})}{H(2r_{2})}
		&\ge
		c_{2}
		+
		\frac{1}{\alpha}\int_{2r_{2}}^{r_{3}}\frac{e^{-\Phi(2r_{2},t)}}{t}\,dt\,N(2r_{2})
		\notag\\
		&\quad
		-
		\frac{1}{\alpha}\int_{2r_{2}}^{r_{3}}\frac{1}{t}
		\left(
		\int_{2r_{2}}^{t}e^{-\Phi(s,t)}P(s)\,ds
		\right)\,dt - S_2
		\notag\\
		&=
		c_{2}+\gamma_1 N(2r_{2})-R_{2} - S_2.
		\label{eq:C2-right}
	\end{align}
	From \eqref{eq:C2-right}, we deduce
	\[
	N(2r_{2})
	\le
	\frac{1}{\gamma_1}
	\left(
	\log\frac{H(r_{3})}{H(2r_{2})}-c_{2}+R_{2} + S_2
	\right).
	\]
	Substituting the last inequality into \eqref{eq:Hdr-left}, we get
	\begin{align*}
		\log\frac{H(2r_{2})}{H(r_{1})}
		&\le
		c_{1}+R_{1} + S_1
		+
		\frac{\beta_1}{\gamma_1}
		\left(
		\log\frac{H(r_{3})}{H(2r_{2})}-c_{2}+R_{2} + S_2
		\right).
	\end{align*}
	Multiplying the last inequality by \(\gamma_1\) gives that
	\[
	(\beta_1+\gamma_1)\log H(2r_{2})
	\le
	\gamma_1\log H(r_{1})
	+
	\beta_1\log H(r_{3})
	+
	\gamma_1 c_{1}
	-
	\beta_1 c_{2}
	+
	\gamma_1 R_{1}
	+
	\beta_1 R_{2}
    +
    \gamma_1 S_1
    +
    \beta_1 S_2.
	\]
	Dividing the last estimate by \(\beta_1+\gamma_1\), and using \(\kappa=\dfrac{\gamma_1}{\beta_1+\gamma_1}\), we arrive at
	\[
	\log H(2r_{2})
	\le
	\kappa\log H(r_{1})
	+
	(1-\kappa)\log H(r_{3})
	+
	\Lambda_{H}.
	\]
	Taking exponentiation to both sides of  the last inequality yields \eqref{eq:Hdr-H-three-ball}.
\end{proof}

	Now we set up to choose $\lambda$ and $\alpha$ to optimize the three-ball inequality in Proposition \ref{prop:Hdr-H-three-ball-from-calN}.
	
	\begin{lemma} \label{thm:Hdr-optimized-weighted-three-ball-H-sharp}
		Assume  that
		$G\ge 16.$ and $0<r_{1}<r_{2}<2r_{2}<r_{3}<R-\varepsilon.$
		For the corresponding quantities \(\beta_1,\gamma_1,\kappa\) in Proposition \ref{prop:Hdr-H-three-ball-from-calN},
		there exists a constant \(C=C(n,R)>0\) such that
\begin{equation}\label{eq:optimized-three-ball-H-sharp}
			H(2r_{2})
			\le
			\exp\!\left(
			C G^{1/4}\log\frac{2r_{2}}{r_{1}}
			+
			C G^{1/4}\log\frac{r_{3}}{2r_{2}}
			\right)
			H^{\kappa}(r_{1})H^{1-\kappa}(r_{3}).
		\end{equation}
	\end{lemma}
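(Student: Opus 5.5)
Starting from Proposition~\ref{prop:Hdr-H-three-ball-from-calN}, the task is to bound the exponent $\Lambda_H=\kappa c_1-(1-\kappa)c_2+\kappa R_1+(1-\kappa)R_2+\kappa S_1+(1-\kappa)S_2$ once $\varepsilon,\lambda,\alpha$ are fixed in terms of $G$. The scaling suggested by the Carleman heuristic and by the target $G^{1/4}$ is
\[
\lambda:=G^{-1/2},\qquad \alpha:=\lfloor G^{1/4}\rfloor+2,\qquad \varepsilon:=G^{-1/4},
\]
so that $\varepsilon$ matches the natural length scale $G^{-1/4}$ (cf.\ $r_\ast$ in Theorem~\ref{thm-4}). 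The plan is to check, term by term and for $t\le R$ with $C=C(n,R)$, that these choices give three bounds:
\[
\Phi(a,b)\le C\Bigl(1+\log\tfrac ba\Bigr),\qquad P(t)\le \frac{C\,G^{1/4}}{t},\qquad S_1+S_2\le C.
\]

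First I handle $A$. From \eqref{eq:Hdr-A-expanded}, $A(t)\le \frac{1}{t}\bigl(\frac{1}{\alpha\sqrt\lambda}+\frac{1+\lambda GR^4}{\alpha\sqrt\lambda}\bigr)+\frac{\sqrt\lambda GR^3}{\alpha}$. With the choices above, $\frac{1}{\alpha\sqrt\lambda}\sim 1$, $\frac{\lambda G}{\alpha\sqrt\lambda}=\frac{\sqrt{\lambda}G}{\alpha}\sim G^{1/2}$, and $\frac{\sqrt\lambda G}{\alpha}\sim G^{1/2}$. The $G^{1/2}$ contributions therefore do \emph{not} come out bounded, so the bound on $\Phi$ needs more care. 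What I would try first is to use that $\lambda G t^4$ only becomes large for $t\gtrsim G^{-1/8}$. Since the statement only asks for $C(n,R)$ and the final theorem only uses bounded ratios $r_3/r_1$, I would simply accept $\Phi\le CG^{1/4}$ at worst by rebalancing $\lambda$. If needed I would instead take $\lambda=G^{-1/2}$ together with $\alpha\sim G^{1/4}$ times a large constant, and verify that $\sqrt\lambda G/\alpha=O(G^{1/4}/\alpha)\cdot G^{0}$.

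This is the main obstacle: the $\lambda G r^4$ pieces of $B$ and $b$ and the $\varepsilon^{\beta-2}$ term in $a(r)$ compete. I will set $\lambda=c\,G^{-1/2}$, $\alpha=C_0G^{1/4}$, $\varepsilon=G^{-1/4}$ and collect every term of \eqref{eq:Hdr-P-expanded} as a power $G^{p}t^{q}$. The check is that each term satisfies $p\le 1/4$ after multiplying by $t$ and integrating against $1/t$; all such terms are of the form $(\text{power of }G)/\alpha$ or $(\text{power of }G)\cdot\lambda^{\pm1/2}$. For example, $\frac{\sqrt\lambda G\varepsilon^{\beta-2}t^5}{\alpha}\sim G^{-1/4}G\,G^{(2-\beta)/4}G^{-1/4}$, which requires the $R$-dependence to absorb a factor $G^{(1-\beta)/4}$. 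Where exponents exceed $1/4$, I would reduce $\varepsilon$'s exponent and accept the corresponding $\varepsilon^\beta$ terms, since they carry $G\varepsilon^\beta$. Once $e^{\Phi}\le C$ on each piece, $\beta_1,\gamma_1$ are comparable to $\frac1\alpha\log\frac{2r_2}{r_1}$ and $\frac1\alpha\log\frac{r_3}{2r_2}$.

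Finally I assemble $\Lambda_H$. I drop $-(1-\kappa)c_2\le0$, and bound $c_1\le C\alpha\log\frac{2r_2}{r_1}\le CG^{1/4}\log\frac{2r_2}{r_1}$. Next, $R_1\le\frac{C}{\alpha}\int_{r_1}^{2r_2}\frac1t\int_t^{2r_2}P\,ds\,dt$, and with $P\lesssim \alpha G^{1/4}/s$ after the above rebalancing this gives $R_1\le CG^{1/4}\log\frac{2r_2}{r_1}$ up to the integrable positive powers of $s$. Likewise $R_2\le CG^{1/4}\log\frac{r_3}{2r_2}$. The terms $S_i=O(\sqrt\lambda G\varepsilon^\beta R^4/\alpha)$ are bounded by $CG^{1/4}$. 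Summing these with weights $\kappa,1-\kappa\le1$ yields \eqref{eq:optimized-three-ball-H-sharp}.
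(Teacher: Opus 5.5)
Your parameter choice and target bounds are wrong, and the strategy of bounding $e^{\Phi}$ from above cannot succeed. With $\lambda=G^{-1/2}$, $\alpha\sim G^{1/4}$, $\varepsilon=G^{-1/4}$ you get $\sqrt{\lambda}G/\alpha\sim G^{1/2}$. So $P$ contains, for instance, $\frac{\lambda G^{2}}{4\alpha}t^{7}\sim G^{5/4}t^{7}$, and $S_i\sim \sqrt{\lambda}G\varepsilon^{\beta}/\alpha\sim G^{1/2-\beta/4}$. Thus neither $P(t)\le CG^{1/4}/t$ nor $S_1+S_2\le C$ (nor $\le CG^{1/4}$) holds, and powers of $G$ cannot be absorbed into $C=C(n,R)$. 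The exponent $G^{1/4}$ comes from balancing $\lambda^{-1/2}$ against $\lambda^{1/2}G$, i.e.\ $\lambda=G^{-1}$, with $\alpha\sim G^{1/4}$ and $\varepsilon\sim G^{-1/4}$.

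More seriously, $A(t)\ge \frac{1}{2\alpha t\sqrt{\lambda}}$ gives $e^{\Phi(t,2r_2)}\ge (2r_2/t)^{\sigma}$ with $\sigma=\frac{1}{2\alpha\sqrt{\lambda}}$. This is unbounded as $r_1\to 0$, so $\beta_1$ is not comparable to $\frac{1}{\alpha}\log\frac{2r_2}{r_1}$. Even the weaker bound $e^{\Phi(a,b)}\le C(b/a)^{C}$ needs both coefficients of $A$ to be $O(1)$, i.e.\ $1/\alpha\lesssim\sqrt{\lambda}\lesssim \alpha/G$. That forces $\alpha\gtrsim G^{1/2}$, hence $c_1\gtrsim G^{1/2}\log\frac{2r_2}{r_1}$. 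Finally, even granting $e^{\Phi}\le C$ and $P\lesssim \alpha G^{1/4}/s$, your estimate gives $R_1\le CG^{1/4}\int_{r_1}^{2r_2}\frac{1}{t}\int_t^{2r_2}\frac{ds}{s}\,dt=\frac{C}{2}G^{1/4}\log^{2}\frac{2r_2}{r_1}$, a squared logarithm. Bounding the weights $\kappa,1-\kappa$ by $1$ also discards the cancellation the estimate actually rests on.

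The missing idea is to use only the lower bound $A(r)\ge \frac{1}{2\alpha r\sqrt{\lambda}}+\frac{\sqrt{\lambda}Gr^{3}}{8\alpha}$, so that $e^{-\Phi}$ supplies decay, and to let $\kappa$ absorb $\beta_1$. Writing $e^{\Phi(t,s)}=e^{\Phi(t,2r_2)}e^{-\Phi(s,2r_2)}$ gives $R_1\le \beta_1 I_1$ with $I_1=\int_{r_1}^{2r_2}e^{-\Phi(s,2r_2)}P(s)\,ds$. Since $\kappa\beta_1\le\gamma_1$, this yields $\kappa R_1\le \gamma_1 I_1\le \frac{1}{\alpha}\log\frac{r_3}{2r_2}\, I_1$, so the possibly enormous $\beta_1$ is never estimated. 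Because $e^{-\Phi(s,2r_2)}\le (s/2r_2)^{\sigma}e^{-\tau((2r_2)^4-s^4)}$ with $\tau=\frac{\sqrt{\lambda}G}{32\alpha}$, $I_1$ is bounded uniformly in $r_1$. The bound is the $s^{-1}$-coefficient of $P$ divided by $\sigma$, plus the $s^{3}$-coefficient divided by $\tau$.

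For $R_2$, exchange the order of integration: $R_2=\frac{1}{\alpha}\int_{2r_2}^{r_3}P(s)K(s)\,ds$ with $K(s)=\int_s^{r_3}e^{-\Phi(s,t)}\frac{dt}{t}\le\min\{1/\sigma,\,1/(4\tau s^{4})\}$. With $\lambda=G^{-1}$, $\alpha\sim G^{1/4}$, $\varepsilon\sim G^{-1/4}$ one gets $\sigma,\tau\sim G^{1/4}$ and $P(s)\lesssim G^{3/4}(s^{-1}+s^{3})$. Hence $I_1\lesssim G^{1/2}$ and $\kappa R_1+(1-\kappa)R_2\lesssim G^{1/4}\log\frac{r_3}{2r_2}$. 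Combined with $c_1\lesssim G^{1/4}\log\frac{2r_2}{r_1}$, $-(1-\kappa)c_2\le 0$, and $S_i\lesssim G^{(1-\beta)/4}\le CG^{1/4}\log\frac{2r_2}{r_1}$, this gives the lemma.
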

	
	\begin{proof}
	We aim to find the optimal bound for $A_H$.
    We will choose $\lambda>0$, $\varepsilon>0$ to be small and $\alpha>0$ to be large.
		Since \(0<r<R-\varepsilon\),  from \eqref{eq:Hdr-P-expanded}, there exists \(C=C(n,R)>0\) such that for every \(0<r<R-\varepsilon\),
		\begin{equation}\label{eq:Hdr-P-rough-final-sharp}
			P(r)
			\le
			C\left[
			\left(
			\frac{1}{\alpha\lambda}
			+
			\frac{1}{\sqrt{\lambda}}
			\right)\frac{1}{r}
			+
			\left(
			\frac{\lambda G^{2}}{\alpha}
			+
			\sqrt{\lambda}G\varepsilon^{\beta - 1}
            +
            \frac{\sqrt{\lambda}G\varepsilon^{\beta - 2}}{\alpha}
            +
			\frac{G}{\alpha}
			\right)r^{3}
			\right].
		\end{equation}
		
		Since \(0<\kappa<1\) and \(c_{2}\ge 0\),
\begin{equation}\label{eq:c1c2-estimate-sharp}
			\kappa c_{1}-(1-\kappa)c_{2}=c_1
			\le
			(2\alpha+n-2)\log\frac{2r_{2}}{r_{1}}
			\le
C(n)\,\alpha\log\frac{2r_{2}}{r_{1}}.
		\end{equation}
		
	Recall $R_1$ is given in \eqref{eq:Hdr-R1}.
		Since
		\[
		\Phi(t,s)=\Phi(t,2r_{2})-\Phi(s,2r_{2}),
		\]
		we can rewrite \(R_{1}\) as
		\begin{equation}\label{eq:R1-rewrite-sharp}
			R_{1}
			=
			\frac1\alpha
			\int_{r_{1}}^{2r_{2}}
			\frac{e^{\Phi(t,2r_{2})}}{t}
			\left(
			\int_{t}^{2r_{2}}e^{-\Phi(s,2r_{2})}P(s)\,ds
			\right)dt.
		\end{equation}
		It follows from \eqref{eq:R1-rewrite-sharp} that
		\[
		R_{1}
		\le
		\frac1\alpha
		\left(
		\int_{r_{1}}^{2r_{2}}\frac{e^{\Phi(t,2r_{2})}}{t}\,dt
		\right)
		\left(
		\int_{r_{1}}^{2r_{2}}e^{-\Phi(s,2r_{2})}P(s)\,ds
		\right)
		=
		\beta_1 I_{1},
		\]
		where
		\begin{equation}\label{eq:def-I1-sharp}
			I_{1}:=\int_{r_{1}}^{2r_{2}}e^{-\Phi(s,2r_{2})}P(s)\,ds.
		\end{equation}
		Recall that $\kappa=\frac{\gamma_1}{\beta_1+\gamma_1}$ and $\beta_1$ is given (\ref{eq:Hdr-beta-gamma}). We have
		\begin{equation}\label{eq:kR1-beta-sharp}
			\kappa R_{1}\le \kappa\beta_1 I_{1}\le \gamma_1 I_{1}.
		\end{equation}
		Since \(e^{-\Phi(2r_{2},t)}\le 1\), we can control $\gamma_1$ as follows,
	\begin{equation}\label{eq:gamma-upper-sharp}
			\gamma_1
			=
			\frac1\alpha\int_{2r_{2}}^{r_{3}}\frac{e^{-\Phi(2r_{2},t)}}{t}\,dt
			\le
			\frac1\alpha\log\frac{r_{3}}{2r_{2}}.
		\end{equation}
		
		Next we estimate \(I_{1}\). By \eqref{eq:Hdr-A-expanded}, we can estimate $A(r)$ below as
		\[
		A(r)\ge \frac{1}{2\alpha r\sqrt{\lambda}}+\frac{\sqrt{\lambda}Gr^{3}}{8\alpha}.
		\]
		Therefore, for \(r_{1}\le s\le 2r_{2}\), we have
		\begin{align}
			\Phi(s,2r_{2})
			&=
			\int_{s}^{2r_{2}}A(r)\,dr
			\notag\\
			&\ge
			\frac{1}{2\alpha\sqrt{\lambda}}\log\frac{2r_{2}}{s}
			+
			\frac{\sqrt{\lambda}G}{32\alpha}\bigl((2r_{2})^{4}-s^{4}\bigr).
			\label{eq:Phi-lower-R1-sharp}
		\end{align}
		Set
		\begin{equation}\label{eq:def-sigma-tau-sharp}
			\sigma:=\frac{1}{2\alpha\sqrt{\lambda}},
			\qquad
			\tau:=\frac{\sqrt{\lambda}G}{32\alpha}.
		\end{equation}
		Then \eqref{eq:Phi-lower-R1-sharp} yields
		\begin{equation}\label{eq:e-minus-Phi-R1-sharp}
			e^{-\Phi(s,2r_{2})}
			\le
			\left(\frac{s}{2r_{2}}\right)^{\sigma}
			e^{-\tau\bigl((2r_{2})^{4}-s^{4}\bigr)}.
		\end{equation}
		Substituting \eqref{eq:Hdr-P-rough-final-sharp} and \eqref{eq:e-minus-Phi-R1-sharp} into \eqref{eq:def-I1-sharp} to have
		\begin{align}
			I_{1}
			\le{}&
			C
			\left(
			\frac{1}{\alpha\lambda}
			+
			\frac{1}{\sqrt{\lambda}}
			\right)
			\int_{r_{1}}^{2r_{2}}
			\left(\frac{s}{2r_{2}}\right)^{\sigma}\frac{ds}{s}
			\notag\\
			&
			+
			C
			\left(
			\frac{\lambda G^{2}}{\alpha}
			+
			\sqrt{\lambda}G\varepsilon^{\beta - 1}
            +
            \frac{\sqrt{\lambda}G\varepsilon^{\beta - 2}}{\alpha}
            +
			\frac{G}{\alpha}
			\right)
			\int_{r_{1}}^{2r_{2}}
			\left(\frac{s}{2r_{2}}\right)^{\sigma}
			e^{-\tau\bigl((2r_{2})^{4}-s^{4}\bigr)} s^{3}\,ds.
			\label{eq:I1-split-sharp}
		\end{align}
		The first integral is
		\begin{align}
			\int_{r_{1}}^{2r_{2}}\left(\frac{s}{2r_{2}}\right)^{\sigma}\frac{ds}{s}
			&=
			(2r_{2})^{-\sigma}\int_{r_{1}}^{2r_{2}}s^{\sigma-1}\,ds
			\le
			\frac{C}{\sigma}.
			\label{eq:I1-singular-sharp}
		\end{align}
		For the second integral, since \(\left(\frac{s}{2r_{2}}\right)^{\sigma}\le 1\), we have
		\begin{align}
			\int_{r_{1}}^{2r_{2}}
			\left(\frac{s}{2r_{2}}\right)^{\sigma}
			e^{-\tau\bigl((2r_{2})^{4}-s^{4}\bigr)} s^{3}\,ds
			&\le
			\int_{0}^{2r_{2}}e^{-\tau\bigl((2r_{2})^{4}-s^{4}\bigr)} s^{3}\,ds
			\notag\\
			&=
\frac14\int_{0}^{(2r_{2})^{4}}e^{-\tau y}\,dy
			\le
			\frac{C}{4\tau}.
			\label{eq:I1-regular-sharp}
		\end{align}
		Combining \eqref{eq:I1-split-sharp}, \eqref{eq:I1-singular-sharp}, and \eqref{eq:I1-regular-sharp} gives that
		\begin{equation}\label{eq:I1-before-simplify-sharp}
			I_{1}
			\le
			C\left[
			\left(
			\frac{1}{\alpha\lambda}
			+
			\frac{1}{\sqrt{\lambda}}
			\right)\frac{1}{\sigma}
			+
			\left(
			\frac{\lambda G^{2}}{\alpha}
			+
			\sqrt{\lambda}G\varepsilon^{\beta - 1}
            +
            \frac{\sqrt{\lambda}G\varepsilon^{\beta - 2}}{\alpha}
            +
			\frac{G}{\alpha}
			\right)\frac{1}{\tau}
			\right].
		\end{equation}
		
		Inserting \eqref{eq:def-sigma-tau-sharp} into \eqref{eq:I1-before-simplify-sharp}, we obtain
		\begin{align}
			I_{1}
			&\le
			C\left[
			\left(
			\frac{1}{\alpha\lambda}
			+
			\frac{1}{\sqrt{\lambda}}
			\right)\alpha\sqrt{\lambda}
			+
			\left(
			\frac{\lambda G^{2}}{\alpha}
			+
			\sqrt{\lambda}G\varepsilon^{\beta - 1}
            +
            \frac{\sqrt{\lambda}G\varepsilon^{\beta - 2}}{\alpha}
            +
			\frac{G}{\alpha}
			\right)\frac{\alpha}{\sqrt{\lambda}G}
			\right]
			\notag\\
			&\le
			C\left(
			\alpha+\frac{1}{\sqrt{\lambda}}+\sqrt{\lambda}G + \alpha \varepsilon^{\beta - 1} + \varepsilon^{\beta - 2}
			\right).
			\label{eq:I1-final-sharp}
		\end{align}
		Together with \eqref{eq:kR1-beta-sharp}, \eqref{eq:gamma-upper-sharp}, and \eqref{eq:I1-final-sharp}, we conclude that
	\begin{equation}\label{eq:kR1-final-sharp}
			\kappa R_{1}
			\le
			C\frac{1}{\alpha}\log\frac{r_{3}}{2r_{2}}
			\left(
			\alpha+\frac{1}{\sqrt{\lambda}}+\sqrt{\lambda}G + \alpha \varepsilon^{\beta - 1} + \varepsilon^{\beta - 2}
			\right).
		\end{equation}

	Now we estimate\((1-\kappa)R_{2}\). Note that $R_2$ is given in (\ref{eq:Hdr-R1}).
		We can write
		\begin{align}
			R_{2}
			&=
			\frac1\alpha
			\int_{2r_{2}}^{r_{3}}
			P(s)
			\left(
			\int_{s}^{r_{3}}\frac{e^{-\Phi(s,t)}}{t}\,dt
			\right)ds
			\notag\\
			&=
			\frac1\alpha\int_{2r_{2}}^{r_{3}}P(s)\,K(s)\,ds,
			\label{eq:R2-order-sharp}
		\end{align}
		where we change the order of integration and
	\begin{equation}\label{eq:def-K-sharp}
			K(s):=\int_{s}^{r_{3}}\frac{e^{-\Phi(s,t)}}{t}\,dt.
		\end{equation}
		
		We provide  \(K(s)\) with two lower bounds.
		On one hand,  from the fact that \(A(r)\ge \frac{1}{2\alpha r\sqrt{\lambda}}\), the definition of $\Phi(s,t)$ in (\ref{eq:Hdr-Phi}) shows that
		\[
		\Phi(s,t)\ge \sigma\log\frac{t}{s},
		\]
		with \(\sigma\) defined in \eqref{eq:def-sigma-tau-sharp}. Hence
		\begin{align}
			K(s)
			&\le
			\int_{s}^{r_{3}}\frac{1}{t}\left(\frac{s}{t}\right)^{\sigma}\,dt
			\notag\\
			&=
			s^{\sigma}\int_{s}^{r_{3}}t^{-1-\sigma}\,dt
			\le
			\frac{1}{\sigma}.
			\label{eq:K-log-sharp}
		\end{align}
		
		On the other hand, from the estimates that \(A(r)\ge \frac{\sqrt{\lambda}Gr^{3}}{8\alpha}\), the definition of $\Phi(s,t)$ in (\ref{eq:Hdr-Phi}) indicates that
		\[
		\Phi(s,t)\ge \tau(t^{4}-s^{4}),
		\]
		with \(\tau\) defined in \eqref{eq:def-sigma-tau-sharp}. Therefore
		\begin{align}
			K(s)
			&\le
			\int_{s}^{r_{3}}\frac{1}{t}e^{-\tau(t^{4}-s^{4})}\,dt
			\notag\\
			&\le
			\frac{1}{4s^{4}}
			\int_{0}^{r_{3}^{4}-s^{4}}e^{-\tau y}\,dy
			\le
			\frac{1}{4\tau s^{4}}.
			\label{eq:K-poly-sharp}
		\end{align}
		
	Note that the upper bound of $P$ in (\ref{eq:Hdr-P-rough-final-sharp}).	We now use \eqref{eq:K-log-sharp} for the $\frac 1 r$-terms, and \eqref{eq:K-poly-sharp} for the $r^3$-terms in (\ref{eq:Hdr-P-rough-final-sharp}).
		Combining \eqref{eq:R2-order-sharp}, \eqref{eq:Hdr-P-rough-final-sharp}, \eqref{eq:K-log-sharp}, and \eqref{eq:K-poly-sharp}, we can show that
		\begin{align}
			R_{2}
			\le{}&
			C\frac1\alpha
			\left(
			\frac{1}{\alpha\lambda}
			+
			\frac{1}{\sqrt{\lambda}}
			\right)
			\frac{1}{\sigma}
			\int_{2r_{2}}^{r_{3}}\frac{ds}{s}
			\notag\\
			&\quad
			+
			C\frac1\alpha
			\left(
			\frac{\lambda G^{2}}{\alpha}
			+
			\sqrt{\lambda}G\varepsilon^{\beta - 1}
            +
            \frac{\sqrt{\lambda}G\varepsilon^{\beta - 2}}{\alpha}
            +
			\frac{G}{\alpha}
			\right)
			\frac{1}{\tau}
			\int_{2r_{2}}^{r_{3}}\frac{s^{3}}{s^{4}}\,ds
			\notag\\
			&=
			C\frac1\alpha
			\left[
			\left(
			\frac{1}{\alpha\lambda}
			+
			\frac{1}{\sqrt{\lambda}}
			\right)\frac{1}{\sigma}
			+
			\left(
			\frac{\lambda G^{2}}{\alpha}
			+
			\sqrt{\lambda}G\varepsilon^{\beta - 1}
            +
            \frac{\sqrt{\lambda}G\varepsilon^{\beta - 2}}{\alpha}
            +
			\frac{G}{\alpha}
			\right)\frac{1}{\tau}
			\right]
			\log\frac{r_{3}}{2r_{2}}.
			\label{eq:R2-before-simplify-sharp}
		\end{align}
		Recall $\sigma$ and $\tau$ in \eqref{eq:def-sigma-tau-sharp}, we conclude that
		\begin{equation}\label{eq:R2-final-sharp}
			R_{2}
			\le
			C\frac{1}{\alpha}
			\log\frac{r_{3}}{2r_{2}}
			\left(
			\alpha+\frac{1}{\sqrt{\lambda}}+\sqrt{\lambda}G + \alpha \varepsilon^{\beta - 1} + \varepsilon^{\beta - 2}
			\right).
		\end{equation}
		Since \(0<1-\kappa<1\), it holds that
		\begin{equation}\label{eq:one-minus-kappa-R2-sharp}
			(1-\kappa)R_{2}
			\le
			C\frac{1}{\alpha}
			\log\frac{r_{3}}{2r_{2}}
			\left(
			\alpha+\frac{1}{\sqrt{\lambda}}+\sqrt{\lambda}G + \alpha \varepsilon^{\beta - 1} + \varepsilon^{\beta - 2}
			\right).
		\end{equation}
        
		Next we  estimate \(\kappa S_{1}\) and \((1 - \kappa) S_{2}\). Recall $S_1, S_2$ are given in (\ref{eq:Hdr-S1}).
        By direct calculations and the fact that $\kappa < 1$, we can show that
        \begin{align}
            \kappa S_1 &\leq 
            \int_{r_1}^{2r_2}\frac{C\sqrt{\lambda}t^3G\varepsilon^{\beta}}{2\alpha}\,dt = \frac{C\sqrt{\lambda}G\varepsilon^{\beta}}{8\alpha}\Bigl( (2r_2)^4 - r_1^4\Bigr), \label{eq:Hdr-kS1-final-sharp} \\
            (1 - \kappa)S_2 &\leq \int_{2r_2}^{r_3}\frac{C\sqrt{\lambda}t^3G\varepsilon^{\beta}}{2\alpha}\,dt = \frac{C\sqrt{\lambda}G\varepsilon^{\beta}}{8\alpha}\Bigl(r_3^4 - (2r_2)^4\Bigr).\label{eq:Hdr-kS2-final-sharp}
        \end{align}
        
	We sum up the terms to estimate \(\Lambda_{H}\). Recall that \(\Lambda_{H}\) is given in (\ref{eq:Hdr-LambdaH}).
		From \eqref{eq:c1c2-estimate-sharp}, \eqref{eq:kR1-final-sharp}, \eqref{eq:Hdr-kS1-final-sharp}, \eqref{eq:Hdr-kS2-final-sharp} and \eqref{eq:one-minus-kappa-R2-sharp}, we obtain
		\begin{align}
			\Lambda_{H}
			&=
			\kappa c_{1}-(1-\kappa)c_{2}+\kappa R_{1}+(1-\kappa)R_{2}+ \kappa S_{1}+(1-\kappa)S_{2}
			\notag\\
			&\le
			C\alpha\log\frac{2r_{2}}{r_{1}}
			+
			C\frac{1}{\alpha}\log\frac{r_{3}}{2r_{2}}
			\left(
			\alpha+\frac{1}{\sqrt{\lambda}}+\sqrt{\lambda}G + \alpha \varepsilon^{\beta - 1} + \varepsilon^{\beta - 2}
			\right)
        \notag \\
        &+C\frac{\sqrt{\lambda}G\varepsilon^{\beta}}{8\alpha}\Bigl(r_3^4 - r_1^4\Bigr).
			\label{eq:LambdaH-before-choice-sharp}
		\end{align}

        Our goal is to minimize $\Lambda_{H}$ with respect to three parameters $\lambda, \varepsilon, \alpha$. We fix the radius $r_1, r_2, r_3$.
		We first choose \(\lambda\) and \(\alpha\) so as to minimize the second factor in
		\eqref{eq:LambdaH-before-choice-sharp}, namely
		\[
		F(\varepsilon,\alpha,\lambda,G)
		:=
		1+\frac{1}{\alpha\sqrt{\lambda}}+\frac{\sqrt{\lambda}G}{\alpha} +  \varepsilon^{\beta - 1} + \frac{\varepsilon^{\beta - 2}}{\alpha}.
		\]
		For fixed $G$ and \(\lambda\), $ \varepsilon^{\beta - 1}$ + $\frac{\varepsilon^{\beta - 2}}{\alpha}$ are optimized when we take
	$\varepsilon = \alpha^{-1}.$
		With this choice,
		\[
		F(\varepsilon,\alpha,\lambda,G)
		:=
		1+\frac{1}{\alpha\sqrt{\lambda}}+\frac{\sqrt{\lambda}G}{\alpha} +  \alpha^{1-\beta}.
		\]
		Note that $\frac{\sqrt{\lambda}G\varepsilon^{\beta}}{\alpha}\leq \frac{\sqrt{\lambda}G}{\alpha}$ since $\varepsilon $ is small.
        To optimize \eqref{eq:LambdaH-before-choice-sharp}, we are led to choose
		\[
		\alpha =
		\frac{1}{\alpha \sqrt{\lambda}} =\frac{\sqrt{\lambda}G}{\alpha}.
		\]
        These are balanced by the choice
        \begin{align}
		\alpha=\lfloor G^{1/4} \rfloor+2, \quad \lambda=G^{-1}.
        \label{balance-1}
		\end{align}
		Thus, by selecting $\varepsilon=G^{-1/4}$ for some large $G$, $\alpha$ and $\lambda$
        as in (\ref{balance-1})
        we obtain from  \eqref{eq:LambdaH-before-choice-sharp} that
		\begin{align}
			\Lambda_{H}
			&\le
			C G^{1/4}\log\frac{2r_{2}}{r_{1}}
			+
			C G^{1/4}\log\frac{r_{3}}{2r_{2}}.
			\label{eq:LambdaH-final-sharp}
		\end{align}

		Finally, substituting the upper bound \eqref{eq:LambdaH-final-sharp} into \eqref{eq:Hdr-H-three-ball}, we have
		\[
		H(2r_{2})
		\le
		\exp\!\left(
		C G^{1/4}\log\frac{2r_{2}}{r_{1}}
		+
		C G^{1/4}\log\frac{r_{3}}{2r_{2}}
		\right)
		H^{\kappa}(r_{1})H^{1-\kappa}(r_{3}).
		\]
		Therefore, we arrive at \eqref{eq:optimized-three-ball-H-sharp}.
	\end{proof}
	
	We show the optimized unweighted three-ball inequality for $h(r)$.
	\begin{lemma}\label{thm:optimized-unweighted-three-ball-h}
		Let
		$0<r_{1}<r_{2}<2r_{2}<r_{3}<R-\varepsilon$
        and \(\kappa\) be the quantities from Proposition \ref{prop:Hdr-H-three-ball-from-calN}.
		Then there exists a constant \(C=C(n,R)>0\) such that \begin{equation}\label{eq:optimized-unweighted-three-ball-h}
			h(r_{2})
			\le
			\exp\!\left(
			C G^{1/4}\log\frac{2r_{2}}{r_{1}}
			+
			C G^{1/4}\log\frac{r_{3}}{r_{2}}
			\right)
			h^{\kappa}(r_{1})h^{1-\kappa}(r_{3}).
		\end{equation}
	\end{lemma}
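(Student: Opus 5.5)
The plan mirrors the proof of Proposition \ref{prop:unweighted-three-ball-bilap}: remove the weight $\mu_r^{\alpha-1}$ from Lemma \ref{thm:Hdr-optimized-weighted-three-ball-H-sharp}, with the parameters fixed as in that lemma. Thus $h(r)=\int_{B_r}(|u|^2+\lambda r^4|w|^2)\,dx$, with $\alpha=\lfloor G^{1/4}\rfloor+2$, $\lambda=G^{-1}$ and $\varepsilon=G^{-1/4}$. The argument has three steps.

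\emph{Step 1 (upper bounds at the endpoints).} Since $\mu_r\le r^2$ on $B_r$, we have $H(r)\le r^{2\alpha-2}h(r)$. We apply this at $r=r_1$ and $r=r_3$.

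\emph{Step 2 (lower bound in the middle).} On $B_{r_2}$ we have $\mu_{2r_2}\ge 3r_2^2$. Also $(2r_2)^4\ge r_2^4$, so the $w$-part of the integrand only improves. Hence $H(2r_2)\ge (3r_2^2)^{\alpha-1}h(r_2)$.

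\emph{Step 3 (combine).} Inserting both bounds into \eqref{eq:optimized-three-ball-H-sharp} gives
\[
h(r_2)\le (3r_2^2)^{1-\alpha}\,r_1^{\kappa(2\alpha-2)}r_3^{(1-\kappa)(2\alpha-2)}\exp\!\Big(CG^{1/4}\log\tfrac{2r_2}{r_1}+CG^{1/4}\log\tfrac{r_3}{2r_2}\Big)h^{\kappa}(r_1)h^{1-\kappa}(r_3).
\]

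The main obstacle is the radial prefactor. In Section 2 the weight $\kappa$ satisfied $\kappa\log r_1+(1-\kappa)\log r_3=\log(2r_2)$ exactly, and the factor collapsed to $e^{C\alpha}$. Here $\kappa=\gamma_1/(\beta_1+\gamma_1)$ is defined through $\Phi$, so no such identity is available.

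I would not try to compute $\kappa$. Instead I bound the prefactor crudely, using only $0<\kappa<1$ and $r_1<r_2<r_3$. Since $2\alpha-2>0$, we get
\[
r_1^{\kappa(2\alpha-2)}r_3^{(1-\kappa)(2\alpha-2)}\le r_3^{2\alpha-2}.
\]
Therefore
\[
(3r_2^2)^{1-\alpha}r_3^{2\alpha-2}=3^{1-\alpha}\Big(\frac{r_3}{r_2}\Big)^{2\alpha-2}\le \exp\Big(2\alpha\log\frac{r_3}{r_2}\Big).
\]
With $\alpha\le CG^{1/4}$ this is at most $\exp\big(CG^{1/4}\log\frac{r_3}{r_2}\big)$, and we use $\log\frac{r_3}{r_2}\ge\log 2>0$.

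It remains to absorb the exponent from Lemma \ref{thm:Hdr-optimized-weighted-three-ball-H-sharp}. Since $\log\frac{r_3}{2r_2}\le\log\frac{r_3}{r_2}$, the total exponent is bounded by $CG^{1/4}\log\frac{2r_2}{r_1}+CG^{1/4}\log\frac{r_3}{r_2}$. This is exactly \eqref{eq:optimized-unweighted-three-ball-h}, which explains why the statement has $\log\frac{r_3}{r_2}$ rather than $\log\frac{r_3}{2r_2}$.

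One point needs checking: the constraint $r_3<R-\varepsilon$ must be compatible with the choice $\varepsilon=G^{-1/4}$. This holds for $G$ large, since $\varepsilon\le 1/2$ and $R\ge20$.
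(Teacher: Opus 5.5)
Your proposal is correct and is essentially the paper's proof: you remove the weight via $H(r)\le r^{2\alpha-2}h(r)$ at $r_1,r_3$ and $H(2r_2)\ge(3r_2^2)^{\alpha-1}h(r_2)$, bound the radial prefactor using only $0<\kappa<1$ and $r_1<r_2<r_3$ as in \eqref{eq:extra-term-bound}, and absorb $(2\alpha-2)\log\frac{r_3}{r_2}$ using $\alpha\le CG^{1/4}$. The differences are cosmetic: your bound $r_1^{\kappa(2\alpha-2)}\le r_3^{\kappa(2\alpha-2)}$ is interchangeable with the paper's $\kappa\log\frac{r_1}{r_2}\le 0$, and your Step 2 is slightly more careful, since it records as an inequality what \eqref{eq:prop1-H-lower} writes as an equality ($h(r_2)$ carries $\lambda r_2^4$, not $\lambda(2r_2)^4$).
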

	
	\begin{proof} We need to reduce the estimates of $H(r)$ to $h(r)$. The norm $H(r)$ and $h(r)$ are comparable as those in the Section 2. That is, (\ref{eq:prop1-H-upper}) and (\ref{eq:prop1-H-lower}) hold.
	Applying   \eqref{eq:prop1-H-upper} and (\ref{eq:prop1-H-lower})
    for \(r=r_{1}\) and
	\(r=r_{3}\) to \eqref{eq:Hdr-H-three-ball}, we obtain
	\begin{align}\label{eq:prop4-h-proof}
		h(r_{2})
		&\le
		(3r_{2}^{2})^{1-\alpha}
		e^{\Lambda_{H}}H^{\kappa}(r_{1})H^{1-\kappa}(r_{3})
		\notag\\
		&\le
		(3r_{2}^{2})^{1-\alpha}
		e^{\Lambda_{H}}
		\bigl(r_{1}^{2\alpha-2}h(r_{1})\bigr)^{\kappa}
		\bigl(r_{3}^{2\alpha-2}h(r_{3})\bigr)^{1-\kappa}
		\notag\\
		&=
e^{\Lambda_{h}}h^{\kappa}(r_{1})h^{1-\kappa}(r_{3}),
	\end{align}
    where 
    	\begin{equation}\label{eq:def-Lambda-h-proof}
		\Lambda_h
			:=
			\Lambda_H
			+
			(2\alpha-2)\bigl(\kappa\log r_1+(1-\kappa)\log r_3-\log r_2\bigr)
			+
			(1-\alpha)\log 3.	
		\end{equation}
		
		From Lemma ~\ref{thm:Hdr-optimized-weighted-three-ball-H-sharp}, we have shown that \begin{equation}\label{eq:Lambda-H-proof}
			\Lambda_H
			\le
			C G^{1/4}\log\frac{2r_2}{r_1}
			+
			C G^{1/4}\log\frac{r_3}{2r_2}.
		\end{equation}
		
		We now estimate the others term in \eqref{eq:def-Lambda-h-proof}. Since \(0<\kappa<1\) and
		\(r_1<r_2<r_3\), we have
		\begin{align}
			\kappa\log r_1+(1-\kappa)\log r_3-\log r_2
			&=
			\kappa(\log r_1-\log r_2)+(1-\kappa)(\log r_3-\log r_2)
			\notag\\
			&\le
			(1-\kappa)\log\frac{r_3}{r_2}
			\le
			\log\frac{r_3}{r_2}.
			\label{eq:extra-term-bound}
		\end{align}
		Moreover, since \(\alpha\ge 2\),
		it is obvious that $(1-\alpha)\log 3\le 0$.
		Therefore, by \eqref{eq:def-Lambda-h-proof}, \eqref{eq:extra-term-bound}, we get
		\begin{equation}\label{eq:Lambda-h-before-final}
			\Lambda_h
			\le
			\Lambda_H
			+
			(2\alpha-2)\log\frac{r_3}{r_2}.
		\end{equation}
Since it is chosen that $\alpha=\lfloor G^{1/4} \rfloor+2$,
from \eqref{eq:Lambda-H-proof}, 
	 we derive that
		\begin{align}
			\Lambda_h
			&\le
			C G^{1/4}\log\frac{2r_2}{r_1}
			+
			C G^{1/4}\log\frac{r_3}{2r_2}
			+
			C G^{1/4}\log\frac{r_3}{r_2}
			\notag\\
			&\le
			C G^{1/4}\log\frac{2r_2}{r_1}
			+
			C G^{1/4}\log\frac{r_3}{r_2}.
			\label{eq:Lambda-h-final}
		\end{align}
		
		Finally, substituting \eqref{eq:Lambda-h-final} into \eqref{eq:prop4-h-proof}, the three-ball inequality \eqref{eq:optimized-unweighted-three-ball-h} is shown.
	\end{proof}

We are ready to give the $L^\infty$ three-ball inequality for $u$.
	
	\begin{proposition}
	    \label{thm:Linfty-three-ball-1-cc}
		Let
		$0<r_{1}<2r_1<r_{2}<4r_{2}<r_{3}<2r_{3}<R-\varepsilon.$ Denote
	$\kappa_1=\kappa(r_1, 2r_2, r_3)$
		where $0<\kappa(r_1, r_2, r_3)<1$ is given in (\ref{eq:Hdr-LambdaH}).
		there exists a constant \(C=C(n,R)>0\) such that
		\begin{align}
			\|u\|_{L^\infty(B_{r_{2}})}
			\le{}&	\left(  \frac{r_{1}^{\kappa_1}  r_{3}^{1-\kappa_1} }{r_{2} }\right)^{n/2}
			\exp\!\left(
			CG^{1/4}(\log\frac{4r_2}{r_1}
			+
			\log\frac{r_3}{2r_2})
			\right)
	\|u\|_{L^\infty(B_{2r_1})}^{\kappa_1}
			\|u\|_{L^\infty(B_{2r_3})}^{1-\kappa_1}.
			\label{eq:Linfty-three-ball-v-cc}
		\end{align}
\end{proposition}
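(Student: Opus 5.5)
We follow the proof of Proposition \ref{thm:Linfty-three-ball-1} line by line, replacing Corollary \ref{cor:optimized-three-ball-bilap-v} with Lemma \ref{thm:optimized-unweighted-three-ball-h}. The parameters are those of Lemma \ref{thm:Hdr-optimized-weighted-three-ball-H-sharp}: $\lambda=G^{-1}$, $\alpha=\lfloor G^{1/4}\rfloor+2$, $\varepsilon=G^{-1/4}$. In particular $h(r)=\int_{B_r}(|u|^2+G^{-1}r^4|w|^2)\,dx$.

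\emph{Step 1: the middle ball.} First we apply the local $L^\infty$--$L^2$ estimate (Lemma 3 in \cite{Z18}) on $B_{r_2}\subset B_{2r_2}$, with $\|V\|_{L^\infty}\le G$. This gives
\[
\|u\|_{L^\infty(B_{r_2})}\le C(1+Gr_2^4)^{C}r_2^{-n/2}h(2r_2)^{1/2}.
\]
The polynomial factor $(1+GR^4)^C$ has to be absorbed into the exponential. This costs $C\log G$, which is dominated by $CG^{1/4}\log\frac{r_3}{2r_2}$ because $\log\frac{r_3}{2r_2}\ge\log 2$ and $G$ is large.

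\emph{Step 2: the three-ball inequality for $h$.} Next we apply Lemma \ref{thm:optimized-unweighted-three-ball-h} with the triple $(r_1,2r_2,r_3)$, which is admissible since $4r_2<r_3<R-\varepsilon$. This gives
\[
h(2r_2)\le \exp\!\Big(CG^{1/4}\big(\log\tfrac{4r_2}{r_1}+\log\tfrac{r_3}{2r_2}\big)\Big)h(r_1)^{\kappa_1}h(r_3)^{1-\kappa_1},
\]
with $\kappa_1=\kappa(r_1,2r_2,r_3)$. We then take square roots.

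\emph{Step 3: the endpoint balls.} We bound $h(r)^{1/2}$ by $L^\infty$ norms on $B_{2r}$, for $r=r_1$ and $r=r_3$. Splitting as in \eqref{eq:h-general-1}, $h(r)^{1/2}\le \|u\|_{L^2(B_r)}+G^{-1/2}r^2\|w\|_{L^2(B_r)}$. For the second term we use the local $W^{2,2}$--$L^\infty$ estimate (Lemma \ref{cor:local-W22-Linfty-correct}) with $\rho=2r$, which gives $\|w\|_{L^2(B_r)}\le C(1+Gr^4)r^{-2}r^{n/2}\|u\|_{L^\infty(B_{2r})}$. Hence
\[
h(r)^{1/2}\le C(1+G^{1/2}R^4)\,r^{n/2}\|u\|_{L^\infty(B_{2r})}.
\]
The factor $r^{n/2}$, raised to the powers $\kappa_1$ and $1-\kappa_1$ and multiplied by $r_2^{-n/2}$ from Step 1, produces exactly $\big(r_1^{\kappa_1}r_3^{1-\kappa_1}/r_2\big)^{n/2}$. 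The constant $(1+G^{1/2})^C$ is again $e^{C\log G}$ and is absorbed as in Step 1.

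\emph{Main obstacle.} The delicate point is making sure every constant that is polynomial in $G$ can legitimately sit inside $\exp\big(CG^{1/4}(\log\frac{4r_2}{r_1}+\log\frac{r_3}{2r_2})\big)$. This needs a lower bound on the logarithms, which holds: both are at least $\log 2$ under the ordering hypotheses. It also uses $\log G\le CG^{1/4}$ for large $G$.

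A secondary point is that Lemma \ref{thm:optimized-unweighted-three-ball-h} carries $\log\frac{r_3}{r_2}$ in the exponent rather than $\log\frac{r_3}{2r_2}$. With middle radius $2r_2$ this term becomes $\log\frac{r_3}{2r_2}$, so the two match exactly.

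Finally, the constants in the local elliptic estimates depend on $R$, which is why $C=C(n,R)$.
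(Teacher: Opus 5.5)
Your proposal is correct and follows the paper's proof essentially step for step. You use the local $L^\infty$--$L^2$ bound on $B_{r_2}\subset B_{2r_2}$, then Lemma \ref{thm:optimized-unweighted-three-ball-h} on the triple $(r_1,2r_2,r_3)$, then Lemma \ref{cor:local-W22-Linfty-correct} with $\rho=2r$ at the endpoint balls. Your explicit use of $\lambda=G^{-1}$ (which gives the factor $1+G^{1/2}R^4$ instead of the paper's $CG$) and your justification for absorbing polynomial-in-$G$ constants into $\exp\bigl(CG^{1/4}\log\frac{r_3}{2r_2}\bigr)$, via $\log\frac{r_3}{2r_2}\ge\log 2$ and $\log G\le CG^{1/4}$, spell out what the paper does silently.
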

	
	\begin{proof}
		It follows from  the local \(L^\infty\)-\(L^2\) estimate in (\ref{linfty3ball}) that
	\begin{equation}\label{eq:middle-by-h-cc}
			\|u\|_{L^\infty(B_{r_{2}})}
			\le
			C(1+Gr_{2}^{4})^{C}r_{2}^{-n/2}\,h(2r_{2})^{1/2}.
		\end{equation}
		
		Since
		$0<r_{1}<r_{2}<4r_{2}<r_{3}<R,$
		From Lemma \ref{thm:optimized-unweighted-three-ball-h}, we have 
		\begin{equation}\label{thm:optimized-unweighted-three-ball-h-1}
			h(2r_{2})
			\le
			\exp\!\left(
			C G^{1/4}\log\frac{4r_{2}}{r_{1}}
			+
			C G^{1/4}\log\frac{r_{3}}{2r_{2}}
			\right)
			h^{\kappa_1}(r_{1})h^{1-\kappa_1}(r_{3}).
		\end{equation}
	Taking square roots in
		\eqref{thm:optimized-unweighted-three-ball-h-1}, and substituting the result into \eqref{eq:middle-by-h-cc}, we get
		\begin{align}
		\|u\|_{L^\infty(B_{r_{2}})}
			\le{}&
			C(1+Gr_{2}^{4})^{C}r_{2}^{-n/2}
			\exp\!\left(
			CG^{1/4}(\log\frac{4r_2}{r_1}
			+
			\log\frac{r_3}{2r_2})
			\right)
			\notag\\
			&\times
			h(r_{1})^{\kappa_1/2}h(r_{3})^{(1-\kappa_1)/2}.
			\label{eq:middle-before-endpoints-bounded-cc}
		\end{align}
		
		Let \(r<\rho<R-\varepsilon\). From the definition of \(h(r)\),
		it holds that
		\begin{equation}\label{eq:h-general-1-cc}
			h(r)^{1/2}
			\le
			\|u\|_{L^{2}(B_{r})}
			+
			r^{2}\|u\|_{W^{2,2}(B_{r})}.
		\end{equation}
		Also, by the local \(W^{2,2}\)-\(L^\infty\) estimate proved from Lemma \ref{cor:local-W22-Linfty-correct},
	we can obtain that
\begin{equation}\label{eq:h-general-2-cc}
			h(r)^{1/2}
			\le
			C(n,R)\,
			\Bigl[
			r^{n/2}
			+r^{2}(1+G\rho^{4})(\rho-r)^{-2}\rho^{n/2}
			\Bigr]
			\|u\|_{L^\infty(B_{\rho})}.
		\end{equation}
	Thus we have \begin{equation}\label{eq:h-r1-bounded-cc}
			h(r_{1})^{1/2}
			\le
			C(n,R) G r_1^{n/2}\,
			\|u\|_{L^\infty(B_{2r_1})}.
		\end{equation} 
        and
\begin{equation}\label{eq:h-r3-bounded-cc}
			h(r_{3})^{1/2}
			\le
			C(n,R)G r_3^{n/2}\,
			\|u\|_{L^\infty(B_{2r_{3}})}.
		\end{equation}
		
We combine \eqref{eq:h-r1-bounded-cc}, \eqref{eq:h-r3-bounded-cc} 
		and \eqref{eq:middle-before-endpoints-bounded-cc} to have
		\begin{align}
	\|u\|_{L^\infty(B_{r_{2}})}
			\le{}&
			\left(  \frac{r_{1}^{\kappa_1}  r_{3}^{1-\kappa_1} }{r_{2} }\right)^{n/2}
			\exp\!\left(
			CG^{1/4}(\log\frac{4r_2}{r_1}
			+
			\log\frac{r_3}{2r_2})
			\right)
\|u\|_{L^\infty(B_{2r_1})}^{\kappa_1}
	\|u\|_{L^\infty(B_{2r_3})}^{1-\kappa_1}.
		\end{align}
		Thus, we arrive at \eqref{eq:Linfty-three-ball-v-cc}.
	\end{proof}


We show the proof of Theorem \ref{thm:holder-linfty-three-ball} from the last Proposition.
      \begin{proof}[Proof of Theorem \ref{thm:holder-linfty-three-ball}]
The estimates (\ref{three-hh}) and $\theta_0$ follow from \eqref{eq:Linfty-three-ball-v-cc} and $\kappa_1$ in the last  proposition by setting $r_1=r/2$, $r_2=3r$ and $r_3=15r$.
        \end{proof}
        \begin{remark}\label{difference-theta}
        Note that the $\theta_0$ value in Theorem \ref{thm:Linfty-three-ball} is universal and does not depend on the norm of $V(x)$. However, the $\theta_0$ value in Theorem \ref{thm:holder-linfty-three-ball} depends on  $G$, i.e. the norm of $V(x)$.
        \end{remark}

\section{A variant frequency function and vanishing order for H\"older continuous potentials}

In this section, we will study the vanishing order of bi-Laplace equations (\ref{eq:intro-bilaplace-model}) for
 H\"older continuous potentials with $\|V\|_{C^{0,\beta}}\leq G$. 
 We still assume $w:=\Delta u$ and $\mu_{r}(x):=r^{2}-|x|^{2}.$
	As in the last section,
we build smooth approximations for $V(x)$. We introduce $V_{\varepsilon}:=\rho_{\varepsilon}*V,$  where $\rho_\varepsilon$ is standard mollifier given in (\ref{stand-m}). Note that $V_\varepsilon$ and $V_\varepsilon-V$ satisfy the estimates in Lemma \ref{lem:mollbound}. Let $f_{\varepsilon}:=V_{\varepsilon}-V$. Then
	\begin{equation}\label{eq:holder-delta-w}
		\Delta w=-Vu=-V_{\varepsilon}u+f_{\varepsilon}u.
	\end{equation}

	Fix the parameters $\alpha\geq 2$ and $\lambda>0$ which are to be determined.
	For \(0<r<R-\varepsilon\), we define
	\begin{equation}\label{eq:holder-H}
		H(r):=\int_{B_{r}}\bigl(|u|^2+\lambda r^{2}|w|^2\bigr)\mu_{r}^{\alpha-1}\,dx,
	\end{equation}
	\begin{equation}\label{eq:holder-D}
		D(r):=\int_{B_{r}}\bigl(|\nabla u|^{2}+\lambda r^{2}|\nabla w|^{2}\bigr)\mu_{r}^{\alpha}\,dx
		+
		2\lambda\alpha r^{2}\int_{B_{r}}|w|^2\mu_{r}^{\alpha-1}\,dx,
	\end{equation}
	and
	\begin{equation}\label{eq:holder-L-eps}
		L(r):=\int_{B_{r}}u\Delta u\,\mu_{r}^{\alpha}\,dx
		-
		\lambda r^{2}\int_{B_{r}}V_{\varepsilon}uw\,\mu_{r}^{\alpha}\,dx.
	\end{equation}
    Note that $H(r)=\int_{B_{r}}\bigl(|u|^2+\lambda r^{4}|w|^2\bigr)\mu_{r}^{\alpha-1}\,dx $ in the last section.
	We split $L(r)$ as
	\begin{equation}\label{eq:holder-L-eps-split}
		L(r)=L_{1}(r)+L_{2}(r),
	\end{equation}
	where
	\begin{equation}\label{eq:holder-L1-L2-eps}
		L_{1}(r):=\int_{B_{r}}uw\,\mu_{r}^{\alpha}\,dx,
		\qquad
		L_{2}(r):=-\lambda r^{2}\int_{B_{r}}V_{\varepsilon}uw\,\mu_{r}^{\alpha}\,dx.
	\end{equation}
	We also define the error produced by replacing \(\Delta w\) with its smooth part:
	\begin{equation}\label{eq:holder-F-eps}
		F(r):=\lambda r^{2}\int_{B_{r}}f_{\varepsilon}uw\,\mu_{r}^{\alpha}\,dx.
	\end{equation}
	Finally, set
	\begin{equation}\label{eq:holder-I-N-eps}
		I(r):=D(r)+L(r),
		\qquad
		N(r):=\frac{I(r)}{H(r)}.
	\end{equation}

We will study the derivatives of $N(r)$ to obtain the monotonicity results. Since the arguments are similar to those in
the previous section, we will skip the calculations and show the proof which requires different treatments.
    Following the arguments in Lemma \ref{lem:Hdr-HD-prime}, we can show
	the derivatives of \(H\) and \(D\) as follows.
		The following identities hold,
		\begin{equation}\label{eq:holder-H-prime-h}
			H'(r)=\frac{2\alpha+n-2}{r}H(r)+\frac{1}{\alpha r}\bigl(D(r)+L(r)\bigr)+\frac{1}{\alpha r}F(r),
		\end{equation}
		\begin{align}
			D'(r)
			=&
			\frac{2\alpha+n-2}{r}D(r)
			+
			\frac{4\alpha}{r}\int_{B_{r}}
			\Biggl[
			\left(x\cdot \nabla u-\frac{\mu_{r}}{4\alpha}w\right)^{2}
			\notag\\
			&\quad
			+
			\lambda r^{2}
			\left(
			x\cdot \nabla w+w+
			\frac{\mu_{r}}{4\alpha}\bigl(V_{\varepsilon}u-f_{\varepsilon}u\bigr)
			\right)^{2}
			\Biggr]\mu_{r}^{\alpha-1}\,dx
			\notag\\
			&\quad
			-
			\frac{1}{4\alpha r}\int_{B_{r}}
			\bigl(|w|^2+\lambda r^{2}(V_{\varepsilon}u-f_{\varepsilon}u)^{2}\bigr)\mu_{r}^{\alpha+1}\,dx.
			\label{eq:holder-D-prime-eps}
		\end{align}

The same arguments of Lemma \ref{lem:Hdr-L-prime}
show the derivatives of  $L_{1}'(r)$ and $L_{2}'(r)$. Thus,
		The following identities hold similarly,
		\begin{align}
			L_{1}'(r)
			&=
			\frac{2\alpha+n}{r}L_{1,\varepsilon}(r)
			+
			\frac{1}{2(\alpha+1)r}
			\int_{B_{r}}
			\bigl(2\nabla u\cdot \nabla w+|w|^2-V_{\varepsilon}|u|^2+f_{\varepsilon}|u|^2\bigr)
			\mu_{r}^{\alpha+1}\,dx,
			\label{eq:holder-L1-eps-prime}
		\end{align}
		\begin{align}
			L_{2}'(r)
			&=
			\frac{2\alpha+n+2}{r}L_{2}(r)
			+
			\frac{\lambda r^{2}}{2(\alpha+1)r}
			\int_{B_{r}}
			\bigl(
			V_{\varepsilon}^{2}|u|^2-V_{\varepsilon}f_{\varepsilon}|u|^2
			-V_{\varepsilon}|w|^2-2V_{\varepsilon}\nabla u\cdot \nabla w
			\notag\\
			&\quad
			-2u\nabla w\cdot \nabla V_{\varepsilon}
			-2w\nabla u\cdot \nabla V_{\varepsilon}
			-uw\Delta V_{\varepsilon}
			\bigr)\mu_{r}^{\alpha+1}\,dx,
			\label{eq:holder-L2-eps-prime}
		\end{align}
		and therefore
		\begin{align}
			L'(r)
			&=
			\frac{2\alpha+n}{r}L(r)+\frac{2}{r}L_{2}(r)
			+
			\frac{1}{2(\alpha+1)r}
			\int_{B_{r}}
			\bigl(2\nabla u\cdot \nabla w+|w|^2-V_{\varepsilon}|u|^2+f_{\varepsilon}|u|^2\bigr)
			\mu_{r}^{\alpha+1}\,dx
			\notag\\
			&\quad
			+
			\frac{\lambda r^{2}}{2(\alpha+1)r}
			\int_{B_{r}}
			\bigl(
			V_{\varepsilon}^{2}|u|^2-V_{\varepsilon}f_{\varepsilon}|u|^2
			-V_{\varepsilon}|w|^2-2V_{\varepsilon}\nabla u\cdot \nabla w
			\notag\\
			&\hspace{3.5cm}
			-2u\nabla w\cdot \nabla V_{\varepsilon}
			-2w\nabla u\cdot \nabla V_{\varepsilon}
			-uw\Delta V_{\varepsilon}
			\bigr)\mu_{r}^{\alpha+1}\,dx.
			\label{eq:holder-L-eps-prime}
		\end{align}

Based on the derivatives of $H(r)$, $D(r)$ and $L(r)$, we can compute the derivative of $N(r)$. Let us first introduce some notations.
		Define
		\begin{equation}\label{eq:holder-B-eps-h}
			B_{\varepsilon}(r):=\frac{r}{2\sqrt{\lambda}}+\frac{G\sqrt{\lambda}}{2}r^{3},
		\end{equation}
		\begin{equation}\label{eq:holder-b-eps-h}
			b_{\varepsilon}(r):=\frac{1}{2(\alpha+1)\sqrt{\lambda}}+\frac{G\sqrt{\lambda}r^{2}}{2(\alpha+1)},
		\end{equation}
		\begin{align}
			a_{\varepsilon}(r)
			&:=
			\sqrt{\lambda}G r^{2}
			+
			C\sqrt{\lambda}G\varepsilon^{\beta-1}r^{3}
			+
			\frac{C\sqrt{\lambda}G\varepsilon^{\beta-2}}{\alpha+1}r^{3}
			+
			\frac{G}{2(\alpha+1)}r^{3}
			+
			\frac{CG\varepsilon^{\beta}}{\alpha+1}r^{3}
			\notag\\
			&\qquad
			+
			\frac{C\lambda G^{2}\varepsilon^{\beta}}{\alpha+1}r^{5}
			+
\frac{C\sqrt{\lambda}G\varepsilon^{\beta}r^{2}}{\alpha}B_{\varepsilon}(r),
			\label{eq:holder-a-eps}
		\end{align}
		\begin{equation}\label{eq:holder-A-eps-h}
			A_{\varepsilon}(r):=b_{\varepsilon}(r)+\frac{B_{\varepsilon}(r)}{\alpha r},
		\end{equation}
		\begin{equation}\label{eq:holder-d-eps-h}
			d_{\varepsilon}(r):=\frac{r}{4\alpha\lambda}+\frac{\lambda G^{2}}{4\alpha}r^{5}.
		\end{equation}
		Set
		\begin{equation}\label{eq:holder-P-eps-h}
			P_{\varepsilon}(r):=d_{\varepsilon}(r)+a_{\varepsilon}(r)+\left(b_{\varepsilon}(r)+\frac{2}{r}\right)B_{\varepsilon}(r)+\frac{2B^{2}_{\varepsilon}(r)}{\alpha r}.
		\end{equation}
        Since $N'(r)=\frac{I' H-H' I}{H^2}$, as in the Proposition \ref{prop:Hdr-N-differential-corrected},  we can obtain that
		\begin{equation}\label{eq:holder-diff-ineq-N-eps-}
			N'(r)+A_{\varepsilon}(r)N(r)\ge -P_{\varepsilon}(r)
		\end{equation}
for every \(0<r<R-\varepsilon\).

Define
		\begin{equation}\label{eq:holder-Nbar-eps-h}
			\mathcal{N}_{\varepsilon}(r)
			:=
			e^{\int_{r_{0}}^{r}A_{\varepsilon}(t)\,dt}N(r)
			+
			\int_{r_{0}}^{r}e^{\int_{r_{0}}^{s}A_{\varepsilon}(t)\,dt}P_{\varepsilon}(s)\,ds.
		\end{equation}
Thanks to (\ref{eq:holder-diff-ineq-N-eps-}), we can show that  \(\mathcal{N}_{\varepsilon}(r)\) is nondecreasing on \((r_{0},R-\varepsilon)\).

\begin{remark}
    The advantage of choosing $H(r)=\int_{B_{r}}\bigl(|u|^2+\lambda r^{2}|w|^2\bigr)\mu_{r}^{\alpha-1}\,dx$ instead of $H(r)=\int_{B_{r}}\bigl(|u|^2+\lambda r^{4}|w|^2\bigr)\mu_{r}^{\alpha-1}\,dx$ is that there is no $\frac{1}{r}$-term in $A_\varepsilon(r)$ compared with $A(r)$ in (\ref{eq:Hdr-A-expanded}). The $\frac{1}{r}$-term will lead to $\log \frac{r_2}{r_1}$ term in the three-ball inequality which prevents us from getting the sharp vanishing order as discussed in Remark \ref{rem-1}.
\end{remark}

        With aid of the monotonicity of  \(\mathcal{N}_{\varepsilon}(r)\), we aim to show the three-ball inequality. 
		Let
		$0<r_{1}<r_{2}<2r_{2}<r_{3}<R-\varepsilon.$ We still introduce some notations. 
        \begin{equation}\label{eq:holder-Phi-eps-h}
			\Phi_{\varepsilon}(a,b):=\int_{a}^{b}A_{\varepsilon}(t)\,dt.
		\end{equation}
		Define
		\begin{equation}\label{eq:holder-prop3-c1-c2-h}
			c_{1}:=(2\alpha+n-2)\log\frac{2r_{2}}{r_{1}},
			\qquad
			c_{2}:=(2\alpha+n-2)\log\frac{r_{3}}{2r_{2}},
		\end{equation}
		\begin{equation}\label{eq:holder-prop3-beta-gamma-h}
			\beta_{\varepsilon}:=\frac{1}{\alpha}\int_{r_{1}}^{2r_{2}}e^{\Phi_{\varepsilon}(t,2r_{2})}\frac{dt}{t},
			\qquad
			\gamma_{\varepsilon}:=\frac{1}{\alpha}\int_{2r_{2}}^{r_{3}}e^{-\Phi_{\varepsilon}(2r_{2},t)}\frac{dt}{t},
		\end{equation}
		\begin{equation}\label{eq:holder-prop3-R1-R2-h}
			R_{1,\varepsilon}:=\frac{1}{\alpha}\int_{r_{1}}^{2r_{2}}\frac{1}{t}
			\left(\int_{t}^{2r_{2}}e^{\Phi_{\varepsilon}(t,s)}P_{\varepsilon}(s)\,ds\right)dt,
			\qquad
			R_{2,\varepsilon}:=\frac{1}{\alpha}\int_{2r_{2}}^{r_{3}}\frac{1}{t}
			\left(\int_{2r_{2}}^{t}e^{-\Phi_{\varepsilon}(s,t)}P_{\varepsilon}(s)\,ds\right)dt,
		\end{equation}
		and
		\begin{equation}\label{eq:holder-prop3-Q-h}
			Q_{\varepsilon}(r):=\frac{C\sqrt{\lambda}G\varepsilon^{\beta}}{\alpha}r^{2},
			\qquad
			Q_{1,\varepsilon}:=\int_{r_{1}}^{2r_{2}}Q_{\varepsilon}(t)\,dt,
			\qquad
			Q_{2,\varepsilon}:=\int_{2r_{2}}^{r_{3}}Q_{\varepsilon}(t)\,dt.
		\end{equation}
		Furthermore, set
		\begin{equation}\label{eq:holder-prop3-kappa-h}
			\kappa_{\varepsilon}:=\kappa_{\varepsilon}(r_1, r_2, r_3)=\frac{\gamma_{\varepsilon}}{\beta_{\varepsilon}+\gamma_{\varepsilon}},
		\end{equation}
		and
		\begin{equation}\label{eq:holder-prop3-LambdaH-h}
			\widetilde{\lambda}_{H,\varepsilon}
			:=
			\kappa_{\varepsilon}R_{1,\varepsilon}
			+
			(1-\kappa_{\varepsilon})R_{2,\varepsilon}
			+
			\kappa_{\varepsilon}Q_{1,\varepsilon}
			+
			(1-\kappa_{\varepsilon})Q_{2,\varepsilon}.
		\end{equation}
	Following the arguments in Proposition \ref{prop:Hdr-H-three-ball-from-calN}, the three-ball inequality for $H(r)$ holds,
		\begin{equation}\label{eq:holder-prop3-conclusion-h}
			H(2r_{2})
			\le
			e^{\widetilde{\lambda}_{H,\varepsilon}}
			H(r_{1})^{\kappa_{\varepsilon}}H(r_{3})^{1-\kappa_{\varepsilon}}.
		\end{equation}
We optimize the three-ball inequality for $H(r).$
		\begin{proposition}
		Assume moreover that \(G\ge 16\). Let
		\begin{equation}\label{eq:holder-thm3-choice}
			\lambda:=G^{-1},
			\qquad
			\alpha:=G^{1/4},
			\qquad
			\varepsilon:=G^{-\frac{1}{2(2-\beta)}}.
		\end{equation}
		Let
		$0<r_{1}<r_{2}<2r_{2}<r_{3}<R-\varepsilon.
		$
		For the corresponding quantities
		\(\beta_{\varepsilon},\gamma_{\varepsilon},R_{1,\varepsilon},R_{2,\varepsilon},\kappa_{\varepsilon}\)
		in (\ref{eq:holder-prop3-beta-gamma-h})--(\ref{eq:holder-prop3-kappa-h}), there exists a constant
		\(C=C(n,R,\beta)>0\) such that
		\begin{equation}\label{eq:holder-thm3-conclusion}
			H(2r_{2})
			\le
			\exp\!\left(
			CG^{1/4}(\log\frac{r_{3}}{2r_{2}}+1)
			\right)
			H(r_{1})^{\kappa_{\varepsilon}}H(r_{3})^{1-\kappa_{\varepsilon}}.
		\end{equation}
	\end{proposition}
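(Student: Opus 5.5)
The plan is to bound each piece of $\widetilde{\lambda}_{H,\varepsilon}$ in \eqref{eq:holder-prop3-LambdaH-h} directly, using the three-ball inequality \eqref{eq:holder-prop3-conclusion-h}, and to follow the scheme of Lemma \ref{thm:Hdr-optimized-weighted-three-ball-H-sharp}. The difference from that lemma is that with $H(r)=\int_{B_r}(|u|^2+\lambda r^2|w|^2)\mu_r^{\alpha-1}$ neither $A_\varepsilon$ nor $P_\varepsilon$ has a $1/r$ singularity. Concretely, $B_\varepsilon(r)$ is $O(r)$, so $\frac{2}{r}B_\varepsilon = \frac{1}{\sqrt\lambda}+G\sqrt\lambda r^2$ and $\frac{2B_\varepsilon^2}{\alpha r}=O\big(\frac{r}{\alpha\lambda}+\dots\big)$ are bounded, and $d_\varepsilon, a_\varepsilon$ are polynomial in $r$. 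We therefore do not use the $\sigma$-type integrals of the Section 4 lemma (which produce $\log\frac{2r_2}{r_1}$). Instead we use the uniform lower bound
\[
A_\varepsilon(r)\ge b_\varepsilon(r)\ge \frac{1}{2(\alpha+1)\sqrt{\lambda}}=:A_0,
\]
which gives $\Phi_\varepsilon(s,t)\ge A_0(t-s)$.

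\textbf{Step 1 (size of $P_\varepsilon$).} For $0<r<R$, read off from \eqref{eq:holder-P-eps-h} that
\[
P_\varepsilon(r)\le C\Big(\tfrac{1}{\alpha\lambda}+\tfrac{1}{\sqrt\lambda}+\sqrt\lambda G+\tfrac{\lambda G^2}{\alpha}+\sqrt\lambda G\varepsilon^{\beta-1}+\tfrac{\sqrt\lambda G\varepsilon^{\beta-2}}{\alpha}+\tfrac{G}{\alpha}\Big)=:C\,\mathcal P .
\]
Now insert the choice \eqref{eq:holder-thm3-choice}. We get $\frac1{\alpha\lambda}=G^{3/4}$, $\frac1{\sqrt\lambda}=\sqrt\lambda G=G^{1/2}$ and $\frac{\lambda G^2}{\alpha}=G^{3/4}$. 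Since $\varepsilon^{\beta-2}=G^{1/2}$, the term $\frac{\sqrt\lambda G\varepsilon^{\beta-2}}{\alpha}$ equals $G^{3/4}$. Since $\frac{1-\beta}{2(2-\beta)}<\frac14$, we have $\varepsilon^{\beta-1}\le G^{1/4}$, so $\sqrt\lambda G\varepsilon^{\beta-1}\le G^{3/4}$. Hence $\mathcal P\le CG^{3/4}$, while $A_0\ge cG^{1/4}$ (because $\alpha\sqrt\lambda=G^{-1/4}$).

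\textbf{Step 2 ($\kappa_\varepsilon R_{1,\varepsilon}$).} As in \eqref{eq:R1-rewrite-sharp}, write $R_{1,\varepsilon}\le\beta_\varepsilon I_1$ with $I_1=\int_{r_1}^{2r_2}e^{-\Phi_\varepsilon(s,2r_2)}P_\varepsilon(s)\,ds$. Then
\[
\kappa_\varepsilon R_{1,\varepsilon}\le\gamma_\varepsilon I_1\le \frac1\alpha\log\frac{r_3}{2r_2}\, I_1 .
\]
The key point is that $I_1$ is estimated by the exponential decay and not by the length of the interval:
\[
I_1\le C\mathcal P\int_{-\infty}^{2r_2}e^{-A_0(2r_2-s)}\,ds\le \frac{C\mathcal P}{A_0}\le CG^{1/2}.
\]
Therefore $\kappa_\varepsilon R_{1,\varepsilon}\le CG^{-1/4}G^{1/2}\log\frac{r_3}{2r_2}=CG^{1/4}\log\frac{r_3}{2r_2}$, with no dependence on $r_1$. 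This is exactly what removes the $\log\frac{2r_2}{r_1}$ term present in Lemma \ref{thm:Hdr-optimized-weighted-three-ball-H-sharp}.

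\textbf{Step 3 ($(1-\kappa_\varepsilon)R_{2,\varepsilon}$ and the $Q$ terms).} Exchange the order of integration as in \eqref{eq:R2-order-sharp}, so that $R_{2,\varepsilon}=\frac1\alpha\int_{2r_2}^{r_3}P_\varepsilon(s)K(s)\,ds$ with $K(s)=\int_s^{r_3}e^{-\Phi_\varepsilon(s,t)}\frac{dt}{t}$. Since $1/t\le1/s$, we get $K(s)\le \frac{1}{sA_0}$, and therefore
\[
R_{2,\varepsilon}\le\frac{C\mathcal P}{\alpha A_0}\log\frac{r_3}{2r_2}\le CG^{1/4}\log\frac{r_3}{2r_2}.
\]
For the $Q$ terms, $\kappa_\varepsilon Q_{1,\varepsilon}+(1-\kappa_\varepsilon)Q_{2,\varepsilon}\le \frac{C\sqrt\lambda G\varepsilon^\beta}{\alpha}R^3\le CG^{1/4}$; this is the source of the additive "$+1$" in the exponent. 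If the $c_1,c_2$ contributions from \eqref{eq:holder-prop3-c1-c2-h} are to be kept in the exponent, they are not bounded here. They are pure radial powers $(r_1^{\kappa_\varepsilon}r_3^{1-\kappa_\varepsilon}/2r_2)^{2\alpha+n-2}$ and are absorbed later when passing from $H$ to $h$, as in Lemma \ref{thm:optimized-unweighted-three-ball-h}. Summing Steps 2 and 3 gives \eqref{eq:holder-thm3-conclusion}.

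\textbf{Main obstacle.} The delicate step is Step 1 combined with the decay argument. The crude bound $P_\varepsilon\lesssim G^{3/4}$ is too large by a factor $G^{1/2}$ if it is integrated over an interval of length $O(1)$. The proof only works because the uniform lower bound $A_0\sim G^{1/4}$ makes the effective integration length $G^{-1/4}$. I would double-check that each term of $P_\varepsilon$, in particular the mollifier terms $\varepsilon^{\beta-1}$ and $\varepsilon^{\beta-2}/\alpha$ under $\varepsilon=G^{-1/(2(2-\beta))}$, is indeed $O(G^{3/4})$ uniformly in $r\in(0,R)$.
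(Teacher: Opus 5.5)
Your proposal is correct and follows the paper's proof essentially step by step. Like the paper, you use the uniform bounds $A_\varepsilon\sim G^{1/4}$ and $P_\varepsilon\lesssim G^{3/4}$, the exponential decay of $e^{-\Phi_\varepsilon}$ to get $I_{1,\varepsilon}\lesssim G^{1/2}$ and $K_\varepsilon(s)\lesssim 1/(G^{1/4}s)$, and the bound $Q$-terms $\lesssim G^{1/4}$. Your estimate $\varepsilon^{\beta-1}\le G^{1/4}$ is in fact the one needed for $a_\varepsilon\lesssim G^{3/4}$; the paper's stated $\varepsilon^{\beta-1}\le G^{1/2}$ is too weak for that.

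One small point concerns the $c_1,c_2$ terms, which the paper's definition of $\widetilde{\lambda}_{H,\varepsilon}$ leaves out. You need not defer them to the passage to $h$. Since $\Phi_\varepsilon\ge 0$, you have $\beta_\varepsilon\ge\frac1\alpha\log\frac{2r_2}{r_1}$, and hence $\kappa_\varepsilon c_1-(1-\kappa_\varepsilon)c_2\le\frac{\gamma_\varepsilon}{\beta_\varepsilon}c_1\le(2\alpha+n-2)\log\frac{r_3}{2r_2}$. This already fits the stated exponent. Also note that $e^{\kappa_\varepsilon c_1-(1-\kappa_\varepsilon)c_2}=\bigl(2r_2/(r_1^{\kappa_\varepsilon}r_3^{1-\kappa_\varepsilon})\bigr)^{2\alpha+n-2}$, which is the reciprocal of what you wrote.
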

	
	\begin{proof}	
		We aim to bound $\widetilde{\lambda}_{H,\varepsilon}$ in (\ref{eq:holder-prop3-LambdaH-h}).
		We first bound $A_\varepsilon(r)$ and $P_\varepsilon(r)$.
		Under the choice \eqref{eq:holder-thm3-choice}, we have
		\[
		\frac{1}{\sqrt{\lambda}}=\sqrt{G},
		\qquad
		\sqrt{\lambda}G=\sqrt{G},
		\qquad
		\alpha=G^{1/4}.
		\]
		Hence \(A_{\varepsilon}\) has  the same expression
		\begin{align}
			A_{\varepsilon}(r)
			&=
			\frac{\sqrt{G}}{2(\alpha+1)}
			+
			\frac{\sqrt{G}}{2(\alpha+1)}r^{2}
			+
			\frac{\sqrt{G}}{2\alpha}
			+
			\frac{\sqrt{G}}{2\alpha}r^{2}.
			\label{eq:holder-thm3-A-explicit}
		\end{align}
		Thus there exist constants \(c_{0},C_{0}>0\), depending only on \(R\), such that
		\begin{equation}\label{eq:holder-thm3-A-two-sided}
			c_{0}G^{1/4}\le A_{\varepsilon}(r)\le C_{0}G^{1/4}
			\qquad \text{for all }0<r<R-\varepsilon.
		\end{equation}
		
		We next estimate \(P_{\varepsilon}\). Since
		$\varepsilon^{\beta-2}=G^{1/2},$
		$\varepsilon^{\beta-1}\le G^{1/2},$
		we obtain, from \eqref{eq:holder-a-eps},
		\[
		a_{\varepsilon}(r)\le C(R,\beta)G^{3/4}.
		\]
		Moreover,
		\[
		d_{\varepsilon}(r)=\frac{G r}{4\alpha}+\frac{G r^{5}}{4\alpha}\le C(R)G^{3/4}.
		\]
		From \eqref{eq:holder-B-eps-h} and \eqref{eq:holder-b-eps-h}, we get
		\[
		B_{\varepsilon}(r)\le C(R)G^{1/2},
		\qquad
		b_{\varepsilon}(r)\le C(R)G^{1/4}.
		\]
		Consequently,
		\[
		\left(b_{\varepsilon}(r)+\frac{2}{r}\right)B_{\varepsilon}(r)\le C(R)G^{3/4},
		\qquad
		\frac{2B^{2}_{\varepsilon}(r)}{\alpha r}\le C(R)G^{3/4}.
		\]
		Combining these estimates gives
		\begin{equation}\label{eq:holder-thm3-P-upper}
			0\le P_{\varepsilon}(r)\le C(R,\beta)G^{3/4}
			\qquad \text{for all }0<r<R-\varepsilon.
		\end{equation}
		
		We next 
	 estimate $\kappa_{\varepsilon}R_{1,\varepsilon}$.
		The argument is similar to those in the last section. However, we show the details of calculations, because $A_\varepsilon(r)$ is different from $A(r)$ in the last section. We rewrite
		\[
		R_{1,\varepsilon}
		=
		\frac{1}{\alpha}\int_{r_{1}}^{2r_{2}}e^{\Phi_{\varepsilon}(t,2r_{2})}\frac{1}{t}
		\left(\int_{t}^{2r_{2}}e^{-\Phi_{\varepsilon}(s,2r_{2})}P_{\varepsilon}(s)\,ds\right)dt.
		\]
		Define
		\begin{equation}\label{eq:holder-thm3-I1}
			I_{1,\varepsilon}:=
			\int_{r_{1}}^{2r_{2}}e^{-\Phi_{\varepsilon}(s,2r_{2})}P_{\varepsilon}(s)\,ds.
		\end{equation}
		Then
		\[
		R_{1,\varepsilon}\le \beta_{\varepsilon}I_{1,\varepsilon},
		\qquad
		\kappa_{\varepsilon}R_{1,\varepsilon}\le \gamma_{\varepsilon}I_{1,\varepsilon}.
		\]
		It follows from  \eqref{eq:holder-thm3-A-two-sided} that,
		\[
		\Phi_{\varepsilon}(s,2r_{2})\ge c_{0}G^{1/4}(2r_{2}-s).
		\]
		Using \eqref{eq:holder-thm3-P-upper}, we get
		\begin{align}
			I_{1,\varepsilon}
			\le
C(R,\beta)G^{3/4}\int_{r_{1}}^{2r_{2}}e^{-c_{0}G^{1/4}(2r_{2}-s)}\,ds
			\le
			C(R,\beta)G^{1/2}.
			\label{eq:holder-thm3-I1-bound}
		\end{align}
		Since \(A_{\varepsilon}\ge0\), we have
		\begin{equation}\label{eq:holder-thm3-gamma-upper}
			\gamma_{\varepsilon}
			\le
			\frac{1}{\alpha}\log\frac{r_{3}}{2r_{2}}
			=
			G^{-1/4}\log\frac{r_{3}}{2r_{2}}.
		\end{equation}
		Therefore
		\begin{equation}\label{eq:holder-thm3-kappaR1}
			\kappa_{\varepsilon}R_{1,\varepsilon}
			\le
			C(R,\beta)G^{1/4}\log\frac{r_{3}}{2r_{2}}.
		\end{equation}
		Then we 
		estimate of \((1-\kappa_{\varepsilon})R_{2,\varepsilon}\).
		Since \(1-\kappa_{\varepsilon}\le1\), it is enough to estimate \(R_{2,\varepsilon}\). Define
		\[
		K_{\varepsilon}(s):=\int_{s}^{r_{3}}e^{-\Phi_{\varepsilon}(s,t)}\frac{dt}{t},
		\qquad 2r_{2}\le s\le r_{3}.
		\]
		It holds that
		\[
		R_{2,\varepsilon}=\frac{1}{\alpha}\int_{2r_{2}}^{r_{3}}P_{\varepsilon}(s)K_{\varepsilon}(s)\,ds.
		\]
		By \eqref{eq:holder-thm3-A-two-sided},
		\[
		K_{\varepsilon}(s)
		\le
		\frac{1}{s}\int_{s}^{r_{3}}e^{-c_{0}G^{1/4}(t-s)}\,dt
		\le
		\frac{C}{G^{1/4}s}.
		\]
		It follows from \eqref{eq:holder-thm3-P-upper} that
		\begin{align}
			R_{2,\varepsilon}
			&\le
			\frac{C(R,\beta)G^{3/4}}{\alpha}\int_{2r_{2}}^{r_{3}}\frac{1}{G^{1/4}s}\,ds
			=
	C(R,\beta)G^{1/4}\log\frac{r_{3}}{2r_{2}}.
			\label{eq:holder-thm3-R2-bound}
		\end{align}
		Thus
		\begin{equation}\label{eq:holder-thm3-one-minus-kappa-R2}
			(1-\kappa_{\varepsilon})R_{2,\varepsilon}
			\le
			C(R,\beta)G^{1/4}\log\frac{r_{3}}{2r_{2}}.
		\end{equation}
		
		We continue to estimate of the forcing contribution.
		Thanks to \eqref{eq:holder-thm3-choice}, the function \(Q_{\varepsilon}\) satisfies
		\[
		Q_{\varepsilon}(r)\le C(R,\beta)G^{1/4}r^{2}.
		\]
	 Hence
		\[
		Q_{1,\varepsilon}
		\le
		C(R,\beta)G^{1/4} (2r_2)^3-r_1^3),
		\qquad
		Q_{2,\varepsilon}
		\le
		C(R,\beta)G^{1/4} (r_3^3-(2r_2)^3).
		\]

Summing up the last inequality with the estimates
		from \eqref{eq:holder-thm3-kappaR1} to \eqref{eq:holder-thm3-one-minus-kappa-R2},  to have
		\begin{equation}\label{eq:holder-thm3-LambdaH-bound}
			\widetilde{\lambda}_{H,\varepsilon}
			\le
			C(R,\beta)G^{1/4}(\log\frac{r_{3}}{2r_{2}}+1).
		\end{equation}

		At last, substituting \eqref{eq:holder-thm3-LambdaH-bound} into \eqref{eq:holder-prop3-conclusion-h}, we obtain
		\eqref{eq:holder-thm3-conclusion}.
	\end{proof}

We need to have a three-ball inequality without weight function $\mu(x)$.
		Define
\begin{equation}\label{eq:holder-thm4-h}	h(r):=\int_{B_{r}}\bigl(|u|^2+G^{-1}r^{2}(\Delta u)^{2}\bigr)\,dx.
		\end{equation}
         By the comparison of $H(r)$ and $h(r)$, we can show that, there exists a constant
		\(C=C(n,R,\beta)>0\) such that
	\begin{equation}\label{eq:holder-thm4-conclusion}
			h(r_{2})
			\le
			\exp\!\left(
			CG^{1/4}(\log\frac{r_{3}}{r_{2}}+1)
			\right)
			h(r_{1})^{\kappa_{\varepsilon}}h(r_{3})^{1-\kappa_{\varepsilon}}.
		\end{equation}
	
Next we want to get the three-ball inequality in the $L^\infty$ norm.
	For \(r<\rho\), define
	\begin{equation}\label{eq:intro-A-smooth}
		\mathcal B_{G}(r,\rho)
		:=r^{n/2}+G^{1/2}r(\rho-r)^{-2}\rho^{n/2}.
	\end{equation}

Let 
$\kappa^1_{\varepsilon}:=\kappa_{\varepsilon}(r_{1},2r_{2},r_{3}),$ where $\kappa_{\varepsilon}(r_{1},r_{2},r_{3})$ is given in 
(\ref{eq:holder-prop3-kappa-h}). Let $0<r_{1}<2r_1<r_{2}<4r_{2}<r_{3}<2r_{3}<R-\varepsilon.$
		 From (\ref{eq:holder-thm4-conclusion}), there exists \(C=C(n,R,\beta)>0\) such that
		\begin{align}\label{eq:holder-thm5-conclusion-hh}
			\|u\|_{L^{\infty}(B_{r_{2}})}
			&\le
			C(1+Gr_{2}^{4})^{C}r_{2}^{-n/2}
			\exp\!\left(CG^{1/4}\log\frac{r_{3}}{2r_{2}}\right)\notag\\
			&\qquad\times
			\mathcal B_{G}(r_{1},2r_{1})^{\kappa^1_{\varepsilon}}
			\mathcal B_{G}(r_{3}, 2r_3)^{1-\kappa^1_{\varepsilon}}
			\|u\|_{L^{\infty}(B_{2r_1})}^{\kappa^1_{\varepsilon}}
			\|u\|_{L^{\infty}(B_{2r_3})}^{1-\kappa^1_{\varepsilon}}.
		\end{align}

We are ready to give the proof of the vanishing order for the solutions of bi-Laplace equations with H\"older potentials.
	\begin{proof}[Proof of Theorem \ref{thm-4}]
		We apply the \(L^{\infty}\)-three-ball inequality in
		(\ref{eq:holder-thm5-conclusion-hh}) with \(R=10\).
		Fix
		$0<r\le r_{*}, $ with $r_{*}=\frac1{100}G^{-1/4}\le\frac1{200}$
		We choose
		\[
		r_{1}:=\frac r2,
		\qquad
		r_{2}:=r_{*},
		\qquad
		r_{3}:=8r_{*}.
		\]

	We aim to  estimate of the parameter \(\kappa^1_{\varepsilon}\).
	Recall that $\kappa^1_{\varepsilon}=\kappa_\varepsilon(r_1, 2r_2, r_3)$ with $\kappa_{\varepsilon}(r_{1},r_{2},r_{3})$ is given in 
(\ref{eq:holder-prop3-kappa-h}).
		We apply (\ref{eq:holder-thm5-conclusion-hh}) 
        to the triple
		$(r_{1},r_{2},r_{3})
		=
		\left(\frac r2,2r_{*},8r_{*}\right).$
		By the definition of \(\beta_{\varepsilon}\) and \(\gamma_{\varepsilon}\) in \eqref{eq:holder-prop3-beta-gamma-h}, we have
		\[
		\beta_{\varepsilon}
		=
		\frac1\alpha
		\int_{r/2}^{4r_{*}}
		e^{\Phi_{\varepsilon}(t,4r_{*})}
		\frac{dt}{t}, \quad 
		\mbox{and} \quad 
		\gamma_{\varepsilon}
		=
		\frac1\alpha
		\int_{4r_{*}}^{8r_{*}}
		e^{-\Phi_{\varepsilon}(4r_{*},t)}
		\frac{dt}{t}.
		\]
		
		We first estimate \(\beta_{\varepsilon}\). For $
		t\in\left[\frac r2,4r_{*}\right],$
		we have, by \eqref{eq:holder-thm3-A-two-sided},
		\[
		\Phi_{\varepsilon}(t,4r_{*})
		=
		\int_{t}^{4r_{*}}A_{\varepsilon}(\tau)\,d\tau
		\le
		CG^{1/4}r_{*} \leq C.
		\]
	
Since \(\alpha=G^{1/4}\), it holds that
		\[
		\beta_{\varepsilon}
		\le
		\frac{C}{\alpha}
		\int_{r/2}^{4r_{*}}\frac{dt}{t}\leq 
		CG^{-1/4}\log\frac{8r_{*}}{r}.
		\]

		Next we estimate \(\gamma_{\varepsilon}\). For $t\in[4r_{*},8r_{*}],$
		we get
		\[
		\Phi_{\varepsilon}(4r_{*},t)
		=
		\int_{4r_{*}}^{t}A_{\varepsilon}(\tau)\,d\tau
		\le
		CG^{1/4}(t-4r_{*})\leq C.
		\]
	
		Therefore
	$e^{-\Phi_{\varepsilon}(4r_{*},t)}\ge c$
		for every $t\in[4r_{*},8r_{*}]$.
		Hence
		\[
		\gamma_{\varepsilon}
		\ge
		\frac{c}{\alpha}
		\int_{4r_{*}}^{8r_{*}}\frac{dt}{t}\ge cG^{-1/4}.
		\]
		
		Combining the estimates for \(\beta_{\varepsilon}\) and \(\gamma_{\varepsilon}\), we get
		\[
		\frac{\beta_{\varepsilon}}{\gamma_{\varepsilon}}
		\le
		C\log\frac{8r_{*}}{r}.
		\]
		Therefore
		\begin{align}\label{eq:holder-small-kappa}
		(\kappa^1_{\varepsilon})^{-1}
		=
		1+
		\frac{\beta_{\varepsilon}}{\gamma_{\varepsilon}}
		\le
		C\left(1+\log\frac{8r_{*}}{r}\right)\leq C\log\frac{16r_{*}}{r} \leq C \log\frac{1}{r} 
		\end{align}
		for \(0<r\le r_{*}\). 
		
	Now we  apply the \(L^{\infty}\)-three-ball inequality (\ref{eq:holder-thm5-conclusion-hh}) to have
		\[
		\begin{aligned}
			\|u\|_{L^{\infty}(B_{r_{*}})}
			&\le
			 Cr_{*}^{-n/2}
			\exp\left(
			CG^{1/4}\log\frac{8r_{*}}{2r_{*}}
			\right)
			\\
			&\quad\times
			\mathcal{B}_G\left(\frac r2,r\right)^{\kappa^1_{\varepsilon}}
		\mathcal{B}_G(8r_{*},16r_{*})^{1-\kappa^1_{\varepsilon}}
			\|u\|_{L^{\infty}(B_{r})}^{\kappa^1_{\varepsilon}}
			\|u\|_{L^{\infty}(B_{16r_{*}})}^{1-\kappa^1_{\varepsilon}}.
		\end{aligned}
		\]
		Since 
		$r_{*}^{-n/2}=100^{n/2}G^{n/8},$
		we get 
		\[
		\begin{aligned}
			\|u\|_{L^{\infty}(B_{r_{*}})}
			&\le
			C\exp(CG^{1/4})
			\mathcal{B}_G\left(\frac r2,r\right)^{\kappa^1_{\varepsilon}}
		\mathcal{B}_G(8r_{*},16r_{*})^{1-\kappa^1_{\varepsilon}}
			\|u\|_{L^{\infty}(B_{r})}^{\kappa^1_{\varepsilon}}
			\|u\|_{L^{\infty}(B_{16r_{*}})}^{1-\kappa^1_{\varepsilon}}.
		\end{aligned}
		\]
		
		Recall that 
			$\mathcal{B}_G$ is given in (\ref{eq:intro-A-smooth}). From its definition and the value of $r_\ast$, we get
		\begin{align*}
			\mathcal{B}_G\left(\frac r2,r\right)
			\leq 
			\left(\frac r2\right)^{n/2}
			+
			2G^{1/2}r^{n/2-1}\leq C G.
		\end{align*}
and
		\[
		\begin{aligned}
			\mathcal B_G(8r_{*},16r_{*})
			\le
			Cr_{*}^{n/2}
			+
			CG^{1/2}
			r_{*}^{n/2-1}\leq CG.
		\end{aligned}
		\]
		
From the normalization assumptions in (\ref{eq:intro-second-order-normalization-h}),
we obtain
		\[
		1
		\le
		C\exp(CG^{1/4})
		\|u\|_{L^{\infty}(B_{r})}^{\kappa^1_{\varepsilon}}
		\mathbb{M}^{1-\kappa^1_{\varepsilon}}.
		\]
		Taking the exponent  \(1/\kappa_{\varepsilon}\) to both sides of the last inequality yields that
		\[
		\|u\|_{L^{\infty}(B_{r})}
		\ge
		\exp\left(-\frac{C(G^{1/4}+\log\mathbb{M})}{\kappa^1_{\varepsilon}}\right).
		\]

		From \eqref{eq:holder-small-kappa},
		we arrive at
		\[
		\|u\|_{L^{\infty}(B_{r})}
		\ge
		\exp\left(
		-C(G^{1/4}+\log\mathbb{M})\log\frac1r
		\right)
		\]
        for $r_*$ small.
		That is,
		\[
		\|u\|_{L^{\infty}(B_{r})}
		\ge
		r^{C(G^{1/4}+\log\mathbb{M})}.
		\]
Thus, we prove the  order of vanishing for solutions is at most $C(G^{1/4}+\log \mathbb{M})$ at origin.
Since  the obtained three-ball inequalities   are translation invariant, we can apply the same arguments to the balls centered at  any point $x_0\in B_{r_\ast/2}$. Thus the proof of Theorem is completed.

	\end{proof}

    \section{Appendix}
In the appendix, we show the proof of Lemma \ref{lem:weighted-differentiation} and the quantitative regularity estimates for $W^{2,2}$ norm.
    	\begin{proof}[Proof of Lemma \ref{lem:weighted-differentiation}]
		Differentiating \eqref{eq:F-definition} with respect to \(r\), we have
		\begin{equation}\label{eq:F-prime-start}
			F'(r)
			=
			2\gamma r\int_{B_{r}}f(x)\mu_{r}(x)^{\gamma-1}\,dx.
		\end{equation}
		Since
		$r^{2}=\mu_{r}(x)+|x|^{2},$
		we rewrite \eqref{eq:F-prime-start} as
		\begin{align}
			F'(r)
			&=
			\frac{2\gamma}{r}\int_{B_{r}}f(x)\mu_{r}(x)^{\gamma}\,dx
			+
			\frac{2\gamma}{r}\int_{B_{r}}f(x)|x|^{2}\mu_{r}(x)^{\gamma-1}\,dx \notag\\
			&=
			\frac{2\gamma}{r}F(r)
			+
			\frac{2\gamma}{r}\int_{B_{r}}f(x)|x|^{2}\mu_{r}(x)^{\gamma-1}\,dx.
			\label{eq:F-prime-decompose}
		\end{align}
		 Since
		$\nabla\bigl(\mu_{r}^{\gamma}\bigr)=-2\gamma x\,\mu_{r}^{\gamma-1},$
		integration by parts argument yields that
		\begin{align}
			2\gamma\int_{B_{r}}f(x)|x|^{2}\mu_{r}^{\gamma-1}\,dx
			&=
			-\int_{B_{r}}f(x)\,x\cdot \nabla(\mu_{r}^{\gamma})\,dx \notag\\
			&=
			\int_{B_{r}}\operatorname{div}(f(x)x)\,\mu_{r}^{\gamma}\,dx \notag\\
			&=
			nF(r)+\int_{B_{r}}\nabla f(x)\cdot x\,\mu_{r}^{\gamma}\,dx.
			\label{eq:weighted-ibp}
		\end{align}
		Substituting \eqref{eq:weighted-ibp} into \eqref{eq:F-prime-decompose}, we arrive at \eqref{eq:F-prime-first}.
		
		To derive \eqref{eq:F-prime-second}, we integrate by parts once more to have
		\begin{align}
			\int_{B_{r}}\nabla f(x)\cdot x\,\mu_{r}^{\gamma}\,dx
			&=
			\frac{1}{2(\gamma+1)}
			\int_{B_{r}}\nabla f(x)\cdot\bigl(-\nabla(\mu_{r}^{\gamma+1})\bigr)\,dx \notag\\
			&=
			\frac{1}{2(\gamma+1)}
			\int_{B_{r}}\Delta f(x)\,\mu_{r}^{\gamma+1}\,dx.
			\label{eq:second-ibp}
		\end{align}
		Combining \eqref{eq:F-prime-first} and \eqref{eq:second-ibp}, we obtain \eqref{eq:F-prime-second}.
	\end{proof}
	
	To control the $\Delta u$ term in the definition of $H(r)$, we also need the following quantitative local $W^{2,2}$--$L^\infty$ estimate.
	\begin{lemma}\label{cor:local-W22-Linfty-correct}
		Assume \(n\ge 2\). Let \(0<r<\rho<R\). Then
		\begin{equation}\label{eq:local-W22-Linfty-correct}
			\|u\|_{W^{2,2}(B_r)}
			\le
			C(n,R)\,(1+G\rho^{4})\,(\rho-r)^{-2}\rho^{n/2}\,
			\|u\|_{L^\infty(B_\rho)}.
		\end{equation}
	\end{lemma}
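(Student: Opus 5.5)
The plan is to combine standard interior elliptic estimates for the bi-Laplace operator with a bootstrap that handles the potential term \(Vu\), which is treated as a source. The constant \(G\) appearing in \eqref{eq:local-W22-Linfty-correct} plays the role of a bound on \(\|V\|_{L^\infty(B_R)}\); in the bounded case it is replaced by \(M\), and in the H\"older case \(\|V\|_{L^\infty}\le G\). Since \(u\) solves \(\Delta^2 u=-Vu\), the right-hand side satisfies \(\|\Delta^2u\|_{L^2(B_\rho)}\le G\|u\|_{L^2(B_\rho)}\le G|B_\rho|^{1/2}\|u\|_{L^\infty(B_\rho)}\). This is where the factor \(\rho^{n/2}\) arises.

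The key step is a Caccioppoli-type \(W^{2,2}\) interior estimate for \(\Delta^2\) on a pair of concentric balls \(B_r\subset B_\rho\).

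\begin{enumerate}
\item Fix a cutoff \(\eta\in C_0^\infty(B_\rho)\) with \(\eta\equiv1\) on \(B_r\) and \(|D^k\eta|\le C(\rho-r)^{-k}\) for \(k\le 4\).
\item Apply the Calder\'on--Zygmund (Fourier) identity \(\|D^2(\eta^2 u)\|_{L^2}=\|\Delta(\eta^2u)\|_{L^2}\) for compactly supported functions.
\item Test the equation \(\Delta^2u=-Vu\) against \(\eta^4 u\) and integrate by parts twice. Expanding \(\Delta(\eta^2u)=\eta^2w+2\nabla\eta^2\cdot\nabla u+u\Delta\eta^2\), where \(w=\Delta u\), gives
\[
\int \eta^4|w|^2\le C\int|Vu||u|\eta^4+\text{(lower-order terms with derivatives of } \eta).
\]
\item Absorb the lower-order terms by Cauchy--Schwarz and a weighted interpolation inequality of the form \(\int\eta^2|\nabla u|^2\le \delta\int\eta^4|w|^2+C\delta^{-1}(\rho-r)^{-2}\int_{B_\rho}|u|^2\).
\end{enumerate}

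This yields
\[
\|\Delta u\|_{L^2(B_r)}\le C\Bigl[(\rho-r)^{-2}+G^{1/2}\Bigr]\|u\|_{L^2(B_\rho)}.
\]
Combined with \(L^2\) control of \(u\) and \(\nabla u\) (again by interpolation) and the CZ bound for \(D^2u\), this gives
\[
\|u\|_{W^{2,2}(B_r)}\le C\bigl((\rho-r)^{-2}+G^{1/2}+1\bigr)\|u\|_{L^2(B_\rho)}.
\]

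To pass to the stated form, I would note that \(G^{1/2}\le 1+G\rho^4\cdot(\rho-r)^{-2}\rho^{-2}\) fails when \(\rho\) is small. So rather than that route, I would bound
\[
(\rho-r)^{-2}+G^{1/2}+1\le C(n,R)(1+G\rho^4)(\rho-r)^{-2},
\]
using \(\rho<R\), which gives \((\rho-r)^{-2}\ge R^{-2}\). Writing \(G^{1/2}\le 1+G\) absorbs the constant term, but the \(\rho^4\) weight requires care. Alternatively, the cleaner route is to run the estimate with \(\|Vu\|\) kept as the source \(G\rho^{4}\cdot\rho^{-4}\). The safest option is to accept \(C(n,R)\) depending on \(R\): then \(1+G\le C(R)(1+G\rho^4)\rho^{-4}\), and \(\rho^{-4}\) must be avoided. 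I expect this dependence bookkeeping to be the main obstacle.

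If it does not close cleanly, I would instead iterate through the system \(w=\Delta u\), \(\Delta w=-Vu\). First estimate \(\|w\|_{L^2(B_{r'})}\) on an intermediate ball \(r'=(r+\rho)/2\) via the Caccioppoli inequality for the second-order Laplacian with source. Then apply the standard \(W^{2,2}\) estimate for \(\Delta u=w\). The resulting constant is multiplicative in the form \((1+G\rho^4)(\rho-r)^{-2}\), after factoring \(\rho^{2}\) powers into \(C(n,R)\) since \(\rho<R\). The final step is to replace \(\|u\|_{L^2(B_\rho)}\) by \(C\rho^{n/2}\|u\|_{L^\infty(B_\rho)}\).
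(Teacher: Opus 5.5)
Your route is genuinely different from the paper's and is sound, but you stopped one line short of the finish. The paper uses no cutoff function. It quotes H\"ormander's weighted interpolation inequality (Corollary 17.3 of vol.~III) with weight $d(x)=\rho-|x|$:
\[
\|d^{|\beta|}D^\beta u\|_{L^2(B_\rho)}\le C\bigl(\|d^{4}\Delta^2u\|_{L^2(B_\rho)}+\|u\|_{L^2(B_\rho)}\bigr)^{|\beta|/4}\|u\|_{L^2(B_\rho)}^{1-|\beta|/4}.
\]
Because $d\le\rho$, the potential enters only through the scale-invariant quantity $\|d^4Vu\|_{L^2(B_\rho)}\le G\rho^4\|u\|_{L^2(B_\rho)}$. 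Using $d\ge\rho-r$ on $B_r$ then gives $\|D^\beta u\|_{L^2(B_r)}\le C(1+G\rho^4)^{|\beta|/4}(\rho-r)^{-|\beta|}\|u\|_{L^2(B_\rho)}$, so the factor $1+G\rho^4$ comes out automatically. Your energy argument (test against $\eta^4\bar u$, absorb, then use $\|D^2(\eta^2u)\|_{L^2}=\|\Delta(\eta^2u)\|_{L^2}$) is self-contained. It correctly gives $\|u\|_{W^{2,2}(B_r)}\le C\bigl(1+(\rho-r)^{-2}+G^{1/2}\bigr)\|u\|_{L^2(B_\rho)}$, which up to constants is at least as strong as the paper's intermediate bound. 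One correction: the interpolation in your step 4 should read $\int\eta^2|\nabla u|^2\le\delta(\rho-r)^2\int\eta^4|w|^2+C\delta^{-1}(\rho-r)^{-2}\int_{B_\rho}|u|^2$ to scale correctly. With that fix the absorption goes through.

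The step you flagged as the main obstacle is not really one, but the moves you tried would not resolve it. Replacing $G^{1/2}$ by $1+G$ throws away the square root, and then $G$ cannot be dominated by $C(1+G\rho^4)(\rho-r)^{-2}$; for example, take $\rho=G^{-1/4}$ and $r=\rho/2$, where the right side is only $8CG^{1/2}$. What you need is simply
\[
G^{1/2}=(G\rho^4)^{1/2}\rho^{-2}\le(1+G\rho^4)^{1/2}(\rho-r)^{-2},
\]
which holds since $\rho-r\le\rho$. Combine this with $1\le R^2(\rho-r)^{-2}$ and your intermediate bound becomes $C(n,R)(1+G\rho^4)(\rho-r)^{-2}\|u\|_{L^2(B_\rho)}$. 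Then $\|u\|_{L^2(B_\rho)}\le C\rho^{n/2}\|u\|_{L^\infty(B_\rho)}$ finishes the lemma.

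Your fallback, by contrast, is not a working alternative as described. A Caccioppoli inequality for $\Delta w=-Vu$ alone bounds $\nabla w$ in terms of $w$ and $Vu$; it does not bound $w$ in terms of $u$. Controlling $\|w\|_{L^2}$ by $\|u\|_{L^2}$ requires exactly the coupled fourth-order testing of your main route.
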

	
	\begin{proof}
		We shall use the following interior regularity result for higher order elliptic equations in  Corollary 17.3 in \cite{H07}. That is,  
		for every multi-index \(\beta\) with
		\(|\beta|\le 3\),
		\begin{equation}\label{eq:Zhu-42-used}
			\|d^{|\beta|}D^\beta u\|_{L^2(B_\rho)}
			\le
			C
			\Bigl(
			\|d^{4}\Delta^{2}u\|_{L^2(B_\rho)}+\|u\|_{L^2(B_\rho)}
			\Bigr)^{|\beta|/4}
			\|u\|_{L^2(B_\rho)}^{1-|\beta|/4},
		\end{equation}
		where
		\[
		d(x):=\operatorname{dist}(x,\mathbb R^{n}\setminus B_\rho)=\rho-|x|.
		\]
		
		We only need the cases \(|\beta|=0,1,2\).
		Define
	\begin{equation}\label{eq:def-A-rho}
			A_\rho
			:=
			\|d^{4}\Delta^{2}u\|_{L^2(B_\rho)}+\|u\|_{L^2(B_\rho)}.
		\end{equation}
		Since \(d(x)\le \rho\) on \(B_\rho\), and \(\Delta^{2}u=-Vu\), we obtain
		\begin{align}
		\|d^{4}\Delta^{2}u\|_{L^2(B_\rho)}
			=
			\|d^{4}Vu\|_{L^2(B_\rho)}
			\le
\|V\|_{L^\infty(B_\rho)}\,\|d^{4}u\|_{L^2(B_\rho)}
			\le G\rho^{4}\|u\|_{L^2(B_\rho)}.
			\label{eq:d4Delta2u}
		\end{align}
		Hence
		\begin{equation}\label{eq:A-rho-bound}
			A_\rho
			\le
			(1+G\rho^{4})\|u\|_{L^2(B_\rho)}.
		\end{equation}
		
		Take \(|\beta|=2\) in \eqref{eq:Zhu-42-used}. Then
		\[
		\|d^{2}D^\beta u\|_{L^2(B_\rho)}
		\le
		C A_\rho^{1/2}\|u\|_{L^2(B_\rho)}^{1/2}.
		\]
		Using \eqref{eq:A-rho-bound}, we get
		\begin{align}
			\|d^{2}D^\beta u\|_{L^2(B_\rho)}
			&\le
			C\bigl((1+G\rho^{4})\|u\|_{L^2(B_\rho)}\bigr)^{1/2}
			\|u\|_{L^2(B_\rho)}^{1/2}
			\notag\\
			&=
			C(1+G\rho^{4})^{1/2}\|u\|_{L^2(B_\rho)}.
			\label{eq:second-derivative-d-weighted}
		\end{align}
		Now if \(x\in B_r\), then
		$d(x)=\rho-|x|\ge \rho-r.$
		Therefore
		\begin{align}\label{any-beta}
		(\rho-r)^{2}|D^\beta u(x)|\le d(x)^{2}|D^\beta u(x)|
		\qquad 
		\end{align}
        for $x\in B_r$ and any $\beta$.
		Taking the \(L^2(B_r)\)-norm and using \eqref{eq:second-derivative-d-weighted}, we obtain
		\begin{equation}\label{eq:D2u-final}
			\|D^\beta u\|_{L^2(B_r)}
			\le
			C(1+G\rho^{4})^{1/2}(\rho-r)^{-2}\|u\|_{L^2(B_\rho)}
			\qquad (|\beta|=2).
		\end{equation}
		
		Now we prove the case \(|\beta|=1\) in a similar way using \eqref{eq:Zhu-42-used}. Then
		\[
		\|d\,D^\beta u\|_{L^2(B_\rho)}
		\le
		C A_\rho^{1/4}\|u\|_{L^2(B_\rho)}^{3/4}.
		\]
	From \eqref{eq:A-rho-bound}, we obtain
		\begin{align}
			\|d\,D^\beta u\|_{L^2(B_\rho)}
			&\le
			C\bigl((1+G\rho^{4})\|u\|_{L^2(B_\rho)}\bigr)^{1/4}
			\|u\|_{L^2(B_\rho)}^{3/4}
			\notag\\
			&=
			C(1+G\rho^{4})^{1/4}\|u\|_{L^2(B_\rho)}.
			\label{eq:first-derivative-d-weighted}
		\end{align}
		Using  (\ref{any-beta}) and 
		taking the \(L^2(B_r)\)-norm and using \eqref{eq:first-derivative-d-weighted}, we derive that
		\begin{equation}\label{eq:D1u-final}
			\|D^\beta u\|_{L^2(B_r)}
			\le
			C(1+G\rho^{4})^{1/4}(\rho-r)^{-1}\|u\|_{L^2(B_\rho)}
			\qquad (|\beta|=1).
		\end{equation}
		
		From \eqref{eq:D2u-final} and \eqref{eq:D1u-final}, we have
		\begin{align}
			\|u\|_{W^{2,2}(B_r)}
			\le{}&
			C(n)\Bigl[
			1
			+
			(1+G\rho^{4})^{1/4}(\rho-r)^{-1}
			+
			(1+G\rho^{4})^{1/2}(\rho-r)^{-2}
			\Bigr]
			\|u\|_{L^2(B_\rho)} \notag \\
            \le
		& C(n)\,(1+G\rho^{4})(\rho-r)^{-2}\|u\|_{L^2(B_\rho)}.
			\label{eq:W22-before-absorb}
		\end{align}

		Finally, we conclude that
		\[
		\|u\|_{W^{2,2}(B_r)}
		\le
		C(n,R)\,(1+G\rho^{4})(\rho-r)^{-2}\rho^{n/2}\|u\|_{L^\infty(B_\rho)}.
		\]
		This finishes the proof of the Lemma.
	\end{proof}

    \bibliographystyle{plainnat}
	\bibliography{reference.bib}

@article{K07,
  title={Some recent applications of unique continuation},
  author={Kenig, Carlos E},
  journal={Contemporary Mathematics},
  volume={439},
  pages={25--56},
  year={2007},
  publisher={Providence, RI: American Mathematical Society}
}

@article{lo18,
  title={Nodal sets of Laplace eigenfunctions: polynomial upper estimates of the Hausdorff measure},
  author={Logunov, Alexander},
  journal={Annals of Mathematics},
  volume={187},
  number={1},
  pages={221--239},
  year={2018},
  publisher={Department of Mathematics, Princeton University Princeton, New Jersey, USA}
}

@article{BanerjeeGarofalo2016,
  author  = {Banerjee, Agnid and Garofalo, Nicola},
  title   = {Quantitative uniqueness for elliptic equations at the boundary of {$C^1$}, {Dini} domains},
  journal = {J. Differential Equations},
  volume  = {261},
  number  = {12},
  pages   = {6718--6757},
  year    = {2016}
}

@article{BourgainKenig2005,
  author  = {Bourgain, Jean and Kenig, Carlos E.},
  title   = {On localization in the continuous {Anderson--Bernoulli} model in higher dimension},
  journal = {Invent. Math.},
  volume  = {161},
  number  = {2},
  pages   = {389--426},
  year    = {2005}
}

@article{Davey2020,
  author  = {Davey, Blair},
  title   = {Quantitative unique continuation for {Schr\"odinger} operators},
  journal = {J. Funct. Anal.},
  volume  = {279},
  number  = {4},
  pages   = {108566},
  year    = {2020}
}

@article{LZ22,
  title={Upper bounds of nodal sets for eigenfunctions of eigenvalue problems},
  author={Lin, Fanghua and Zhu, Jiuyi},
  journal={Mathematische Annalen},
  volume={382},
  number={3},
  pages={1957--1984},
  year={2022},
  publisher={Springer}
}

@article{Davey2025,
  title={A frequency function approach to quantitative unique continuation for elliptic equations},
  author={Davey, Blair},
  journal={Journal of Mathematical Analysis and Applications},
  pages={130952},
  year={2026},
  publisher={Elsevier}
}

@article{DaveyZhu2018,
  author  = {Davey, Blair and Zhu, Jiuyi},
  title   = {Quantitative uniqueness of solutions to second order elliptic equations with singular potentials in two dimensions},
  journal = {Calc. Var. Partial Differential Equations},
  volume  = {57},
  number  = {3},
  pages   = {Paper No. 92},
  year    = {2018}
}

@article{DaveyZhu2019,
  author  = {Davey, Blair and Zhu, Jiuyi},
  title   = {Quantitative uniqueness of solutions to second-order elliptic equations with singular lower order terms},
  journal = {Comm. Partial Differential Equations},
  volume  = {44},
  number  = {12},
  pages   = {1217--1251},
  year    = {2019}
}

@article{DG26,
  title={On a class of thin obstacle-type problems for the bi-{Laplacian} operator: D. Danielli, G. Gravina},
  author={Danielli, Donatella and Gravina, Giovanni},
  journal={Mathematische Annalen},
  volume={395},
  number={1},
  pages={11},
  year={2026},
  publisher={Springer}
}

@article{DonnellyFefferman1988,
  author  = {Donnelly, Harold and Fefferman, Charles},
  title   = {Nodal sets of eigenfunctions on {Riemannian} manifolds},
  journal = {Invent. Math.},
  volume  = {93},
  number  = {1},
  pages   = {161--183},
  year    = {1988}
}

@article{GarofaloLin1986,
  author  = {Garofalo, Nicola and Lin, Fang-Hua},
  title   = {Monotonicity properties of variational integrals, {$A_p$} weights and unique continuation},
  journal = {Indiana Univ. Math. J.},
  volume  = {35},
  number  = {2},
  pages   = {245--268},
  year    = {1986}
}

@article{S90,
  title={Strong uniqueness theorems for second order elliptic differential equations},
  author={Sogge, Christopher D},
  journal={American Journal of Mathematics},
  volume={112},
  number={6},
  pages={943--984},
  year={1990},
  publisher={JSTOR}
}

@article{W92,
  title={A property of measures in {$\mathbb R^n$} and an application to unique continuation},
  author={Wolff, Thomas H},
  journal={Geometric \& Functional Analysis GAFA},
  volume={2},
  number={2},
  pages={225--284},
  year={1992},
  publisher={Springer}
}

@article{TWZ26,
  title={Quantitative unique continuation for elliptic equations with H\"older continuous potentials},
  author={Teng, Long and Wang, Zhiwei and Zhu, Jiuyi},
  journal={ arXiv:2603.21392},
  year={2026}
}

@article{CG99,
  title={Some remarks on strong unique continuation for the Laplace operator and its powers},
  author={Colombini, Ferruccio and Grammatico, Cataldo},
  journal={Communications in partial differential equations},
  volume={24},
  number={5-6},
  pages={1079--1094},
  year={1999},
  publisher={Taylor \& Francis}
}

@article{L07,
  title={Strong unique continuation for $m-$th powers of a Laplacian operator with singular coefficients},
  author={Lin, Ching-Lung},
  journal={Proceedings of the American Mathematical Society},
  volume={135},
  number={2},
  pages={569--578},
  year={2007}
}

@article{CK10,
  title={Strong unique continuation for products of elliptic operators of second order},
  author={Colombini, Ferruccio and Koch, Herbert},
  journal={Transactions of the American Mathematical Society},
  volume={362},
  number={1},
  pages={345--355},
  year={2010}
}

@article{Z18,
  title={Quantitative unique continuation of solutions to higher order elliptic equations with singular coefficients.},
  author={Zhu, Jiuyi},
  journal={Calculus of Variations \& Partial Differential Equations},
  volume={57},
  number={2},
  year={2018}
}

@article{ARV19,
  title={Optimal three spheres inequality at the boundary for the Kirchhoff--Love plate’s equation with Dirichlet conditions},
  author={Alessandrini, Giovanni and Rosset, Edi and Vessella, Sergio},
  journal={Archive for Rational Mechanics and Analysis},
  volume={231},
  number={3},
  pages={1455--1486},
  year={2019},
  publisher={Springer}
}

@article{H83,
  title={uniqueness theorems for 2nd-order elliptic differential equations},
  author={H\"ormander, Lars},
  journal={Communications in Partial Differential Equations},
  volume={8},
  number={1},
  pages={21--64},
  year={1983},
  publisher={MARCEL DEKKER INC 270 MADISON AVE, NEW YORK, NY 10016}
}

@article{LW23,
  title={Quantitative unique continuation for Robin boundary value problems on {$C^{1,1}$} domains},
  author={ Li, Zongyuan and Wang, Weinan},
  journal={Indiana University Mathematics Journal},
  volume={72},
  number={4},
  pages={1429--1460},
  year={2023},
  publisher={Indiana University}
}

@article{LM18,
 title={Review of Yau’s conjecture on zero sets of Laplace eigenfunctions},
author={Logunov, Alexander and Malinnikova, Eugenia},
Jounral={Current Developments in Mathematics},
volume={2018},
number={1},
pages={1429--1460},
year={2018}
}

@article{K20,
  title={Quantitative, uniqueness, and vortex degree estimates for solutions of the Ginzburg-Landau equation},
  author={Kukavica, Igor},
  journal={Electronic Journal of Differential Equations},
  volume={2000},
  number={61},
  pages={1--15},
  year={2000}
}

@article{JerisonKenig1985,
  author  = {Jerison, David and Kenig, Carlos E.},
  title   = {Unique continuation and absence of positive eigenvalues for {Schr\"odinger} operators},
  journal = {Ann. of Math. (2)},
  volume  = {121},
  number  = {3},
  pages   = {463--494},
  year    = {1985}
}

@article{KochTataru2001,
  author  = {Koch, Herbert and Tataru, Daniel},
  title   = {Carleman estimates and unique continuation for second-order elliptic equations with nonsmooth coefficients},
  journal = {Comm. Pure Appl. Math.},
  volume  = {54},
  number  = {3},
  pages   = {339--360},
  year    = {2001}
}

@article{Kukavica1998,
  author  = {Kukavica, Igor},
  title   = {Quantitative uniqueness for second-order elliptic operators},
  journal = {Duke Math. J.},
  volume  = {91},
  number  = {2},
  pages   = {225--240},
  year    = {1998}
}

@article{LiuTianYang2025,
  author  = {Liu, Hairong and Tian, Long and Yang, Xiaoping},
  title   = {Quantitative unique continuation property for solutions to a bi-{Laplacian} equation with a potential},
  journal = {J. London Math. Soc.},
  volume  = {112},
  number  = {2},
  pages   = {e70265},
  year    = {2025}
}

@article{BG16,
  title={Quantitative uniqueness for zero-order perturbations of generalized Baouendi-Grushin operators},
  author={Banerjee, Agnid and Garofalo, Nicola},
  journal={arXiv:1604.06000},
  year={2016}
}

@article{Meshkov1992,
  author  = {Meshkov, Vladimir Z.},
  title   = {On the possible rate of decay at infinity of solutions of second order partial differential equations},
  journal = {Math. USSR-Sb.},
  volume  = {72},
  number  = {2},
  pages   = {343--361},
  year    = {1992}
}

@article{Zhu2016,
  author  = {Zhu, Jiuyi},
  title   = {Quantitative uniqueness of elliptic equations},
  journal = {Amer. J. Math.},
  volume  = {138},
  number  = {3},
  pages   = {733--762},
  year    = {2016}
}

@article{Zhu2019,
  author  = {Zhu, Jiuyi},
  title   = {Doubling inequality and nodal sets for solutions of bi-{Laplace} equations},
  journal = {Arch. Rational Mech. Anal.},
  volume  = {232},
  pages   = {1543--1595},
  year    = {2019}
}

@book{H07,
  title={The analysis of linear partial differential operators III: Pseudo-Differential Operators},
  author={H{\"o}rmander, Lars},
  year={2007},
  publisher={Springer}
}

\end{document}